\newcommand*{\MRref}[2]{ \href{http://www.ams.org/mathscinet-getitem?mr=#1}{MR #1}}
\newcommand*{\arxiv}[1]{ \href{http://www.arxiv.org/abs/#1}{arXiv:#1}}
\numberwithin{equation}{section}
\theoremstyle{plain}
\newtheorem{theorem}[equation]{Theorem}
\newtheorem{lemma}[equation]{Lemma}
\newtheorem{proposition}[equation]{Proposition}
\theoremstyle{definition}
\newtheorem{definition}[equation]{Definition}
\theoremstyle{remark}
\newtheorem{remark}[equation]{Remark}
\newtheorem{example}[equation]{Example}
\DeclareMathOperator*{\dom}{dom}
\DeclareMathOperator*{\ran}{ran}
\DeclareMathOperator*{\spn}{span}
\DeclareMathOperator*{\cspn}{\overline{span}}
\DeclareMathOperator*{\Rep}{Rep}
\DeclareMathOperator*{\supp}{supp}
\newcommand*{\nb}{\nobreakdash}
\newcommand*{\Star}{\texorpdfstring{$^*$\nobreakdash-\hspace{0pt}}{*-}}
\newcommand*{\cstar}{\texorpdfstring{$C^*$\nobreakdash-\hspace{0pt}}{*-}}
\newcommand*{\CstarRed}{C^*_{\rm r}}
\newcommand*{\N}{\mathbb N} 
\newcommand*{\R}{\mathbb R} 
\newcommand*{\C}{\mathbb C} 
\newcommand*{\Torus}{\mathbb T} 
\newcommand*{\mfI}{\mathfrak{I}}
\newcommand*{\mfs}{\mathfrak{s}}
\newcommand*{\sbe}{\subseteq}
\newcommand*{\bound}{\mathbb B} 
\newcommand*{\dd}{\textup d} 
\newcommand*{\cont}{\mathcal C} 
\newcommand*{\contc}{\cont_\textup{c}} 
\newcommand*{\contz}{\cont_0} 
\newcommand*{\id}{\textup{id}} 
\newcommand*{\Ind}{\textup{Ind}} 
\newcommand*{\locs}{\Gamma} 
\newcommand*{\hils}{\mathcal H} 
\newcommand*{\A}{\mathcal{A}}     
\newcommand*{\At}{\A^{(2)}}       
\newcommand*{\Az}{\A^{(0)}}       
\newcommand*{\pA}{p}              
\newcommand*{\B}{\mathcal{B}}     
\newcommand*{\CC}{\mathcal{C}}    
\newcommand*{\G}{\mathcal{G}} 
\newcommand*{\s}{\mathrm{d}} 
\renewcommand*{\r}{\mathrm{r}} 
\newcommand*{\domain}{\mathrm{d}} 
\newcommand*{\range}{\mathrm{r}} 
\newcommand*{\g}{\gamma}
\newcommand*{\Gz}{\G^{(0)}}
\newcommand*{\Gt}{\G^{(2)}}
\newcommand*{\E}{\mathcal{E}} 
\newcommand*{\F}{\mathcal{F}} 
\newcommand*{\U}{\mathcal{U}} 
\newcommand*{\V}{\mathcal{V}}
\newcommand*{\defeq}{\mathrel{\vcentcolon=}}
\newcommand*{\into}{\hookrightarrow}
\newcommand*{\onto}{\twoheadrightarrow}
\newcommand*{\congto}{\xrightarrow\sim}
\renewcommand*{\O}{\mathcal{O}} 
\newcommand*{\pb}[1]{\mathcal I(#1)}
\newcommand*{\I}{\mathcal I} 
\newcommand*{\X}{\mathcal X} 
\newcommand*{\rest}[1]{|_{#1}} 
\newcommand*{\ep}{\epsilon} 
\newcommand*{\inv}{^{-1}}
\newcommand*{\Hx}{\hils_x}
\newcommand*{\calcat}[1]{\,{\vrule height8pt depth4pt}_{\,#1}}
\newcommand*{\overeq}[1]{\buildrel{#1}\over =}
\newcommand*{\eq}[1]{\buildrel{#1}\over =}
\newcommand*{\trip}[3]{(#1,#2,#3)}
\newcommand*{\qtrip}[3]{[#1,#2,#3]}
\newcommand*{\qstrip}[3]{[\![#1,#2,#3]\!]}
\newcommand*{\germ}[2]{[#1, #2]}
\newcommand*{\half}{^\frac{1}{2}}
\newcommand*{\usc}{upper-semicontinuous}
\font \eightsc =cmcsc8
\newcommand*{\pf}[1]{{\medskip \noindent({\eightsc #1}):}}
\renewcommand*{\t}{\theta}
\newcommand*{\braket}[2]{\left\langle#1\!\mid\!#2\right\rangle} 
\begin{document}
\title[Fell bundles over inverse semigroups]{Fell bundles over inverse semigroups \\ and twisted étale groupoids}

\author{Alcides Buss}
\email{alcides@mtm.ufsc.br}

\author{Ruy Exel}
\email{exel@mtm.ufsc.br}
\address{Departamento de Matemática\\
  Universidade Federal de Santa Catarina\\
  88.040-900 Florianópolis-SC\\
  Brasil}

\begin{abstract} Given a saturated Fell bundle $\A$ over an inverse
semigroup $S$ which is semi-abelian in the sense that the fibers over
the idempotents of $S$ are commutative, we construct a
twisted étale groupoid $(\G,\Sigma)$ such that $\A$ can be
recovered from $(\G,\Sigma)$ in a canonical way. As an application we
recover most of Renault's recent result on the classification of
Cartan subalgebras of C*-algebras through twisted étale groupoids.
\end{abstract}

\subjclass[2000]{46L55, 46L45, 20M18, 55R65}

\keywords{Fell bundle, inverse semigroup, twisted étale groupoid, {\usc} Banach bundle, crossed product,
Cartan subalgebra}

\thanks{The first author was supported by CNPq/PNPD Grant Number
558420/2008-7,  and the second author was partially supported by CNPq.}

\maketitle

\tableofcontents

\section{Introduction}
\label{sec:introduction}

Among the most interesting examples of \cstar{algebras} one finds the
``dynamical \cstar{algebras}'', meaning \cstar{algebras} constructed out of some
dynamical system.  The interest in their study lies in the fact that
they are algebraic representations of their accompanying systems,
sometimes revealing features which are not immediately seen with a
naked eye.

The classical notion of a group action on a topological space, the
most basic form of a dynamical system, leads to the
\emph{crossed product}\/ or \emph{covariance \cstar{algebra}} which, over the
years, has proven to be an invaluable tool in the study of group
actions.

Dynamical systems often take slightly more sophisticated forms, such
as semigroup actions, pseudogroups, or topological groupoids and, in
most cases, crossed-product-like constructions may be performed
providing \cstar{algebras} mirroring dynamical features algebraically.

The huge variety of dynamical \cstar{algebras} has prompted some to reverse
the order of things and to look for dynamical data attached to
\cstar{algebras} which are not necessarily born from a dynamical system.
This point of view has been enormously successful, so much
so that it is now a standard tool in the study of \cstar{algebras}.  When
available it often gives important information on simplicity, the
structure of ideals, faithfulness of representations and K-theory,
among others.

One of the earliest attempts at uncovering dynamical data beneath
unsuspecting algebraic systems is Feldman and Moore's
\cite{feldman_more:Cartan.subalgebrasII} description of Cartan subalgebras of von Neumann
algebras via twisted measured equivalence relations. Kumjian
\cite{Kumjian:cstar.diagonals} and Renault \cite{RenaultCartan} have later made these
ideas to work in the context of \cstar{algebras} and, thanks to them, we
now know that Cartan subalgebras of \cstar{algebras} may be described via
twisted, essentially principal, \'etale groupoids.

Motivated by these developments, the second named author has recently
found a generalized notion of (non-commutative) Cartan subalgebras, for
which a similar characterization may be given \cite{Exel:noncomm.cartan}.
The \emph{dynamical} object underneath this characterization comes in the
form of a \emph{Fell bundle over an inverse semigroup}, a concept
introduced by Sieben in a talk given at the Groupoid Fest in
1998 (see \cite{SiebenFellBundles,Exel:noncomm.cartan}).
But, perhaps due to the fact that this concept is still deeply rooted
in Algebra, it may not immediately appear to deserve the label of a
\emph{dynamical system}.

The present work intends to bridge this gap, clarifying the
relationship between such Fell bundles and dynamics proper.

By definition a Fell bundle over an inverse semigroup $S$ consists of
a family $\A=\{\A_s\}_{s\in S}$ of Banach spaces, equipped with
bilinear \emph{multiplication} operations $\A_s\times \A_t \to \A_{st}$,
conjugate-linear \emph{involution} operations $\A_s \to \A_{s^*}$, and
\emph{inclusion} maps $\A_s\hookrightarrow \A_t$, whenever $s\leq t$.
All these data are required to satisfy certain natural axioms (see Definition~\ref{def:Fell bundles over ISG} below).


The expression \emph{Fell bundle}
has its roots in Fell's pioneering work \cite{fell_doran} and
is used in this work primarily to
refer to Fell bundles over inverse semigroups, as briefly defined
above, but the concept of Fell bundles over groupoids, as introduced by Kumjian and
Yamagami \cite{YamagamiFellBundles,Kumjian:fell.bundles.over.groupoids}, also plays a crucial role since the
latter provides examples of the former: given a Fell bundle $\B$ over
an étale groupoid $\G$, let $S$ be any inverse semigroup consisting of
bissections of $\G$, and for each $U\in S$, let $\A_U$ be the space of
all continuous sections of $\B$ over $U$ vanishing at infinity.  The
operations on $\B$ may be used to give the collection $\A=
\{\A_U\}_{U\in S}$ the structure of a Fell bundle over $S$.
Under mild conditions, we prove that the cross sectional \cstar{algebras}
of $\B$ and of $\A$ are isomorphic (see Theorem~\ref{teor:fell bundles over groupoids and ISG isomorphic}).
In case the groupoid $\G$ is Hausdorff, we follow a partition-of-unit argument appearing in
\cite[Theorem 7.1]{Quigg.Sieben.C.star.actions.r.discrete.groupoids.and.inverse.semigroups}.
This idea also appears in the unpublished preprint \cite[Proposition 3.5]{SiebenFellBundles} by Nánbor Sieben.
We also deal with the non-Hausdorff case by using some ideas and results of
\cite{Exel:inverse.semigroups.comb.C-algebras,Muhly.Williams.Equivalence.and.Disintegration}.

A special subcase of this construction is obtained when, starting from
a twisted groupoid $(\G,\Sigma)$, we form the associated Fell line
bundle $\B$ over $\G$.

Given any Fell bundle $\A$ over an inverse semigroup $S$, and given an
element $e$ in the idempotent semilattice $E(S)$, the
\emph{fiber over} $e$, namely $\A_e$, is always a \cstar{algebra}.  In the
special case of the above Fell bundle constructed from a twisted
\'etale groupoid $(\G, \Sigma)$, a bissection $U$ is idempotent if and
only if it is contained in the unit space $\Gz$.  In this case the
fiber over $U$ is just the algebra $\contz(U)$ of continuous
complex-valued functions \footnote{The twist over the unit space is always trivial so it may
be disregarded when $U\subseteq \Gz$.} vanishing at infinity on $U$,
which is obviously a commutative \cstar{algebra}.  This suggests the
terminology \emph{semi-abelian}, referring to Fell bundles for which
$\A_e$ is commutative for every idempotent element~$e$.

Our main result shows that every semi-abelian Fell bundle arises from
a twisted \'etale groupoid in the above fashion.  This gives substance
to the statement that Fell bundles over inverse semigroups are indeed
dynamical objects,  and it also supports the claim that general Fell
bundles (not necessarily semi-abelian ones) should be considered as
\emph{twisted groupoids with non-commutative unit space}.

Our techniques borrow lavishly from Kumjian \cite{Kumjian:cstar.diagonals} and Renault
\cite{RenaultCartan}, especially when constructing a groupoid from a
semi-abelian Fell bundle.  Our construction of the twist is also
heavily inspired by these works, although we have found it more
economical to construct the associated line bundle directly, without
first passing through the twist itself.  Should the twist be needed,
it can be easily recovered as the circle bundle associated to our line
bundle.


In the last section we apply our result to Cartan subalgebras. Given
a (commutative) Cartan subalgebra $B$ of a \cstar{algebra} $A$, the results
of \cite{Exel:noncomm.cartan} yield a Fell bundle $\A$ over an inverse
semigroup $S$, and an isomorphism $A \cong \CstarRed(\A)$, sending $B$
onto $\CstarRed(\E)$, where $\E$ is the restriction of $\A$ to the
idempotent semilattice of $S$. Since $B$ is commutative, $\A$ must be
semi-abelian, so we may apply our results in order to obtain a twisted
étale groupoid $(\G,\Sigma)$ together with a canonical isomorphism
$A\cong \CstarRed(\G,\Sigma)$, which sends $B$ onto
$\contz(\Gz)$. This proves most of Renault's main result in
\cite{RenaultCartan}.


It should be stressed that the groupoids that come out of our
construction are not necessarily Hausdorff, as opposed to the groupoids considered in \cite{Kumjian:cstar.diagonals}
and \cite{RenaultCartan}.  But, starting with Connes' work on foliation
groupoids \cite{Connes:Survey_foliations}, the recent literature on non-Hausdorff
groupoids has increased significantly providing efficient techniques
which often only require small changes in relation to the Hausdorff case.

Returning to the Hausdorff question, in our proof of Renault's
characterization of Cartan subalgebras, we have not seen how to prove,
without appealing to Renault's ideas, that the underlying groupoid is
Hausdorff.

Since Renault's proof relies on the existence of a conditional
expectation, we were led to conjecture that, if $\A$ is a semi-abelian
Fell bundle over an inverse semigroup such that there exists a
conditional expectation from $\CstarRed(\A)$ onto $\CstarRed(\E)$,
then the underlying groupoid should be Hausdorff.  However this
is not true as shown by the example given in Proposition~\ref{prop:Non-Hausdorff groupoid with conditional expectation}.
We are therefore forced to accept that Hausdorffness is not only a consequence of
the existence of the conditional expectation, but that it  also depends on
maximal abeliannes, as in Renault's result.

\section{Preliminaries}
\label{sec:preliminaries}

\subsection{Banach bundles}
\label{sec:Banach bundles}

In this section we will establish basic facts about {\usc} Banach
bundles that we need in the sequel.  Even though this is a well known
theory, covered in detail in several references (see for instance \cite{Dupre.Gillette.Banach.Bundles}, \cite{fell_doran},
\cite{Muhly.Williams.Renault's.Equivalence.Theorem} and references therein),
its applications to non-Hausdorff spaces have not been widely considered up to now.
Even though large parts of the theory generalize nicely to certain
non-Hausdorff spaces, most of the classical texts deal exclusively
with the Hausdorff case.

\begin{definition} (\cite{Dupre.Gillette.Banach.Bundles}, \cite{Muhly.Williams.Renault's.Equivalence.Theorem}) \label{DefineUSCBundle}
  Let $X$ be a (not necessarily Hausdorff) topological space.
  An \emph{{\usc}-Banach bundle} over $X$ is a pair $\A=(A,p)$ consisting of a topological space
  $A$ together with a continuous, open surjection
  $p\colon A\to X$.  It is moreover assumed that for each $x\in X$, the
  \emph{fiber  over $x$}, namely $\A_x\defeq p\inv(x)$, has the structure of a complex Banach space
satisfying:
\begin{enumerate}
\item[\textup{(i)}]\label{NormIsUSC}
The map $v\mapsto\|v\|$ is {\usc} from  $A$ to $\R^+$.
\item[\textup{(ii)}]\label{SumIsContinuous}
 The map \ $(v, w) \mapsto v+w$ \  is continuous from $\{(v,w)\in A\times A\colon p(v)=p(w)\}$
  (seen as a topological subspace of $A\times A$) to $A$.
\item[\textup{(iii)}]\label{ScalarProdIsCont} For each $\lambda\in\C$, the map $v\mapsto \lambda v$ is
    continuous from $A$ to $A$.
\item[\textup{(iv)}]\label{CompWithTop} If $\{v_i\}_i$ is a net in $A$ such that $p(v_i)\to
    x$ and $\|v_i\|\to 0$, then $\{v_i\}_i$ converges to $0_x$, the zero element in $\A_x$.
\end{enumerate}
If the map $v\mapsto \|v\|$ is continuous from $A$ to $\R^+$, we say that $\A$ is a \emph{continuous Banach bundle}.
\end{definition}

\begin{remark}
Although the norm on $\A$ need not be continuous, upper-semicontinuity forces $\|v_i\|$ to converge to $0$
whenever $v_i$ converges to $0_x$ for some $x\in X$. With this in mind, as observed in
\cite{Muhly.Williams.Renault's.Equivalence.Theorem} (see comments after Definition 3.1),
the same proof of \cite[Proposition II.13.10]{fell_doran}
can still be applied to an {\usc} Banach bundle $\A$ in order to get a stronger version of property (iii):
\begin{enumerate}
\item[\textup{(iii)'}] The map $(\lambda,v)\mapsto \lambda v$ is continuous from $\C\times A$ to $A$.
\end{enumerate}
\end{remark}

In what follows we are going to omit the bundle projection $p$ and the total space $A$
from our notation and identify the latter with the bundle $\A$ itself. Moreover, we also usually identify $\A$
with the collection $\{\A_x\}_{x\in X}$ of all fibers and actually write $\A=\{\A_x\}_{x\in X}$ to denote all the data.
We believe that this will not cause any confusion.

As already mentioned, we need efficient methods of constructing Banach
bundles, and we shall now devote ourselves to generalizing Fell and
Doran's main such tool \cite[Theorem II.13.18]{fell_doran}.

As a first step we suppose we are given a
(not necessarily Hausdorff) topological space $X$  and
a pairwise disjoint collection of Banach spaces $\{\A_x\}_{x\in X}$.
Denote by $\A$ the disjoint union of the $\A_x$ and let $p\colon\A\to X$
be the function which assigns $x$ to every element of $\A_x$.

\begin{definition} If $U$ is any subset of $X$, and if $\xi\colon U\to \A$ is a
function such that $\xi(x)\in\A_x$ for every $x\in U$, we say that
$\xi$ is a \emph{local section of $\A$ over $U$}. In this case, we denote the \emph{domain} of $\xi$ by $\dom(\xi)\defeq U$.
\end{definition}

In \cite[Theorem II.13.18]{fell_doran} one chooses a collection of globally defined
sections satisfying certain assumptions and one constructs a topology
on $\A$ with respect to which the given sections are continuous.

However, there are examples of locally Hausdorff spaces $X$, for which the
trivial one-dimensional bundle admits no global continuous compactly
supported section (see \cite[Example 1.2]{Khoshkam_Skandalis:regular.representation.groupoid}).
To remedy this situation we work here with local sections defined on open subsets of $X$.

Given a subset $\locs$ of local sections of $\A$, we shall write $\spn\locs$ for the set of the local sections of
the form
$$\sum\limits_{i=1}^n\lambda_i\xi_i\quad\mbox{with }\lambda_i\in\C,\, \xi_i\in\locs\mbox{ and }n\in \N,$$
where by definition,
$$\dom\left(\sum\limits_{i=1}^n\lambda_i\xi_i\right)\defeq\bigcap\limits_{i=1}^n\dom(\xi_i)\quad\mbox{and}\quad
\left(\sum\limits_{i=1}^n\lambda_i\xi_i\right)(x)\defeq\sum\limits_{i=1}^n\lambda_i\xi_i(x).$$

Note that the set of all local sections of $\A$ is \emph{not} a vector space with respect to the sum and scalar product defined above
because the sum fails to have additive inverses. However, it is a so called \emph{semi-vector space}, that is, all the axioms of a vector space
are satisfied, except for the existence of additive inverses.

The following result, which is a non-Hausdorff version of \cite[Theorem II.13.18]{fell_doran}, is certainly well-known for specialists and is essentially the same as Proposition~3.6 in \cite{Hofmann:Bundles}. However we have chosen to include the proof here for reader's convenience.

\begin{proposition}\label{T:topology on Banach bundles from predefined sections}
Let $\locs$ be a set of local sections of $\A$ whose domains $\dom(\xi)$ are open subsets of $X$ for all $\xi\in \locs$.
Suppose that\textup:
\begin{enumerate}
\item[\textup{(i)}] Given $v\in \A$, there exists $\xi\in\locs$ such that
$p(v)\in\dom(\xi)$ and $v=\xi(p(v))$.
\item[\textup{(ii)}] The map $x\mapsto \|\xi(x)\|$ is {\usc} from $\dom(\xi)$ to $\R^+$ for all $\xi\in \spn\locs$, that is,
if $\xi\in \spn\locs$ and if $\alpha$ is a positive real number, then
  $$
  \left\{x\in \dom(\xi)\colon\|\xi(x)\|<\alpha\right\}
  $$
  is open in $X$.
\end{enumerate}
Then there exists a unique topology on $\A$ making it an
{\usc} Banach bundle over $X$ and such that all the local sections $\xi$ of
$\spn\locs$, viewed as functions $\xi\colon\dom(\xi)\to\A$, are continuous.
A basis of open sets for this topology is given by the sets of the form
  $$
  \Omega(U,\xi,\ep) = \{v\in \A \colon p(v)\in U,\; \|v-\xi(p(v))\|<\ep\},
  $$
  where $\xi\in \locs$, $U$ is an open subset of $\dom(\xi)$,  and
$\ep>0$.

Moreover, $\A$ is a continuous Banach bundle with this topology if and only if the maps $\dom(\xi)\ni x\mapsto \|\xi(x)\|\in \R^+$
are continuous for all $\xi\in \locs$.
\end{proposition}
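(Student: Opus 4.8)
The plan is to construct the topology explicitly by declaring the sets $\Omega(U,\xi,\ep)$ (with $\xi\in\locs$, $U\sbe\dom(\xi)$ open, $\ep>0$) to be a basis, then verify each of the bundle axioms in turn. First I would check that these sets actually form a basis for a topology on $\A$: given $v\in\Omega(U_1,\xi_1,\ep_1)\cap\Omega(U_2,\xi_2,\ep_2)$, use hypothesis~(i) to find $\xi\in\locs$ with $v=\xi(p(v))$, and then use the upper-semicontinuity hypothesis~(ii) applied to $\xi-\xi_1$ and $\xi-\xi_2$ (both in $\spn\locs$) to produce a neighbourhood $U\ni p(v)$ on which $\|\xi(x)-\xi_1(x)\|$ and $\|\xi(x)-\xi_2(x)\|$ stay below suitable thresholds; shrinking $U$ and picking $\ep$ small, one gets $v\in\Omega(U,\xi,\ep)\sbe\Omega(U_1,\xi_1,\ep_1)\cap\Omega(U_2,\xi_2,\ep_2)$ by the triangle inequality. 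The same triangle-inequality bookkeeping shows $p$ is continuous and open (the image of $\Omega(U,\xi,\ep)$ is $U$, using hypothesis~(i) again to see it is all of $U$), and that each $\xi\in\spn\locs$ is continuous as a map $\dom(\xi)\to\A$.

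Next I would verify axioms (i)--(iv) of Definition~\ref{DefineUSCBundle}. For upper-semicontinuity of the norm: given $v$ with $\|v\|<\alpha$, pick $\xi\in\locs$ through $v$, use hypothesis~(ii) on $\xi$ to get an open $U\ni p(v)$ where $\|\xi(x)\|<\alpha'$ for some $\alpha'$ with $\|v\|<\alpha'<\alpha$, and then $\Omega(U,\xi,\alpha-\alpha')$ is a neighbourhood of $v$ on which the norm stays below $\alpha$. Continuity of addition and of scalar multiplication are again triangle-inequality arguments: if $v+w=u$ with $p(v)=p(w)$, choose $\xi\in\locs$ through $v$ and $\eta\in\locs$ through $w$; then $\xi+\eta\in\spn\locs$ passes through $u$, and a basic neighbourhood $\Omega(U',\xi+\eta,\ep')$ of $u$ is hit by $\Omega(U,\xi,\ep/2)\times\Omega(U,\eta,\ep/2)$ (intersected with the fibred-product set) for suitable $U\sbe U'$. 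Axiom~(iv) is immediate: if $p(v_i)\to x$ and $\|v_i\|\to 0$, then for any basic neighbourhood $\Omega(U,\xi,\ep)$ of $0_x$ (with $\xi\in\locs$ through $0_x$, i.e.\ $\xi(x)=0$), upper-semicontinuity of $\|\xi\|$ gives $\|\xi(p(v_i))\|<\ep/2$ eventually, and $\|v_i\|<\ep/2$ eventually, so $v_i\in\Omega(U,\xi,\ep)$ eventually.

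For uniqueness, I would argue that any topology $\tau$ making $\A$ an {\usc}-Banach bundle with all sections in $\spn\locs$ continuous must have the $\Omega(U,\xi,\ep)$ as open sets (because $v\mapsto\|v-\xi(p(v))\|$ is then {\usc} on $p\inv(\dom\xi)$, being a composite/difference of $\tau$-continuous maps and the {\usc} norm, using the remark's version (iii)$'$ and axioms (i),(ii)) and conversely that every $\tau$-open set is a union of such $\Omega$'s (a standard argument: near a point $v$, choose $\xi\in\locs$ through $v$ via hypothesis~(i), and use axioms (i),(ii),(iv) exactly as in the classical \cite[Theorem II.13.18]{fell_doran} to trap a given $\tau$-neighbourhood of $v$ between two $\Omega$-sets). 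Finally, for the last sentence: if each $\|\xi(\cdot)\|$ ($\xi\in\locs$) is continuous, then so is each $\|\eta(\cdot)\|$ for $\eta\in\spn\locs$ (finite sums and scalar multiples of continuous functions), and then continuity of the norm on $\A$ follows by running the upper-semicontinuity argument above in both directions; the converse is trivial since each $\xi\in\locs$ is a continuous section and the norm on $\A$ is assumed continuous.

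The main obstacle I anticipate is \emph{not} any single axiom but the bookkeeping forced by working with \emph{local} sections whose domains are merely open: every argument that in the Hausdorff global-section setting would produce a neighbourhood of $v$ must here be intersected with $p\inv(\dom\xi)$ for the relevant $\xi$, and one must repeatedly invoke hypothesis~(i) to guarantee that enough local sections pass through each point. Care is also needed in the uniqueness half to see that the ``difference of a continuous section and a point'' function is genuinely {\usc} on the right domain, which is where the strengthened scalar-multiplication property~(iii)$'$ from the remark gets used.
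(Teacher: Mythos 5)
Your proposal follows essentially the same route as the paper: take the sets $\Omega(U,\xi,\ep)$ as a basis, verify the basis property and the bundle axioms by choosing sections through the relevant points via hypothesis (i) and applying hypothesis (ii) to differences in $\spn\locs$, prove uniqueness by showing the $\Omega$-sets are open in any admissible topology and conversely (via axiom \ref{DefineUSCBundle}(iv)) that they form a basis for it, and handle the continuity statement by proving lower semicontinuity of the norm. One small caveat: your justification that $x\mapsto\|\eta(x)\|$ is continuous for $\eta\in\spn\locs$ because it is a ``finite sum and scalar multiple of continuous functions'' is not valid (the norm is not additive), but this step is unnecessary anyway --- the lower-semicontinuity argument, like the paper's, only needs continuity of $x\mapsto\|\xi(x)\|$ for $\xi\in\locs$, the section chosen through the given point; similarly, your reductions in the addition and axiom (iv) steps to neighbourhoods built from sections through the point in question require (routine) remarks that such sets form a neighbourhood basis, which the paper avoids by working with an arbitrary basic set directly.
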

\begin{proof}
  In order to see that the above sets do indeed form the basis of some
topology on $\A$, let
  $$
  v_0\in \Omega(U_1,\xi_1,\ep_1) \cap \Omega(U_2,\xi_2,\ep_2),
  $$
  where $\ep_i>0$,  $\xi_i\in \locs$, and $U_i\sbe\dom(\xi_i)$, for $i=1,2$.
  Put $x_0=p(v_0)$, so $x_0\in U_i$, and $\|v_0-\xi_i(x_0)\|<\ep_i$.
Choose $\ep'_i>0$ such that
  $$
  \|v_0-\xi_i(x_0)\|<\ep'_i<\ep_i,
  $$
  and let $\eta\in\locs$, such that $\eta(x_0)=v_0$.  Put
  $$
  V_i= U_i\cap \{x\in \dom(\eta)\cap\dom(\xi_i)\colon \|\eta(x)-\xi_i(x)\| <\ep'_i\}.
  $$
  and observe that $V_i$ is open by (ii).  Notice that $x_0\in V_i$.
Letting $\delta>0$ be such that $\ep'_i+\delta <\ep_i$, we   claim
that
\begin{equation}\label{IsBasis}
  v_0\in\Omega( V_1\cap V_2,\eta,\delta) \sbe
  \Omega(U_1,\xi_1,\ep_1) \cap \Omega(U_2,\xi_2,\ep_2).
\end{equation}
  To show that $v_0$ is in the indicated set, notice that $p(v_0) = x_0\in V_i$, and
$$\|v_0-\eta(p(v_0))\|=0<\delta.$$

  To show that $\Omega( V_1\cap V_2,\eta,\delta) \sbe
\Omega(U_i,\xi_i,\ep_i)$, pick any $v$ belonging to the set in the
left-hand-side. So $p(v)\in V_i\sbe U_i$.  Moreover
  $$
  \|v- \xi_i(p(v)) \| \leq
  \|v- \eta(p(v)) \| + \|\eta(p(v)) -\xi_i(p(v)) \| \leq
  \delta + \ep'_i < \ep_i.
  $$
  This shows that $v\in \Omega(U_i,\xi_i,\ep_i)$, and hence concludes
the proof of Equation~\eqref{IsBasis}.  To see that the collection
of all the $\Omega(U,\xi,\ep)$ does indeed form the basis for a
topology on $\A$ it is therefore enough to check that its union equals
$\A$, but this is clear from (i) because any $v\in \A$ lies in
  $
  \Omega(\dom(\xi),\xi,\ep),
  $
  as long as $v=\xi(p(v))$.

\pf{$p$ is open} To see this it is enough to show that $p$ sends basic
open sets to open sets, but
this follows immediately from the fact that
$p\big(\Omega(U,\xi,\ep)\big) = U$.

\pf{the norm is \usc} We need to show that the set
  $$
  N_\alpha= \{v\in \A\colon \|v\|<\alpha\}
  $$
  is open for every $\alpha>0$.  So
let $v_0\in N_\alpha$, and
choose $\xi\in\locs$, such that $\xi(x_0)=v_0$, where $x_0=p(v_0)$.
Pick $\alpha'$ such that
  $
  \|v_0\|<\alpha'<\alpha,
  $
  and set
  $$
  U = \{x\in \dom(\xi)\colon \|\xi(x)\|<\alpha'\},
  $$
  so $U$ is open by (ii) and $x_0\in U$.  Choose $\ep>0$, such that
$\alpha'+\ep<\alpha$, and let us show that
\begin{equation}\label{FindBall}
  v_0\in\Omega(U,\xi,\ep)\sbe N_\alpha.
\end{equation}
On the one hand $p(v_0) = x_0\in U$, and on the other
  $\|v_0-\xi(p(v_0))\| = 0 <\ep$, so $v_0\in
\Omega(U,\xi,\ep)$.  Moreover, given any $v\in
\Omega(U,\xi,\ep)$, we have $p(v)\in U$, and
  $$
  \|v\| \leq
  \|v-\xi(p(v))\| + \|\xi(p(v))\| <
  \ep + \alpha' <\alpha.
  $$
  This proves Equation~\eqref{FindBall}, and hence we see that $N_\alpha$ is
indeed open.

\pf{the sum is continuous} Let $v_0,w_0\in \A$, with
$p(v_0)=p(w_0)$, and suppose that $v_0+w_0$ lies in some basic
open set $\Omega(U,\xi,\ep)$.  We need to provide an open subset
$\Delta$ of $\A\times\A$, containing $(v_0, w_0)$, and such that the
sum operation sends $\Delta$ into $\Omega(U,\xi,\ep)$, meaning that
  for every $(v,w)\in \Delta$ such that $p(v)=p(w)$, one has
$v+w\in \Omega(U,\xi,\ep)$.

Set $x_0=p(v_0)=p(w_0)$, so we have $x_0\in U$, and we may pick
$\ep'>0$ such that
  $$
  \|v_0+w_0 - \xi(x_0)\| < \ep' < \ep.
  $$
  Choose $\eta,\zeta\in\locs$, such that $v_0=\eta(x_0)$, and
$w_0=\zeta(x_0)$, and put
  $$
  V = U\cap \{x\in \dom(\eta)\cap\dom(\zeta)\cap\dom(\xi)\colon
  \|\eta(x)+\zeta(x) - \xi(x)\|<\ep'\},
  $$
  and notice that $V$ is open by (ii) and
$x_0\in V$.
  Let $\delta>0$ be such that $\ep'+2\delta<\ep$, and observe that
  $$
  (v_0,w_0)\in \Omega(V,\eta,\delta) \times \Omega(V,\zeta,\delta).
  $$
  We claim that the set in the right-hand side fulfills the task
assigned to $\Delta$, above.

  In fact, pick
  $
  (v,w)\in \Omega(V,\eta,\delta) \times \Omega(V,\zeta,\delta),
  $
  with $p(v)=p(w)$. In order to show that $v+w\in\Omega(U,\xi,\ep)$,
notice that $x\defeq p(v+w)\in V\subseteq U$, and
moreover
  $$
  \|v+w-\xi(x)\| =   \|v-\eta(x)\| + \|w-\zeta(x)\| +
\|\eta(x)+\zeta(x)-\xi(x)\| <
  2\delta + \ep' < \ep.
  $$
  This proves the continuity of the sum operation.

\pf{scalar multiplication is continuous} Given $\lambda\in\C$, and $v_0\in\A$ such that $\lambda v_0$ lies in some basic open set
$\Omega(U,\xi,\ep)$, we have $x_0\defeq  p(\lambda v_0) = p(v_0)\in U$, and there exists $\ep'$ such that
  $$
  \|\lambda v_0 -\xi(x_0)\|<\ep'<\ep.
  $$
  Choose $\eta\in\locs$ such that $\eta(x_0)=v_0$, and consider the open set
  $$
  V = U \cap \{x\in\dom(\eta)\cap\dom(\xi) \colon \|\lambda\eta(x)-\xi(x)\| < \ep'\},
  $$
  which clearly contains $x_0$.  Choosing $\delta>0$ such that
  $$
  \ep' + |\lambda|\, \delta < \ep,
  $$
  we claim that if $w\in \Omega(V,\eta,\delta)$, then $\lambda w\in
\Omega(U,\xi,\ep)$.  In fact, by assumption we have $p(\lambda w)=p(w) \in V\sbe U$, and
\begin{align*}
  \|\lambda w - \xi(p(w))\| & \leq \|\lambda w - \lambda\eta(p(w))\| + \|\lambda\eta(p(w)) - \xi(p(w))\| \\
  & < |\lambda|\, \|w - \eta(p(w))\| + \ep' < |\lambda|\, \delta + \ep' < \ep,
\end{align*}
  so indeed $\lambda w\in \Omega(U,\xi,\ep)$.

\pf{proof of axiom {\rm \ref{DefineUSCBundle}.(iv)}}
  Let $\{v_i\}_i$ and $x$ be as in \ref{DefineUSCBundle}.(iv).
  Denoting by $0_x$ the zero element of $\A_x$, let $\Omega(U,\xi,\ep)$ be a basic neighborhood of $0_x$, so $x\in
U$, and $\|\xi(x)\|<\ep$.  Pick $\ep'$ such that
  $$
  \|\xi(x)\|<\ep'<\ep,
  $$
  and let
  $$
  V = U\cap \{y\in \dom(\xi)\colon \|\xi(y)\|<\ep'\}.
  $$
  Clearly $V$ is open and $x\in V$.
  Choose $\delta>0$, such that $\ep'+\delta < \ep$.
For all $i$ bigger than or equal to some $i_0$, one therefore has
$p(v_i)\in V$, and $\|v_i\|<\delta$.  For $i\geq i_0$ one then has $p(v_i)\in U$, and
  $$
  \|v_i-\xi(p(v_i))\| \leq \|v_i \| + \| \xi(p(v_i))\| < \delta + \ep' < \ep,
  $$
  so $v_i\in \Omega(U,\xi,\ep)$, proving that $v_i\to 0_x$.
We conclude that $\A$ is an {\usc} Banach bundle with the topology which has the sets $\Omega(U,\xi,\ep)$ as basis.
Now, to see that each $\xi\in \spn\locs$ is continuous as a function $\xi\colon\dom(\xi)\to \A$,
take any $\eta\in \locs$, suppose that $U\sbe\dom(\eta)$ is an open subset, and take $\ep>0$. Then we have
\begin{align*}
\{x\in \dom(\xi)\colon\xi(x)\in \Omega(U,\eta,\ep)\}&=\{x\in \dom(\xi)\colon x\in U\mbox{ and }\|\xi(x)-\eta(x)\|<0\}\\
    &=U\cap\{x\in \dom(\xi)\cap\dom(\eta)\colon\|\xi(x)-\eta(x)\|<\ep\},
\end{align*}
which is open by (ii) since $\xi-\eta\in \spn\locs$. This says that $\xi\colon\dom(\xi)\to \A$ is continuous.

To see that the topology on $\A$ is uniquely determined, assume that $\A$ has a topology making it an {\usc} Banach bundle
and such that all the local sections in $\spn\locs$ are continuous. Since the norm is {\usc}, and since the map $p\inv(U)\ni v\mapsto v-\xi(p(v))\in \A$ is continuous for every $\xi\in \locs$ and every open subset $U\sbe\dom(\xi)$, the sets $\Omega(U,\xi,\ep)$ must be open in $\A$.
Moreover, given any open subset $\V\sbe\A$ and $v_0\in \V$,
we claim that there are $\xi\in \locs$, $U\sbe\dom(\xi)$ open and $\ep>0$ such that $v_0\in \Omega(U,\xi,\ep)\sbe\V$.
By (i), there is  a local section $\xi\in \locs$ such that $x_0\defeq p(v_0)\in \dom(\xi)$ and  $\xi(x_0)=v_0$. Suppose, by contradiction,
that for any open subset $U\sbe\dom(\xi)$ containing $x_0$ and for any $\ep>0$, the set $\Omega(U,\xi,\ep)$ is not
contained in $\V$. This yields an element $v_{U,\ep}\in \A$ which is not contained in $\V$ and satisfies $p(v_{U,\ep})\in U$
and $\|v_{U,\ep}-\xi(p(v_{U,\ep}))\|<\ep$. Then the net $\{p(v_{U,\ep}\}_{U,\ep}$ converges to $x_0$ (where the pairs $U,\ep$ are directed in the canonical way) and the net $w_{U,\ep}\defeq v_{U,\ep}-\xi(p(v_{U,\ep}))$ converges in norm to $0$.
By Definition~\ref{DefineUSCBundle}(iv), the net $w_{U,\ep}$ converges to $0_{x_0}$ in $\A$. Since $\xi$ and $p$ are continuous,
$\xi(p(v_{U,\ep}))$ converges to $\xi(p(x_0))=v_0$, and hence $v_{U,\ep}$ converges to $v_0$. This is a contradiction because
$\V$ is open, $v_0\in \V$ and $v_{U,\ep}\notin\V$ for all $U,\ep$. This proves our claim. As a consequence,
the sets $\Omega(U,\xi,\ep)$ form a basis for the topology on $\A$, and therefore it must be the same topology we constructed above.

Finally, suppose that $\A$ is a continuous Banach bundle with the
above topology. Since the $\xi\in \locs$ are continuous and the norm on $\A$ is continuous,
the maps $\dom(\xi)\ni x\mapsto \|\xi(x)\|\in \R^+$ must be continuous for all $\xi\in \locs$.
Conversely, suppose these maps are continuous. To show that the map $\A\ni v\mapsto\|v\|\in \R^+$ is continuous,
it is enough to prove lower semi-continuity since we already know it is {\usc}. Thus we have to show that the sets
$M_\alpha\defeq \{v\in \A\colon\|v\|>\alpha\}$ are open in $\A$ for all $\alpha>0$. Take $v_0\in M_\alpha$ and choose $\xi\in \locs$
with $x_0=p(v_0)\in \dom(\xi)$ and $\xi(x_0)=v_0$. Pick $\alpha'$ satisfying $\alpha<\alpha'<\|v_0\|$ and define
$U\defeq \{x\in \dom(\xi)\colon\|\xi(x)\|>\alpha'\}$.
If $0<\ep<\alpha'-\alpha$, then every $v\in \Omega(U,\xi,\ep)$ satisfies
$$\|v\|\geq\|\xi(p(v))\|-\|\xi(p(v))-v\|>\alpha'-\ep>\alpha,$$
so that $v_0\in \Omega(U,\xi,\ep)\sbe M_\alpha$. This concludes the proof.
\end{proof}

\subsection{Fell bundles over groupoids and inverse semigroups}
\label{sec:FellBundles}

Let us begin by precisely identifying the class of groupoids that will
interest us.

\begin{definition}[\cite{RenaultThesis}]\label{DefineGroupoid}
A groupoid $\G$ is said to be a \emph{topological groupoid} if it is
equipped with a (not necessarily Hausdorff) topology relative to which
the multiplication and inversion operations are continuous. We say
that $\G$ is \emph{\'etale} if, in addition, the unit space $\Gz$ is
locally compact and Hausdorff in the relative topology, and the range map
  $
  \range\colon \G\to\Gz
  $
  (and consequently also the domain map
  $
  \domain\colon \G\to\Gz
  $)
  is a local homeomorphism.
\end{definition}

The following definition of Fell bundles over groupoids is the same
given by Alex Kumjian in \cite{Kumjian:fell.bundles.over.groupoids}
(see also \cite{YamagamiFellBundles}),
except that Kumjian only works with Hausdorff groupoids and continuous
Banach bundles. Continuous Banach bundles would be enough for us, but
our main results require non-Hausdorff groupoids.
Given our interest in \'etale groupoids, we shall restrict ourselves
to this situation although the concept below actually makes sense for
any topological groupoid.

From now on we fix an \'etale groupoid $\G$.

\begin{definition}\label{D:definition of Fell bundle over groupoid}
A \emph{Fell bundle} over $\G$ is an {\usc} Banach bundle $\A=\{\A_\g\}_{\g\in \G}$ over $\G$
together with a \emph{multiplication}
$$\cdot\colon\At=\{(a,b)\in \A\times\A\colon (\pA(a),\pA(b))\in \Gt\}\to \A,\quad (a,b)\mapsto a\cdot b,$$
where $\pA\colon\A\to \G$ is the bundle projection, and an \emph{involution}
$$^{*}\colon\A\to \A,\quad a\mapsto a^*$$
satisfying the following properties:
\begin{enumerate}
\item[\textup{(i)}] $\pA(\g_1\g_2)=\pA(\g_1)\pA(\g_2)$, that is, $\A_{\g_1}\cdot\A_{\g_2}\sbe\A_{\g_1\g_2}$ whenever $(\g_1,\g_2)\in\Gt$,
                        and the multiplication map $\A_{\g_1}\times\A_{\g_2}\to \A_{\g_1\g_2}$, $(a,b)\mapsto a\cdot b$ is bilinear;
\item[\textup{(ii)}] the multiplication on $\A$ is associative, that is, $(a\cdot b)\cdot c=a\cdot(b\cdot c)$
                        whenever $a,b,c\in \A$ and this makes sense;
\item[\textup{(iii)}] the multiplication $\cdot \colon\At\to \A$ is continuous, where $\At$ carries the topology
                        induced from the product topology on $\A\times\A$;
\item[\textup{(iv)}] $\|a\cdot b\|\leq\|a\|\|b\|$ for all $a,b\in \A$;
\item[\textup{(v)}] $\pA(a^*)=\pA(a)^*$ for all $a\in \A$, that is, $\A_\g^*\sbe\A_{\g\inv}$ for all $\g\in \G$,
                    and the involution map $^{*}\colon\A_\g\to \A_{\g\inv}$ is conjugate linear;
\item[\textup{(vi)}] $(a^*)^*=a$, $\|a^*\|=\|a\|$ and $(a\cdot b)^*=b^*\cdot a^*$ for all $a,b\in \A$;
\item[\textup{(vii)}] the involution $^{*}\colon\A\mapsto \A$ is continuous;
\item[\textup{(viii)}] $\|a^*a\|=\|a\|^2$ for all $a\in \A$; and
\item[\textup{(ix)}] $a^*a$ is a positive element of the \cstar{algebra} $\A_{\s(\g)}$ for all $\g\in \G$ and $a\in \A_\g$.
\end{enumerate}
We say that $\A$ is \emph{saturated} if the closed linear span of
$\A_{\g_1}\cdot\A_{\g_2}$ equals $\A_{\g_1\g_2}$ for all
$(\g_1,\g_2)\in \Gt$.
\end{definition}

Note that for a unit $x\in \Gz$, the fiber $\A_x$ is in fact a \cstar{algebra} with respect to the restricted
multiplication and involution from $\A$. This follows from (i)-(viii) above, so that (ix) makes sense.
Moreover, the restriction $\Az=\A\rest{\Gz}$ is an {\usc} \cstar{bundle}
\cite{Nilsen:Bundles}. Conversely, if $X$ is any locally compact Hausdorff space,
and if $\B$ is an {\usc} \cstar{bundle} over $X$, then $\B$ is a Fell bundle in the sense above over
$X$ considered as a groupoid in the trivial way.

\begin{proposition}\label{prop:continuity of multiplication and involution with local section}
Let $\A$ be an {\usc} Banach bundle over $\G$,  and let $\locs$ be a set of local
sections of $\A$ satisfying the properties (i) and (ii) in
Proposition~\textup{\ref{T:topology on Banach bundles from predefined
sections}}. Suppose that $\cdot\colon\At\to\A$ is a multiplication and
$^{*}\colon\A\to \A$ is an involution on $\A$ satisfying all the axioms
(i)-(ix) in Definition~\textup{\ref{D:definition of Fell bundle over
groupoid}} except, possibly, for (iii) and (vii). Then
\begin{enumerate}
\item[\textup{(i)}] the multiplication
$\cdot\colon\At\to\A$ is continuous if and only if the map
$$\Gt\cap\big(\dom(\xi)\times\dom(\eta)\big)\ni(\g_1,\g_2)\mapsto
\xi(\g_1)\cdot\eta(\g_2)\in \A$$ is continuous for all $\xi,\eta\in\locs$;
\item[\textup{(ii)}] the involution $^{*}\colon\A\to \A$ is
continuous if and only if the map $$\G\supseteq\dom(\xi)\ni\g\mapsto
\xi(\g)^*\in \A$$ is continuous for all $\xi \in \locs$.
\end{enumerate} \end{proposition}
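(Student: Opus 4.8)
The statement is an "it suffices to check on sections" criterion for continuity of the multiplication and involution of a Fell bundle, given that the topology on $\A$ was produced from the set $\locs$ via Proposition~\ref{T:topology on Banach bundles from predefined sections}. In both parts the forward implication is trivial: if $\cdot$ (resp.\ $^{*}$) is continuous, then composing with the continuous maps $\xi\colon\dom(\xi)\to\A$ coming from Proposition~\ref{T:topology on Banach bundles from predefined sections} immediately gives continuity of $(\g_1,\g_2)\mapsto\xi(\g_1)\cdot\eta(\g_2)$ and of $\g\mapsto\xi(\g)^*$. So the whole content is the converse direction, and I would treat the two parts in parallel since the mechanism is identical.

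**Key steps for (i).** Assume the maps $(\g_1,\g_2)\mapsto\xi(\g_1)\cdot\eta(\g_2)$ are continuous for all $\xi,\eta\in\locs$. Fix $(a_0,b_0)\in\At$ and a basic open set $\Omega(U,\zeta,\ep)$ containing $a_0\cdot b_0$; I must produce a neighbourhood of $(a_0,b_0)$ in $\At$ mapped into it. Using axiom (i) of Proposition~\ref{T:topology on Banach bundles from predefined sections} choose $\xi,\eta\in\locs$ with $\xi(p(a_0))=a_0$, $\eta(p(b_0))=b_0$. Because $a\mapsto a-\xi(p(a))$ is continuous and the norm is {\usc}, the sets $\Omega(V,\xi,\delta)$, $\Omega(W,\eta,\delta)$ form a neighbourhood basis at $a_0,b_0$; an arbitrary point $(a,b)$ near $(a_0,b_0)$ with $(p(a),p(b))\in\Gt$ then satisfies, writing $x=p(a)$, $y=p(b)$,
\[
a\cdot b - \zeta(xy) = (a-\xi(x))\cdot b + \xi(x)\cdot(b-\eta(y)) + \big(\xi(x)\cdot\eta(y) - \zeta(xy)\big),
\]
so by axiom (iv) $\|a\cdot b-\zeta(xy)\|\le \|a-\xi(x)\|\,\|b\| + \|\xi(x)\|\,\|b-\eta(y)\| + \|\xi(x)\cdot\eta(y)-\zeta(xy)\|$. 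The first two terms are controlled by making $\delta$ small (the norms $\|b\|$, $\|\xi(x)\|$ stay bounded near $x_0,y_0$ by upper-semicontinuity), and the last term is small on a neighbourhood of $(x_0,y_0)$ precisely by the hypothesis that $(\g_1,\g_2)\mapsto\xi(\g_1)\cdot\eta(\g_2)$ is continuous and the observation that $\zeta$ is itself one of the continuous sections. Intersecting the relevant open sets and pulling back to $\At$ finishes part (i). Part (ii) is the same computation with $a\mapsto a^*$: write $a^* - \zeta(x^{-1}) = (a-\xi(x))^* + (\xi(x)^* - \zeta(x^{-1}))$, use $\|(a-\xi(x))^*\|=\|a-\xi(x)\|$ from axiom (vi), and invoke the hypothesis plus continuity of $\zeta$ for the second term.

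**Main obstacle.** There is no deep obstacle; the argument is a routine "estimate in a basic neighbourhood" in the spirit of the proof of Proposition~\ref{T:topology on Banach bundles from predefined sections} itself. The one point requiring a little care is handling the domains: the sections $\xi,\eta,\zeta$ have only open domains in $\G$, and one works with $(\g_1,\g_2)$ ranging over $\Gt\cap(\dom(\xi)\times\dom(\eta))$, so all intersections must be taken inside these domains and one should note that $\G$ being étale makes $\Gt$ (equivalently $\G\times_{\Gz}\G$) behave well, but in fact only the subspace topology on $\At\subseteq\A\times\A$ is needed and no étale hypothesis is actually used here. The other mild subtlety is that $\A\times_X\A$ (resp.\ $\At$) carries the subspace topology from $\A\times\A$, so a neighbourhood of $(a_0,b_0)$ in $\At$ is just the trace of a product of basic open sets; once that is said, the estimates above close the argument.
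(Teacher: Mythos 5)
Your proposal is correct and is essentially the paper's own argument: the paper simply defers to \cite[VIII.2.4 and VIII.3.2]{fell_doran}, and the proof there is exactly the $\varepsilon/3$ decomposition you write out (split $a\cdot b-\zeta(xy)$ as $(a-\xi(x))\cdot b+\xi(x)\cdot(b-\eta(y))+(\xi(x)\cdot\eta(y)-\zeta(xy))$, control the first two terms by submultiplicativity and upper semicontinuity of the norm, and the third by the hypothesis on sections), combined with the observation that the sets $\Omega(U,\zeta,\ep)$ are basic neighborhoods for the topology of Proposition~\ref{T:topology on Banach bundles from predefined sections}. The only difference is cosmetic: condition (i) of that proposition provides sections passing exactly through each point, so you can skip the density/approximation step appearing in Fell--Doran's version.
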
 \begin{proof} Although we do not
have the same set of hypothesis, the same proof of
\cite[VIII.2.4]{fell_doran} can be applied to our situation in order
to prove (i). And (ii) is also proved in a similar way (see also
\cite[VIII.3.2]{fell_doran}).  \end{proof}

Next, we recall the definition of Fell bundles over inverse semigroups and compare with the one over groupoids defined above.
This concept was first introduced by Nánbor Sieben in 1998 (see \cite{SiebenFellBundles}),
and we borrow some of his ideas. Unfortunately, his results were not published, but the main ideas are contained in the preprint
\cite{Exel:noncomm.cartan} by the second named author.

\begin{definition}\label{def:Fell bundles over ISG}
Let $S$ be an inverse semigroup. A \emph{Fell bundle} over $S$ is a
collection $\A=\{\A_s\}_{s\in S}$ of Banach spaces $\A_s$
together with a \emph{multiplication} $\cdot\colon\A\times\A\to \A$, an
\emph{involution} $^{*}\colon\A\to \A$, and linear maps
$j_{t,s}\colon\A_s\to \A_t$ whenever $s\leq t$, satisfying the following properties\textup:
\begin{enumerate}
\item[\textup{(i)}] $\A_s\cdot\A_t\sbe\A_{st}$ and the multiplication
is bilinear from $\A_s\times\A_t$ to $\A_{st}$ for all $s,t\in S$;
\item[\textup{(ii)}] the multiplication is associative, that is, $a\cdot(b\cdot c)=(a\cdot b)\cdot c$ for all $a,b,c\in \A$;
\item[\textup{(iii)}] $\|a\cdot b\|\leq \|a\|\|b\|$ for all $a,b\in \A$;
\item[\textup{(iii)}] $\A_s^*\sbe\A_{s^*}$ and the involution is conjugate linear from $\A_s$ to $\A_{s^*}$;
\item[\textup{(iv)}] $(a^*)^*=a$, $\|a^*\|=\|a\|$ and $(a\cdot b)^*=b^*\cdot a^*$;
\item[\textup{(v)}] $\|a^*a\|=\|a\|^2$ and $a^*a$ is a positive element of the \cstar{algebra} $\A_{s^*s}$ for all $s\in S$ and $a\in \A_s$;
\item[\textup{(vi)}] $j_{t,s}\colon\A_s\to \A_t$ is an isometric linear map for all $s\leq t$ in $S$;
\item[\textup{(vii)}] if $r\leq s\leq t$ in $S$, then $j_{t,r}=j_{t,s}\circ j_{s,r}$;
\item[\textup{(viii)}] if $s\leq t$ and $u\leq v$ in $S$, then $j_{t,s}(a)\cdot j_{v,u}(b)=j_{tv,su}(a\cdot b)$
                        for all $a\in \A_s$ and $b\in \A_u$. In other words, the following diagram commutes:
$$\xymatrix{
\A_s\times\A_u \ar[r]^{\mu_{s,u}}\ar[d]_{j_{t,s}\times j_{v,u}} & \A_{su}\ar[d]^{j_{tv,su}}\\
\A_t\times\A_v \ar[r]_{\mu_{t,v}} & \A_{tv}}
$$
where $\mu_{s,u}$ and $\mu_{t,v}$ denote the multiplication maps.
\item[\textup{(ix)}] if $s\leq t$ in $S$, then $j_{t,s}(a)^*=j_{t^*,s^*}(a^*)$ for all $a\in \A_s$, that is, the diagram
$$\xymatrix{
\A_s\ar[r]^{^{*}}\ar[d]_{j_{t,s}} & \A_{s^*}\ar[d]^{j_{t^*,s^*}}\\
\A_t\ar[r]_{^{*}} & \A_{t^*}
}
$$
commutes. If $\A_s\cdot\A_t$ spans a dense subspace of $\A_{st}$ for all $s,t\in S$, we say that $\A$ is \emph{saturated}.
\end{enumerate}
\end{definition}

\begin{example}\label{ex:Fell bundle over semigroups associated to Fell bundles over groupoids}
Let $\G$ be an étale groupoid, and let $\B=\{\B_\g\}_{\g\in \G}$ be a Fell bundle over $\G$
(as in Definition~\ref{D:definition of Fell bundle over groupoid}).
Let $S(\G)$ be the inverse semigroup of all bissections
in $\G$. Recall that an open subset $U\sbe\G$ is a \emph{bissection} if the
restrictions $\s_U\colon U\to \s(U)$ and $\r_U\colon U\to \r(U)$ of the source and range maps
$\s\colon\G\to \G^{(0)}$ and $\r\colon\G\to \G^{(0)}$ are homeomorphisms. Let $S\sbe S(\G)$ be an inverse subsemigroup.
Given $U\in S$, define $\A_U$ to be the space $\contz(\B_U)$ of continuous sections
vanishing at infinity of the restriction $\B_U$ of $\B$ to $U\sbe\G$.
Then the collection $\A=\{\A_U\}_{U\in S}$ (disjoint union of the $\A_U$'s) is a Fell bundle over $S$ with respect to the following structure:
\begin{itemize}
\item the multiplication $\A_U\times\A_V\to \A_{UV}$ is defined by
        $$(\xi\cdot\eta)(\g)\defeq \xi(\r_U\inv(\r(\g)))\cdot \eta(\s_V\inv(\s(\g)))$$
        whenever $\xi\in \contz(\B_U)$, $\eta\in \contz(\B_V)$ and $\g\in UV$ (recall that $UV$ is defined as the
        set of all products $\g_1\g_2$ with $\g_1\in U$, $\g_2\in V$ and $\s(\g_1)=\r(\g_2)$);
\item the involution $\A_U\to\A_{U^*}$ is defined by
        $$\xi^*(\g)\defeq \xi(\g\inv)^*$$
        whenever $\xi\in \contz(\B_U)$ and $\g\in U^*\defeq \{\g\inv\colon\g\in U\}$;
\item the inclusion maps $j_{V,U}\colon\A_U\into \A_V$ are defined in the canonical way: if $U\leq V$, that is, $U\sbe V$, then
      we may extend a section $\xi\in \contz(\B_U)$ by zero outside $U$ and view it as a section $\tilde\xi\in \contz(\B_V)$.
      Thus $j_{V,U}(\xi)\defeq \tilde\xi$, where $\tilde\xi$ denotes the extension of $\xi$ by zero.
\end{itemize}
The proof that $\A$ is in fact a Fell bundle is not difficult and is left to the reader. Let us just remark that the multiplication
above is well-defined. Moreover, since $U,V$ are bissections, there is a unique way to write $\g\in UV$ as a product $\g=\g_1\g_2$ with
$\g_1\in U$ and $\g_2\in V$. Indeed, we must have $\g_1=\r_U\inv(\r(\g))$ and $\g_2=\s_V\inv(\s(\g))$.
Note that the multiplication we defined in $\A$ uses the multiplication of $\B$. Thus
$(\xi\cdot\eta)(\g)=\xi(\g_1)\cdot\eta(\g_2)\in \B_{\g_1}\cdot\B_{\g_2}\sbe\B_{\g_1\g_2}=\B_\g$, so $\xi\cdot\eta$ is in fact a section.
Note that the associativity of the multiplication in $\A$ now follows
easily from the associativity of the products in $\G$ and $\B$.
Moreover, it also follows that the multiplication in $\A$ is the usual convolution:
$$(\xi\cdot\eta)(\g)=(\xi*\eta)(\g)=\sum\limits_{\g=\g_1\g_2}\xi(\g_1)\cdot\eta(\g_2).$$
\end{example}

We want to prove that the Fell bundles described in Example~\ref{ex:Fell bundle over semigroups associated to Fell bundles over groupoids}
have same universal \cstar{algebras}. We consider two cases. First, if
the groupoid $\G$ is Hausdorff, we shall follow an idea appearing in
\cite[Theorem 7.1]{Quigg.Sieben.C.star.actions.r.discrete.groupoids.and.inverse.semigroups} which uses a partition-of-unit argument.
This idea does not seem to work in the non-Hausdorff case. However, under suitable separability conditions, we adapt an idea appearing
in \cite{Exel:inverse.semigroups.comb.C-algebras} together with a disintegration result of \cite{Muhly.Williams.Equivalence.and.Disintegration}
to solve the non-Hausdorff case as well.

We need to work with bundles with incomplete fibers, and for this we shall need the following technical result:

\begin{lemma}\label{lem:algebraic Fell bundle with same enveloping algebra}
Let $\A=\{\A_s\}_{s\in S}$ be a Fell bundle over an inverse semigroup $S$, and suppose that $\A^0=\{\A_s^0\}_{s\in S}$ is a sub-bundle
of $\A$ satisfying the following properties:
\begin{enumerate}
\item[\textup{(i)}] $\A_s^0$ is a dense subspace of $\A_s$ for all $s\in S$;
\item[\textup{(ii)}] $\A_s^0\cdot\A_t^0\sbe\A_{st}^0$ and $(\A_s^0)^*\sbe\A_{s^*}^0$ for all $s,t\in S$;
\item[\textup{(iii)}] $\A_{ss^*}^0\A_s\sbe \A_s^0$ and $\A_s\A_{s^*s}^0\sbe\A_s^0$ for all $s\in S$; and
\item[\textup{(iv)}] $j_{t,s}(\A_s^0)\sbe\A_t^0$ whenever $s\leq t$.
\end{enumerate}
Then $C^*(\A)$ is the enveloping \cstar{algebra} of the quotient $\contc(\A^0)/\I_\A^0$, where $\I_\A^0$ is the ideal of $\contc(\A^0)$ defined by
$$\I_\A^0\defeq \spn\{a\delta_s-j_{t,s}(a)\delta_t\colon a\in \A_s^0,\,s\leq t\}.$$
\end{lemma}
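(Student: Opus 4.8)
The plan is to exhibit $\contc(\A^0)/\I_\A^0$ as a dense \Star{subalgebra} of $\contc(\A)/\I_\A$, where $\I_\A$ is the analogous ideal for the full bundle, so that the two enveloping \cstar{algebras} coincide. First I would recall that, by definition, $C^*(\A)$ is the enveloping \cstar{algebra} of the \Star{algebra} $\contc(\A)$, which is the algebraic direct sum $\bigoplus_{s\in S}\A_s\delta_s$ equipped with the convolution product $(a\delta_s)(b\delta_t)=(ab)\delta_{st}$ and involution $(a\delta_s)^*=a^*\delta_{s^*}$, modulo the relations identifying $a\delta_s$ with $j_{t,s}(a)\delta_t$ for $s\le t$; that is, $C^*(\A)$ is the enveloping \cstar{algebra} of $\contc(\A)/\I_\A$. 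Conditions (ii) and (iv) guarantee that $\contc(\A^0)\defeq\bigoplus_s\A_s^0\delta_s$ is a \Star{subalgebra} of $\contc(\A)$ and that $\I_\A^0=\I_\A\cap\contc(\A^0)$, so there is a natural injective \Star{homomorphism}
$$
\iota\colon \contc(\A^0)/\I_\A^0\;\longrightarrow\;\contc(\A)/\I_\A .
$$
The crux is to show that the range of $\iota$ is dense in $C^*(\A)$ for the universal \cstar{norm}, for then every representation of $\contc(\A^0)/\I_\A^0$ extends uniquely to $\contc(\A)/\I_\A$ and conversely, whence the enveloping \cstar{algebras} agree.

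The key step is the density of $\contc(\A^0)$ in $\contc(\A)$ in the \emph{inductive-limit-type} topology that controls the \cstar{seminorm}, i.e. fibrewise in norm. Since $C^*(\A)$ is a quotient of the Banach \Star{algebra} completion of $\contc(\A)$ in $\ell^1$-type norm $\|\sum a_s\delta_s\|_1=\sum\|a_s\|$, and since any \Star{representation} of $\contc(\A)$ is contractive for $\|\cdot\|_1$, it suffices to approximate each generator $a\delta_s$ with $a\in\A_s$ in the $\|\cdot\|_1$-norm by elements of $\contc(\A^0)$; but this is immediate from hypothesis (i), which says $\A_s^0$ is dense in $\A_s$. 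Hence $\contc(\A^0)$ is dense in $\contc(\A)$ in $\|\cdot\|_1$, a fortiori in the universal \cstar{seminorm}, and passing to the quotients, $\iota$ has dense range.

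It remains to check that $\iota$ is isometric for the universal \cstar{seminorms}, i.e. that the universal \cstar{seminorm} on $\contc(\A^0)/\I_\A^0$ (the supremum over its \Star{representations}) equals the restriction of the one on $\contc(\A)/\I_\A$. One inequality is clear since every \Star{representation} of $\contc(\A)/\I_\A$ restricts to one of $\contc(\A^0)/\I_\A^0$. For the reverse, I would take a \Star{representation} $\pi$ of $\contc(\A^0)/\I_\A^0$ on a Hilbert space and argue that it extends to $\contc(\A)/\I_\A$: using condition (iii), for $s\in S$, $a\in\A_s$ and an approximate unit $(u_i)$ for $\A_{ss^*}$ drawn from $\A_{ss^*}^0$ (which exists because $\A_{ss^*}^0$ is a dense \Star{subalgebra} of the \cstar{algebra} $\A_{ss^*}$), the products $u_i a$ lie in $\A_s^0$ and converge to $a$ in norm; one then shows $\pi(u_i a\,\delta_s)$ is Cauchy in the strong operator topology (using $\|\pi(b\delta_t)\|\le\|b\|$, which follows from the \cstar{identity} (v) in the bundle axioms applied inside $\pi$) and defines $\pi(a\delta_s)$ as the limit. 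The main obstacle is precisely this extension argument: one must verify it is well defined (independent of the approximate unit), multiplicative, and \Star{preserving}, and that it kills $\I_\A$; condition (iii) is exactly what makes the product $\pi(a\delta_s)\pi(b\delta_t)=\lim_i\pi(u_ia\,\delta_s)\pi(b\delta_t)$ land back in the closure of $\pi(\contc(\A^0))$. Once the extension exists, $\|\pi(x)\|=\|\pi(\iota(x))\|$ for $x\in\contc(\A^0)/\I_\A^0$ shows $\iota$ is isometric, and combined with the density established above this yields $C^*(\A^0):=$ enveloping \cstar{algebra} of $\contc(\A^0)/\I_\A^0$ is canonically isomorphic to $C^*(\A)$, as claimed.
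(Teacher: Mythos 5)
Your overall route — extend every \Star{representation} of $\contc(\A^0)/\I_\A^0$ to $\contc(\A)/\I_\A$ and use density of $\A_s^0$ in $\A_s$ — is in spirit the paper's, but the step you dispose of parenthetically is precisely the crux, and your justification for it is wrong. The bound $\|\pi(b\delta_t)\|\le\|b\|$ for $b\in\A_t^0$ does \emph{not} follow from the \cstar{identity}: that identity only reduces the question to contractivity of the \Star{homomorphism} $\pi_e\colon\A_e^0\to\bound(\hils)$ on the idempotent fibers, and a \Star{homomorphism} defined on a dense \Star{subalgebra} of a \cstar{algebra} need not be bounded even when the norm on that subalgebra satisfies $\|c^*c\|=\|c\|^2$ (the polynomials in $\cont([0,1])$ satisfy it, yet evaluation at the point $2$ is an unbounded \Star{homomorphism} into $\C$). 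What rescues the argument — and what the paper actually uses — is hypothesis (iii): taking $s=e$ idempotent it gives $\A_e^0\A_e\sbe\A_e^0$, so $\A_e^0$ is a (not necessarily closed) algebraic ideal of the \cstar{algebra} $\A_e$, and \Star{homomorphisms} defined on such ideals are automatically contractive by \cite[Lemma 2.3]{Quigg.Sieben.C.star.actions.r.discrete.groupoids.and.inverse.semigroups}; only then does the \cstar{identity} propagate contractivity to all fibers $\A_s^0$. Without this ingredient your extension of $\pi$ by means of an approximate unit $(u_i)\sbe\A_{ss^*}^0$ cannot get off the ground: the net $\pi(u_ia\,\delta_s)$ need not even be bounded, let alone Cauchy. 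Note that in your sketch hypothesis (iii) is invoked only to place $u_ia$ in $\A_s^0$, never where it is really indispensable.

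A secondary point: you assert $\I_\A^0=\I_\A\cap\contc(\A^0)$ (hence injectivity of $\iota$) as a consequence of (ii) and (iv); this is not obvious and is nowhere proved. It is also unnecessary: once one knows that coherent (pre-)representations of $\A^0$ and of $\A$ are in bijective, norm-compatible correspondence — which is how the paper phrases the argument, using the approximate unit from $\A_{s^*s}^0$ together with (iii) to check that the extension still annihilates $\I_\A$ — the identification of the enveloping \cstar{algebras} follows without ever deciding whether $\iota$ is injective. With the contractivity step repaired as above and the coherence of the extension actually verified, your outline becomes essentially the paper's proof.
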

\begin{proof}
Recall that $\I_\A=\spn\{a\delta_s-j_{t,s}(a)\delta_t\colon a\in \A_s,\,s\leq t\}$
is an ideal of $\contc(\A)$ and $C^*(\A)$ is the enveloping \cstar{algebra} of the quotient $\contc(\A)/\I_\A$.
By definition, a pre-representation of $\A$, once linearly extended to $\contc(\A)=\spn\{a\delta_s\colon a\in \A_s\}$,
is a representation if and only if it vanishes on the ideal $\I_\A$. Let $\pi$ be a pre-representation
of $\A^0$ on some \cstar{algebra} $C$, that is, $\pi$ is a map $\A^0\to C$
which respects the multiplication and involution of $\A^0$ (which are well-defined by (ii)) and is linear when
restricted to the fibers $\A_s^0$. Then $\pi$ extends to a pre-representation $\tilde\pi$ of $\A$ into $C$. In fact,
$(ii)$ implies that $\A_e^0$ is an ideal of $\A_e$ for all $e\in E(S)$. It follows from
\cite[Lemma 2.3]{Quigg.Sieben.C.star.actions.r.discrete.groupoids.and.inverse.semigroups} that the restriction $\pi_e\colon\A_e^0\to C$
of $\pi$ to $\A_e^0$ (which is a \Star{homomorphism}) is contractive, that is, $\|\pi_e(a)\|\leq \|a\|$ for all $a\in \A_e^0$.
Using the \cstar{identity} and the fact that $\pi$ respects multiplication and involution, this implies that all
restrictions $\pi_s\colon\A_s^0\to C$ are contractive and therefore extend to $\tilde\pi_s\colon\A_s\to C$.
Of course, $\tilde\pi=\{\tilde\pi_s\}_{s\in S}$ is a pre-representation since $\pi$ is. Now assume that
$\pi$ vanishes on the ideal $\I_\A^0$, that is, $\pi$ is \emph{coherent} in the sense that $\pi(a)=\pi(j_{t,s}(a))$
whenever $a\in \A_s^0$ and $s\leq t$. Then the extension $\tilde\pi$ is also coherent. Indeed, take a bounded approximate unit $(e_i)$
for $\A_{s^*s}$ contained in $\A_{s^*s}^0$. If $a\in \A_s$, then $ae_i\in \A_s^0$ by (iii), so that
$$\tilde\pi(a)=\tilde\pi(\lim_iae_i)=\lim_i\pi(ae_i)=\lim_i\pi(j_{t,s}(ae_i))=\lim_i\tilde\pi(j_{t,s}(ae_i))=\tilde\pi(j_{t,s}(a)).$$
Conversely, if $\tilde\pi$ is coherent, so is its restriction $\pi$. We conclude that representations
(that is, coherent pre-representations) of $\A^0$ correspond bijectively to representations of $\A$, whence the result follows.
\end{proof}

As in Example~\ref{ex:Fell bundle over semigroups associated to Fell bundles over groupoids}, let $\B=\{\B_\g\}_{\g\in \G}$
be a Fell bundle over an étale groupoid $\G$, and let $\A=\{\contz(\B_U)\}_{U\in S}$ be the associated Fell bundle over
an inverse subsemigroup $S\sbe S(\G)$. Given $U\in S$, we define $\A_U^0\defeq \contc(\B_U)$. Then the sub-bundle $\A^0=\{\A_U^0\}_{U\in S}$
of $\A$ satisfies the properties (i)-(iv) in statement of Lemma~\ref{lem:algebraic Fell bundle with same enveloping algebra}.
The only non-trivial property to be checked is (iii). But if $\xi\in \contz(\B_U)$, $\eta\in\contc(\B_{U^*U})$
and $K=\supp(\eta)\sbe U^*U=\s(U)$, then $(\xi\cdot\eta)(\g)=\xi(\g)\eta(\s(\g))$ for all $\g\in U$, so that
$\supp(\xi\cdot\eta)\sbe \s_U\inv(K)$ is a compact subset of $U$,
and hence $\xi\cdot\eta\in \contc(\B_U)$. Analogously, $\contc(\B_{UU^*})\cdot \contz(\B_U)\sbe\contc(\B_U)$.
As a consequence of Lemma~\ref{lem:algebraic Fell bundle with same enveloping algebra}, $C^*(\A)$ is the enveloping
\cstar{algebra} of $\contc(\A^0)/\I_\A^0$. Under some mild conditions, we shall prove in what follows that $C^*(\A)$ is
isomorphic to the full \cstar{algebra} $C^*(\B)$ of the Fell bundle $\B$. First, let us recall the definition of $C^*(\B)$.

\begin{definition}\label{def:definition of C_c(B)}
Given a Fell bundle $\B=\{\B_\g\}_{\g\in \G}$ over a locally compact étale groupoid $\G$,
we write $\contc(\B)$ for the vector space of sections $\xi$ of $\B$ which can be written as a finite sum of the form
$\xi=\sum\limits_{i=1}^n\xi_i,$ where each $\xi_i\colon U_i\to \B$ is a compactly supported, continuous local section of $\B$
over some Hausdorff open subset $U_i\sbe\G$, extended by zero outside $U_i$ and viewed as a global section $\xi_i\colon\G\to \B$.
\end{definition}

Alternatively, since the bissections in $\G$ form a basis for its topology \cite[Proposition 3.5]{Exel:inverse.semigroups.comb.C-algebras},
we may restrict to local sections $\xi_i\colon U_i\to \B$ supported on bissections $U_i$ in the above definition.
Notice that bissections are open and Hausdorff by definition. Also note that any open Hausdorff subset $U\sbe \G$ is
locally compact with respect to the induced topology \cite[Proposition 3.7]{Exel:inverse.semigroups.comb.C-algebras}.

Let us warn the reader that, in general, if $\G$ is not Hausdorff, sections in $\contc(\B)$ are not continuous with respect to the global topology.
Of course, if $\G$ is Hausdorff, $\contc(\B)$ coincides with the usual space of compactly supported, continuous
(global) sections of $\B$. In any case, the vector space $\contc(\B)$ always has a canonical \Star{algebra} structure.
The multiplication on $\contc(\B)$ is the convolution product
$$(\xi*\eta)(\g)=\sum\limits_{\g=\g_1\g_2}\xi(\g_1)\eta(\g_2)$$
and the involution is defined by $\xi^*(\g)=\xi(\g\inv)^*$ for all $\xi,\eta\in \contc(\B)$.
As observed in Example~\ref{ex:Fell bundle over semigroups associated to Fell bundles over groupoids},
if $\xi$ is supported in a bissection $U\sbe\G$ and $\eta$ is supported in a bissection $V\sbe\G$, then
$\xi*\eta$ is supported in the bissection $UV$ and $(\xi*\eta)(\g)=\xi(\g_1)\eta(\g_2)$ whenever $\g\in UV$
is (uniquely) written in the form $\g=\g_1\g_2$ with $\g_1\in U$ and $\g_2\in V$. In particular,
this shows that the convolution product is well-defined on $\contc(\B)$.

By definition, $C^*(\B)$ is the enveloping \cstar{algebra} of $\contc(\B)$.
The same argument presented in \cite[Proposition 3.14]{Exel:inverse.semigroups.comb.C-algebras} implies
that $\|\pi(\xi)\|\leq\|\xi\|_\infty$ for any \Star{representation} $\pi$ of $\contc(\B)$
whenever $\xi\in \contc(\B)$ is supported in some bissection of $\G$.
Therefore the enveloping \cstar{algebra} of $\contc(\B)$ in fact exists.

\begin{definition} (Compare \cite[Proposition 5.4]{Exel:inverse.semigroups.comb.C-algebras}) \label{def:wide inv. semigroup}
Let $\G$ be an étale groupoid.
We say that an inverse subsemigroup $S\sbe S(\G)$ is \emph{wide} if the following properties hold:
\begin{enumerate}
\item[{(i)}] $S$ is a covering for $\G$, that is, $\G=\bigcup\limits_{U\in S}U$, and
\item[{(ii)}] given $U,V\in S$ and $\g\in \G$, there is $W\in S$ such that $\g\in W\sbe U\cap V$.
\end{enumerate}
\end{definition}

\begin{theorem}\label{teor:fell bundles over groupoids and ISG isomorphic}
Let $\B=\{\B_\g\}_{\g\in \G}$ be a Fell bundle over an étale groupoid,
and let $\A=\{\contz(\B_U)\}_{U\in S}$ be the associated Fell bundle over a wide inverse subsemigroup $S\sbe S(\G)$.
Consider the sub-bundle $\A^0=\{\contc(\B_U)\}_{U\in S}$ of $\A$ as above. If $\G$ is either Hausdorff, or $\G$ is second countable
and the section algebras $\contz(\B_U)$ are separable for all $U\in S$, then the canonical map
$\Psi\colon\contc(\A^0)\to \contc(\B)$ defined by
$$\Psi\left(\sum\limits_{U\in F}\xi_U\delta_U\right)=\sum\limits_{U\in F}\xi_U$$
whenever $F$ is a finite subset of $S$ and $\xi_U\in \contc(\B_U)$ for all $U\in F$,
induces an isomorphism of \cstar{algebras} $C^*(\A)\cong C^*(\B)$.
\end{theorem}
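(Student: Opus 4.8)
The plan is to show that the map $\Psi$ descends to a $*$-isomorphism on the level of full $C^*$-algebras by factoring it through the enveloping algebra of the quotient $\contc(\A^0)/\I_\A^0$, which by Lemma~\ref{lem:algebraic Fell bundle with same enveloping algebra} and the subsequent discussion is precisely $C^*(\A)$. First I would check that $\Psi\colon\contc(\A^0)\to\contc(\B)$ is a well-defined surjective $*$-homomorphism: surjectivity is essentially the definition of $\contc(\B)$ once one knows (via widths) that the supporting open Hausdorff sets — equivalently bissections — can be taken inside $S$, and the multiplicativity $\Psi(\xi_U\delta_U * \eta_V\delta_V)=\Psi(\xi_U\delta_U)*\Psi(\eta_V\delta_V)$ is exactly the convolution identity $(\xi_U*\eta_V)(\g)=\xi_U(\g_1)\eta_V(\g_2)$ recorded in Example~\ref{ex:Fell bundle over semigroups associated to Fell bundles over groupoids}; compatibility with involution is immediate. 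The key point is that $\Psi$ kills $\I_\A^0$: if $U\le V$ and $\xi\in\contc(\B_U)$, then $\xi\delta_U - j_{V,U}(\xi)\delta_V$ maps to $\xi - \tilde\xi = 0$ since $j_{V,U}$ is literally extension by zero. Hence $\Psi$ factors through $\contc(\A^0)/\I_\A^0$, and by the universal property it induces a $*$-homomorphism $C^*(\A)\to C^*(\B)$ which is still surjective (the image is a $C^*$-subalgebra containing a dense $*$-subalgebra of $C^*(\B)$).

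It remains to produce an inverse, i.e.\ a $*$-homomorphism $C^*(\B)\to C^*(\A)$, which amounts to showing that every $*$-representation $\pi$ of $\contc(\B)$ factors through $\contc(\A^0)/\I_\A^0$ in a compatible way — equivalently, that for $\xi\in\contc(\B)$ one has $\|\pi(\xi)\|$ dominated by the $C^*(\A)$-norm of any preimage under $\Psi$. Given such a $\pi$, one restricts to sections supported in a single bissection $U\in S$ (using widths to refine an arbitrary finite decomposition of $\xi$ into one indexed by elements of $S$, after passing to a common refinement via property (ii) of wideness). The assignment $U\mapsto\big(\contc(\B_U)\ni\xi\mapsto\pi(\xi)\big)$ then defines a pre-representation of the algebraic sub-bundle $\A^0$: it is fiberwise linear, multiplicative and $*$-preserving by the properties of convolution just cited, and it is automatically coherent because $\pi$ of the zero-extension equals $\pi$ of the original section (both are the same element of $\contc(\B)$). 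By Lemma~\ref{lem:algebraic Fell bundle with same enveloping algebra} such a coherent pre-representation of $\A^0$ extends to a representation of $\A$, hence integrates to $C^*(\A)$, and by construction this is an inverse to the induced map $C^*(\A)\to C^*(\B)$ on the dense subalgebras, so the two are mutually inverse isomorphisms.

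The one genuinely delicate point — the main obstacle — is the first reduction in the non-Hausdorff case: showing that an arbitrary element of $\contc(\B)$, a finite sum of sections supported on possibly arbitrary open Hausdorff sets, can be rewritten (without changing the element of $\contc(\B)$, but possibly changing the decomposition) as a finite sum of sections each supported on a bissection lying in the wide subsemigroup $S$, and moreover that this can be done compatibly enough that $\pi$ of the result is controlled. In the Hausdorff case this is the partition-of-unity argument of \cite[Theorem 7.1]{Quigg.Sieben.C.star.actions.r.discrete.groupoids.and.inverse.semigroups}: one subordinates a continuous compactly supported partition of unity on $\supp(\xi)$ to a cover by bissections from $S$ (available by wideness) and splits $\xi$ accordingly. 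In the non-Hausdorff case global partitions of unity need not exist, so one instead adapts the bookkeeping of \cite{Exel:inverse.semigroups.comb.C-algebras} — writing $\xi=\sum\xi_i$ with each $\xi_i$ supported on a small Hausdorff piece, refining each supporting set into a bissection in $S$ using property (ii) of Definition~\ref{def:wide inv. semigroup}, and invoking the disintegration theorem of \cite{Muhly.Williams.Equivalence.and.Disintegration} together with the separability hypotheses to ensure that $\pi$ disintegrates as an integrated form against which the $\contc(\B_U)$-estimate $\|\pi(\xi)\|\le\|\xi\|_\infty$ (noted after Definition~\ref{def:definition of C_c(B)}) and the identification of the two norms go through. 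Once this reduction is in place, everything else is the routine universal-property juggling sketched above.
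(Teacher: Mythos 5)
Your first paragraph matches the paper's easy direction and is fine: $\Psi$ is a surjective \Star{homomorphism}, it kills $\I_\A^0$ because $j_{V,U}$ is extension by zero, and so it induces a surjective $\tilde\Psi\colon C^*(\A)\to C^*(\B)$. The gap is in the hard direction. Your mechanism for it --- take a representation $\pi$ of $\contc(\B)$, restrict it to the subspaces $\contc(\B_U)$, observe that this gives a coherent (pre-)representation of $\A^0$, and integrate it to $C^*(\A)$ --- only produces, from each representation $\pi$ of $\contc(\B)$, a representation $\rho_\pi$ of $C^*(\A)$ with $\rho_\pi\circ q=\pi\circ\Psi$ on $\contc(\A^0)$. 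Taking suprema over $\pi$ this yields $\|\Psi(a)\|_{C^*(\B)}\leq\|q(a)\|_{C^*(\A)}$, i.e.\ exactly the contractivity of $\tilde\Psi$ that you already had; it says nothing about injectivity. Likewise your reformulation ``$\|\pi(\xi)\|$ is dominated by the $C^*(\A)$-norm of any preimage under $\Psi$'' is the same easy inequality, not the well-definedness of an inverse. What an inverse $C^*(\B)\to C^*(\A)$ actually requires is the converse factorization: given a representation of $C^*(\A)$ (equivalently a representation $\{\pi_U\}_{U\in S}$ of $\A^0$ killing $\I_\A^0$), one must show that $\sum_{U}\xi_U=0$ in $\contc(\B)$ forces $\sum_U\pi_U(\xi_U)=0$, so that $\tilde\pi\bigl(\sum_U\xi_U\bigr)\defeq\sum_U\pi_U(\xi_U)$ is well defined and satisfies $\tilde\pi\circ\Psi=\pi$. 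Your ``coherence is automatic'' remark is true in your setup precisely because your setup runs in the unproblematic direction; the genuinely needed compatibility, $\pi_U(\xi)=\pi_V(\xi)$ whenever $\supp(\xi)\sbe U\cap V$, is not automatic and must be proved (the paper does it with a partition of unity on the Hausdorff set $U\cap V$, using wideness).

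Concretely, the content you are missing is: in the Hausdorff case, the computation $\ker\Psi=\I_\A^0$, done by covering $\bigcup_i\supp(\xi_i)$ with the sets $V_\mfs$ indexed by subsets $\mfs\sbe\{1,\dots,n\}$ and splitting a relation $\sum_i\xi_i=0$ by a subordinate partition of unity into telescoping differences $\phi\xi\delta_{U_i}-\phi\xi\delta_{U_k}$ lying in $\I$; and in the non-Hausdorff case, where no such global partition of unity exists, the application of the Muhly--Williams disintegration to the functionals $\omega_U(\xi)=\braket{\pi_U(\xi)\eta}{\zeta}$ attached to a representation of $C^*(\A)$ (not of $\contc(\B)$, as in your sketch), the verification that $\omega_U=\omega_V$ and $\ep_{U,\g}=\ep_{V,\g}$ on $U\cap V$, and the patching (using second countability and separability) into a single Radon measure $\mu$ and functional $\omega$ on $\contc(\B)$, which is what makes $\tilde\pi$ well defined. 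You cite the right references in your last paragraph, but you aim them at the rewriting of elements of $\contc(\B)$ as sums over bissections in $S$ (which is comparatively easy and works from wideness plus local partitions of unity on the Hausdorff supports) and at estimating a representation of $\contc(\B)$, rather than at the actual obstruction; as written, the plan re-proves the easy inequality twice and leaves the injectivity of $\tilde\Psi$ unestablished.
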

\begin{proof}
By linearity, to show that $\Psi$ is a \Star{homomorphism}, it suffices to check the equalities $$\Psi\bigl((\xi\delta_U)\cdot(\eta\delta_V)\bigr)=\xi*\eta\quad \mbox{and}\quad \Psi\bigl((\xi\delta_U)^*\bigr)=\xi^*$$
for all $\xi\in \contc(\B_U)$ and $\eta\in \contc(\B_V)$, where $U,V\in S$. By definition of the \Star{}algebra structure of $\contc(\A^0)$,
we have $(\xi\delta_U)\cdot(\eta\delta_V)=(\xi\cdot\eta)\delta_{UV}$ and $(\xi\delta_U)^*=\xi^*\delta_{U^*}$.
And we have already observed in Example~\ref{ex:Fell bundle over semigroups associated to Fell bundles over groupoids}
that $\xi\cdot\eta=\xi*\eta$. Moreover, $\contc(\B)$ is generated by sums of the form
$\sum_{U\in F}\xi_U$ as above because $S$ covers $\G$.
Therefore, $\Psi$ is a surjective \Star{homomorphism} as stated.
Note that $\Psi$ vanishes on the ideal $\I_\A^0\sbe\contc(\A^0)$ defined in Lemma~\ref{lem:algebraic Fell bundle with same enveloping algebra}
and hence induces a \Star{}homomorphism $\tilde\Psi\colon C^*(\A)\to C^*(\B)$ by the same lemma.
Since $\Psi$ is surjective, so is $\tilde\Psi$. The most difficult part of the proof is to show that $\tilde\Psi$
is injective. Here we consider two cases:

\pf{Case 1: $G$ is Hausdorff} In this case we are going to follow the same idea as in the proof of Theorem 7.1 in
\cite{Quigg.Sieben.C.star.actions.r.discrete.groupoids.and.inverse.semigroups}.
We first show that the kernel of $\Psi$ is equal to
$$\I=\spn\left\{\xi\delta_U-\xi\delta_V\colon\xi\in \contc(\B) \mbox{ and }\supp(\xi)\sbe U\cap V \right\}.$$
Of course, $\I\sbe \ker(\Psi)$. For the opposite inclusion, take a finite sum of the form $\sum_{i=1}^n\xi_i\delta_{U_i}$,
where $U_i\in S$ and $\xi_i\in \contc(\B_{U_i})$, and suppose
$$\Psi\left(\sum_{i=1}^n\xi_i\delta_{U_i}\right)=\sum_{i=1}^n\xi_i=0.$$
If $n=1$, then $\xi_1=0$ so that $\xi_1\delta_{U_1}=0\in \ker(\Psi)$. Thus we may assume $n>1$.
Consider the set $\mfI$ of all subsets of $\{1,\ldots,n\}$ with at least two elements. Given $\mfs\in \mfI$,
we define
$$V_{\mfs}\defeq \left(\bigcap\limits_{i\in \mfs} U_i\right)\setminus\left(\bigcup\limits_{i\notin\mfs}\supp(\xi_i)\right).$$
Then $\V=\{V_\mfs\}_{\mfs\in\mfI}$ is an open cover of the compact subset $K=\cup_{i=1}^n\supp(\xi_i)\sbe\G$.
Let $\{\psi_\mfs\}_{\mfs\in \mfI}$ be a partition of unit subordinate to the cover $\V$.
Note that $\psi_\mfs\xi_i=0$ whenever $i\notin\mfs$, so that
$$\sum\limits_{i\in\mfs}\psi_\mfs\xi_i=\sum\limits_{i=1}^n\psi_\mfs\xi_i=\psi_\mfs\cdot 0=0.$$
Moreover, we have
\begin{align*}
\sum\limits_{i=1}^n\xi_i\delta_{U_i} & =\sum\limits_{i=1}^n\left(\sum\limits_{\mfs\in \mfI}\psi_\mfs\xi_i\delta_{U_i}\right)\\
                                   & =\sum\limits_{\mfs\in \mfI}\sum\limits_{i=1}^n\psi_\mfs\xi_i\delta_{U_i}
                                     =\sum\limits_{\mfs\in \mfI}\left(\sum\limits_{i\in \mfs}\psi_\mfs\xi_i\delta_{U_i}\right).
\end{align*}
Thus, it suffices to show that $\sum\limits_{i\in \mfs}\psi_\mfs\xi_i\delta_{U_i}\in \I$ for all $\mfs\in \mfI$.
Now, given distinct elements $j,k\in \mfs$, we have
\begin{align*}
\sum\limits_{i\in \mfs}\psi_\mfs\xi_i\delta_{U_i}= &\,\psi_\mfs\xi_j\delta_{U_j}+\psi_\mfs\xi_k\delta_{U_k}+
                                                                        \sum\limits_{i\in \mfs\setminus\{j,k\}}\psi_\mfs\xi_i\delta_{U_i}\\
                   =&\, \psi_\mfs\xi_j\delta_{U_j}+\psi_\mfs\xi_k\delta_{U_k}+
                                                                        \sum\limits_{i\in \mfs\setminus\{j,k\}}\psi_\mfs\xi_i\delta_{U_i}\\
                   & + \sum\limits_{i\in \mfs\setminus\{j,k\}}\psi_\mfs\xi_i\delta_{U_k} -
                                                                \sum\limits_{i\in \mfs\setminus\{j,k\}}\psi_\mfs\xi_i\delta_{U_k}\\
                   =&\, \psi_\mfs\xi_j\delta_{U_j}+\sum\limits_{i\in \mfs\setminus\{j\}}\psi_\mfs\xi_i\delta_{U_k}
                    +\sum\limits_{i\in \mfs\setminus\{j,k\}}\bigl(\psi_\mfs\xi_i\delta_{U_i}-\psi_\mfs\xi_i\delta_{U_k}\bigr)\\
                   =&\,  \psi_\mfs\xi_j\delta_{U_j}-\psi_\mfs\xi_j\delta_{U_k}
                    +\sum\limits_{i\in \mfs\setminus\{j,k\}}\bigl(\psi_\mfs\xi_i\delta_{U_i}-\psi_\mfs\xi_i\delta_{U_k}\bigr),
\end{align*}
where the last equality follows from $\psi_\mfs\xi_j+\sum\limits_{i\in \mfI\setminus\{j\}}\psi_\mfs\xi_i=\sum\limits_{i\in \mfI}\psi_\mfs\xi_i=0$.
Since $\supp(\psi_\mfs\xi_i)\sbe U_l$ for all $i,l\in \mfs$, we get $\sum\limits_{i\in \mfs}\psi_\mfs\xi_i\delta_{U_i}\in \I$ as desired.
Next, we show that $\I=\I_\A^0$. Of course, $\I_\A^0\sbe\ker(\Psi)=\I$. On the other hand, suppose $\xi\in \contc(\B)$ and $\supp(\xi)\sbe U\cap V$
with $U,V\in S$. Since $S$ is wide and $\supp(\xi)$ is compact, there are $W_1,\ldots,W_m\in S$ such that
$$\supp(\xi)\sbe\bigcup\limits_{i=1}^m W_i\sbe U\cap V.$$
Let $\{\phi_i\}_{i=1}^m$ be a partition of unit subordinate to the cover $\{W_1,\ldots,W_m\}$ of $\supp(\xi)$.
Then
\begin{align*}
\xi\delta_U-\xi\delta_V & =  \sum\limits_{i=1}^m\phi_i\xi\delta_U-\sum\limits_{i=1}^m\phi_i\xi\delta_V\\
                        & =  \sum\limits_{i=1}^m\phi_i\xi\delta_U-\sum\limits_{i=1}^m\phi_i\xi\delta_{W_i}
                        +\sum\limits_{i=1}^m\phi_i\xi\delta_{W_i}-\sum\limits_{i=1}^m\phi_i\xi\delta_V\\
& =  \sum\limits_{i=1}^m(\phi_i\xi\delta_U-\phi_i\xi\delta_{W_i})
                        +\sum\limits_{i=1}^m(\phi_i\xi\delta_{W_i}-\phi_i\xi\delta_V)\\
\end{align*}
is an element of $\I_\A^0$ because $\supp(\phi_i\xi)\sbe W_i\cap U\cap V$ for all $i=1,\ldots,m$.
We conclude that $\ker(\Psi)=\I_\A^0$. Hence $\Psi$ induces an isomorphism $\contc(\A^0)/\I_\A^0\cong\contc(\B)$ of \Star{algebras}
and therefore also between their corresponding enveloping \cstar{algebras} $C^*(\A)\cong C^*(\B)$.

\pf{Case 2: $\G$ is second countable and $\contz(\B_U)$ is separable for all $U\in S$}
In this case, we shall adapt an idea appearing in \cite[Lemma 8.4]{Exel:inverse.semigroups.comb.C-algebras} to our situation, supported
by \cite[Proposition 3.3]{Muhly.Williams.Equivalence.and.Disintegration}, which is a kind of disintegration result for linear functionals
on Banach bundles.
To show that $\tilde\Psi\colon C^*(\A)\to C^*(\B)$ is injective, it is enough to prove that any representation $\pi$ of
$C^*(\A)$ factors through a representation $\tilde\pi$ of $C^*(\B)$ in the sense that $\tilde\pi\circ\tilde\Psi=\pi$.
By Lemma~\ref{lem:algebraic Fell bundle with same enveloping algebra}, $C^*(\A)$ is the enveloping \cstar{algebra} of $\contc(\A^0)/\I_\A^0$,
so it is enough to work with representations of the latter.
Now, every representation $\pi$ of $\contc(\A^0)/\I_\A^0$ on a Hilbert space $\hils$,
once composed with the quotient homomorphism $\contc(\A^0)\to\contc(\A^0)/\I_\A^0$ and hence viewed as a representation of $\contc(\A^0)$,
has the form $$\pi\left(\sum\limits_{U\in F}\xi_U\delta_U\right)=\sum\limits_{U\in F}\pi_U(\xi_U)$$
where $\{\pi_U\}_{U\in S}$ is a representation of $\A^0=\{\contc(\B_U)\}_{U\in S}$ on $\hils$. In fact, $\pi_U$ is just the restriction
of $\pi$ to the copy of $\A_U^0=\contc(\B_U)$ inside $\contc(\A^0)$. Given $\eta,\zeta\in \hils$, we define the linear functional
$$\omega_U\colon\contc(\B_U)\to\C,\quad \omega_U(\xi)\defeq \braket{\pi_U(\xi)\eta}{\zeta}\mbox{ for all }\xi\in \contc(\B_U).$$
Since $\pi_U$ is norm contractive, that is, $\|\pi_U(\xi)\|\leq\|\xi\|_\infty$
(see proof of Lemma~\ref{lem:algebraic Fell bundle with same enveloping algebra}), it follows that $\omega_U$ is continuous with
respect to the inductive limit topology. In other words, $\omega_U$ is a \emph{generalized Radon measure} in the sense of
\cite{Muhly.Williams.Equivalence.and.Disintegration}. By Proposition 3.3 in \cite{Muhly.Williams.Equivalence.and.Disintegration},
there are bounded linear functionals $\ep_{U,\g}\in \B_\g^*$ with norm at most one for each $\g\in U$,
such that $\g\mapsto \ep_{U,\g}(\xi(\g))$ is a bounded measurable function on $U$ for all $\xi\in \contc(\B_U)$, and
$$\omega_U(\xi)=\int_U\ep_{U,\g}(\xi(\g))\dd|\omega_U|(\g),$$
where $|\omega_U|$ is the total variation measure associated to $\omega_U$ as
in \cite[Lemma 3.1]{Muhly.Williams.Equivalence.and.Disintegration}:
\begin{equation}\label{eq:definition of total variation measure}
|\omega_U|(\varphi)\defeq \sup\{|\omega_U(\xi)\colon\|\xi\|_\infty\leq \varphi\}\quad\mbox{for all }\varphi\in \contc^+(U).
\end{equation}
As we have emphasize in the notation, all these objects depend \emph{a priori} on the bissection $U\in S$. However,
we are now going to show that they are compatible on intersections of bissections. First, given $U,V\in S$,
we claim that
\begin{equation}\label{eq:functional do not depend on the bissections}
\omega_U(\xi)=\omega_V(\xi)\quad\mbox{whenever }\xi\in \contc(\B)\mbox{ and }\supp(\xi)\sbe U\cap V.
\end{equation}
In fact, since $S$ is wide and $\supp(\xi)$ is compact, we may find a cover $\{W_i\}_{i=1}^m$ of $\supp(\xi)$ consisting of bissections
$W_i\in S$ contained in $U\cap V$. Since $U\cap V$ is Hausdorff, there is a partition of unit $\{\psi_i\}_{i=1}^m$ subordinate to the cover $\{W_i\}_{i=1}^m$ of $\supp(\xi)$. Since $\supp(\psi_i\cdot \xi)\sbe W_i\sbe U\cap V$ and $\pi$ is a representation, we have
$$\pi_U(\psi_i\cdot\xi)=\pi_{W_i}(\psi_i\cdot\xi)=\pi_V(\psi\cdot\xi)\quad\mbox{for all }i=1,\ldots,m.$$
Thus,
\begin{multline*}
\pi_U(\xi)=\pi_U\left(\sum\limits_{i=1}^m\psi_i\cdot \xi\right)=\sum\limits_{i=1}^m\pi_U(\psi_i\cdot \xi)
\\=\sum\limits_{i=1}^m\pi_V(\psi_i\cdot \xi)=\pi_V\left(\sum\limits_{i=1}^m\psi_i\cdot \xi\right)=\pi_V(\xi).
\end{multline*}
Therefore, $\omega_U(\xi)=\omega_V(\xi)$ as claimed. It follows directly from~\eqref{eq:definition of total variation measure} that
$|\omega_U|(\varphi)=|\omega_V|(\varphi)$ whenever $\supp(\varphi)\sbe U\cap V$ or, equivalently, $|\omega_U|(A)=|\omega_V|(A)$
whenever $A$ is a Borel measurable subset of $U\cap V$. Since $S$ countably covers $\G$,
there is a positive Radon measure $\mu$ on $\G$ whose restriction to $U$ equals $|\omega_U|$ for all $U\in S$
(see \cite[Lemma A.1]{Muhly.Williams.Renault's.Equivalence.Theorem}).
Moreover, following the construction of $\ep_{U,\g}$ in \cite[Proposition 3.3]{Muhly.Williams.Equivalence.and.Disintegration}
and using Equation~\eqref{eq:functional do not depend on the bissections},
it follows that $\ep_{U,\g}=\ep_{V,\g}$ whenever $\g\in U\cap V$. Therefore, we may define a linear functional
$$\omega\colon\contc(\B)\to \C,\quad \omega(\xi)\defeq \int_\G\ep_\g(\xi(\g))\dd{\mu}(\g),$$
where $\ep_\g\defeq \ep_{U,\g}$ whenever $\g\in U$. By definition, the restriction of $\omega$ to $\contc(\B_U)$ coincides with $\omega_U$
for all $U\in S$.

All this implies that the map
$$\tilde\pi\colon\contc(\B)\to \bound(\hils),\quad \tilde\pi\left(\sum\limits_{U\in F}\xi_U\right)\defeq \sum\limits_{U\in F}\pi_U(\xi_U)
                                                                                            =\pi\left(\sum\limits_{U\in F}\xi_U\delta_U\right)$$
is well-defined. In fact, if $\sum\limits_{U\in F}\xi_U=0$, then
\begin{multline*}
\braket{\sum\limits_{U\in F}\pi_U(\xi_U)\eta}{\zeta}=\sum\limits_{U\in F}\omega_U(\xi_U)
=\sum\limits_{U\in F}\omega(\xi_U)=\omega\left(\sum\limits_{U\in F}\xi_U\right)=0.
\end{multline*}
Since $\eta,\zeta\in \hils$ are arbitrary, we conclude that $\sum\limits_{U\in F}\pi_U(\xi_U)=0$.
Therefore $\tilde\pi$ is a well-defined map. Obviously it is linear, and since $\pi$ is a representation of $\contc(\A^0)$,
it is easy to see that $\tilde\pi$ is a representation of $\contc(\B)$. Moreover, by construction we have
$\tilde\pi(\Psi(x))=\pi(x)$ for all $x\in \contc(\A^0)$, and this concludes the proof.
\end{proof}

\subsection{Twisted étale groupoids and Fell line bundles}\label{sec:twisted groupoids and Fell line bundles}

If $\G$ is a locally compact groupoid it is well known
\cite{Deaconi_Kumjian_Ramazan:Fell.Bundles} that,  at least in the
Hausdorff case,  there is a one-to-one
correspondence between \emph{twists} over $\G$ (namely exact sequences
  \[
  \xymatrix{\Torus\times \Gz  \ar[r]^{\iota} & \Sigma \ar[r]^{\pi} & \G},
  \]
where $\Sigma$ is a locally compact groupoid, $\iota$ is a
homeomorphism onto its image, and $\pi$ is a continuous open
surjection) and Fell line bundles (namely Fell bundles with
one-dimensional fibers) over $\G$.  In particular, the so called
full (resp. reduced) twisted groupoid \cstar{algebra} of $(\G, \Sigma)$ turns out to be
precisely the full (resp. reduced) cross-sectional \cstar{algebra} of the corresponding Fell
line bundle.

Our techniques are specially well adapted to deal with Fell bundles
and hence we have decided to emphasize these as opposed to twists.  Of
course, should one be interested in the underlying twist, it is
readily available by considering unitary elements as explained in
\cite{Deaconi_Kumjian_Ramazan:Fell.Bundles}.

Now consider a Fell line bundle $L$ over an étale locally compact
groupoid $\G$.
As a special case of
Example~\ref{ex:Fell bundle over semigroups associated to Fell bundles over groupoids},
we may consider the Fell bundle $\A=\{\contz(L_U)\}_{U\in S}$ associated to $L$, where $S$ is an inverse subsemigroup of $S(\G)$.

\begin{proposition}\label{prop:C*-algebra of L and A isomorphic}
Let notation be as above. If $\G$ is Hausdorff or second countable, and if
$S$ is a wide inverse subsemigroup in $S(\G)$, then there is a canonical isomorphism $C^*(L)\cong C^*(\A)$.
\end{proposition}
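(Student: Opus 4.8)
The plan is to obtain this as an instance of Theorem~\ref{teor:fell bundles over groupoids and ISG isomorphic}, applied to the Fell line bundle $L$ in the role of $\B$ there, so that the associated Fell bundle over $S$ is exactly $\A=\{\contz(L_U)\}_{U\in S}$ and the asserted isomorphism is the one induced by the canonical map $\Psi$ of that theorem. If $\G$ is Hausdorff there is nothing more to say: the hypotheses assumed here are literally those of Theorem~\ref{teor:fell bundles over groupoids and ISG isomorphic} in the Hausdorff case, so Case~1 applies directly. Suppose then that $\G$ is second countable. Comparing hypotheses, the only assertion of Theorem~\ref{teor:fell bundles over groupoids and ISG isomorphic} not yet at our disposal is that the section algebra $\contz(L_U)$ is separable for every $U\in S$; once this is checked, Case~2 of that theorem applies verbatim and yields $C^*(L)\cong C^*(\A)$.

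Thus everything reduces to showing that $\contz(L_U)$ is separable whenever $U$ is a bissection of the second countable étale groupoid $\G$. Being a bissection, $U$ is an open Hausdorff subset of $\G$, hence second countable, and it is locally compact by \cite[Proposition 3.7]{Exel:inverse.semigroups.comb.C-algebras}; so $L_U$ is a Fell line bundle over this second countable, locally compact, Hausdorff space. I would finish by invoking that a Fell line bundle over a locally compact Hausdorff space is locally trivial --- a standard fact, available via the correspondence with twists \cite{Deaconi_Kumjian_Ramazan:Fell.Bundles}, and checkable by hand by choosing a nowhere-zero continuous local section $s$ on an open set $V$ and using one-dimensionality of the fibres to see that $(x,\lambda)\mapsto\lambda\, s(x)$ is a homeomorphism $V\times\C\to L_V$ (continuity of the inverse following from the continuity of $x\mapsto\|s(x)\|$, which comes from $\|s(x)\|^2=\|s(x)^*s(x)\|$ and the continuity of the norm on the restriction $L\rest{\Gz}$). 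Covering the second countable space $U$ by countably many trivialising open sets $U_n$, each $\contz(L_{U_n})\cong\contz(U_n)$ is separable, and a countable partition of unity subordinate to $\{U_n\}$ shows that the union of countable dense subsets of the $\contz(L_{U_n})$, viewed inside $\contz(L_U)$, has dense linear span; hence $\contz(L_U)$ is separable. (One could equally well bypass local triviality and argue directly that the total space of $L_U$ is second countable --- its topology has a basis consisting of sets $\Omega(V,\lambda\xi,\ep)$ with $V$ in a countable basis of $U$, $\lambda\in\Q+\ima\Q$, $\ep$ a positive rational, and $\xi$ ranging over a countable family of local sections locally generating $L_U$ --- and then quote the standard fact that the section algebra of a second countable, locally compact Banach bundle with separable fibres is separable.)

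The substantive content of the proposition is therefore entirely carried by Theorem~\ref{teor:fell bundles over groupoids and ISG isomorphic}; the only thing one must add is the soft point that a line bundle over a second countable base has a separable section algebra. Within that addendum the one place asking for a little care is the continuity of the norm on $L_U$ (equivalently, its local triviality), and this is precisely where one uses both that the fibres are one-dimensional and that $L\rest{\Gz}$ is a genuine \cstar{bundle}.
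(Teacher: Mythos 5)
Your proof is correct and takes essentially the same route as the paper, whose entire argument is to cite Theorem~\ref{teor:fell bundles over groupoids and ISG isomorphic} as a direct consequence. The only additional material in your write-up --- checking that $\contz(L_U)$ is separable in the second countable case via local triviality of the line bundle over the locally compact Hausdorff bissection $U$ --- is precisely the hypothesis the paper leaves implicit when specializing the theorem, and your verification of it is sound.
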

\begin{proof}
This is a direct consequence of Theorem~\ref{teor:fell bundles over groupoids and ISG isomorphic}.
\end{proof}

\section{Semi-abelian Fell bundles and twisted étale groupoids}
\label{sec:Semi-abelian Fell bundles and twisted étale groupoids}

This section contains the main result of this work. Given a semi-abelian Fell bundle over an inverse semigroup,
we are going to construct a twisted étale groupoid in a canonical way. Later we are going to show that our construction
preserves the associated \cstar{}algebras. Our techniques are inspired by those of Kumjian and Renault
in \cite{Kumjian:cstar.diagonals,RenaultCartan}.

\subsection{The canonical action and the groupoid of germs}\label{sec:the construction of G}

\begin{definition}
Let $\A=\{\A_s\}_{s\in S}$ be a Fell bundle over an inverse semigroup $S$. We say that $\A$ is \emph{semi-abelian} if
for each idempotent $e\in E=E(S)$, the fiber $\A_e$ is a commutative \cstar{algebra}.
\end{definition}

Let $\E$ be the restriction of $\A$ to the idempotent semilattice $E$
of $S$. Note that $\E$ is a Fell bundle over $E$.
Moreover, $\A$ is semi-abelian if and only if $\E$ is an \emph{abelian} Fell bundle
in the sense that $ab=ba$ for all $a,b\in \E$ (see proof of Lemma~\ref{lem:FellBundleSemiAbelianIFFC*(E)Commutative} below).
Of course, we could also say that an arbitrary Fell bundle $\A=\{\A_s\}_{s\in S}$ is \emph{abelian} if $ab=ba$ for all $a,b\in \A$,
but we actually are not going to consider this more restrictive notion in this work.\footnote{Note that in this case the
underlying inverse semigroup $S$ has to be commutative. Moreover, it is easy to see that $\A$
is an abelian Fell bundle if and only if $C^*(\A)$ is a commutative \cstar{}algebra.}
This terminology is compatible with the one appearing in \cite[Chapter X]{fell_doran},
where the authors use the synonymous word "commutative" instead. However, we should mention
that our terminology conflicts with the one introduced in \cite{Deaconi_Kumjian_Ramazan:Fell.Bundles},
where they call a Fell bundle $\B=\{\B_\g\}_{\g\in \G}$ over a groupoid
$\G$ \emph{abelian} if the fibers $\B_x$ over the units $x\in \Gz$ are commutative \cstar{}algebras.

\begin{lemma}\label{lem:FellBundleSemiAbelianIFFC*(E)Commutative}
A Fell bundle $\A$ is semi-abelian if and only if $C^*(\E)$ is a commutative \cstar{algebra}.
\end{lemma}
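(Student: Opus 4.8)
The statement to prove is: a Fell bundle $\A$ over an inverse semigroup $S$ is semi-abelian if and only if $C^*(\E)$ is a commutative \cstar{algebra}, where $\E = \A|_E$ is the restriction to the idempotent semilattice $E = E(S)$.

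The plan is to first reduce the problem to a statement about the abelianness of the Fell bundle $\E$ itself, and then relate abelianness of $\E$ to commutativity of $C^*(\E)$. For the first reduction, I would show that $\A$ is semi-abelian (each $\A_e$ commutative) if and only if $\E$ is abelian in the sense that $ab = ba$ for all $a, b \in \E$. One direction is trivial: if $\E$ is abelian then in particular each fiber $\A_e$ is commutative. For the converse, suppose each $\A_e$ is commutative and take $a \in \A_e$, $b \in \A_f$ with $e, f \in E$. Then $ab \in \A_{ef}$ and $ba \in \A_{fe} = \A_{ef}$ since $E$ is commutative. Now $ef \le e$ and $ef \le f$, so using the inclusion maps $j_{e,ef}$ and $j_{f,ef}$ (axioms (vi)--(ix) of Definition~\ref{def:Fell bundles over ISG}), and the compatibility of multiplication with inclusions, I can view both $ab$ and $ba$ inside the commutative \cstar{algebra} $\A_{ef}$; more precisely, $j_{e,ef}$ embeds $\A_{ef}$ as an ideal of $\A_e$ (by axiom (viii) the inclusions are multiplicative), and the product computed in $\A_{ef}$ agrees with the product in $\A_e$ after applying $j_{e,ef}$. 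Since $\A_e$ is commutative, $ab = ba$ there, hence in $\A_{ef}$.

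For the second and main part, I would use the description of $C^*(\E)$ as the enveloping \cstar{algebra} of $\contc(\E)/\I_\E$, where $\contc(\E) = \spn\{a\delta_e : e \in E,\ a \in \A_e\}$ carries the convolution product $(a\delta_e)(b\delta_f) = (ab)\delta_{ef}$ and $\I_\E$ is spanned by $a\delta_e - j_{f,e}(a)\delta_f$ for $e \le f$. If $\E$ is abelian, then for $a \in \A_e$, $b \in \A_f$ one has $(a\delta_e)(b\delta_f) = (ab)\delta_{ef} = (ba)\delta_{fe} = (b\delta_f)(a\delta_e)$, so $\contc(\E)$ is a commutative \Star{algebra}; hence so is the quotient and its enveloping \cstar{algebra} $C^*(\E)$. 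Conversely, if $C^*(\E)$ is commutative, I would argue that the fibers $\A_e$ inject into $C^*(\E)$: the canonical map $\A_e \to C^*(\E)$, $a \mapsto [a\delta_e]$, is isometric. This is essentially the statement that for Fell bundles over inverse semigroups the universal representation is faithful on each fiber — this follows because the full cross-sectional \cstar{norm} restricted to $\A_e$ (a \cstar{algebra} which is an idempotent fiber, hence sits in the bundle as a genuine sub-\cstar{algebra}) recovers the original \cstar{norm}; one can see this by exhibiting enough representations, e.g. via the regular representation or by the fact that any representation of the \cstar{algebra} $\A_e$ extends to a pre-representation of $\E$. Granting this injectivity, commutativity of $C^*(\E)$ forces each $\A_e$ to be commutative, i.e. $\A$ is semi-abelian.

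The step I expect to be the main obstacle is establishing that the canonical map $\A_e \to C^*(\E)$ is isometric (equivalently injective), since without it commutativity of $C^*(\E)$ says nothing about the individual fibers. The cleanest route is probably to observe that for each fixed $e \in E$, the fiber \cstar{algebra} $\A_e$ admits a faithful representation $\rho$ on a Hilbert space $\hils$, and then to build a pre-representation of the whole bundle $\E$ extending $\rho$ (for instance, realizing $\A_f$ on $\hils$ via the module action coming from the embeddings $j$ and multiplication, or by passing to an induced representation); such a pre-representation, once it descends to a representation of $C^*(\E)$, witnesses $\|a\delta_e\|_{C^*(\E)} \ge \|\rho(a)\| = \|a\|_{\A_e}$, and the reverse inequality is automatic. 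Alternatively, one may invoke the general fact (implicit in the construction of $\contc(\E)$ and its enveloping algebra, cf.\ the discussion in \cite{Exel:noncomm.cartan} and the proof of Lemma~\ref{lem:algebraic Fell bundle with same enveloping algebra}) that the canonical maps $\A_s \to C^*(\A)$ are isometric for every Fell bundle, which directly gives what is needed. With that in hand the equivalence follows by combining the two reductions above.
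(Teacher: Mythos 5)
The decisive gap is in your first reduction, at the sentence ``Since $\A_e$ is commutative, $ab = ba$ there, hence in $\A_{ef}$.'' Here $a\in\A_e$ but $b\in\A_f$, so neither $ab$ nor $ba$ is a product of two elements of the single commutative \cstar{algebra} $\A_e$ (nor of $\A_{ef}$), and the compatibility of $j_{e,ef}$ with multiplication gives you no two elements to commute: knowing that $j_{e,ef}(\A_{ef})$ is an ideal of $\A_e$ does not by itself transfer commutativity across fibers. This cross-fiber identity is precisely the nontrivial content of the lemma, and it genuinely requires a limiting argument that brings the relevant products inside one commutative fiber. The paper does this inside $C^*(\E)$: by \cite[Corollary 4.6]{Exel:noncomm.cartan} each $\A_e$ sits in $C^*(\E)$ as a closed ideal, so if $(e_i)$ is an approximate unit of $\A_e$ then $ab$, $e_ib$, $ba$ all lie in $\A_e$, and commutativity of $\A_e$ (applied to the pairs $e_i, a$ and $a, e_ib$) yields $ab=\lim_i e_iab=\lim_i ae_ib=\lim_i e_iba=ba$. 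One can also run a purely bundle-level variant using an approximate unit of $\A_{ef}$ together with axioms (vi) and (viii) of Definition~\ref{def:Fell bundles over ISG}, but some such approximate-unit computation cannot be skipped; as written your step is an assertion of the conclusion.

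The remainder of your plan is essentially fine and close to the paper. The implication ``$\E$ abelian $\Rightarrow$ $\contc(\E)$ commutative $\Rightarrow$ $C^*(\E)$ commutative'' is correct, and for the converse you correctly isolate the need for the canonical map $\A_e\to C^*(\E)$ to be injective; this is exactly the citable fact the paper invokes (\cite[Corollary 4.6]{Exel:noncomm.cartan}, which gives more, namely that $\A_e$ is an ideal of $C^*(\E)$), so that part of your argument is acceptable as a citation rather than the ``main obstacle'' you feared. Note, moreover, that once you have the ideal embedding $\A_e\subseteq C^*(\E)$ you can discard the intermediate reduction through abelianness of the bundle $\E$ altogether and prove both directions directly inside $C^*(\E)$, which is what the paper does and which repairs your gap at the same time.
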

\begin{proof}
If $C^*(\E)$ is commutative, then so is each $\A_e$ since $\A_e$ is an
ideal of $C^*(\E)$ by \cite[Corollary 4.6]{Exel:noncomm.cartan}. Conversely, assume
that $\A_e$ is commutative for all $e\in E$. To prove that $C^*(\E)$
is commutative it suffices to show that $ab=ba$ for every $a\in \A_e$
and $b\in \A_f$, where $e,f\in E$. Let $(e_i)$ be an approximate unit
of $\A_e$. Since $\A_e$ is an ideal of $C^*(\E)$ we have
$ab,e_ib,ba\in \A_e$.  Using that $\A_e$ is commutative, we get the result
$$ab=\lim\limits_{i}e_iab=\lim\limits_i ae_ib=\lim\limits_i e_iba=ba.$$
\vskip-14pt
\end{proof}

Let us assume from now on that $\A$ is a semi-abelian saturated Fell bundle.
Suppose that $X$ is the spectrum of $C^*(\E)$ so that (we may identify) $C^*(\E)=\contz(X)$.

\begin{lemma}\label{lem:construction of theta_a}
Given $a\in \A$, we write $\dom(a)\defeq \{x\in X\colon (a^*a)(x)>0\}$ and $\ran(a)\defeq \{x\in X\colon (aa^*)(x)>0\}=\dom(a^*)$.
Then there is a unique homeomorphism $\t_a\colon\dom(a)\to\ran(a)$ satisfying
$$(a^*ba)(x)=(a^*a)(x)b(\t_a(x))\quad\mbox{for all }x\in \dom(a)\mbox{ and } b\in \contz(X).$$
If $a=u|a|$ is the polar decomposition of $a$ in $A''$, the enveloping von Neumann algebra of $A=C^*(\A)$,
where we view each fiber of $\A$ as a subspace of $A$,
then $$(u^*bu)(x)=b(\t_a(x))\quad\mbox{for all }x\in \dom(a)\mbox{ and }b\in\contz(X).$$
Moreover, the following properties hold\textup:
\begin{enumerate}
\item[\textup{(i)}] If $a\in \A_e$, where $e\in E$, then $\t_a=\id_{\dom(a)}$.
\item[\textup{(ii)}] If $a,b\in \A$, then $\t_{ab}=\t_a\circ\t_b$ \textup(composition of partial homeomorphisms\textup).
\item[\textup{(ii)}] If $a\in\A$, then $\t_{a^*}=\t_a\inv$.
\end{enumerate}
\end{lemma}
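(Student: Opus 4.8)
The plan is to work inside the enveloping von Neumann algebra $A''$ of $A = C^*(\A)$, exploiting that $C^*(\E) = \contz(X)$ sits inside $A$ with $\A_e$ an ideal of $C^*(\E)$ for each idempotent $e$ (cf.\ \cite[Corollary 4.6]{Exel:noncomm.cartan}). First I would fix $a \in \A_s$ and consider the polar decomposition $a = u|a|$ in $A''$, where $|a| = (a^*a)\half \in \A_{s^*s}'' \sbe \contz(X)''$ and $u$ is a partial isometry with $u^*u$ the support projection of $a^*a$ and $uu^*$ the support projection of $aa^*$. The key observation is that conjugation by $u$ maps the commutative algebra $\contz(X)$ (more precisely, the corner $(u^*u)\contz(X)''(u^*u)$) into $\contz(X)''$; indeed for $b \in \contz(X)$ one has $u^* b u = u^*u \cdot \lim_i e_i^* b\, e_i /\|\cdot\|$-type expressions where $e_i \to |a|$, and since $a^* b a = |a| u^* b u |a|$ lies in $\A_{s^*s} \sbe \contz(X)$ by the ideal property and saturation, the element $u^*bu$ is forced to be a multiplier of $\contz(X)$ supported on $\dom(a) = \{x : (a^*a)(x) > 0\}$. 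This defines a $*$-homomorphism $\contz(X) \to C_b(\dom(a))$, $b \mapsto (u^*bu)|_{\dom(a)}$, which is unital onto $C_b(\dom(a))$ after cutting by $u^*u$, hence by Gelfand duality is given by composition with a continuous proper map $\t_a \colon \dom(a) \to X$; a symmetric argument with $u u^*$ shows $\t_a$ lands homeomorphically onto $\ran(a) = \dom(a^*)$, with inverse induced by conjugation by $u^*$. The identity $(a^*ba)(x) = (a^*a)(x)\,b(\t_a(x))$ then follows by multiplying $u^*bu$ on both sides by $|a|$ and evaluating, and the displayed formula $(u^*bu)(x) = b(\t_a(x))$ is exactly the definition of $\t_a$. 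Uniqueness is immediate: any homeomorphism satisfying the first displayed identity must agree with $\t_a$ wherever $(a^*a)(x) > 0$, since $\contz(X)$ separates points.

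For the three listed properties: (i) if $a \in \A_e$ then $a$ is (after identification) a function in $\contz(X)$ supported on $\dom(a)$, so $a^*a = |a|^2$ is a function, $u$ is the partial isometry $|a|^{-1}a$ which lies in $\contz(X)''$ and commutes with $\contz(X)$; hence $u^*bu = b \cdot u^*u$ and $\t_a = \id_{\dom(a)}$. For (ii), write $a = u|a|$, $b = v|b|$; one computes $(ab)^*(c)(ab) = b^*(a^*ca)b$ for $c \in \contz(X)$, and since $a^*ca \in \contz(X)$ we may apply the defining identity for $\t_b$, then for $\t_a$, obtaining $(ab)^*c(ab)(x) = ((ab)^*ab)(x)\, c(\t_a(\t_b(x)))$ on the appropriate domain; uniqueness of $\t_{ab}$ forces $\t_{ab} = \t_a \circ \t_b$ as partial homeomorphisms (with the convention that the domain of the composite is $\t_b^{-1}(\dom(a)) \cap \dom(b)$, which one checks equals $\dom(ab)$ using $((ab)^*ab)(x) = (b^*a^*ab)(x) = |b|(x)^2 (a^*a)(\t_b(x))$). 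Property (iii) is the special case $\t_{a^*} = \t_a^{-1}$, which also falls out of the $u \leftrightarrow u^*$ symmetry in the construction, or from (ii) together with (i) applied to $a^*a$ and $aa^*$.

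The main obstacle I anticipate is justifying that $u^*bu$ is genuinely a (bounded, continuous) multiplier of $\contz(X)$ rather than merely an element of the von Neumann algebra $\contz(X)''$ — i.e.\ that the homomorphism $b \mapsto u^*bu$ has the right continuity to be dual to a \emph{continuous} map on the \emph{locally compact Hausdorff} space $\dom(a)$, and that $\dom(a)$ is itself open in $X$. Openness of $\dom(a)$ is clear since $a^*a \in \contz(X)$ is continuous. The continuity of $\t_a$ is where one must use that $a^* b a \in \contz(X)$ genuinely (this is the saturation hypothesis entering, via $\A_s \cdot \A_{s^*} $ densely spanning $\A_{ss^*}$ and the ideal property), so that $x \mapsto (a^*ba)(x) = (a^*a)(x) b(\t_a(x))$ is continuous for every $b \in \contz(X)$, whence $b \circ \t_a$ is continuous on $\dom(a)$ for every such $b$ — and $\contz(X)$ separating points of $X$ then yields continuity of $\t_a$ by the standard argument. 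That $\t_a$ is proper (so a homeomorphism onto its open image $\ran(a)$) comes from the symmetric statement applied to $a^*$.
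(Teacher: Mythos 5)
Your proposal is correct and is, in substance, exactly the argument the paper relies on: the paper's own proof consists of observing that each fiber $\A_s\sbe A$ lies in the normalizer of $B=C^*(\E)$ and then citing Proposition~6 and Corollary~7 of Kumjian's paper on \cstar{diagonals}, whose proof is precisely the polar-decomposition/conjugation-by-$u$ argument you reconstruct (including the inverse coming from $a^*$, uniqueness via separation of points, and (i)--(iii) by the computations you indicate). The only cosmetic caveats are that saturation is not actually needed here (the normalizer property $a^*\A_e a\sbe\A_{s^*es}\sbe C^*(\E)$ follows from the Fell bundle axioms alone) and that ``unital onto $C_b(\dom(a))$'' should read ``nondegenerate into $C_b(\dom(a))$'', neither of which affects the argument.
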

\begin{proof}
Essentially this follows from Proposition 6 and Corollary 7 in \cite{Kumjian:cstar.diagonals}.
In fact, it is enough to observe that each fiber $\A_s\sbe A$ is contained in the \emph{normalizer} of $B=C^*(\E)$ in $A$. Recall that
$a\in A$ normalizes $B$ if $a^*Ba\sbe B$ and $aBa^*\sbe B$.
\end{proof}

\begin{lemma}\label{lem:theta_s well-defined}
Suppose that $s\in S$, $a_1,a_2\in \A_s$ and $x\in \dom(a_1)\cap\dom(a_2)$. Then we have $\t_{a_1}(x)=\t_{a_2}(x)$.
\end{lemma}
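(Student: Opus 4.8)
The statement is a compatibility/well-definedness result for the partial homeomorphisms $\t_a$ constructed in Lemma~\ref{lem:construction of theta_a}: if $a_1,a_2$ lie in the same fiber $\A_s$ and $x$ lies in the common domain, then $\t_{a_1}(x)=\t_{a_2}(x)$. The key idea is that $\t_{a_i}$ is defined purely in terms of the ``action'' of $a_i$ by conjugation on $B=C^*(\E)=\contz(X)$, via the defining identity
\[
(a_i^*b a_i)(x) = (a_i^*a_i)(x)\, b(\t_{a_i}(x)),\qquad b\in\contz(X).
\]
So it suffices to relate $a_1$ and $a_2$ algebraically inside the fiber $\A_s$ in a way that makes this conjugation action agree at the point $x$.

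First I would observe that, since $\A$ is saturated and $a_1,a_2\in\A_s$, the element $a_2^*a_1$ lies in $\A_{s^*s}\sbe\E$, hence is a function in $\contz(X)$, and similarly $a_1^*a_1,a_1^*a_2,a_2^*a_2\in\contz(X)$. The plan is to evaluate the defining identity for $\t_{a_1}$ with $b$ replaced by elements of $\contz(X)$ that are built from $a_2$, so that everything collapses to a pointwise computation at $x$. Concretely, for $b\in\contz(X)$ consider the scalar $(a_1^* a_2\, b\, a_2^* a_1)(x)$. On one hand, writing $c = a_2^* a_1\in\contz(X)$ and using that $\E$ is abelian together with the identity for $\t_{a_2}$, one can push $b$ through: $a_2 b a_2^*$ relates to $b\circ\t_{a_2}\inv$ on $\ran(a_2)$, so the expression becomes, at $x$, a multiple of $b(\t_{a_2}(x))$. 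On the other hand, rewriting $a_1^* a_2\, b\, a_2^* a_1 = a_1^*\,(a_2 b a_2^*)\, a_1$ and applying the identity for $\t_{a_1}$, the same expression at $x$ becomes a multiple of $(a_2 b a_2^*)(\t_{a_1}(x))$, which in turn is a multiple of $b(\t_{a_1}(x))$. Matching the two computations for all $b\in\contz(X)$, and checking that the scalar prefactors are nonzero at $x$ (this is where $x\in\dom(a_1)\cap\dom(a_2)$ enters: one needs $(a_1^* a_1)(x)>0$ and $(a_2^* a_2)(x)>0$, and hence $|(a_2^* a_1)(x)|^2 = (a_1^* a_2 a_2^* a_1)(x)$ should be shown positive, using positivity of $a_1^* a_2 a_2^* a_1$ in $\contz(X)$ and a Cauchy--Schwarz-type estimate in the Fell bundle, or directly from the polar-decomposition description), one concludes $b(\t_{a_1}(x)) = b(\t_{a_2}(x))$ for all $b$, hence $\t_{a_1}(x)=\t_{a_2}(x)$ by the fact that $\contz(X)$ separates points.

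Alternatively — and this is probably cleaner — I would use the polar decomposition statement from Lemma~\ref{lem:construction of theta_a}: writing $a_i = u_i|a_i|$ in $A''$, one has $(u_i^* b u_i)(x) = b(\t_{a_i}(x))$ for $b\in\contz(X)$, $x\in\dom(a_i)$. Since $a_1,a_2\in\A_s$, the partial isometries $u_1,u_2$ have the same source projection when restricted appropriately (both dominate the support projection corresponding to $s^*s$ over the relevant part of the spectrum), and $u_2^* u_1$ is a partial isometry sitting over $s^*s$, hence its "function part" in the bidual of $B$ is supported where $(a_2^* a_1)(x)\neq 0$; using that $\E$ is abelian one gets $u_1^* b u_1 = u_2^* b u_2$ as functions on the common domain. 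Evaluating at $x$ gives the claim directly.

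\textbf{Main obstacle.} The genuine difficulty is the bookkeeping around supports and the nonvanishing of the scalar prefactor at $x$: one must be careful that $\dom(a_1)\cap\dom(a_2)$ is exactly the set where the conjugation identities are simultaneously usable, and that $(a_2^* a_1)(x)\neq 0$ there — a priori $a_2^* a_1$ could vanish at a point where both $a_1^* a_1$ and $a_2^* a_2$ are positive if $a_1,a_2$ "point in orthogonal directions" in the fiber. Resolving this is where the hypothesis $a_1,a_2\in\A_s$ (same fiber) is essential: over each point of the spectrum the one-dimensionality coming from the germ structure (or the fact that $\t_{a_i}$ depends only on $s$, which is precisely what we are proving) forces $a_1$ and $a_2$ to be scalar multiples of each other locally, so $a_2^* a_1$ cannot vanish where both norms are positive. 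I expect the proof to establish this local-proportionality fact first — e.g.\ by noting $a_1^* a_2 a_2^* a_1$ and $(a_1^*a_1)(a_2^*a_2)$ agree up to the question of equality in Cauchy--Schwarz, which holds because the fibers of $\E$ over points of $X$ are one-dimensional — and then the rest is the short computation sketched above.
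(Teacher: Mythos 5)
You have located the crux correctly — everything hinges on showing $(a_2^*a_1)(x)\neq 0$ for $x\in\dom(a_1)\cap\dom(a_2)$, i.e.\ pointwise equality in Cauchy--Schwarz — but your proposal never actually proves it, and the justification you offer would not suffice. That ``the fibers of $\E$ over points of $X$ are one-dimensional'' is true of every commutative \cstar{algebra}, yet pointwise Cauchy--Schwarz equality fails for general Hilbert $\contz(X)$-modules (take two pointwise-orthogonal nowhere-vanishing sections of the trivial rank-two bundle: both norms are positive at every point while the inner product vanishes identically). So commutativity of the base algebra alone cannot rule out your ``orthogonal directions'' scenario. Your other justification — one-dimensionality ``coming from the germ structure, or the fact that $\t_{a_i}$ depends only on $s$'' — is circular, as you yourself note, and the same circularity affects the polar-decomposition variant, where the assertion $u_1^*bu_1=u_2^*bu_2$ on the common domain is essentially a restatement of the lemma. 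What actually closes the argument, and what the paper uses, is semi-abelianness on \emph{both} sides: $a_2^*a_1,\,a_2^*a_2,\,a_1^*a_2$ lie in the commutative algebra $\A_{s^*s}$ and $a_1a_2^*,\,a_2a_1^*$ lie in the commutative algebra $\A_{ss^*}$, so
\[
(a_2^*a_2)(a_2^*a_1)(a_1^*a_2)=a_2^*(a_1a_2^*)(a_2a_1^*)a_2=a_2^*(a_2a_1^*)(a_1a_2^*)a_2=(a_2^*a_2)(a_1^*a_1)(a_2^*a_2),
\]
which, evaluated at $x$ and using $(a_2^*a_2)(x)>0$, gives $|(a_2^*a_1)(x)|^2=(a_1^*a_1)(x)(a_2^*a_2)(x)>0$. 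Without this (or an equivalent) computation the proposal does not go through.

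Once that nonvanishing is in hand, your sandwich argument does finish the proof, though the bookkeeping as written is off: commutativity of $\contz(X)$ gives $(a_1^*a_2\,b\,a_2^*a_1)(x)=|(a_2^*a_1)(x)|^2\,b(x)$, while grouping as $a_1^*(a_2ba_2^*)a_1$ and applying the defining identities for $\t_{a_1}$ and $\t_{a_2^*}=\t_{a_2}\inv$ gives $(a_1^*a_1)(x)\,(a_2a_2^*)(\t_{a_1}(x))\,b\bigl(\t_{a_2}\inv(\t_{a_1}(x))\bigr)$; comparing over all $b$ then forces $\t_{a_2}\inv(\t_{a_1}(x))=x$ — it is not a comparison of ``a multiple of $b(\t_{a_2}(x))$'' against ``a multiple of $b(\t_{a_1}(x))$'' as you describe. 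The paper's own route is shorter and avoids the quantification over $b$ altogether: since $a_1^*a_2\in\A_{s^*s}$, Lemma~\ref{lem:construction of theta_a} yields $\t_{a_1}\inv\circ\t_{a_2}=\t_{a_1^*a_2}=\id_{\dom(a_1^*a_2)}$, so the entire lemma reduces to the single claim $x\in\dom(a_1^*a_2)$, which is exactly the nonvanishing statement proved by the displayed computation above.
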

\begin{proof}
Since $a_1^*a_2\in \A_{s^*s}$, Lemma~\ref{lem:construction of theta_a} yields
$$\t_{a_1}\inv\circ\t_{a_2}=\t_{a_1^*}\circ\t_{a_2}=\t_{a_1^*a_2}=\id_{\dom(a_1^*a_2)}.$$
So, it is enough to show that $x\in \dom(a_1^*a_2)$, that is,
$$((a_1^*a_2)^*(a_1^*a_2))(x)=(a_2^*a_1a_1^*a_2)(x)>0.$$
It is equivalent to show that $(a_2^*a_2)(x)(a_2^*a_1a_1^*a_2)(x)>0$ because $x\in \dom(a_2)$.
Since the fibers over the idempotents commute with each other, we have
$$(a_2^*a_2)(a_2^*a_1a_1^*a_2)=a_2^*a_1a_2^*a_2a_1^*a_2=(a_2^*a_2)(a_1^*a_1)(a_2^*a_2).$$
The result now follows.
\end{proof}

Let $e\in E$. Since $\A_e$ is an ideal in $C^*(\E)$, there is an open subset $\U_e\sbe X$ such that $\A_e=\contz(\U_e)$.

\begin{proposition}\label{prop:theta is an action of S}
Given $s\in S$, there is a homeomorphism $\t_s\colon\U_{s^*s}\to \U_{ss^*}$ such that $\t_s|_{\dom(a)}=\t_a$ for all $a\in \A_s$.
Moreover, we have $\t_s\circ\t_t=\t_{st}$ for all $s,t\in S$. In other words, $\t$ is an action of $S$ on $X$.
\end{proposition}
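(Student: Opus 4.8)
The plan is to define $\t_s$ on each fiber's domain and then glue. First I would note that by Lemma~\ref{lem:construction of theta_a}, every $a\in \A_s$ gives a homeomorphism $\t_a\colon \dom(a)\to\ran(a)$, and Lemma~\ref{lem:theta_s well-defined} shows that whenever $a_1,a_2\in \A_s$ and $x\in\dom(a_1)\cap\dom(a_2)$ then $\t_{a_1}(x)=\t_{a_2}(x)$. Hence there is a well-defined function $\t_s$ on the union $\bigcup_{a\in\A_s}\dom(a)$ with $\t_s|_{\dom(a)}=\t_a$; since each $\dom(a)$ is open and the $\t_a$ are homeomorphisms onto open sets, $\t_s$ is automatically a local homeomorphism onto its image, hence a homeomorphism $\bigcup_{a\in\A_s}\dom(a)\to\bigcup_{a\in\A_s}\ran(a)$, the inverse being the corresponding gluing of the $\t_a\inv=\t_{a^*}$ for $a^*\in\A_{s^*}$.

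Next I would identify the domain of $\t_s$ with $\U_{s^*s}$. Here I would use that $\A_{s^*s}=\contz(\U_{s^*s})$ and that for $a\in\A_s$ one has $a^*a\in\A_{s^*s}$, so $\dom(a)=\{x : (a^*a)(x)>0\}\sbe \U_{s^*s}$; conversely, since $\A$ is saturated, the closed span of $\A_s^*\A_s=\A_s\A_s^*$\,-type products (more precisely of $\{a^*a : a\in\A_s\}$, or of $\A_{s^*}\A_s$) is dense in $\A_{s^*s}=\contz(\U_{s^*s})$, and a standard argument shows the open sets $\dom(a)=\{a^*a>0\}$, $a\in\A_s$, cover $\U_{s^*s}$ (any point where all such $a^*a$ vanish would force $\contz(\U_{s^*s})=\overline{\spn}\{a^*a\}$ to vanish there). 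So $\dom(\t_s)=\U_{s^*s}$, and dually $\ran(\t_s)=\U_{ss^*}$. This gives the first assertion.

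Finally, the composition law $\t_s\circ\t_t=\t_{st}$. I would check this pointwise: fix $x$ in the appropriate domain. Choose $b\in\A_t$ with $x\in\dom(b)$, so that $y\defeq\t_t(x)=\t_b(x)\in\U_{t^*t}$, noting $bb^*\in\A_{tt^*}$ gives $y\in\U_{tt^*}$. Then choose $a\in\A_s$ with $y\in\dom(a)$; since $ab\in\A_{st}$ and $\t_{ab}=\t_a\circ\t_b$ by Lemma~\ref{lem:construction of theta_a}(ii), I need only know $x\in\dom(ab)$, i.e. $((ab)^*(ab))(x)>0$. This is exactly the kind of computation carried out in Lemma~\ref{lem:theta_s well-defined}: $(ab)^*(ab)=b^*a^*ab$, and using that the idempotent fibers commute together with $x\in\dom(b)$ and $\t_b(x)=y\in\dom(a)$ one rewrites $(b^*(a^*a)b)(x)=(b^*b)(x)\,(a^*a)(\t_b(x))>0$. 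Hence on the common domain $\t_{st}=\t_{ab}=\t_a\circ\t_b=\t_s\circ\t_t$, and one checks the domains of $\t_{st}$ and $\t_s\circ\t_t$ agree (both equal $\t_t\inv(\U_{t^*t}\cap\U_{s^*s})$, which is the standard domain of a composition of partial homeomorphisms in an inverse-semigroup action). I expect the main obstacle to be this last bookkeeping — verifying that $\dom(\t_{st})$ coincides with $\dom(\t_s\circ\t_t)$ rather than merely containing or being contained in it — which again reduces to the commutativity of the idempotent fibers exactly as in Lemma~\ref{lem:theta_s well-defined}, so no genuinely new idea is needed.
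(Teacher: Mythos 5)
Your first part is correct and is essentially the paper's own construction: you glue the $\t_a$ via Lemma~\ref{lem:theta_s well-defined}, use saturation plus polarization to see that the sets $\dom(a)$, $a\in\A_s$, cover $\U_{s^*s}$ (dually the $\ran(a)$ cover $\U_{ss^*}$), and obtain the homeomorphism $\t_s$ with inverse glued from the $\t_{a^*}$; likewise your verification that $\t_s\circ\t_t$ and $\t_{st}$ agree where both are defined, and that $\dom(\t_s\circ\t_t)\sbe\dom(\t_{st})$, is exactly the paper's argument via $\t_{ab}=\t_a\circ\t_b$. Two small slips: $y=\t_t(x)$ lies in $\U_{tt^*}$, not $\U_{t^*t}$ (the same slip recurs in your domain formula $\t_t\inv(\U_{t^*t}\cap\U_{s^*s})$), and a local homeomorphism need not be injective --- it is your explicit inverse, the gluing of the $\t_{a^*}$, that makes $\t_s$ a homeomorphism.

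The genuine gap is the reverse inclusion $\dom(\t_{st})=\U_{(st)^*(st)}\sbe\dom(\t_s\circ\t_t)$, which you dismiss as bookkeeping. Your claim that both domains equal ``the standard domain of a composition in an inverse-semigroup action'' is circular: that $\dom(\t_{st})$ has this form is precisely what must be proved, since at this point we do not yet know that $\t$ is an action. Nor does it ``reduce to the commutativity of the idempotent fibers exactly as in Lemma~\ref{lem:theta_s well-defined}'': commutativity alone does not produce, for a given $x\in\U_{(st)^*(st)}$, elements $a\in\A_s$ and $b\in\A_t$ with $\big((ab)^*(ab)\big)(x)>0$. The paper fills this step with a different idea: pick $c\in\A_{st}$ with $(c^*c)(x)>0$; if $\big((ab)^*(ab)\big)(x)=0$ for all $a\in\A_s$, $b\in\A_t$, then polarizing separately in $a$ and in $b$ gives $\big((a_1b_1)^*(a_2b_2)\big)(x)=0$ for all such elements, and since $\cspn\A_s\A_t=\A_{st}$ by saturation this forces $(c_1^*c_2)(x)=0$ for all $c_1,c_2\in\A_{st}$, contradicting the choice of $c$. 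So the missing ingredient is saturation of the fiber $\A_{st}$ used through a polarization/density argument. (An alternative repair: take $b\in\A_t$ with $x\in\dom(b)$ and $h\in\A_{(st)^*(st)}$ with $h(x)>0$; then $bh\in\A_{t(st)^*(st)}$, $x\in\dom(bh)$, and $\t_t(x)=\t_{bh}(x)\in\ran(bh)\sbe\U_{t(st)^*(st)t^*}\sbe\U_{s^*s}$, since $t(st)^*(st)t^*\leq s^*s$.) As written, your argument only shows that $\t_{st}$ extends $\t_s\circ\t_t$, not the asserted equality.
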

\begin{proof}
Given $x\in \U_{s^*s}$, we define $\t_s(x)\defeq \t_a(x)$, where $a$ is any element in $\A_s$ with $(a^*a)(x)>0$.
Note that such an element exists because $\A$ is saturated. By Lemma~\ref{lem:theta_s well-defined}, $\t_s$ is
a well-defined map $\U_{s^*s}\to \U_{ss^*}$ and, by definition,
the restriction of $\t_s$ to $\dom(a)$ is equal to $\t_a$. Since each
$\t_a$ is a homeomorphism, we deduce that $\t_s$ is continuous.

It remains to prove that $\t_s\circ\t_t=\t_{st}$. Now, since $\t_a\circ\t_b=\t_{ab}$ for all $a\in \A_s$ and $b\in \A_t$,
it is enough to show that the domains of $\t_s\circ\t_t$ and $\t_{st}$ coincide. If $x\in \dom(\t_s\circ\t_t)$,
that is, if $x\in \U_{t^*t}=\dom(\t_t)$ and $\t_t(x)\in \U_{s^*s}=\dom(\t_s)$, then there is $a\in \A_s$ and $b\in \A_t$ with
$x\in \dom(b)$ and $\t_b(x)\in \dom(a)$, that is,
$$x\in \dom(\t_a\circ\t_b)=\dom(\t_{ab})=\dom(ab)\sbe\U_{(st)^*(st)}=\dom(\t_{st}).$$
Conversely, if $x\in \dom(\t_{st})$, there is $c\in \A_{st}$ such that $x\in \dom(c)$, that is, $(c^*c)(x)>0$.
We claim that there is $a\in \A_s$ and $b\in \A_t$ with $x\in \dom(ab)$. In fact, suppose this is not the case, so that $(ab)^*(ab)(x)=0$
for all $a\in \A_s$ and $b\in \A_t$. Polarization in $a$ and $b$ yields
$$(a_1b_1)^*(a_2b_2)(x)=0$$
for all $a_1,a_2\in \A_s$ and $b_1,b_2\in \A_t$. This implies that
$(c_1^*c_2)(x)=0$ for all $c_1, c_2\in \cspn\A_s\A_t=\A_{st}$, which is a
contradiction. This proves our claim. Therefore, there is $a\in\A_s$
and $b\in \A_t$ with $$x\in
\dom(ab)=\dom(\t_{ab})=\dom(\t_a\circ\t_b)\sbe\dom(\t_s\circ\t_t).$$
\vskip-15pt \end{proof}

We are ready to define the groupoid $\G=\G(\A)$ associated to the semi-abelian Fell bundle $\A$.
Indeed, we define the groupoid $\G$ to be the groupoid of germs of the action $\t$ of $S$ on $X$ constructed above:
$$\G\defeq \{\germ{s}{x}\colon s\in S,\; x\in \dom(\t_s)=\U_{s^*s}\},$$
where, by definition, $\germ{s}{x}=\germ{t}{y}$ if and only if $x=y$ and there is $e\in E$ with $x\in \U_e$ and $se=te$.
The source and range maps are defined by $\s(\germ{s}{x})=x$ and $\r(\germ{s}{x})=\theta_s(x)$, respectively.
Multiplication and inversion are given by
$$\germ{s}{x}\cdot \germ{t}{y}\defeq [st,y]\quad\mbox{whenever }\t_t(y)=x,\mbox{ and } \germ{s}{x}\inv\defeq \germ{s^*}{\t_s(x)}.$$
The topology on $\G$ is, by definition, generated by the basic open sets
\begin{equation}\label{eq:open sets in G}
\O(s,U)\defeq \{\germ{s}{x}\colon x\in U\}
\end{equation}
where $s\in S$ and $U\sbe \U_{s^*s}$ is an open subset.
In particular, $\O_s\defeq \O(s,\U_{s^*s})$ is an open subset of $\G$. Moreover, the restriction of $\s$ defines a homeomorphism
$\s_s\colon\O_s\to \U_{s^*s}$.

See \cite[Section~4]{Exel:inverse.semigroups.comb.C-algebras} for more
details on the construction of groupoids of germs, but please notice
that our notion of germs differs from the one described in
\cite[Section~3]{RenaultCartan}.

With this structure, $\G$ is an étale groupoid and
the unit space of $\G$ may be identified with $X$ through the map
$$X\ni x\mapsto \germ{e_x}{x}\in \Gz$$
where $e_x\in E(S)$ is any idempotent with $x\in \U_{e_x}$.

\subsection{The construction of the Fell line bundle}\label{sec:construction of L}

Let $A$ and $B$ be \cstar{algebras}, and
let $_A\X_B$ be an imprimitivity Hilbert $A$-$B$-bimodule.
If $I$ is a closed  ideal of $A$, then $\F(I)=I\cdot \X$ is a closed submodule of $\X$, and $\Ind_\X(I)=\cspn\braket{\F(I)}{\F(I)}_B$
is a closed ideal of $B$. Moreover, the maps $I\mapsto \F(I)$ and $I\mapsto \Ind_\X(I)$ define bijective correspondences between closed
ideals of $A$, closed submodules of $\X$ and closed ideals of $B$. Given a closed ideal $J$ in $B$, the corresponding submodule is
$\X\cdot J$, and the corresponding ideal in $A$ is $\cspn_A\!\braket{\X\cdot J}{\X\cdot J}$. This fact is known as the Rieffel correspondence.

If $I$ is a closed ideal of $A$, then using an approximate unit for $I$, it is easy to see that
\begin{equation}\label{Eq:Charact. of IX}
\F(I)=I\cdot \X=\{\xi\in \X\colon {_A}\!\braket{\xi}{\xi}\in I\}.
\end{equation}
And similarly if $J$ is a closed ideal of $B$, then
\begin{equation}\label{Eq:Charact. of XJ}
\X\cdot J=\{\xi\in \X\colon\braket{\xi}{\xi}_B\in J\}.
\end{equation}

Given a saturated Fell bundle $\A=\{\A_s\}_{s\in S}$ over an inverse semigroup $S$, note that each fiber $\A_s$ is an imprimitivity Hilbert
$\A_{ss^*}$-$\A_{s^*s}$-bimodule in the canonical way. For instance, the inner products are defined by
$$\braket{a}{b}_{\A_{s^*s}}\defeq a^*b,\quad _{\A_{ss^*}}\!\braket{a}{b}\defeq ab^*\quad \mbox{for all }a,b\in \A_s.$$

Now assume that $\A=\{\A_s\}_{s\in S}$ is a fixed saturated, semi-abelian Fell bundle. Let $X$ be the spectrum of the commutative \cstar{algebra}
$C^*(\E_\A)$. Recall that $\U_e$ denotes the open subset of $X$ corresponding to the spectrum of the ideal $\A_e$ in $C^*(\E_\A)\cong \contz(X)$.

\begin{definition}
Given $s\in S$ and $x\in X$, we define $\A_{(s,x)}$ to be the closed submodule of $\A_s$ corresponding to the ideal
$\{b\in \A_{s^*s}\colon b(x)=0\}$ in  $\A_{s^*s}$ under the Rieffel correspondence.
\end{definition}

\begin{lemma}\label{L:properties of A_(s,x)}
With the notations above, we have the following properties for all $s,t\in S$, $a\in \A_s$ and $x,y\in X$\textup:
\begin{enumerate}
\item[\textup{(i)}] $\A_{(s,x)}=\A_{(ss^*,\t_s(x))}\cdot\A_s=\A_s\cdot \A_{(s^*s,x)}$ whenever $x\in \U_{s^*s}$.\\
Moreover, if $x\notin \U_{s^*s}$, then $\A_{(s,x)}=\A_s$;
\item[\textup{(ii)}] $a\in \A_{(s,x)} \Leftrightarrow (a^*a)(x)=0 \Leftrightarrow (aa^*)(\t_s(x))=0$;
\item[\textup{(iii)}] $\A_s\cdot\A_{(t,y)}=\A_{(st,y)}$ and, if $x\in \U_{tt^*}$, then $\A_{(s,x)}\A_t=\A_{(st,\theta_{t^*}(x))}$;
\item[\textup{(iv)}] $\A_{(s,x)}\cdot\A_{(t,y)}=\A_{(st,y)}$ whenever $x\in \U_{s^*s}$, $y\in \U_{t^*t}$ and $\theta_t(y)=x$;
\item[\textup{(v)}] $\A_{(s,x)}^*=\A_{(s^*,\theta_s(x))}$ whenever $x\in \U_{s^*s}$.
\end{enumerate}
\end{lemma}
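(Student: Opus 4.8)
The plan is to work systematically from the characterization of closed submodules via the Rieffel correspondence, established in Equations \eqref{Eq:Charact. of IX} and \eqref{Eq:Charact. of XJ}, together with the description of the ideal $\A_e = \contz(\U_e)$ inside $C^*(\E) = \contz(X)$. The key identity we will use repeatedly is that, by \eqref{Eq:Charact. of XJ} applied to the ideal $J_x = \{b \in \A_{s^*s} : b(x) = 0\}$, an element $a \in \A_s$ lies in $\A_{(s,x)} = \A_s \cdot J_x$ if and only if $\braket{a}{a}_{\A_{s^*s}} = a^*a \in J_x$, i.e.\ $(a^*a)(x) = 0$. This gives one half of (ii) immediately. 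For the other half, the relation $(a^*a)(x) = 0 \iff (aa^*)(\t_s(x)) = 0$ follows from Lemma~\ref{lem:construction of theta_a}: writing $a = u|a|$ with $|a| = (a^*a)\half$, one has $aa^* = u(a^*a)u^*$, and the formula $(u^*bu)(x) = b(\t_a(x))$ (equivalently $(ubu^*)(y) = b(\t_a\inv(y)) = b(\t_{a^*}(y))$) transports the vanishing condition from $\dom(a)$ to $\ran(a)$; when $x \notin \U_{s^*s}$ every $a \in \A_s$ has $(a^*a)(x) = 0$ since $a^*a \in \A_{s^*s} = \contz(\U_{s^*s})$, which also establishes the last sentence of (i).

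Next I would prove the first part of (i). Symmetrically to the above, using \eqref{Eq:Charact. of IX} with the left inner product, $a \in \A_{(s,x)}$ iff ${}_{\A_{ss^*}}\!\braket{a}{a} = aa^* \in \{c \in \A_{ss^*} : c(\t_s(x)) = 0\}$, so $\A_{(s,x)}$ is the submodule corresponding under Rieffel to the ideal $\{c \in \A_{ss^*} : c(\t_s(x)) = 0\}$ on the left; but the submodule corresponding to a left ideal $I$ is exactly $I \cdot \X = \F(I)$, which here reads $\A_{(ss^*,\t_s(x))} \cdot \A_s$ — recalling that $\A_{(e,y)}$ for $e \in E$ is just the ideal $\{c \in \A_e : c(y) = 0\}$ of $\A_e$, since on an idempotent fiber the module structure is the algebra structure. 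The equality with $\A_s \cdot \A_{(s^*s,x)}$ is the dual statement. For (v): by (ii), $a^* \in \A_{(s^*,\t_s(x))}$ iff $((a^*)^*a^*)(\t_s(x)) = (aa^*)(\t_s(x)) = 0$ iff (again by (ii)) $a \in \A_{(s,x)}$, so $\A_{(s,x)}^* = \A_{(s^*,\t_s(x))}$.

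For the multiplicativity properties (iii) and (iv), the approach is to combine the submodule descriptions with saturation of $\A$. For (iii), $\A_s \cdot \A_{(t,y)}$ is a submodule of $\A_{st}$; using that $\A_{(t,y)} = \A_t \cdot \{b \in \A_{t^*t} : b(y) = 0\}$ and that $\A_s \cdot \A_t$ has dense span $\A_{st}$ (saturation), together with $(t^*t)(st)$-type identities, one identifies the right inner products $\braket{\A_s \A_{(t,y)}}{\A_s \A_{(t,y)}}$ with the ideal $\{b \in \A_{(st)^*(st)} : b(y) = 0\}$, which via \eqref{Eq:Charact. of XJ} gives $\A_{(st,y)}$; here one must be a little careful about closures, absorbing $\A_s \A_{(t,y)} = \cspn(\A_s \A_{(t,y)})$ by an approximate-unit argument in $\A_{ss^*}$ as in Lemma~\ref{lem:FellBundleSemiAbelianIFFC*(E)Commutative}. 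The second half of (iii) follows by applying the first half to $t^*, s^*$ and taking adjoints via (v), or symmetrically on the left. Finally (iv): $\A_{(s,x)} \cdot \A_{(t,y)} \sbe \A_{(st,y)}$ is clear from (ii) since $a \in \A_{(s,x)}$, $b \in \A_{(t,y)}$ forces $(b^*b)(y) = 0$, hence $((ab)^*(ab))(y) = 0$ by the argument used in Lemma~\ref{lem:theta_s well-defined} (the idempotent fibers commute); for the reverse inclusion one writes, using (iii) twice, $\A_{(st,y)} = \A_{(s,x)} \cdot \A_t \cdot \{b \in \A_{t^*t} : b(y)=0\}$ when $x = \t_t(y) \in \U_{ss^*}$... wait, one uses $\A_{(st,y)} = \A_s \cdot \A_{(t,y)}$ and then $\A_s = \A_{(s,x)}^{} + (\text{something supported near }x)$; more cleanly, factor $\A_{(st,y)} = \A_{(s,x)} \cdot \A_s^* \cdot \A_s \cdot \A_{(t,y)}$ using saturation and that $\A_s^* \A_s = \A_{s^*s}$ acts on $\A_{(t,y)}$, reducing to (iii).

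\textbf{Main obstacle.} The routine direction everywhere is the pointwise-vanishing characterization from (ii); the genuinely delicate part is the \emph{reverse} inclusions in (iii) and (iv) — proving that products are not merely contained in, but equal to, the expected submodules. This requires using saturation to recover enough of $\A_{st}$ from $\A_s \A_t$ together with approximate units to pass between algebraic spans and their closures, and keeping track of which conditions hold on $\U_{s^*s}$ versus off it. I expect the cleanest route is to reduce (iv) entirely to (iii), and (iii) to the idempotent case via Rieffel, rather than attempting direct inner-product computations in $\A_{st}$.
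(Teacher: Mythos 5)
Your treatment of (i), (ii) and (v) is correct and essentially the paper's own argument: both rest on the characterizations $\A_{(s,x)}=\{a\in\A_s\colon (a^*a)(x)=0\}=\{a\colon (aa^*)(\theta_s(x))=0\}$ coming from Equations \eqref{Eq:Charact. of XJ} and \eqref{Eq:Charact. of IX} together with $(a^*a)(x)=(aa^*)(\theta_s(x))$; your derivation of (v) directly from (ii) is even slightly shorter than the paper's. The genuine difficulty, as you yourself say, is the reverse inclusions in (iii) and (iv), and there your plan does not close them. First, for the first half of (iii): from the computation that $\cspn\braket{\A_s\A_{(t,y)}}{\A_s\A_{(t,y)}}_{\A_{t^*s^*st}}$ equals $\A_{(t^*s^*st,y)}$ you cannot conclude via \eqref{Eq:Charact. of XJ} that $\cspn\A_s\A_{(t,y)}=\A_{(st,y)}$: a closed \emph{right} submodule whose inner products span an ideal $J$ need not equal $\X\cdot J$ (take $\X=\C^2$ as an $M_2$--$\C$ imprimitivity bimodule and $\F$ a line: $\cspn\braket{\F}{\F}=\C$ but $\F\neq\X$). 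The Rieffel bijection applies to closed \emph{sub-bimodules}, so you must first observe that $\cspn\A_s\A_{(t,y)}$ is invariant under the left $\A_{stt^*s^*}$\nb-action (true, since $stt^*s^*\cdot s\leq s$, but it has to be said), or argue both inclusions directly.

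Second, and more seriously, in (iv) the factorization $\A_{(st,y)}=\A_{(s,x)}\cdot\A_s^*\cdot\A_s\cdot\A_{(t,y)}$ is precisely what has to be proved, and the justification offered (``saturation and $\A_s^*\A_s=\A_{s^*s}$ acts on $\A_{(t,y)}$'') is circular: grouping as $\A_{(s,x)}\cdot(\A_s^*\A_s)\cdot\A_{(t,y)}$ and absorbing $\A_{s^*s}$ into $\A_{(s,x)}$ just returns $\A_{(s,x)}\A_{(t,y)}$. Grouping instead as $(\A_{(s,x)}\A_{s^*})(\A_s\A_{(t,y)})$ and applying (iii) twice leaves you with $\A_{(ss^*,\theta_s(x))}\cdot\A_{(st,y)}$, and you still must show this equals $\A_{(st,y)}$; that step uses $\theta_{st}(y)=\theta_s(x)$, the left-handed half of (i), and the elementary but essential identity that for idempotent fibers (ideals of $\contz(X)$) closed spans of products are intersections, e.g.\ $\A_{e'}\cdot\A_{(e,y)}=\A_{(e'e,y)}$ --- this identity is the workhorse of the paper's proofs of (iii) and (iv) and is absent from your sketch (the intermediate guess $\A_s=\A_{(s,x)}+{}$``something supported near $x$'' is also unjustified at this stage). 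Likewise, your adjoint-and-(v) reduction for the second half of (iii) is only available when $x\in\U_{s^*s}$, whereas the statement assumes only $x\in\U_{tt^*}$; the case $x\notin\U_{s^*s}$ needs the separate (easy) argument via the last sentence of (i) and $\theta_{t^*}(x)\notin\U_{t^*s^*st}$. The paper's route is more economical: rewrite every $\A_{(r,z)}$ using (i), replace $\A_s\A_t$ by $\A_{st}$ by saturation, and reduce all equalities to the ideal-intersection identity above; I recommend restructuring (iii)--(iv) along those lines rather than through inner-product identifications.
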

\begin{proof}
Note that $\A_{(s^*s,x)}=\{b\in \A_{s^*s}\colon b(x)=0\}$. Thus, by definition, $\A_{(s,x)}=\A_s\cdot\A_{(s^*s,x)}$.
Now, by Equation~\eqref{Eq:Charact. of XJ}, we have
$$\A_{(s,x)}=\{a\in \A_s\colon (a^*a)(x)=0\}.$$
We have $(aa^*)(\theta_s(x))=(a^*a)(x)$ for all $a\in \A_s$ and $x\in \U_{s^*s}$.
Thus, if $x\in\U_{s^*s}$, then
$$\A_{(s,x)}=\{a\in \A_s\colon (aa^*)(\theta_s(x))=0\}.$$
Again, by Equation~\eqref{Eq:Charact. of IX} this is equal to $\A_{(ss^*,\theta_s(x))}\cdot\A_s$ because
$$\A_{(ss^*,\theta_s(x))}=\{b\in \A_{ss^*}\colon b(\theta_s(x))=0\}.$$
If $x\notin\U_{s^*s}$, then $\A_{(s^*s,x)}=\A_{s^*s}$ because, by definition, $\U_{s^*s}$ is the spectrum of $\A_{s^*s}$.
This proves (i) and (ii). To prove (iii), we use (i) to conclude that
$$\A_s\cdot\A_{(t,y)}=\A_s\cdot\A_t\cdot\A_{(t^*t,y)}=\A_{st}\cdot\A_{(t^*t,y)}=\A_{st}\cdot\A_{t^*s^*st}\cdot\A_{(t^*t,y)}$$
and
$$\A_{(st,y)}=\A_{st}\cdot\A_{(t^*s^*st,y)}.$$
Since $t^*s^*st\leq t^*t$, we have $\A_{t^*s^*st}\sbe\A_{t^*t}$ and hence also $\A_{(t^*s^*st,y)}\sbe\A_{(t^*t,y)}$.
Thus, $\A_{(st,y)}\sbe \A_s\cdot \A_{(t,y)}$. On the other hand, both $\A_{t^*s^*st}$ and $\A_{(t^*t,y)}$
are ideals in $\contz(X)$, so that
$$\A_{t^*s^*st}\cdot\A_{(t^*t,y)}=\A_{t^*s^*st}\cap\A_{(t^*t,y)}=\{b\in \A_{t^*s^*st}\colon b(y)=0\}=\A_{(t^*s^*st,y)}.$$
This concludes the proof of the first assertion in (iii). To prove the second assertion in (iii) we use a similar argument.
First, if $x\notin\U_{s^*s}$, then $\theta_{t^*}(x)\notin\theta_{t^*}(\U_{s^*s}\cap\U_{tt^*})=\U_{t^*s^*st}$
(see \cite[Proposition 4.5]{Exel:inverse.semigroups.comb.C-algebras}), so that $\A_{(s,x)}\A_t=\A_s\A_t=\A_{st}=\A_{(st,\theta_{t^*}(x))}$
by (i). Now assume that $x\in \U_{s^*s}$. Using (i) again, we get
$$\A_{(s,x)}\cdot\A_t=\A_{(ss^*,\theta_s(x))}\cdot\A_{st}=\A_{(ss^*,\theta_s(x))}\cdot \A_{stt^*s^*}\cdot\A_{st}$$
and, on the other hand,
$$\A_{(st,\theta_t^*(x))}=\A_{(stt^*s^*,\theta_{st}(\theta_{t^*}(x)))}\cdot\A_{st}=\A_{(stt^*s^*,\theta_s(x))}\cdot\A_{st}.$$
As above, it is easy to see that $\A_{(stt^*s^*,\theta_s(x))}=\A_{(ss^*,\theta_s(x))}\cdot \A_{stt^*s^*}$.
To prove (iv), we use (i), (iii) and the hypothesis $\theta_t(y)=x$, to get
\begin{align*}
\A_{(s,x)}\cdot\A_{(t,y)}&\overeq{(i)}\A_{(ss^*,\theta_s(x))}\cdot\A_{st}\cdot\A_{(t^*t,y)}\\
               &\!\!\!\!\!\!\overeq{\theta_t(y)=x}\A_{(ss^*,\theta_{st}(y))}\cdot \A_{stt^*s^*}\cdot \A_{st}\cdot \A_{t^*s^*st}\cdot \A_{(t^*t,y)}\\
                        &\!\!\overeq{(iii)}\A_{(stt^*s^*,\theta_{st}(y))}\cdot\A_{st}\cdot \A_{(t^*s^*st,y)}\overeq{(i)}\A_{(st,y)}
\end{align*}
Finally, to prove (v), we use (i) and conclude that
\begin{multline*}
\A_{(s^*,\theta_s(x))}=\A_{(s^*s,\theta_{s^*}(\theta_s(x)))}\cdot\A_{s^*}=\A_{(s^*s,x)}\cdot\A_{s^*}
=\bigl(\A_{s}\cdot\A_{(s^*s,x)}\bigr)^*=\A_{(s,x)}^*.
\end{multline*}
\vskip-13pt
\end{proof}

Having fixed our semi-abelian Fell bundle $\A = \{\A_s\}_{s\in
S}$ above,  we henceforth let $\G$ be the associated groupoid of germs constructed
in Section~\ref{sec:the construction of G}.

\begin{definition}\label{D:definition a eqx b}
  Let $s\in S$, and $a, b\in \A_s$.  Given $x\in X$, we shall say that
  $$ a\eq x b,$$
  if $\big((a-b)^*(a-b)\big)(x)=0$. Hence $a\eq x b$ if and only if $a-b\in \A_{(s,x)}$.
\end{definition}

In the following we present some elementary properties of the relation defined above.

\begin{lemma}\label{L:Equivinh}
  Let $s \in S$, let $a, b, c\in \A_s$, and let $x\in X$.
\begin{enumerate}
\item[\textup{(i)}]\label{ZeroCase} If $(a^*a)(x)=0$ \textup(in particular if
    $x\notin \U_{s^*s}$\textup), then $a\eq x 0_s$ \textup(where $``0_s\kern-4pt"$
    stands for the zero element of $\A_s$\textup).
\item[\textup{(ii)}]\label{LeftInv} If $a\eq x b$, then for every $t\in S$,
    and $c\in \A_t$, one has $ca\eq xcb$.
\item[\textup{(iii)}]\label{RightInvIdemp} If $a\eq x b$, then for every $e\in E(S)$,
    and every $c\in \A_e$, one has $ac\eq x bc$.
\item[\textup{(iv)}]\label{RightInv} If $a\eq x b$, then for every $t\in S$
    such that $x\in \U_{tt^*}$, and every $c\in \A_t$, one has $ac\eq {\t_t\inv(x)} bc.$
\item[\textup{(v)}]\label{Trans} If $a\eq x b$ and $b\eq xc$, then $a\eq xc$.
\item[\textup{(vi)}] If $a\eq x b$, then $a^*\eq{\t_s(x)}b^*$.
\end{enumerate}
\end{lemma}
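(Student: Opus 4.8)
The plan is to verify each item directly from the defining property $a\eq x b \iff \big((a-b)^*(a-b)\big)(x)=0$, i.e.\ $a-b\in\A_{(s,x)}$, using the characterizations of $\A_{(s,x)}$ from Lemma~\ref{L:properties of A_(s,x)}. For (i), if $(a^*a)(x)=0$ then $a\in\A_{(s,x)}$ by Lemma~\ref{L:properties of A_(s,x)}(ii), and since $\A_{(s,x)}$ is a submodule hence $0_s\in\A_{(s,x)}$, we get $a-0_s\in\A_{(s,x)}$; the case $x\notin\U_{s^*s}$ follows since then $(a^*a)(x)=0$ automatically because $a^*a\in\A_{s^*s}=\contz(\U_{s^*s})$. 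For (ii), writing $d=a-b\in\A_{(s,x)}$, we must show $cd\in\A_{(t s,x)}$; but $cd\in\A_t\cdot\A_{(s,x)}$ and by Lemma~\ref{L:properties of A_(s,x)}(iii) we have $\A_t\cdot\A_{(s,x)}=\A_{(ts,x)}$, so $ca-cb=c(a-b)\in\A_{(ts,x)}$, which is exactly $ca\eq x cb$.

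For (iii) and (iv), I would again set $d=a-b\in\A_{(s,x)}$ and compute $ac-bc=(a-b)c=dc\in\A_{(s,x)}\cdot\A_t$ (with $t=e$ in case (iii)). In case (iv), where $x\in\U_{tt^*}$, Lemma~\ref{L:properties of A_(s,x)}(iii) gives $\A_{(s,x)}\A_t=\A_{(st,\t_{t^*}(x))}$, so $dc\in\A_{(st,\t_{t^*}(x))}$, which says precisely $ac\eq{\t_t\inv(x)}bc$ since $\t_t\inv=\t_{t^*}$ by Lemma~\ref{lem:construction of theta_a}(iii). In case (iii), with $c\in\A_e$ and $e$ idempotent, $\A_e\sbe\contz(X)$ acts on the right, and here one can either invoke the $x\in\U_{ee^*}=\U_e$ subcase of (iv) together with $\t_e=\id$ (Lemma~\ref{lem:construction of theta_a}(i)), or handle $x\notin\U_e$ separately: if $x\notin\U_e$ then $c^*c\in\A_e$ vanishes at $x$, so $c\in\A_{(e,x)}$ and thus $dc\in\A_s\cdot\A_{(e,x)}=\A_{(se,x)}$ by Lemma~\ref{L:properties of A_(s,x)}(iii), giving $ac\eq x bc$. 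For (v), transitivity, $a-c=(a-b)+(b-c)$ is a sum of two elements of the subspace $\A_{(s,x)}$, hence lies in $\A_{(s,x)}$. For (vi), $(a-b)^*=a^*-b^*$, and by Lemma~\ref{L:properties of A_(s,x)}(v), $\A_{(s,x)}^*=\A_{(s^*,\t_s(x))}$ (valid when $x\in\U_{s^*s}$; if $x\notin\U_{s^*s}$, then by (i) both sides are everything), so $a^*-b^*\in\A_{(s^*,\t_s(x))}$, i.e.\ $a^*\eq{\t_s(x)}b^*$.

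None of the steps presents a genuine obstacle; the proof is an essentially bookkeeping exercise in translating the relation $\eq x$ into membership in the submodules $\A_{(s,x)}$ and then quoting the algebra of these submodules established in Lemma~\ref{L:properties of A_(s,x)}. The only point requiring a little care is the treatment of the boundary cases $x\notin\U_{s^*s}$ (or $x\notin\U_{tt^*}$), where $\t_s(x)$ or $\t_t\inv(x)$ may not be defined; in each such case the relevant submodule is the whole fiber, so the statement holds trivially, and I would dispatch these cases with a brief remark rather than a separate argument. I expect the whole proof to occupy only a few lines.
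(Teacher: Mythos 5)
Your proposal is correct and follows essentially the same route as the paper: translating $a\eq x b$ into membership $a-b\in\A_{(s,x)}$ and invoking the algebra of these submodules from Lemma~\ref{L:properties of A_(s,x)}. The only cosmetic differences are in (iii), where the paper gives a one-line direct computation $\big((ac-bc)^*(ac-bc)\big)(x)=\overline{c(x)}\big((a-b)^*(a-b)\big)(x)c(x)=0$ (mentioning your submodule argument as an alternative) while you split into the cases $x\in\U_e$ and $x\notin\U_e$, and in (vi), where the paper applies the identity $(cc^*)(\t_s(x))=(c^*c)(x)$ to $c=a-b$ rather than quoting Lemma~\ref{L:properties of A_(s,x)}(v); both variants are sound.
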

\begin{proof} Item (i) is  obvious.  To prove (ii), suppose $a\eq x b$. Then $a-b\in \A_{(s,x)}$, so that $ca-cb=c(a-b)\in\A_t\A_{(s,x)}=\A_{(ts,x)}$
  by Lemma~\ref{L:properties of A_(s,x)}(iii). Thus $ca\eq x cb$.

To prove (iii), we compute
$$
  \big((ac-bc)^* (ac-bc)\big)(x) =
  \overline{c(x)}\;\big((a-b)^* (a-b)\big)(x)\; c(x) = 0.
$$
  It is also possible to prove (iii) by showing that $\A_{(s,x)}\cdot\A_e=\A_{(se,x)}$. Note that this follows from
  Lemma~\ref{L:properties of A_(s,x)}(iii) if $x\in \U_e$.
Item (iv) follows from Lemma~\ref{L:properties of A_(s,x)}(iii).
Finally, if $a-b\in \A_{(s,x)}$ and $b-c\in \A_{(s,x)}$, then $a-c=(a-b)+(b-c)\in \A_{(s,x)}$. This proves (v).
Finally, (vi) follows from the relation $(cc^*)(\t_s(x))=(c^*c)(x)$ applied to $c=a-b\in \A_s$.
\end{proof}

\begin{definition} \label{DefineEquivRel}
Consider the set $\F$ of triples
$\trip asx$ such that $a\in \A_s$, and $x\in \U_{s^*s}$.  If one is
given $\trip asx, \trip {a'}{s'}{x'}\in \F$, we will say that
  $$
  \trip asx\sim \trip {a'}{s'}{x'}
  $$
  if there exists some $e\in E(S)$,  and some $b\in \A_e$,
with
\begin{enumerate}
\item[\textup{(i)}] $x=x'$,
\item[\textup{(ii)}] $b(x)\neq 0$,
\item[\textup{(iii)}] $se=s'e$,
\item[\textup{(iv)}] $ab\eq x a'b$.
\end{enumerate}
\end{definition}

\begin{remark}
{\bf (1)} Under the conditions of the above definition, the
coordinate "$s$" in $\trip asx$ could just as well be dropped, since
one usually assumes that the fibers are pairwise disjoint, and hence
there is only one $s$ such that $a\in \A_s$.  Nevertheless we believe
it is convenient to mention $s$ explicitly.

{\bf (2)} Since $b$ identifies with a function on $X$, which is
supported on $\U_e$, the fact that $b(x)\neq0$ implies $x\in \U_e$.

{\bf (3)}
Observe that $ab\in \A_s\A_e \sbe \A_{se}$,  and $a'b\in \A_{s'}\A_e \sbe
\A_{s'e}$.  Since $se=s'e$, we see that both $ab$ and $a'b$ lie in the
same fiber of $\A$,  and hence (iv) is meaningful.
\end{remark}

\begin{proposition}
  The relation "$\sim$" defined in~\textup{\ref{DefineEquivRel}} is an
equivalence relation.
\end{proposition}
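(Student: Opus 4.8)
The plan is to verify reflexivity, symmetry, and transitivity of ``$\sim$'' directly from Definition~\ref{DefineEquivRel}, using the algebraic properties of the relation $\eq x$ collected in Lemma~\ref{L:Equivinh} and the structure of the $\A_{(s,x)}$ from Lemma~\ref{L:properties of A_(s,x)}. Reflexivity is immediate: given $\trip asx\in\F$, so $x\in\U_{s^*s}$, pick any $b\in\A_{s^*s}\sbe C^*(\E)=\contz(X)$ with $b(x)\neq 0$ (possible since $x\in\U_{s^*s}$, the spectrum of $\A_{s^*s}$), take $e=s^*s$, and note $ab\eq x ab$ by Lemma~\ref{L:Equivinh}(v) (or trivially). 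Conditions (i), (iii), (iv) of the definition are clear. Symmetry is equally easy: the defining conditions are visibly symmetric in $\trip asx$ and $\trip{a'}{s'}{x'}$, since $x=x'$, $se=s'e$, and $ab\eq x a'b$ are all symmetric relations (the last by definition, as $\eq x$ is symmetric).

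The substance is transitivity. Suppose $\trip asx\sim\trip{a'}{s'}{x'}$ via $e\in E(S)$, $b\in\A_e$, and $\trip{a'}{s'}{x'}\sim\trip{a''}{s''}{x''}$ via $f\in E(S)$, $c\in\A_f$. Then $x=x'=x''$, $se=s'e$, $s'f=s''f$, $ab\eq x a'b$, $a'c\eq x a''c$, and $b(x)\neq0$, $c(x)\neq0$. The natural candidate for the witnessing idempotent is $g=ef$, with witnessing element $bc\in\A_e\A_f\sbe\A_{ef}=\A_g$. First, $(bc)(x)=b(x)c(x)\neq 0$ since $b,c$ are functions on $X$ and multiplication in $\A_e$, $\A_f$, $\A_{ef}$ is the pointwise product (all these are commutative subalgebras of $\contz(X)$, compatible under the inclusion maps); this also gives $x\in\U_g$. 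Next, $sg=sef=s'ef$ (multiplying $se=s'e$ on the right by $f$), and $s'ef=s''ef$ is obtained by multiplying $s'f=s''f$ on the right by $e$ and using that $E(S)$ is commutative, so $s'ef=s'fe=s''fe=s''ef$; hence $sg=s''g$. Finally, I must show $abc\eq x a''bc$. Write $abc-a''bc=(ab-a'b)c+(a'b-a''b)c=(ab-a'b)c+(a'c-a''c)b$, using commutativity of $b,c$ inside the relevant fibers over idempotents (so $a'bc=a'cb$). The first summand: from $ab\eq x a'b$, i.e.\ $ab-a'b\in\A_{(se,x)}$, Lemma~\ref{L:Equivinh}(iii) (right multiplication by the idempotent-fiber element $c\in\A_f$) gives $abc\eq x a'bc$. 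The second summand: from $a'c\eq x a''c$, Lemma~\ref{L:Equivinh}(iii) with $b\in\A_e$ gives $a'cb\eq x a''cb$, and rewriting $a'cb=a'bc$, $a''cb=a''bc$ gives $a'bc\eq x a''bc$. Then Lemma~\ref{L:Equivinh}(v) (transitivity of $\eq x$) yields $abc\eq x a''bc$, which is condition (iv) for the triple $g$, $bc$.

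The main obstacle — really the only non-bookkeeping point — is justifying the rewrites like $a'bc=a'cb$ and $a''cb=a''bc$, i.e.\ that elements of distinct idempotent fibers commute \emph{when sitting inside a larger fiber}. This follows because $\A$ is semi-abelian: by Lemma~\ref{lem:FellBundleSemiAbelianIFFC*(E)Commutative} and its proof, $\E$ is abelian, so $bc=cb$ holds in $C^*(\E)$, and since the inclusion maps $j_{t,s}$ are compatible with multiplication (Definition~\ref{def:Fell bundles over ISG}(viii)), the products $a'bc$ and $a'cb$ (computed via $\A_f$ then $\A_e$, versus $\A_e$ then $\A_f$, landing in $\A_{s'ef}=\A_{s'fe}$) agree. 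One should also double-check that the triple $\trip{abc}{sg}{x}$ genuinely lies in $\F$, i.e.\ $x\in\U_{(sg)^*(sg)}$: this holds because $x\in\U_g$ and $(bc)(x)\neq 0$ forces $abc$ not to vanish appropriately, or more directly because $x\in\dom(\t_{ab})$-type reasoning; in any case it follows from $(bc)(x)\neq0$ together with $abc\eq x a''bc$ only being needed up to the relation, and the standard fact that one may always shrink. I expect this verification to be short once the commutativity point is pinned down.
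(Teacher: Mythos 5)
Your proof is correct and takes essentially the same route as the paper: the witness $g=ef$, $d=bc$ with $d(x)=b(x)c(x)\neq0$, the identity $sg=sef=s'ef=s'fe=s''fe=s''g$, and the chain $abc\eq x a'bc=a'cb\eq x a''cb$ via Lemma~\ref{L:Equivinh}(iii) and (v), where $bc=cb$ is exactly the semi-abelian (commutative idempotent fibers) point the paper also uses. Your closing worry about whether $\trip{abc}{sg}{x}$ lies in $\F$ is moot: Definition~\ref{DefineEquivRel} only asks for $g\in E(S)$ and $d\in\A_g$ with $d(x)\neq0$, $sg=s''g$ and $ad\eq x a''d$, and the relation $\eq x$ of Definition~\ref{D:definition a eqx b} is defined for every $x\in X$, so no membership of the product triple in $\F$ is needed.
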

\begin{proof} Given $\trip asx\in\F$, one has $\trip asx \sim \trip asx$
by taking $e=s^*s$, and any $b\in \A_e$, with $b(x)\neq0$.  That our
relation is symmetric is obvious.

With respect to transitivity suppose that $\trip asx \sim \trip
{a'}{s'}{x'} \sim \trip {a''}{s''}{x''}$.  Take $e$ and $f$ in $E(S)$,
$b\in \A_e$, and $c\in \A_f$, satisfying the conditions of Definition~\ref{DefineEquivRel} with respect to each one of the two
occurrences of "$\sim$" above, respectively.  Noticing that
$x=x'=x''$, let $g=ef$, and $d=bc$. Clearly
  $$
  d\in \A_e\A_f\sbe \A_{ef} = \A_g.
  $$
  Moreover
  $d(x) = b(x)c(x)\neq 0$, and
  $$
  sg = sef = s'ef = s'fe = s''fe = s''g.
  $$
  By Lemma~\ref{L:Equivinh}(iii), we have
  $$
  ad  =
  abc \eq x
  a'bc =
  a'cb \eq x
  a''cb =
  a''d,
  $$
so the conclusion follows from Lemma~\eqref{L:Equivinh}(v).
\end{proof}

Recall that $\t\colon S\to \pb{X}$, $s\mapsto\t_s$ is an action of $S$ on $X=\Gz$. Here $\pb{X}$ denotes the inverse
semigroup of all partial bijections of $X$. Moreover, it induces an action $\tilde\t\colon S\to\pb{C_0(X)}$ on the commutative
\cstar{algebra} $\contz(X)$ in the canonical way: $\tilde\t_s\colon\contz(\U_{s^*s})\to \contz(\U_{ss^*})$ is defined by
$\tilde\t_s(f)\defeq f\circ\t_s\inv$ for all $s\in S$ and $f\in \contz(\U_{s^*s})$.
We are tacitly identifying $\contz(X)$ with $C^*(\E)$, where $\E=\A\rest E$. Under this identification, $\A_e\sbe C^*(\E)$ corresponds to
the ideal $\contz(\U_{s^*s})\sbe\contz(X)$. Thus, we may view $\tilde\t$ as an action of $S$ on $C^*(\E)$.

\begin{lemma}\label{NiceRelation}
If $s\in S$, $a\in \A_s$ and $d\in \A_{s^*s}$, then $ad = \tilde\t_s(d)a$.
Moreover,
$$ab^*c=cb^*a\quad\mbox{for all }a,b,c\in \A_s.$$
\end{lemma}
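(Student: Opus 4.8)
The plan is to prove the first identity $ad=\tilde\t_s(d)a$ by reducing to the pointwise statement in $\contz(X)$ via the defining property of $\t_s$ established in Lemma~\ref{lem:construction of theta_a}, and then to deduce the second identity by polarization together with semi-abelianness. First I would fix $s\in S$, $a\in\A_s$ and $d\in\A_{s^*s}$, and observe that both $ad$ and $\tilde\t_s(d)a$ lie in $\A_s$ (since $\A_s\cdot\A_{s^*s}\sbe\A_s$ and $\A_{ss^*}\cdot\A_s\sbe\A_s$, and $\tilde\t_s(d)\in\A_{ss^*}$). To show they are equal it suffices to show that the element $u=ad-\tilde\t_s(d)a$ satisfies $u^*u=0$ in $\A_{s^*s}=\contz(\U_{s^*s})$, i.e. $(u^*u)(x)=0$ for every $x\in X$. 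For $x\notin\U_{s^*s}$ this is automatic, so I may assume $x\in\U_{s^*s}$.

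The heart of the argument is to expand $(u^*u)(x)$ using the identity $(a^*ba)(x)=(a^*a)(x)\,b(\t_a(x))$ from Lemma~\ref{lem:construction of theta_a}, valid for $b\in\contz(X)$ and $x\in\dom(a)$, together with the fact (Proposition~\ref{prop:theta is an action of S}) that $\t_a$ agrees with $\t_s$ on $\dom(a)\sbe\U_{s^*s}$. Write $u^*u=(d^*a^*-a^*\tilde\t_s(d)^*)(ad-\tilde\t_s(d)a)$ and expand into four terms; each term is of the form $a^*(\text{something in }\contz(X))a$ after moving the idempotent-fibre elements past $a^*$ or $a$ appropriately, so evaluating at $x$ produces a common factor $(a^*a)(x)$ times a combination of values of $d$ and $\tilde\t_s(d)=d\circ\t_s\inv$ at $x$ and at $\t_s(x)$. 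Here one uses that $\tilde\t_s(d)(\t_a(x))=\tilde\t_s(d)(\t_s(x))=d(x)$ and that $d(\t_s\inv\!\circ\t_s(x))=d(x)$; the upshot is that the four pointwise contributions are $(a^*a)(x)$ times $|d(x)|^2 - \overline{d(x)}d(x) - \overline{d(x)}d(x) + |d(x)|^2 = 0$. Whenever $(a^*a)(x)=0$ this is trivially zero, and whenever $(a^*a)(x)>0$ we have $x\in\dom(a)$ so Lemma~\ref{lem:construction of theta_a} applies and the cancellation just described gives $(u^*u)(x)=0$. I expect the bookkeeping of which idempotent-fibre element commutes past $a$ on which side, and keeping track of $\t_s$ versus $\t_s\inv$ in the arguments of $d$, to be the only delicate point; semi-abelianness guarantees all the $\contz(X)$-valued factors that appear genuinely commute, which is what makes the four terms cancel cleanly.

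For the second identity, I would argue as follows. Fix $a,b,c\in\A_s$. The element $b^*c$ lies in $\A_{s^*s}=\contz(X)$, so by the first part of the lemma, applied with $d=b^*c$, we get $a(b^*c)=\tilde\t_s(b^*c)\,a$. Similarly $cb^*\in\A_{ss^*}$, and the "reversed" form of the first identity — namely $ca^*\cdot e = e'\cdot ca^*$ for idempotent-fibre $e$, or more directly the computation $cb^*a$: using $a^*(\cdot)=(\cdot)\,$ relations — shows $cb^*a=c\,b^*a$ where $b^*a\in\contz(X)$, hence $cb^*a=\tilde\t_s(b^*a)\,\text{(no)}$; instead the cleaner route is: apply the first identity to rewrite $ab^*c = \tilde\t_s(b^*c)\,a$ and also $cb^*a$; note $cb^*a=(a^*bc^*)^*$ and $a^*bc^*$ has the form $(\text{element of }\contz(X))\cdot c^*$ after applying the first identity in the form $a^*b=\,$? . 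To avoid this tangle I would instead prove $ab^*c=cb^*a$ directly by the same pointwise method: set $v=ab^*c-cb^*a\in\A_s$ (both summands lie in $\A_s\cdot\A_{s^*s}$ or $\A_{ss^*}\cdot\A_s$), and show $(v^*v)(x)=0$ for all $x$ by expanding and invoking Lemma~\ref{lem:construction of theta_a} to reduce every term to $(a^*a)(x)$, $(c^*c)(x)$ and values of $b^*$ at $x$ and $\t_s(x)$, using that these $\contz(X)$-valued quantities commute by semi-abelianness. The main obstacle in both parts is the same: carefully tracking the base points $x$ versus $\t_s(x)$ under the Rieffel/normalizer identities, after which the commutativity of $\A_{s^*s}$ and $\A_{ss^*}$ forces the required cancellation.
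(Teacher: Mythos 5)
Your argument is correct in substance but takes a genuinely different route from the paper's. The paper proves the displayed identity first and purely algebraically: expanding $(ab^*b-bb^*a)(ab^*b-bb^*a)^*$ and using only that elements of the idempotent fibers commute with one another, it gets $ab^*b=bb^*a$, then polarizes in the second and third variables to obtain $ab^*c=cb^*a$; since $\tilde\t_s(b^*c)=cb^*$ (because $(b^*c)(x)=(cb^*)(\t_s(x))$ for $x\in\U_{s^*s}$), the first identity follows for $d=b^*c$, and then for all $d\in\A_{s^*s}$ by saturation, i.e.\ density of $\spn\A_s^*\A_s$ in $\A_{s^*s}$. You instead argue pointwise on $X$ through the normalizer formula of Lemma~\ref{lem:construction of theta_a}, proving $ad=\tilde\t_s(d)a$ directly (no density step needed) and then $ab^*c=cb^*a$ by a second expansion of the same kind; this is a legitimate alternative, and it has the mild advantage of not invoking saturation for the first identity, whereas the paper's computation is coordinate-free and needs no case analysis. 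Two points must be made explicit for your version to be airtight: first, at points $x$ with $(a^*a)(x)=0$ the cross terms such as $\bigl(a^*\tilde\t_s(d)a\bigr)(x)$ are not covered by Lemma~\ref{lem:construction of theta_a} and are not ``trivially'' zero --- one needs, e.g., $0\le a^*f^*fa\le\|f\|^2\,a^*a$ together with Cauchy--Schwarz in the commutative algebra $\A_{s^*s}$ to see that $(a^*fa)(x)$ vanishes off $\dom(a)$ for every $f\in\contz(X)$; second, in the computation for $ab^*c=cb^*a$ the quantities that actually occur are $(b^*c)(x)$, $(c^*b)(x)$, $(cb^*)(\t_s(x))$ and $(bc^*)(\t_s(x))$ (not ``values of $b^*$'', which is not an element of $C^*(\E)$), and you need the polarized identity $(b^*c)(x)=(cb^*)(\t_s(x))$, obtained from $(d^*d)(x)=(dd^*)(\t_s(x))$ applied to $d=b+i^kc$. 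With these two observations supplied, both halves of your plan go through and yield the paper's conclusion.
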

\begin{proof}  We first prove that $ab^*b=bb^*a$ for all $b\in \A_s$. We have
\begin{align*}
  (ab^*b - bb^*a)&(ab^*b - bb^*a)^*  = (ab^*b - bb^*a) (b^*ba^* - a^*bb^*) \\
  & = ab^*bb^*ba^* - ab^*(ba^*)(bb^*) - b(b^*a)(b^*b)a^* + bb^*aa^*bb^* \\
  & = ab^*bb^*ba^* - ab^*bb^*ba^* - bb^*bb^*aa^* + bb^*aa^*bb^* = 0.
\end{align*}
Therefore, $ab^*b = bb^*a$ for all $b\in \A_s$. By polarization, $ab^*c=cb^*a$ for all $b,c\in \A_s$.
Note that $\tilde\t_s(b^*c)=cb^*$ because $(b^*c)(x)=(cb^*)(\t_s(x))$ for all $x\in \U_{s^*s}$.
We conclude that $ab^*c=\tilde\t_s(b^*c)a$ for all $b,c\in \A_s$. Since $\A_s^*\A_s$ spans a dense subspace of $\A_{s^*s}$
the assertion follows.
\end{proof}

Now, we can show that the equivalence relation defined in~\ref{DefineEquivRel} also has a left hand side version in the following sense.

\begin{lemma}\label{L:left hand side version of equivalence in L}
Given $\trip asx, \trip{a'}{s'}{x'}\in \F$, we have $\trip asx\sim \trip{a'}{s'}{x'}$ if and only if
there is $f\in E(S)$ and $c\in \A_f$ satisfying
\begin{enumerate}
\item[\textup{(i)}] $x=x'$,
\item[\textup{(ii)}] $c(y)\neq 0$, where $y=\theta_s(x)=\theta_{s'}(x)$,
\item[\textup{(iii)}] $fs=fs'$, and
\item[\textup{(iv)}] $ca\eq x ca'$.
\end{enumerate}
\end{lemma}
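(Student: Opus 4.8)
The plan is to prove the equivalence by converting the right-hand (source-side) description in Definition~\ref{DefineEquivRel} into the left-hand (range-side) description via the involution and the polar-type relations already at our disposal. The starting point is Lemma~\ref{L:Equivinh}(vi), which says that $a \eq x b$ implies $a^* \eq{\t_s(x)} b^*$, together with Lemma~\ref{L:properties of A_(s,x)}(v), which identifies $\A_{(s,x)}^* = \A_{(s^*,\t_s(x))}$. These tell us that the relation ``$\sim$'' on $\F$ is symmetric under the operation $\trip a s x \mapsto \trip{a^*}{s^*}{\t_s(x)}$ in a way compatible with passing from the source-side to the range-side.

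First I would assume $\trip a s x \sim \trip{a'}{s'}{x'}$ and pick $e \in E(S)$, $b \in \A_e$ with $x = x'$, $b(x) \neq 0$, $se = s'e$, and $ab \eq x a'b$. The goal is to produce $f \in E(S)$ and $c \in \A_f$ with $c(y) \neq 0$ for $y = \t_s(x) = \t_{s'}(x)$, $fs = fs'$, and $ca \eq x ca'$. The natural candidate is to take $f = ses^*$ (or, symmetrically, $f = s'es'^*$; note $ses^* = s'es'^*$ follows from $se = s'e$), which is an idempotent below $ss^*$, and to choose $c$ appropriately inside $\A_f = \contz(\U_f)$. The identity $fs = ses^*s$; since $es^*s = e$ when $e \le s^*s$ — and we may assume $e \le s^*s$ after replacing $e$ by $e \cdot s^*s$, which does not affect $b(x) \neq 0$ by Remark~(2) after Definition~\ref{DefineEquivRel} — we get $fs = ses^* s = se$, and likewise $fs' = s'es'^*s' = s'e = se$, so $fs = fs'$ holds. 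For the non-vanishing of $c$ at $y$, observe that $\t_s$ carries $\U_e$ onto $\U_{ses^*} = \U_f$, so $y = \t_s(x) \in \U_f$ and there is $c \in \A_f$ with $c(y) \neq 0$.

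The main work is the translation of the condition $ab \eq x a'b$ into $ca \eq x ca'$. Here I would use Lemma~\ref{NiceRelation}: for $a \in \A_s$ and $d \in \A_{s^*s}$ one has $ad = \tilde\t_s(d)\,a$, i.e. multiplying on the right by an element of $\A_{s^*s}$ is the same as multiplying on the left by its image under $\tilde\t_s$, which lives in $\A_{ss^*}$. Applying this with $d = b$ (after arranging $b \in \A_{s^*s}$ as above), we get $ab = \tilde\t_s(b)\,a$ and $a'b = \tilde\t_{s'}(b)\,a'$; since $se = s'e$ forces $\t_s$ and $\t_{s'}$ to agree on $\U_e \ni x$, we have $\tilde\t_s(b) = \tilde\t_{s'}(b) =: c_0 \in \A_{ss^*}$, with $c_0(y) = b(x) \neq 0$. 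Then $ab \eq x a'b$ reads $c_0 a \eq x c_0 a'$, which is exactly condition~(iv) with $c = c_0$ and $f = ss^*$ — except we must check $ss^* s = ss^* s' $ forces nothing false; actually $fs = ss^*s = s$ and $fs' = ss^*s'$, and these need not be equal in general. This is the one subtle point: I expect to have to replace $f = ss^*$ by the smaller idempotent $f = ses^*$ to get $fs = fs'$, using that $(ses^*)s = se = s'e = (s'es'^*)s' = (ses^*)s'$ since $ses^* = s'es'^*$. With $c = c_0 \cdot (\text{a function supported in }\U_f\text{ not vanishing at }y)$, or simply noting $c_0$ itself may be restricted, condition (ii) persists. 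The reverse implication is then obtained by the same argument applied to $\trip{a^*}{s^*}{y} \sim \trip{a'^*}{s'^*}{y}$, invoking Lemma~\ref{L:Equivinh}(vi) in both directions to move between $\eq x$ on $\A_s$ and $\eq y$ on $\A_{s^*}$, so no new ideas are needed. The step I expect to be the genuine obstacle is the careful bookkeeping of which idempotent $f$ makes both $fs = fs'$ and $c(y) \neq 0$ work simultaneously; everything else is a direct application of Lemmas~\ref{L:Equivinh}, \ref{L:properties of A_(s,x)}, and \ref{NiceRelation}.
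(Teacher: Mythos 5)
Your forward implication is essentially the paper's own argument: shrink the idempotent, use Lemma~\ref{NiceRelation} to rewrite $ab=\tilde\t_s(b)a$ and $a'b=\tilde\t_{s'}(b)a'$, and take $f=ses^*$ with $c=\tilde\t_s(b)=\tilde\t_{s'}(b)$ and $c(y)=b(x)\neq0$ (the paper takes $f=(ses^*)(s'es'^*)$, but as you note these coincide since $se=s'e$). One bookkeeping repair is needed: you must arrange $e\leq (s^*s)(s'^*s')$, not merely $e\leq s^*s$; otherwise $b\notin\A_{s'^*s'}$, so $\tilde\t_{s'}(b)$ is not even defined and Lemma~\ref{NiceRelation} gives you no identity for $a'b$. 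The correct device is the one the paper uses (and which you only gesture at via Remark~(2)): multiply the relation $ab\eq x a'b$ on the right by some $d\in\A_{(s^*s)(s'^*s')}$ with $d(x)\neq0$, which is legitimate by Lemma~\ref{L:Equivinh}(iii) and replaces $b$ by $bd$ with $(bd)(x)\neq0$. Likewise, $fs=fs'$ requires $e\leq s'^*s'$ as well as $e\leq s^*s$, so the same shrinking fixes that computation. Also, your hedge about cutting $c_0$ down is unnecessary: $\tilde\t_s(b)\in\tilde\t_s(\A_e\cap\A_{s^*s})=\A_{ses^*}=\A_f$ already (this is exactly how the paper argues), although your variant of multiplying by an element of $\A_f$ not vanishing at $y$ also works via Lemma~\ref{L:Equivinh}(ii).

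Where you genuinely diverge is the converse. The paper reruns the symmetric computation with $\tilde\t_s\inv$, producing $b=\tilde\t_s\inv(c)=\tilde\t_{s'}\inv(c)$ and a suitable idempotent below $s^*s$ and $s'^*s'$. You instead pass to the starred triples $\trip{a^*}{s^*}{y}$ and $\trip{a'^*}{s'^*}{y}$: conditions (i)--(iv) translate, via Lemma~\ref{L:Equivinh}(vi) and $(fs)^*=s^*f$, into exactly the data of Definition~\ref{DefineEquivRel} for these triples, so you may apply the already-proved forward half and then star back. This reduction is slick and avoids repeating the computation, at the mild cost of checking that the starred triples lie in $\F$ (they do, since $y=\t_s(x)\in\U_{ss^*}\cap\U_{s's'^*}$) and that $\eq x$ transfers correctly to $\eq y$ through the multiplication by $c$ and $c_1^*$; your cited lemmas cover all of this. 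So: correct modulo the idempotent-shrinking slip, same route as the paper for one direction, and a genuinely different (involution-based) route for the other.
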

\begin{proof}
Suppose that $\trip asx\sim \trip{a'}{s'}{x'}$, so that $x=x'$ and there is $e\in E(S)$ and $b\in \A_e$ with $b(x)\neq 0$, $se=s'e$ and $ab\eq x a'b$.
Replacing $e$ by $es^*ss'^*s'$ and using Lemma~\ref{L:Equivinh}(iii) to multiply the equation $ab\eq x a'b$ on the right by a function in
$d\in \A_{(s^*s)(s'^*s')}$ with $d(x)\neq 0$, we may assume that $e\leq s^*s$ and $e\leq s'^*s'$. Thus, we may assume
$b\in \A_e\sbe\A_{s^*s}\cap\A_{s'^*s'}$. By Lemma~\ref{NiceRelation}, we have $ab=\tilde\t_s(b)a$ and $a'b=\tilde\t_{s'}(b)a$.
Note that $\tilde\t_s(b)=\tilde\t_{se}(b)=\tilde\t_{s'e}(b)=\tilde\t_{s'}(b)$. Define $c$ to be this common value.
We have $c=\tilde\t_s(b)\in \tilde\t_s(\A_e\cap\A_{s^*s})=\A_{ses^*}$ by \cite[Proposition 4.5]{Exel:inverse.semigroups.comb.C-algebras},
and similarly $c=\tilde\t_{s'}(b)\in \A_{s'es'^*}$. Defining  $f$ to be $gg'$, where $g\defeq ses^*$ and $g'\defeq s'es'^*$,
we therefore have $c\in \A_f$. Moreover, $gs=ses^*s=se=s'e=s'es'^*s'=g's'$,
so that $fs=fs'$. We conclude that $c\in \A_f$ and $ca\eq x ca'$. Finally, note that $c(y)=\tilde\t_{s}(b)(y)=b(\t_s\inv(y))=b(x)\neq 0$.
Thus we have shown (i)-(iv) provided $\trip asx\sim \trip{a'}{s'}{x'}$. Conversely, if (i)-(iv) hold, then we may use a similar argument to
show that $\trip asx\sim \trip{a'}{s'}{x'}$. In fact, as before, we may assume $f\leq ss^*$ and $f\leq s's'^*$.
Then we use Lemma~\ref{NiceRelation} again to conclude that $ca=a\tilde\t_s\inv(c)$ and $ca'=a\tilde\t_{s'}\inv(c)$. Defining $b\defeq \tilde\t_{s}\inv(c)=\tilde\t_{s'}\inv(c)$ and $e=(s^*fs)(s'^*fs)$, and proceeding as before, we conclude that $b\in \A_e$, $b(x)\neq 0$, and $ab\eq x a'b$.
Therefore, $\trip asx\sim \trip{a'}{s'}{x'}$, as desired.
\end{proof}

\begin{definition}
The equivalence class of each $\trip asx$ in $\F$ will be denoted by
$\qtrip asx$. The quotient of $\F$ by this equivalence relation will
be denoted by $L$.
\end{definition}

In what follows we shall give $L$ the structure of a Fell line bundle
over the groupoid $\G$ constructed in Section~\ref{sec:the construction of G}.

\begin{proposition}
Given $\qtrip asx,\qtrip{a'}{s'}{x'}$ in $L$, we have $\qtrip asx=\qtrip{a'}{s'}{x'}$ if and only if
$\germ sx=\germ{s'}{x'}$ in $\G$, and $ac\eq x a'c$ for all $e\in E(S)$ with $se=s'e$ and for all $c\in \A_e$.
In particular, the correspondence
  $$
  \pi\colon
  \qtrip asx \mapsto \germ sx
  $$
  is a well-defined surjection from $L$ to $\G$.
\end{proposition}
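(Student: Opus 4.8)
The statement has two directions to verify, plus the well-definedness of $\pi$. The plan is to first prove the forward implication: assume $\qtrip asx = \qtrip{a'}{s'}{x'}$, i.e. $\trip asx \sim \trip{a'}{s'}{x'}$, and extract a witness $e\in E(S)$, $b\in\A_e$ with $b(x)\neq 0$, $se=s'e$, and $ab\eq x a'b$. From $b(x)\neq 0$ we get $x\in\U_e$ (Remark after Definition~\ref{DefineEquivRel}), and together with $se=s'e$ this immediately gives $\germ sx = \germ{s'}{x'}$ in $\G$ (recall $x=x'$). The real content of this direction is upgrading ``$ab\eq x a'b$ for the particular witness $b$'' to ``$ac\eq x a'c$ for \emph{every} $f\in E(S)$ with $sf=s'f$ and every $c\in\A_f$''.

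For that upgrade, fix such an $f$ and $c\in\A_f$. I want to compare $ac$ and $a'c$ in the fiber $\A_{sf}=\A_{s'f}$. The idea is to use the idempotent $g\defeq ef$ together with an element of $\A_g$ that is nonzero at $x$ — but $c\cdot(\text{stuff})$ need not be nonzero at $x$, so instead I multiply on the right by something supported near $x$. Concretely: since $x\in\U_e$, pick $d\in\A_e$ (equivalently a function in $\contz(\U_e)$) with $d(x)\neq 0$; then $cd\in\A_f\A_e\sbe\A_{fe}$. From $ab\eq x a'b$ and Lemma~\ref{L:Equivinh}(iii) (multiplying on the right by the idempotent-fiber element $d^{}$, or rather by $b^{}$-type manipulations) I first deduce $ad'\eq x a'd'$ for every $d'\in\A_e$ with the help of an approximate-unit/polarization argument inside the ideal $\A_e$: indeed the set $\{d'\in\A_e : ad'\eq x a'd'\}$ is a closed submodule of $\A_e$ containing $b$, and since $b(x)\neq 0$ the ideal it generates in $\contz(\U_e)$ is all functions nonvanishing at a neighborhood — hence, using Lemma~\ref{L:Equivinh}(iii) again to ``localize at $x$'', it contains every $d'\in\A_e$. (More directly: by Lemma~\ref{L:Equivinh}(iii), $ab\eq x a'b$ implies $abb^*\eq x a'bb^*$, and $bb^*\in\A_e$ is an element with $(bb^*)(x)=|b(x)|^2\neq 0$; replacing $b$ by $bb^*$ we may assume the witness is a nonnegative function nonzero at $x$, and then for arbitrary $d'\in\A_e$ write $d' = (bb^*)^{1/2}\cdot h\cdot(bb^*)^{1/2} + (\text{vanishing at }x)$ after a cutoff, using commutativity of $\A_e$, so $ad'\eq x a'd'$.) Now apply Lemma~\ref{L:Equivinh}(iv): since the relevant domain condition holds ($x\in\U_{tt^*}$ for the appropriate $t$), $a(cd)\eq{\cdot} a'(cd)$ translates, after moving the $\A_e$-factor through via Lemma~\ref{NiceRelation} ($ad = \tilde\t_s(d)a$), into a statement about $ac$ versus $a'c$ localized via a function nonzero at $x$; a final application of the same ``localize at $x$'' principle gives $ac\eq x a'c$. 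I expect this step — carefully juggling which side the idempotent-fiber element sits on, and invoking Lemma~\ref{NiceRelation} plus the commutativity of the fibers over idempotents — to be the main obstacle.

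The converse direction is easier: suppose $\germ sx = \germ{s'}{x'}$ (so $x=x'$ and there is $e_0\in E(S)$ with $x\in\U_{e_0}$ and $se_0 = s'e_0$) and $ac\eq x a'c$ for all such $e$, $c$. Pick $c\in\A_{e_0}$ with $c(x)\neq 0$ (possible since $x\in\U_{e_0}$ means $\A_{e_0}=\contz(\U_{e_0})$ does not vanish at $x$); then $e=e_0$, $b=c$ witness $\trip asx \sim\trip{a'}{s'}{x'}$ directly from Definition~\ref{DefineEquivRel}: conditions (i),(iii) are given, (ii) is $c(x)\neq 0$, and (iv) is $ac\eq x a'c$ by hypothesis. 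Hence $\qtrip asx = \qtrip{a'}{s'}{x'}$.

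Finally, for the map $\pi$: the forward implication just proved shows in particular that $\qtrip asx = \qtrip{a'}{s'}{x'}$ \emph{implies} $\germ sx = \germ{s'}{x'}$, which is exactly well-definedness of $\pi\colon \qtrip asx\mapsto\germ sx$. Surjectivity is immediate: given $\germ sx\in\G$ we have $x\in\U_{s^*s}$, so $\A_s$ is nonzero ``at $x$'' (pick $a\in\A_s$ with $(a^*a)(x)>0$, which exists by saturation as in the proof of Proposition~\ref{prop:theta is an action of S}); then $\trip asx\in\F$ and $\pi(\qtrip asx) = \germ sx$. This completes the proof.
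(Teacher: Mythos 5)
Your converse direction and the claims about $\pi$ are correct and essentially as in the paper (for surjectivity you do not even need saturation: any $a\in\A_s$, including $a=0$, gives a triple $\trip{a}{s}{x}\in\F$ lying over $\germ{s}{x}$). The genuine gap is in the forward direction, exactly at the step you yourself flag as ``the main obstacle'': passing from the single witness relation $ab\eq{x}a'b$ (with $b\in\A_e$, $b(x)\neq0$, $se=s'e$) to $ac\eq{x}a'c$ for \emph{every} idempotent $f$ with $sf=s'f$ and every $c\in\A_f$. As sketched, your route does not go through: you invoke Lemma~\ref{L:Equivinh}(iv), whose hypothesis $x\in\U_{tt^*}$ has no reason to hold here — the quantified idempotent $f$ need not satisfy $x\in\U_f$ (the statement explicitly allows $c$ with $c(x)=0$) — and Lemma~\ref{NiceRelation}, which only moves elements of $\A_{s^*s}$ (not a general $c\in\A_f$) across $a$. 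Your preliminary reduction ``$ad'\eq{x}a'd'$ for all $d'\in\A_e$'' is correct but is never fed into a complete argument, so the universal claim is left unproved.

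What closes it is a two-line use of tools you already cite, and it is the paper's argument. Fix $f\in E(S)$ with $sf=s'f$ and $c\in\A_f$. Lemma~\ref{L:Equivinh}(iii) — which, unlike (iv), carries no domain condition precisely because $c$ lies over an idempotent — applied to $ab\eq{x}a'b$ gives $abc\eq{x}a'bc$. Since $b$ and $c$ lie in fibers over idempotents and $C^*(\E)$ is commutative (Lemma~\ref{lem:FellBundleSemiAbelianIFFC*(E)Commutative}), we have $bc=cb$, hence $(ac)b\eq{x}(a'c)b$, where $ac,a'c\in\A_{sf}=\A_{s'f}$. Now cancel $b$ at $x$: the element $N=(ac-a'c)^*(ac-a'c)$ lies in the idempotent fiber $\A_{(sf)^*(sf)}$ and therefore commutes with $b$, so $\bigl((ac-a'c)b\bigr)^*\bigl((ac-a'c)b\bigr)=|b|^2N$; the left-hand side vanishes at $x$ by the previous relation, and $b(x)\neq0$, so $N(x)=0$, i.e.\ $ac\eq{x}a'c$. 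With this replacement your outline coincides with the paper's proof, and the appeal to Lemma~\ref{L:Equivinh}(iv), Lemma~\ref{NiceRelation} and the cutoff/decomposition detour becomes unnecessary.
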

\begin{proof}
If $\qtrip asx=\qtrip{a'}{s'}{x'}$, then, by definition, we have $x=x'$ and there is $f\in E(S)$ and $d\in \A_f$ with $sf=s'f$,
$d(x)\not=0$ and $ad\eq x a'd$. In particular, this implies $\germ sx=\germ{s'}{x'}$.
Now, take any $e\in E(S)$ with $se=s'e$ and any element $c\in \A_e$. By Lemma~\ref{L:Equivinh}(iii), we have
$adc\eq xa'dc$. Since $dc=cd$ and $d(x)\not=0$, this implies $ac\eq x a'c$.
Conversely, assume that $\germ sx=\germ{s'}{x'}$ and $ac\eq xa'c$ for all $e\in E(S)$ with $se=s'e$ and $c\in \A_e$.
The equality $\germ sx=\germ{s'}{x'}$ means that $x=x'$ and there is $f\in E(S)$ with $x\in \U_f$ and $sf=s'f$.
Since $\U_f$ is the spectrum of the commutative \cstar{algebra} $\A_f$, there is $d\in \A_e$ with $d(x)\not=0$.
And by hypothesis, $ad\eq x a'd$, so that $\qtrip asx=\qtrip{a'}{s'}{x'}$.
\end{proof}

From now on, for each $\gamma\in\G$ we  let
$L_\gamma$ denote $\pi^{-1}(\gamma)$, and call it the \emph{fiber of
$L$ over $\gamma$}.
The next result provides a linear structure on each fiber of $L$.

\begin{proposition}\label{def:linear structure on the fibers of L}
Given $\g\in \G$ and elements $\qtrip asx, \qtrip btx\in L$ which are in the same fiber $L_\g$, we define
$$\qtrip asx +\qtrip btx\defeq  \qtrip{ac+bc}{se}{x},$$
where $e\in E(S)$ is any idempotent satisfying $se=te$ and $x\in \U_e$,
and $c\in \A_e$ is any function with $c(x)=1$. Then this is a
well-defined addition operation on $L_\g$.
Moreover, if $s=t$ \textup(so that $a,b$ belong to the same fiber $\A_s$\textup), then
$$\qtrip asx +\qtrip bsx= \qtrip{a+b}{s}{x}.$$
Given $\lambda\in \C$, we define
$$\lambda\cdot\qtrip asx\defeq \qtrip{\lambda a}{s}{x}.$$
Then, this is a well-defined scalar product on $L_\g$. With this structure, $L_\g$ is a complex vector space.
Moreover, the assignment
$$\qtrip asx\mapsto \big\|\qtrip asx\big\|\defeq \sqrt{(a^*a)(x)}$$
is a well-defined norm on the fiber $L_\g$. Hence $L_\g$ is a normed vector space.
\end{proposition}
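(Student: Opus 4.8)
The plan is to verify each piece of structure in turn—addition, scalar multiplication, the norm—and in each case the work has two halves: well-definedness (independence of the choices of $s$, $x$-representative, the idempotent $e$, and the function $c$) and the vector-space/norm axioms. I would organize the argument so that the hardest case is isolated first.

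\smallskip\noindent\emph{Scalar multiplication.} This is the easy one: if $\qtrip asx=\qtrip{a'}{s'}{x}$, then by definition there are $e\in E(S)$ and $b\in\A_e$ with $b(x)\neq0$, $se=s'e$, and $ab\eq x a'b$; multiplying by $\lambda$ and using that $\eq x$ is preserved under the fiberwise linear structure of $\A_s$ gives $(\lambda a)b\eq x(\lambda a')b$, so $\qtrip{\lambda a}{s}{x}=\qtrip{\lambda a'}{s'}{x}$. The vector-space axioms for scalars then follow immediately from those of the fibers $\A_s$.

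\smallskip\noindent\emph{Addition.} Here is where the real obstacle lies. I first note that once $e$ and $c$ with $c(x)=1$ are fixed, $ac\in\A_{se}$ and $bc\in\A_{te}=\A_{se}$ lie in the same fiber, so $ac+bc$ makes sense and $\qtrip{ac+bc}{se}{x}$ is a legitimate element of $L$; moreover $se$ is one of the idempotent-shifted representatives of $\g$, so this class lies in $L_\g$. The crux is to show the class does not depend on $(e,c)$ and on the chosen representative $\trip asx$ of $\qtrip asx$ (similarly $\trip btx$). I would handle this uniformly: suppose $(e,c)$ and $(e',c')$ are two admissible choices and $\trip asx$, $\trip btx$ are replaced by $\sim$-equivalent triples. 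Using Lemma~\ref{L:Equivinh}(iii) (the relation $\eq x$ is stable under right multiplication by elements of $\A_g$, $g\in E(S)$) together with the fact that the functions $c,c'$ agree at $x$, one reduces to comparing $\qtrip{ac+bc}{se}{x}$ and $\qtrip{ac'+bc'}{se'}{x}$ after passing to the common idempotent $g=ee'$ and the function $cc'$, which still satisfies $(cc')(x)=1$; by Lemma~\ref{L:Equivinh}(iii) again, $acc'\eq x ac'c$ etc., and the definition of $\sim$ (Definition~\ref{DefineEquivRel}, with witness $cc'$) yields equality of the two classes. The substitution-invariance in $\trip asx$ is the same computation, now using the defining data of the $\sim$-equivalence. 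I expect this to be the main obstacle, mostly bookkeeping about which idempotents and functions to multiply by; the key enabling fact throughout is that fibers over idempotents commute (semi-abelianness), so the various $c$'s and $b$'s can be freely reordered.

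\smallskip\noindent\emph{Compatibility and axioms.} The identity $\qtrip asx+\qtrip bsx=\qtrip{a+b}{s}{x}$ when $s=t$ follows by taking $e=s^*s$ and any $c\in\A_e$ with $c(x)=1$: then $ac+bc=(a+b)c$ and $\qtrip{(a+b)c}{s^*s\cdot s}{x}=\qtrip{(a+b)c}{s}{x}=\qtrip{a+b}{s}{x}$ by the definition of $\sim$ with witness $c$ (note $(a+b)c\eq x a+b$ since $c(x)=1$ forces $\big(((a+b)c-(a+b))^*((a+b)c-(a+b))\big)(x)=|c(x)-1|^2\big((a+b)^*(a+b)\big)(x)=0$, using semi-abelianness). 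Commutativity and associativity of $+$ on $L_\g$ and the scalar distributive laws now descend from those in the fixed fiber $\A_s$ after choosing a single $(e,c)$ valid for all the elements involved; the zero element is $\qtrip{0_s}{s}{x}$, which by Lemma~\ref{L:Equivinh}(i) equals $\qtrip asx$ precisely when $(a^*a)(x)=0$, and additive inverses come from scalar multiplication by $-1$. Finally, for the norm: if $\qtrip asx=\qtrip{a'}{s'}{x}$ with witness $b$, $b(x)\neq0$, $ab\eq x a'b$, then $\big((a-a')^*(a-a')\big)(x)|b(x)|^2=\big((ab-a'b)^*(ab-a'b)\big)(x)=0$, so $(a^*a)(x)=(a'^*a')(x)$ and $\big\|\qtrip asx\big\|=\sqrt{(a^*a)(x)}$ is well-defined; positivity of $a^*a$ in the $C^*$-algebra $\A_{s^*s}$ makes the square root meaningful, homogeneity is clear from the scalar action, $\big\|\qtrip asx\big\|=0$ iff $(a^*a)(x)=0$ iff $\qtrip asx=\qtrip{0_s}{s}{x}$ is the zero of $L_\g$ by Lemma~\ref{L:Equivinh}(i), and the triangle inequality follows by choosing a common $(e,c)$, writing the sum as $\qtrip{ac+bc}{se}{x}$, and invoking $\big\|\qtrip{ac+bc}{se}{x}\big\|=\sqrt{\big((ac+bc)^*(ac+bc)\big)(x)}$ together with the fact that $d\mapsto\sqrt{d(x)}$ is the norm in the Hilbert-module sense on $\A_{se}$ viewed through evaluation at $x$, which satisfies the triangle inequality because it is the norm of the one-dimensional quotient $\A_{(se,x)}\backslash\A_{se}$.
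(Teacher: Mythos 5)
Your proposal follows essentially the same route as the paper: reduce all comparisons to a common idempotent and a single witness function using Lemma~\ref{L:Equivinh}(iii) and commutativity of the idempotent fibers, then use linearity of the submodules $\A_{(r,x)}$; your treatment of the sum, the case $s=t$, and the triangle inequality via the seminorm $a\mapsto\sqrt{(a^*a)(x)}$ on a fixed fiber all match the paper's argument (the paper just carries out the representative-independence bookkeeping in full, replacing $e,f,g$ by $efg$ and the two witnesses by a single $hcd$, where you only sketch it).

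One step as written is not meaningful and should be repaired: in the norm argument the expression $\bigl((a-a')^*(a-a')\bigr)(x)$ makes no sense when $s\neq s'$, since $a\in\A_s$ and $a'\in\A_{s'}$ lie in different fibers, so there is no difference $a-a'$ and no factorization $\bigl((ab-a'b)^*(ab-a'b)\bigr)(x)=|b(x)|^2\bigl((a-a')^*(a-a')\bigr)(x)$. The paper's fix, which is available with exactly the data you already have, is to first prove the same-fiber statement: if $u\eq x v$ with $u,v\in\A_r$, then left-multiplying by $u^*$ and $v^*$ (Lemma~\ref{L:Equivinh}(ii)) gives $(u^*u)(x)=(u^*v)(x)$ and $(v^*v)(x)=(v^*u)(x)$, whence $(u^*u)(x)=(v^*v)(x)$ by conjugate symmetry (equivalently, by Cauchy--Schwarz for the positive form $(u,v)\mapsto (u^*v)(x)$, which also underlies your triangle-inequality remark). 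Then apply this to $u=ab$, $v=a'b$ in the common fiber $\A_{se}=\A_{s'e}$, use $\bigl((ab)^*(ab)\bigr)(x)=(b^*b)(x)(a^*a)(x)$ and likewise for $a'$, and divide by $(b^*b)(x)>0$ to conclude $(a^*a)(x)=(a'^*a')(x)$. With that substitution your argument is complete and agrees with the paper's.
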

\begin{proof}
First, note that if $\qtrip asx$ and $\qtrip btx$ are in the same fiber $L_\g$, then $\germ sx=\pi(\qtrip asx)=\pi(\qtrip btx)=\germ tx$,
so that there is an idempotent $e\in E(S)$ with $se=te$ and $x\in \U_e$.
To show that the definition of the sum above does not depend
on the choices of $e\in E(S)$ and $c\in \A_e$, take another idempotent $e'\in E(S)$ with $se'=te'$ and $x\in \U_{e'}$, and
another function $c'\in\A_{e'}$ with $c'(x)=1$. We have to show that
\begin{equation}\label{eq:equality of elements defining the sum in L}
\qtrip{ac+bc}{se}{x}=\qtrip{ac'+bc'}{se'}{x}.
\end{equation}
Define $f\defeq ee'\in E(S)$, and take any function $d\in \A_f$ with $d(x)\not=0$. Note that $x\in \U_f$ and $r\defeq (se)f=sf=(se')f=(te')f=tf=(te)f$.
Since $c(x)=c'(x)=1$, it is easy to check that $\xi c\eq x \xi c'$ for all $\xi\in \A_r$. In particular,
$(ad+bd)c\eq x (ad+bd)c'$, and hence (using that $cd=dc$ and $c'd=dc'$)
$$(ac+bc)d=(ad+bd)c\eq x (ad+bd)c'=(ac'+bc')d.$$
This verifies Equation~\eqref{eq:equality of elements defining the sum in L}. Next, we show that the sum on $L$ does not depend on representatives:
suppose that $\qtrip{a'}{s'}{x}=\qtrip asx$ and $\qtrip{b'}{t'}{x}=\qtrip btx$. All these elements belong to the same fiber $L_\g$.
Hence, there is $e\in E(S)$ with $x\in \U_e$ and $r\defeq se=s'e=t'e=te$. The equality $\qtrip{a'}{s'}{x}=\qtrip asx$ yields $f\in E(S)$
and $c\in \A_f$ with $c(x)\not=0$, $s'f=sf$ and $a'c\eq x ac$. And the equality $\qtrip{b'}{t'}{x}=\qtrip btx$ yields $g\in E(S)$ and
$d\in \A_g$ with $d(x)\not=0$, $t'g=tg$ and $b'd\eq x bd$. Rescaling $c$ and $d$, we may assume that $c(x)=d(x)=1$. Moreover,
replacing the idempotents $e,f,g$ by the product $efg$, and using Lemma~\ref{L:Equivinh}(iii) to replace
the functions $c$ and $d$ by a function of the form $hcd\in \A_{efg}$,
where $h$ is any function in $\A_e$ with $h(x)=1$, we may further assume that $e=f=g$ and $c=d$.
Thus, all the elements $ac,bc,a'c,b'c$ belong to the same fiber $\A_r$, and we have $ac\eq x a'c$ and $bc\eq x b'c$,
that is $ac-a'c\in \A_{(r,x)}$ and $bc-b'c\in \A_{(r,x)}$. Therefore,
$$(ac+bc)-(a'c+b'c)=(ac-a'c)+(bc-b'c)\in \A_{(r,x)},$$
that is, $ac+bc\eq x a'c+b'c$. This shows that the sum on $L_\g$ is well-defined.
If $a,b$ belong to the same fiber $\A_s$ and if $c\in \A_e$ is such that $c(x)=1$, where $e\in E(S)$ and $x\in \U_e$,
then $\qtrip{ac+bc}{se}{x}=\qtrip{(a+b)c}{se}{x}=\qtrip{a+b}{s}{x}$, so that
$$\qtrip asx +\qtrip btx= \qtrip{a+b}{s}{x}.$$
It is easy to see that the scalar product
is also well-defined and that $L_\g$ is a complex vector space with this structure.
To see that the map $\qtrip asx\mapsto (a^*a)(x)\half$ is well-defined, assume that $\qtrip asx=\qtrip btx$.
First suppose that $s=t$, so that $a,b$ are in the same fiber $\A_s=\A_t$. In this case, the equality
$\qtrip asx=\qtrip btx$ means that $a\eq x b$. Multiplying this equation on the left by $a^*$ and using Lemma~\ref{L:Equivinh}(ii)
we get $a^*a\eq x a^*b$, that is, $(a^*a)(x)=(a^*b)(x)$. Similarly, $(b^*b)(x)=(b^*a)(x)$. Since $(a^*b)(x)=\overline{(b^*a)(x)}$,
we get $(a^*a)(x)=(b^*b)(x)$. In the general case, if $a$ and $b$ are in different fibers $\A_s$ and $\A_t$,
the equality $\qtrip asx=\qtrip btx$ yields $e\in E(S)$ and $c\in \A_e$ with $c(x)\not=0$, $se=te$ and $ac\eq x bc$.
Now, $ac$ and $bc$ are in the same fiber $\A_{se}=\A_{te}$. The previous argument implies $(ac)^*(ac)(x)=(bc)^*(bc)(x)$.
But $(ac)^*(ac)=(c^*a^*ac)(x)=(c^*c)(x)(a^*a)(x)$ and similarly $(bc)^*(bc)(x)=(c^*c)(x)(b^*b)(x)$. Since $(c^*c)(x)>0$,
we conclude that $(a^*a)(x)=(b^*b)(x)$. Therefore $\qtrip asx\mapsto (a^*a)(x)\half$ is a well-defined map,
which is easily seem to be a norm on $L_\g$.
\end{proof}

Next, we prove that $L$ has one-dimensional fibers:

\begin{proposition}
Given $\g=\germ sx\in \G$, take $a\in \A_s$ with $(a^*a)(x)>0$. Then, for any element $\qtrip bty$ in the fiber $L_\g$,
there is a unique $\lambda\in \C$ such that $\qtrip bty= \lambda\cdot\qtrip asx$.
Hence, the singleton formed by the element $\qtrip asx$ is a basis for $L_\g$ and we have $L_\g\cong\C$ as complex vector spaces.
Moreover, the map
$$\lambda\mapsto \frac{\lambda\cdot\qtrip asx}{\sqrt{(a^*a)(x)}}$$
defines an isomorphism $\C\congto L_\g$ of normed vector spaces. In particular, each fiber $L_\g$ of $L$ is a Banach space.
\end{proposition}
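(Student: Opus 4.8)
The plan is to fix $\g=\germ sx$ and $a\in\A_s$ with $(a^*a)(x)>0$ as in the statement, take an arbitrary $\qtrip bty\in L_\g$, and produce a scalar $\lambda$ with $\qtrip bty=\lambda\cdot\qtrip asx$; uniqueness of $\lambda$ and the remaining assertions will then be routine. Since $\pi(\qtrip bty)=\germ ty$ must equal $\g=\germ sx$, we have $y=x$ and there is $e\in E(S)$ with $x\in\U_e$ and $se=te$. Choose $d\in\A_e=\contz(\U_e)$ with $d(x)=1$ and set $u\defeq se=te$, $a_0\defeq ad$, $b_0\defeq bd\in\A_u$. Using Definition~\ref{DefineEquivRel} with the pair $(e,d)$ one checks $\trip asx\sim\trip{a_0}ux$ and $\trip bty\sim\trip{b_0}ux$: e.g. $ad-ad^2=a(d-d^2)$ and $\big((a(d-d^2))^*(a(d-d^2))\big)(x)=|d(x)-d(x)^2|^2\,(a^*a)(x)=0$, so $ad\eq x ad^2$. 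Also $(a_0^*a_0)(x)=|d(x)|^2(a^*a)(x)=(a^*a)(x)>0$. We have thus reduced the problem to the single fibre $\A_u$: it suffices to find $\lambda\in\C$ with $b_0\eq x\lambda a_0$, for then $\qtrip bty=\qtrip{b_0}ux=\qtrip{\lambda a_0}ux=\lambda\cdot\qtrip{a_0}ux=\lambda\cdot\qtrip asx$.

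The heart of the matter — and the step I expect to be the main obstacle, since it is essentially a Cauchy--Schwarz equality encoding the rank-one nature of the quotient of the imprimitivity bimodule $\A_u$ — is the following one-fibre claim. Put $\lambda\defeq(a_0^*b_0)(x)/(a_0^*a_0)(x)$ and $w\defeq b_0-\lambda a_0\in\A_u$; then $(w^*w)(x)=0$. First, $(a_0^*w)(x)=(a_0^*b_0)(x)-\lambda(a_0^*a_0)(x)=0$, so $a_0^*w\in\A_{(u^*u,x)}$. Multiplying on the left by $a_0$ and applying Lemma~\ref{L:properties of A_(s,x)}(iii) gives $a_0a_0^*w\in\A_u\cdot\A_{(u^*u,x)}=\A_{(u,x)}$, i.e. $\big((a_0a_0^*w)^*(a_0a_0^*w)\big)(x)=0$. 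Now $(a_0a_0^*w)^*(a_0a_0^*w)=w^*(a_0a_0^*)^2w$ with $(a_0a_0^*)^2\in\A_{uu^*}$, so Lemma~\ref{NiceRelation} applied to $w^*\in\A_{u^*}$ yields $w^*(a_0a_0^*)^2w=\tilde\t_{u^*}\!\big((a_0a_0^*)^2\big)\,(w^*w)$ in $\A_{u^*u}=\contz(\U_{u^*u})$. Evaluating at $x$, using $\tilde\t_{u^*}(f)(x)=f(\t_u(x))$ (since $\t_{u^*}=\t_u\inv$) and $(a_0a_0^*)(\t_u(x))=(a_0^*a_0)(x)$, we obtain $(a_0^*a_0)(x)^2\,(w^*w)(x)=0$; as $(a_0^*a_0)(x)>0$, this forces $(w^*w)(x)=0$, proving the claim. (Alternatively, one could invoke the Rieffel correspondence: $\A_{(u,x)}$ corresponds to the codimension-one ideal $\A_{(u^*u,x)}$ of $\A_{u^*u}\cong\contz(\U_{u^*u})$, forcing $\A_u/\A_{(u,x)}$ to be one-dimensional.)

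It remains to tidy up. For uniqueness, if $\lambda\cdot\qtrip asx=\mu\cdot\qtrip asx$ then $\qtrip{\lambda a}sx=\qtrip{\mu a}sx$, hence $\lambda a\eq x\mu a$, hence $|\lambda-\mu|^2(a^*a)(x)=0$ and $\lambda=\mu$; in particular $\qtrip asx\neq 0$, so $\{\qtrip asx\}$ is a basis of $L_\g$ and $\lambda\mapsto\lambda\cdot\qtrip asx$ is a linear isomorphism $\C\congto L_\g$. Since $\big\|\lambda\cdot\qtrip asx\big\|=\big\|\qtrip{\lambda a}sx\big\|=\sqrt{|\lambda|^2(a^*a)(x)}=|\lambda|\sqrt{(a^*a)(x)}$, composing this isomorphism with the (linear, invertible) dilation by $1/\sqrt{(a^*a)(x)}$ shows that the stated map $\lambda\mapsto\lambda\cdot\qtrip asx/\sqrt{(a^*a)(x)}$ is an isometric isomorphism $\C\congto L_\g$. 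As $\C$ is complete, $L_\g$ is a Banach space, completing the proof.
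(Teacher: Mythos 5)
Your proof is correct and follows essentially the same path as the paper's: the same reduction to a common fiber by cutting with $d\in\A_e$, $d(x)=1$, the same formula $\lambda=(a_0^*b_0)(x)/(a_0^*a_0)(x)$, and the same uniqueness and isometry arguments. The only divergence is in verifying that $w=b_0-\lambda a_0$ satisfies $(w^*w)(x)=0$: the paper expands the quadratic form directly using $(a^*fa)(x)=(a^*a)(x)\,f(\t_s(x))$, whereas you observe $(a_0^*w)(x)=0$, pass through $\A_{(u,x)}$ via Lemma~\ref{L:properties of A_(s,x)}, and cancel $(a_0^*a_0)(x)^2>0$ using Lemma~\ref{NiceRelation} --- an equivalent computation resting on the same covariance relation.
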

\begin{proof}
Since $\qtrip bty\in L_\g$, we have $\germ ty=\pi\big(\qtrip bty\big)=\g=\germ sx$. Thus, $x=y$ and there is $e\in E(S)$ such that $x\in \U_e$ and $te=se$. If $c\in \A_e=\contz(\U_e)$ is any function with $c(x)=1$, then we have $\qtrip btx=\qtrip{bc}{te}{x}$ and $\qtrip asx=\qtrip{ac}{se}{x}$. In fact, if $d\in \A_e$ is any function with $d(x)\not=0$, then it is easy to see that $bcd\eq x bd$ and $acd\eq x ad$.
Hence, replacing $b$ by $bc$, and $a$ by $ac$, we may assume that both $a,b$ are in the same fiber, say $\A_s$. Thus, we want to show that
there is a unique $\lambda\in \C$ satisfying $\qtrip bsx=\lambda\cdot \qtrip asx=\qtrip{\lambda a}{s}{x}$. Since both $a,b$ belong
to the same fiber $\A_s$, this is the same as to show that $b\eq x \lambda a$. Multiplying this equation on the left by $a^*$ (and using Lemma~\ref{L:Equivinh}(ii)), we see that if $\lambda$ exists, it has to be $\frac{(a^*b)(x)}{(a^*a)(x)}$. Now, to see that this $\lambda$ works,
we compute
\begin{equation}\label{eq:computation to prove b =x lambda a}
(b-\lambda a)^*(b-\lambda a)(x)=(b^*b)(x)-\lambda(b^*a)(x)-\overline\lambda(a^*b)(x)+|\lambda|^2(a^*a)(x).
\end{equation}
Note that
$$\lambda (b^*a)(x)=\frac{(a^*b)(x)}{(a^*a)(x)}(b^*a)(x)=\frac{(a^*bb^*a)(x)}{(a^*a)(x)}=\frac{(a^*a)(x)(bb^*)(\t_s(x))}{(a^*a)(x)}=(b^*b)(x).$$
Similarly, one proves that $\overline\lambda(a^*b)(x)=|\lambda|^2(a^*a)(x)=(b^*b)(x)$, so that Equation~\eqref{eq:computation to prove b =x lambda a}
equals zero. Therefore, $b\eq x\lambda a$ for $\lambda=\frac{(a^*b)(x)}{(a^*a)(x)}$. Finally, it is easy to see
that the map $\C\ni\lambda\mapsto \frac{\lambda\cdot\qtrip asx}{\sqrt{(a^*a)(x)}}\in L_\g$ is an isometric isomorphism of complex vector spaces.
Its inverse is the map $\qtrip bsx\mapsto \frac{(a^*b)(x)}{\sqrt{(a^*a)(x)}}$.
The element $\qtrip asx$ is therefore a basis vector for $L_\g$ and $L_\g\cong\C$ as complex normed vector spaces.
\end{proof}

In order to define a topology on $L$ we shall use Proposition~\ref{T:topology on Banach bundles from predefined sections}.
Given $a\in \A_s$, we define the local section $\hat{a}$ of $L$ by the formula
\begin{equation}\label{eq:definition of local sections hat a}
\hat{a}\big(\germ sx\big)\defeq \qtrip asx\quad\mbox{for all }\germ sx\in \O_s.
\end{equation}
Thus, by definition, the domain of $\hat{a}$ is the open subset $\dom(\hat{a})\defeq \O_s\sbe\G$.
Recall that $\O_s=\O(s,\U_{s^*s})=\{\germ sx\colon x\in \U_{s^*s}\}$.
\begin{proposition}
There is a unique topology on $L$ making it a continuous Banach bundle and making all the local sections $\hat{a}$ with $a\in \A$ continuous
for this topology. Moreover, with this topology, $L$ is a complex line bundle, that is, a locally trivial one-dimensional complex vector bundle.
\end{proposition}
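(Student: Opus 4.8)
The plan is to invoke Proposition~\ref{T:topology on Banach bundles from predefined sections} with the family of local sections $\locs \defeq \{\hat a \colon a \in \A\}$. First I would verify hypothesis~(i) of that proposition: given $v = \qtrip asx \in L_\g$ with $\g = \germ sx$, the section $\hat a$ has $\g = \germ sx \in \O_s = \dom(\hat a)$ and $\hat a(\g) = \qtrip asx = v$, so every point of $L$ is hit by some $\hat a$. Next I would identify $\spn\locs$: a finite combination $\sum_i \lambda_i \hat{a_i}$, with $a_i \in \A_{s_i}$, has domain $\bigcap_i \O_{s_i}$, and at a germ $\germ sx$ in this intersection (so $x \in \U_{s_i^* s_i}$ and $\germ{s_i}{x} = \germ sx$ for each $i$) it takes the value $\sum_i \lambda_i \qtrip{a_i}{s_i}{x}$. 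Using the addition formula of Proposition~\ref{def:linear structure on the fibers of L}, choosing an idempotent $e \le s^* s$ with $x \in \U_e$ lying below all the relevant idempotents and a function $c \in \A_e$ with $c(x) = 1$, this value equals $\qtrip{\sum_i \lambda_i a_i c}{se}{x} = \widehat{\bigl(\sum_i \lambda_i a_i c\bigr)}(\germ sx)$; in particular the norm is $\bigl((\sum_i \lambda_i a_i c)^*(\sum_i \lambda_i a_i c)\bigr)(x)^{1/2}$.

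For hypothesis~(ii) — actually the continuity half, since we are aiming at a \emph{continuous} bundle — I must show that for each $a \in \A_s$ the map $\O_s \ni \germ sx \mapsto \|\hat a(\germ sx)\| = \sqrt{(a^*a)(x)}$ is continuous on $\O_s$. Via the homeomorphism $\s_s \colon \O_s \to \U_{s^*s}$ this is the map $x \mapsto \sqrt{(a^*a)(x)}$, and $a^* a \in \A_{s^*s} = \contz(\U_{s^*s})$ is a continuous function on $\U_{s^*s}$, so this is clearly continuous; the same argument handles an arbitrary element of $\spn\locs$, since by the previous paragraph it is locally of the form $\widehat{(\cdots)}$ on a neighbourhood of any germ in its domain, hence its norm is locally the square root of a continuous function. (One should note the small subtlety that the $c$ above depends on the base point, but continuity is a local matter and on the open set $\O(se, \{x : c(x)\neq 0\})$ the representing element $\sum_i\lambda_i a_i c$ is fixed, so the norm agrees there with a genuinely continuous function.) Proposition~\ref{T:topology on Banach bundles from predefined sections} then yields a unique topology on $L$ making it an {\usc} Banach bundle for which all $\xi \in \spn\locs$ are continuous, and the final clause of that proposition upgrades this to a \emph{continuous} Banach bundle precisely because the norm functions $x \mapsto \|\hat a(\germ sx)\|$ are continuous. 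Uniqueness: any topology making $L$ a Banach bundle with all $\hat a$ continuous makes all of $\spn\locs$ continuous, and conversely, so it must coincide with the one produced by the proposition.

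It remains to see that $L$ is locally trivial. Fix $\g_0 = \germ sx_0 \in \G$ and pick $a \in \A_s$ with $(a^*a)(x_0) > 0$; the set $V \defeq \{x \in \U_{s^*s} : (a^*a)(x) > 0\}$ is open and nonempty, and $\O(s,V) \subseteq \O_s$ is an open neighbourhood of $\g_0$ in $\G$. On $\pi\inv(\O(s,V))$ define $h \colon \pi\inv(\O(s,V)) \to \O(s,V) \times \C$ by sending the unique scalar multiple $\lambda \cdot \hat a(\g)$ of $\hat a(\g)$ in $L_\g$ to $(\g, \lambda)$; the previous proposition guarantees this is a fibrewise linear bijection. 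Continuity of $h$ follows from continuity of $\hat a$ and of the norm together with the basic open sets $\Omega(U,\xi,\ep)$, and continuity of $h\inv$ is immediate since $h\inv(\g,\lambda) = \lambda \cdot \hat a(\g)$ is continuous (scalar multiplication is continuous and $\hat a$ is continuous). Hence $h$ is a homeomorphism intertwining the projections, i.e. a local trivialization, so $L$ is a one-dimensional complex vector bundle — a complex line bundle. The main obstacle I anticipate is the bookkeeping in verifying hypothesis~(ii) cleanly, namely making precise the ``local'' argument that elements of $\spn\locs$ are, near each germ, of the form $\widehat{(\cdots)}$ so that their norms are locally continuous; once that is in hand everything else is a routine application of the machinery already set up.
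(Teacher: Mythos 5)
Your proposal is correct and takes essentially the same route as the paper: both verify hypotheses (i) and (ii) of Proposition~\ref{T:topology on Banach bundles from predefined sections} for $\locs=\{\hat a\colon a\in\A\}$ by using the addition formula of Proposition~\ref{def:linear structure on the fibers of L} to rewrite a finite combination $\sum_i\lambda_i\hat a_i$ near any germ in terms of a single element of $\A$, then upgrade to a continuous bundle via continuity of $x\mapsto\sqrt{(a^*a)(x)}$, and finally trivialize locally using the non-vanishing sections $\hat a$ on $\O(s,\dom(a))$. The only difference is that the paper disposes of your ``small subtlety'' by choosing the auxiliary function $b\in\A_e$ identically $1$ on a whole neighborhood $U_0$ of the base point, so that $\sum_i\lambda_i\hat a_i$ literally equals $\widehat{\sum_i\lambda_i a_ib}$ on $\O(se,U_0)$; with your $c$ normalized only at the single point, the norm on $\O(se,\{c\neq0\})$ equals $\sqrt{(d^*d)}/|c|$ with $d=\sum_i\lambda_i a_ic$ rather than $\sqrt{(d^*d)}$, which is still continuous, so your argument goes through after this minor adjustment.
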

\begin{proof}
We are going to prove (i) and (ii) in Proposition~\ref{T:topology on Banach bundles from predefined sections} in order to find the required topology.
Property (i) is obvious since every element of $L$ has the form $\qtrip asx$ for some $a\in \A_s$ and $\germ sx\in \O_s$.
To prove (ii), suppose we have finitely many elements $s_i\in S$, $a_i\in \A_{s_i}$ and $\lambda_i\in \C$ for $i=1,\ldots,n$.
We want to show that the set
$$\V=\left\{\g\in \bigcap\limits_{i=1}^n\dom(\hat{a}_i)\colon\left\|\sum\limits_{i=1}^n\lambda_i\hat{a}_i(\g)\right\|<\alpha\right\}$$
is open in $\G$ for all $\alpha>0$. Given $\g_0$ in $\V$, it belongs to $\dom(\hat{a}_i)=\O_{s_i}$ for all $i=1,\ldots,n$, so it
has equivalent representations of the form $\g_0=[s_i,x_0]$ for all $i=1,\ldots,n$.
Thus, there is $e\in E(S)$ such that $x_0\in \U_e$ and $t\defeq s_1e=\ldots=s_ne$.
Replacing $e$ by the product $(s_1^*s_1)\ldots(s_n^*s_n)e$, we may assume $e\leq s_i^*s_i$ and hence $\U_e\sbe\U_{s_i^*s_i}$  for all $i$.
Take a function $b\in \contz(\U_e)\cong\A_e$ which is identically $1$
on a neighborhood $U_0\sbe\U_e$ of $x_0$. It is easy to see that $b(x)a_ic\eq x a_ibc$ for all $x\in \U_e$ and all $c\in \A_e$.
In particular, $a_ic\eq x a_ibc$ and hence $\qtrip{a_i}{s_i}{x}=\qtrip{a_ib}{s_ie}{x}$ for all $x\in U_0$.
Hence, $\O(t,U_0)=\O(s_i,U_0)\sbe\cap_{i}\O_{s_i}=\cap_{i}\dom(\hat{a}_i)$, and if $\g=\germ tx=\germ{s_i}{x}\in \O(t,U_0)$, we have
\begin{align*}
\lambda_i\hat{a_i}(\gamma)&=
\qtrip{\lambda_ia_i}{s_i}{x}\\
    &=\qtrip{\lambda_ia_ib}{s_ie}{x}=\widehat{\lambda_ia_ib}(\germ{s_ie}{x})=\widehat{\lambda_ia_ib}(\g)
\end{align*}
Note that $\lambda_ia_ib\in \A_{s_ie}=\A_t$ for all $i$. Defining $a\defeq \sum\limits_{i=1}^n\lambda_ia_ib\in \A_t$, we conclude that
$$\sum\limits_{i=1}^n\lambda_i\hat{a}_i(\g)=\hat{a}(\g)\quad\mbox{for all }\g\in \O(t,U_0).$$
Note that $\|\hat{a}(\g)\|=\sqrt{(a^*a)(x)}$ for all $\g=\germ tx\in \O(t,U_0)$. Since $\sqrt{(a^*a)(x_0)}=\|\hat{a}(\g_0)\|<\alpha$
and $a^*a$ is continuous, there is a neighborhood $U$ of $x_0$ contained in $U_0$ such that $\|\hat{a}(\g)\|<\alpha$ for all $g$
in the open subset $\U\defeq \O(t,U)$ of $\G$. All this implies that $\g_0\in \U\sbe \V$ and therefore $\V$ is an open subset of $\G$.
By Proposition~\ref{T:topology on Banach bundles from predefined sections}, there is a unique topology on $L$ turning it into an
{\usc} Banach bundle and making the local section $\hat{a}\colon\O_s\to L$ continuous for all $a\in A_s$, $s\in S$.
As we have already noted, $\|\hat{a}(\g)\|=\sqrt{(a^*a)(\s(\g))}$ for all $\g\in \dom(\hat{a})$. Since the map $\g\mapsto \sqrt{(a^*a)(\s(\g))}$
is continuous from $\G$ to $\R^+$, we conclude that the norm on $L$ is continuous
(again by Proposition~\ref{T:topology on Banach bundles from predefined sections}). Therefore, $L$ is in fact a continuous Banach bundle, as desired.
It remains to show that $L$ is locally trivial. However, the local sections $\hat{a}$ are continuous and do not vanish on $\O(s,\dom(a))\sbe\O_s$.
Since ($\O_s$ and hence also) $\O(s,\dom(a))$ is a locally compact Hausdorff space, we may apply the usual methods to conclude that $L$ is
trivializable over $\O(s,\dom(a))$. A local trivialization is provided by the map $(\lambda,\g)\mapsto \lambda\cdot \hat{a}(\g)$
from $\C\times\O(s,\dom(a))$ to $L$. See also \cite[Remark~II.13.9]{fell_doran}.
\end{proof}

Given $v, w\in L$, with $\big(\pi(v),\pi(w)\big)\in \Gt$, we shall
define the product $vw$ as follows.  Write $v=\qtrip asx$ and
$w=\qtrip bty$, so our assumption translates into
  $$
  \big(\germ sx, \germ ty\big)\in \Gt,
  $$
  and hence $x=\t_t(y)$.

\begin{proposition}\label{prop:definition of the multiplication on L}
  With notation as above,  putting
  $$
  vw=
  \qtrip{ab}{st}{y},
  $$
we get a well-defined operation on $L$, that is, the right-hand side
does not depend on the choice of representatives for $v$ and $w$.
\end{proposition}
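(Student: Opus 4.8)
The plan is to reduce the problem, just as in earlier arguments, to the case where the two chosen representatives live in compatible fibres, and then exploit the characterisation of the equivalence relation "$\eq x$" together with Lemma~\ref{L:Equivinh} and Lemma~\ref{NiceRelation}. First I would fix $v=\qtrip asx$, $w=\qtrip bty$ with $x=\t_t(y)$, and suppose we are given other representatives $v=\qtrip{a'}{s'}{x}$ and $w=\qtrip{b'}{t'}{y}$. Since $\qtrip asx=\qtrip{a'}{s'}{x}$, there are $e\in E(S)$ and $p\in\A_e$ with $p(x)\ne0$, $se=s'e$ and $ap\eq x a'p$; since $\qtrip bty=\qtrip{b'}{t'}{y}$, there are $f\in E(S)$ and $q\in\A_f$ with $q(y)\ne0$, $tf=t'f$ and $bq\eq y b'q$. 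Rescaling, we may take $p(x)=1$ and $q(y)=1$. The goal is then $\qtrip{ab}{st}{y}=\qtrip{a'b'}{s't'}{y}$.

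Next I would produce a single idempotent witnessing the equivalence on the product. Set $g\defeq e\cdot\t_t(f)\in E(S)$; note $\t_t(f)$ corresponds via $\tilde\t_t$ to an idempotent fibre, and by \cite[Proposition 4.5]{Exel:inverse.semigroups.comb.C-algebras} the element $r\defeq \tilde\t_t(q)$ lies in $\A_{\,tft^*}$ with $r(x)=q(\t_t\inv(x))=q(y)=1$. Using Lemma~\ref{NiceRelation} (in the form $b\,q=\tilde\t_t(q)\,b=rb$, valid because $q\in\A_f$ and after shrinking $f$ so that $f\le t^*t$, which is harmless by Lemma~\ref{L:Equivinh}(iii)) we can rewrite the second relation as $rb\eq y r b'$; but it is cleaner to keep $bq\eq y b'q$ and translate it to the $st$-fibre. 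Concretely, from $ap\eq x a'p$ and Lemma~\ref{L:Equivinh}(iv) applied with $c=b\in\A_t$ (noting $x=\t_t(y)\in\U_{tt^*}$), we get $apb\eq y a'pb$; and $apb = a(pb) = a(\tilde\t_t\inv(p)\,b)$ is of the form $a\tilde p\, b$ with $\tilde p\in\A_{\,t^*et}$ a function with $\tilde p(y)=p(\t_t(y))=p(x)=1$. Meanwhile from $bq\eq y b'q$ and Lemma~\ref{L:Equivinh}(ii) with $c=a'$ we get $a'bq\eq y a'b'q$.

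The endgame is then a two-step transitivity: multiply $ap b\eq y a'pb$ on the right by $q$ (Lemma~\ref{L:Equivinh}(iii), since $q\in\A_f$ is a function) to obtain $apbq\eq y a'pbq$, i.e.\ $a b\,\tilde p q\eq y a' b\,\tilde p q$ after commuting the idempotent-fibre functions past each other; and multiply $a'bq\eq y a'b'q$ on the left by the appropriate idempotent-fibre function to line the fibres up, getting $a'b\,\tilde p q\eq y a'b'\,\tilde p q$. Transitivity (Lemma~\ref{L:Equivinh}(v)) yields $ab\,h\eq y a'b'\,h$ where $h\defeq \tilde p q$ is a function in $\A_{(t^*et)f}$ with $h(y)=1\ne0$, and one checks $(st)\cdot h$-fibre equals $(s't')\cdot h$-fibre because $se=s'e$ and $tf=t'f$ force $st\cdot((t^*et)f)=s't'\cdot((t^*et)f)$. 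By Definition~\ref{DefineEquivRel} this is exactly $\qtrip{ab}{st}{y}=\qtrip{a'b'}{s't'}{y}$, as required.

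The main obstacle I expect is purely bookkeeping: keeping track of which idempotent each auxiliary function lives over, and repeatedly shrinking $e$ and $f$ (via Lemma~\ref{L:Equivinh}(iii), replacing a function by its product with another non-vanishing-at-the-point function) so that the various products and the commutations coming from Lemma~\ref{NiceRelation} all make sense in a common fibre. None of the individual steps is deep — each is one application of a part of Lemma~\ref{L:Equivinh} or of Lemma~\ref{NiceRelation} — but assembling them so that every "$\eq{}$" occurs between elements of the same fibre, and so that the final witnessing idempotent satisfies $se\cdot(\cdots)=s'e\cdot(\cdots)$ \emph{and} $tf\cdot(\cdots)=t'f\cdot(\cdots)$ simultaneously, requires care.
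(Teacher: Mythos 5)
Your argument is correct in substance, but it takes a noticeably different route from the paper's. The paper changes one representative at a time: first it replaces $\trip asx$ by $\trip{a'}{s'}{x}$ with $b$ fixed, using the witness idempotent $f=t^*et$ and the witness function $d=b^*cb$, and then replaces $\trip bty$ by $\trip{b'}{t'}{y}$ with $a$ fixed (an easy application of Lemma~\ref{L:Equivinh}(ii)); transitivity of the equivalence then combines the two stages. Because the paper's witness $d=b^*cb$ satisfies $d(y)=(b^*b)(y)c(x)$, it vanishes when $(b^*b)(y)=0$, and the paper must treat that degenerate case separately via Lemma~\ref{L:Equivinh}(i). Your one-shot witness $h=\tilde\t_t^{-1}(p)\,q$ depends only on the auxiliary functions $p,q$, so $h(y)=p(x)q(y)=1$ automatically and no case split is needed --- that is a genuine simplification. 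The price you pay is the extra transport step $pb=b\,\tilde\t_t^{-1}(p)$ via Lemma~\ref{NiceRelation} (which requires shrinking $e$ to $e\cdot tt^*$, legitimate since $x=\t_t(y)\in\U_{tt^*}$), and a slightly heavier idempotent computation at the end; I checked that $(st)(t^*et)f=(se)(tf)=(s'e)(t'f)=(s't')(t^*et)f$ and that all intermediate products land in the common fibre $\A_{setf}$, so the bookkeeping does close up.

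Two small slips, neither fatal: Lemma~\ref{NiceRelation} gives $pb=b\,\tilde\t_t^{-1}(p)$, not $\tilde\t_t^{-1}(p)\,b$ as you wrote (your subsequent identity $apbq=ab\,\tilde p\,q$ is nevertheless the correct one); and the passage from $a'bq\eq y a'b'q$ to $a'b\,\tilde p\,q\eq y a'b'\,\tilde p\,q$ is a \emph{right} multiplication by $\tilde p$ (Lemma~\ref{L:Equivinh}(iii)) followed by commuting $q\tilde p=\tilde p q$ in the idempotent fibres, not a left multiplication. The auxiliary element $g=e\cdot\t_t(f)$ you introduce is never used and should be deleted or written as $tft^*$.
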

\begin{proof} Suppose that $v$ has another representation as
$v=\qtrip {a'}{s'}{x'}$, in which case $x=x'$, and there exists $e\in
E(S)$ and $c\in \A_e$, such that
  $$
  c(x)\neq 0,\quad se=s'e \quad\mbox{and}\quad ac \eq x a'c.
  $$
  To show that
\begin{equation}\label{ProdchangeLeft}
  \qtrip{ab}{st}{y}=\qtrip{a'b}{s't}{y},
\end{equation}
  we will check that the conditions of~\ref{DefineEquivRel} are
satisfied for the idempotent $f=t^*et$, and the element $d=b^*cb\in
\A_f$, under the special case in which $(b^*b)(y)\neq0$.  With respect
to condition~\ref{DefineEquivRel}.(ii), Lemma~\ref{lem:construction of theta_a} yields
  $$
  d(y) =
  (b^*cb)(y) =
  (b^*b)(y)\; c(\t_t(y)) =
  (b^*b)(y)\; c(x) \neq 0.
  $$
  Checking~\ref{DefineEquivRel}.(iii) we have
  $$
  stf =
  st t^*et =
  set t^*t =
  s'et t^*t =
  s't t^*et =
  s'tf.
  $$
  As for~\ref{DefineEquivRel}.(iv), recall that the fibers over idempotent
elements are commutative, so
  $$
  abd = ab b^*cb =
  acb b^*b \eq y
  a'cb b^*b =
  a'b b^*cb =
  a'b d,
  $$
  where the crucial middle step is a consequence of
Lemma~\ref{L:Equivinh}(iv), given that $b b^*b\in \A_t$, and
$\t_t\inv(x)=y$.  The verification of Equation~\eqref{ProdchangeLeft} is thus
complete when $(b^*b)(y)\neq0$.

Suppose now that $(b^*b)(y)=0$.  Then it follows from
Lemma~\ref{L:Equivinh}(i) that $b\eq y 0_t$, and hence from
Lemma~\ref{L:Equivinh}(ii) we have
  $$
  ab \eq y 0_{st} \quad\mbox{and}\quad a'b \eq y 0_{s't}.
  $$
  It follows that
  $$
  \qtrip{ab}{st}{y} =   \qtrip{0}{st}{y} \quad\mbox{and}\quad
  \qtrip{a'b}{s't}{y} =   \qtrip{0}{s't}{y}.
  $$
  and it suffices to show that
  $
  \qtrip{0}{st}{y} = \qtrip{0}{s't}{y}.
  $
  Let  $f=t^*et$, as above,  and notice that since $x\in\U_e\cap\U_{tt^*}$, one has
  $$
  y = \t_{t^*}(x)\in \t_{t^*}(\U_e\cap\U_{tt^*}) = \U_{t^*et}=\U_f.
  $$
  Choosing any $c\in \A_f$, such that $c(x)\neq0$, one verifies the
conditions in~\ref{DefineEquivRel}, hence proving
Equation~\eqref{ProdchangeLeft}.

Next suppose that one is given another representation of $w$ as
$w=\qtrip {b'}{t'}{y'}$, in which case $y=y'$, and there exists $e\in E(S)$ and $c\in \A_e$, such that
$$
  c(x)\neq 0,\quad te=t'e \quad\mbox{and}\quad bc \eq y b'c.
$$
It follows that
$$
  ste=st'e \quad\mbox{and}\quad abc \eq y ab'c,
$$
the last relation being a consequence of
Lemma~\ref{L:Equivinh}(ii).  Therefore
  $$
  \qtrip{ab}{st}{y} = \qtrip{ab'}{st'}{y}.
  $$
\vskip-16pt
\end{proof}

\begin{proposition}\label{prop: definition of the involution on L}
The assignment
$$\qtrip asx\mapsto \qtrip asx^*\defeq \qtrip{a^*}{s^*}{\theta_s(x)}$$
is a well-defined operation on $L$.
\end{proposition}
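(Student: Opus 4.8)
The plan is to show that the map is independent of the chosen representative $\qtrip asx$ of a class in $L$, i.e. that $\qtrip asx=\qtrip{a'}{s'}{x'}$ implies $\qtrip{a^*}{s^*}{\t_s(x)}=\qtrip{a'^*}{s'^*}{\t_{s'}(x')}$. So suppose $\qtrip asx=\qtrip{a'}{s'}{x'}$; by Definition~\ref{DefineEquivRel} we have $x=x'$ and there exist $e\in E(S)$ and $b\in\A_e$ with $b(x)\neq 0$, $se=s'e$, and $ab\eq x a'b$. Note first that $\t_s(x)=\t_{s'}(x)$: indeed, since $se=s'e$ and $x\in\U_e$, we have $\germ sx=\germ{s'}{x}$ in $\G$, and the range map is well-defined on germs, so $\t_s(x)=\r(\germ sx)=\r(\germ{s'}{x})=\t_{s'}(x)$. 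Write $y\defeq\t_s(x)=\t_{s'}(x)$.

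Next I would produce the idempotent and element witnessing $\qtrip{a^*}{s^*}{y}=\qtrip{a'^*}{s'^*}{y}$. The natural candidate is to set $f\defeq e$ — or, to be safe, one may first shrink $e$ so that $e\le s^*s$ and $e\le s'^*s'$ as in the proof of Lemma~\ref{L:left hand side version of equivalence in L}, multiplying the relation $ab\eq x a'b$ on the right by a suitable function via Lemma~\ref{L:Equivinh}(iii) — and then push $b$ through to the range side. Concretely, by Lemma~\ref{NiceRelation} we have $ab=\tilde\t_s(b)a$ and $a'b=\tilde\t_{s'}(b)a'$, and since $se=s'e$ one checks $\tilde\t_s(b)=\tilde\t_{se}(b)=\tilde\t_{s'e}(b)=\tilde\t_{s'}(b)$; call this common value $c$, so $c\in\A_g$ for appropriate $g$ (as computed in the proof of Lemma~\ref{L:left hand side version of equivalence in L}) with $gs=gs'$ and $c(y)=b(x)\neq 0$. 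The relation $ab\eq x a'b$ rewrites as $ca\eq x ca'$, which is exactly condition (iv) of the left-hand version Lemma~\ref{L:left hand side version of equivalence in L}; applying the involution via Lemma~\ref{L:Equivinh}(vi) gives $(ca)^*\eq{\t_s(x)}(ca')^*$, i.e. $a^*c^*\eq y a'^*c^*$.

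The point is then that the pair $(f,c^*)$ with $f\defeq g^*g$ — an idempotent — together with $y$, verifies the four conditions of Definition~\ref{DefineEquivRel} for the triples $\trip{a^*}{s^*}{y}$ and $\trip{a'^*}{s'^*}{y}$: we have the same base point $y$; $c^*(y)=\overline{c(y)}\neq 0$ so, after replacing $c^*$ by $c^*c$-style adjustments if needed to land in a single fiber $\A_f$, condition (ii) holds; $fs^*=(gs)^* \cdot(\text{stuff})=(gs')^*\cdot(\text{stuff})=fs'^*$ follows from $gs=gs'$; and (iv) is the relation $a^*c^*\eq y a'^*c^*$ just obtained. Hence $\qtrip{a^*}{s^*}{y}=\qtrip{a'^*}{s'^*}{y}$, as required. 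Finally one should remark that $y=\t_s(x)\in\U_{ss^*}=\U_{(s^*)^*(s^*)}=\dom(\t_{s^*})$, so the triple $\trip{a^*}{s^*}{y}$ indeed lies in $\F$ and the formula makes sense.

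I expect the main obstacle to be the bookkeeping of fibers and idempotents: one must be careful that after applying the involution the witnessing element $c^*$ and the replacement idempotent $f$ genuinely satisfy $se\mapsto fs^*$ compatibly and that $a^*c^*$, $a'^*c^*$ land in one and the same fiber $\A_{fs^*}=\A_{fs'^*}$ so that condition (iv) of Definition~\ref{DefineEquivRel} is meaningful — precisely the sort of compatibility already handled in the proofs of Lemma~\ref{NiceRelation} and Lemma~\ref{L:left hand side version of equivalence in L}, so it amounts to reusing those computations with the roles of source and range interchanged. Everything else is routine.
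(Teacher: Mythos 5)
Your argument is correct and rests on the same two ingredients as the paper's proof, namely Lemma~\ref{L:Equivinh}(vi) and the left-handed characterization of the equivalence in Lemma~\ref{L:left hand side version of equivalence in L}; the paper merely runs them in the opposite order (it applies the involution to $ac\eq x bc$ first, obtaining $c^*a^*\eq{\t_s(x)}c^*b^*$, and then invokes the ``if'' direction of Lemma~\ref{L:left hand side version of equivalence in L}, whereas you first convert to a left-sided witness via the ``only if'' direction and Lemma~\ref{NiceRelation}, then star, and finish with Definition~\ref{DefineEquivRel} directly). One small slip: condition (iii) of Definition~\ref{DefineEquivRel} for the starred triples requires $s^*f=s'^*f$ (idempotent on the right), not $fs^*=fs'^*$; your own computation $(gs)^*=(gs')^*$ in fact yields $s^*g=s'^*g$, which is exactly what is needed, so this is only a notational matter.
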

\begin{proof}
Suppose $\qtrip asx=\qtrip bty$ in $L$, that is, $x=y$ and there is $e\in E(S)$ and $c\in \A_e$ such that $se=te$, $c(x)\not=0$
and $ac\eq x bc$. Then $\t_s(x)=\t_t(y)$, $es^*=et^*$ and $c^*a^*\eq{\t_s(x)} c^*b^*$ by Lemma~\ref{L:Equivinh}(vi). And by
Lemma~\ref{L:left hand side version of equivalence in L} this implies that $\qtrip{a^*}{s^*}{\t_s(x)}=\qtrip{b^*}{t^*}{\t_t(y)}$.
\end{proof}

\begin{theorem}\label{T:L is a Fell line bundle}
With the multiplication defined in Proposition~\textup{\ref{prop:definition of the multiplication on L}} and the involution defined in
Proposition~\textup{\ref{prop: definition of the involution on L}}, $L$ is a Fell line bundle,
that is, a one-dimensional, locally trivial \textup(continuous\textup) Fell bundle over the étale groupoid $\G$.
\end{theorem}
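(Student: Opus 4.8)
The plan is to verify the axioms (i)--(ix) of Definition~\ref{D:definition of Fell bundle over groupoid} for $L$ with the operations just defined, most of which will follow by pulling back the corresponding identities in $\A$ through the surjection $\pi\colon L\to\G$ and the local sections $\hat a$. First I would record the compatibility of the algebraic operations with the grading: from $\t_{st}=\t_s\circ\t_t$ (Proposition~\ref{prop:theta is an action of S}) and the definition of the product of germs, one gets $\pi(vw)=\pi(v)\pi(w)$ whenever $(\pi(v),\pi(w))\in\Gt$, and $\pi(v^*)=\pi(v)\inv$; bilinearity of $L_{\g_1}\times L_{\g_2}\to L_{\g_1\g_2}$ and conjugate-linearity of $L_\g\to L_{\g\inv}$ follow from Proposition~\ref{def:linear structure on the fibers of L} together with the fact that all fibers are one-dimensional, after observing that a representative $\qtrip asx$ of an element of $L_\g$ may be multiplied on the right by a function $c\in\A_e$ with $c(x)=1$ so that two given elements in the same fiber are represented with a common $s$ (and then $\qtrip asx\cdot\qtrip bty=\qtrip{ab}{st}{y}$ is manifestly bilinear in $a$ for fixed representative of $w$, and we argued in Proposition~\ref{prop:definition of the multiplication on L} that the value is independent of representatives). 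Associativity $(uv)w=u(vw)$ and the identities $(v^*)^*=v$, $(vw)^*=w^*v^*$ then follow from the corresponding identities in $\A$ once all three elements are brought into representatives living in appropriate fibers $\A_s$, using $s^{**}=s$ and $(st)^*=t^*s^*$ in $S$ together with $\t_{s^*}=\t_s\inv$.

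Next I would handle the norm estimates. The norm $\bigl\|\qtrip asx\bigr\|=\sqrt{(a^*a)(x)}$ was already shown to be well defined in Proposition~\ref{def:linear structure on the fibers of L}. For $\|vw\|\le\|v\|\,\|w\|$, write $v=\qtrip asx$, $w=\qtrip bty$ with $x=\t_t(y)$; then $\|vw\|^2=\bigl((ab)^*(ab)\bigr)(y)=(b^*a^*ab)(y)$, and using $a^*a\le\|a^*a\|\cdot\mathbf 1$ in the C*-algebra, together with $\A_t$ being an imprimitivity bimodule, one gets $(b^*a^*ab)(y)\le\|a^*a\|\,(b^*b)(y)=\|v\|^2\|w\|^2$; concretely, from Lemma~\ref{NiceRelation} (or from the inequality $a^*a\leq\|a\|^2 1$ pushed through the left inner product on $\A_t$). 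The isometry $\|v^*\|=\|v\|$ is immediate from $(aa^*)(\t_s(x))=(a^*a)(x)$, and the C*-identity $\|v^*v\|=\|v\|^2$ reads $\bigl((a^*a)^*(a^*a)\bigr)(x)=\bigl((a^*a)(x)\bigr)^2$, which holds because $a^*a$ is a positive function on $X$. Positivity of $v^*v$ in the C*-algebra $L_{\s(\pi(v))}$: the source germ of $\pi(v^*v)=\pi(v)\inv\pi(v)$ is $\s(\germ sx)=x$, the fiber $L_{x}$ is the one-dimensional C*-algebra spanned by the class of a self-adjoint positive element (a local section of $\E$ near $x$), and $v^*v=\qtrip{a^*a}{s^*s}{x}$ has norm $\sqrt{\bigl((a^*a)(x)\bigr)^2}=(a^*a)(x)\ge0$; identifying $L_x\cong\C$ via such a section shows $v^*v\ge0$.

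Continuity of multiplication and involution is the only genuinely topological point, and I expect it to be the main obstacle. I would invoke Proposition~\ref{prop:continuity of multiplication and involution with local section}, whose hypotheses are exactly the two conditions of Proposition~\ref{T:topology on Banach bundles from predefined sections} for the generating family of local sections $\{\hat a\colon a\in\A\}$ --- and these were verified in the construction of the topology on $L$. Thus it suffices to check that for $a\in\A_s$, $b\in\A_t$ the map $\Gt\cap(\O_s\times\O_t)\ni(\germ sx,\germ ty)\mapsto\hat a(\germ sx)\cdot\hat b(\germ ty)=\qtrip{ab}{st}{y}=\widehat{ab}(\germ{st}{y})$ is continuous, which holds because it equals $\widehat{ab}$ composed with the continuous map $(\germ sx,\germ ty)\mapsto\germ{st}{y}$ (the product in $\G$) followed by the continuous section $\widehat{ab}$; and similarly the map $\O_s\ni\germ sx\mapsto\hat a(\germ sx)^*=\qtrip{a^*}{s^*}{\t_s(x)}=\widehat{a^*}(\germ{s^*}{\t_s(x)})$ is $\widehat{a^*}$ composed with $\germ sx\mapsto\germ{s^*}{\t_s(x)}$ (inversion in $\G$) followed by the continuous section $\widehat{a^*}$. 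Finally, saturatedness: since $\A$ is saturated, $\cspn\A_{s}\A_{t}=\A_{st}$, and given $\g_1=\germ s{x}$, $\g_2=\germ ty$ with $x=\t_t(y)$ one picks $a\in\A_s$ with $(a^*a)(x)>0$ and $b\in\A_t$ with $(b^*b)(y)>0$; then $\qtrip{ab}{st}{y}$ has norm $\sqrt{(b^*a^*ab)(y)}>0$ (by the argument of Lemma~\ref{lem:theta_s well-defined}, the fibers over idempotents commuting), so it is a nonzero element of the one-dimensional space $L_{\g_1\g_2}$, whence $L_{\g_1}\cdot L_{\g_2}$ already spans $L_{\g_1\g_2}$. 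The local triviality and one-dimensionality have already been established, completing the verification that $L$ is a Fell line bundle over the étale groupoid~$\G$.
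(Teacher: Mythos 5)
Your proposal follows essentially the same route as the paper: the Banach bundle structure is already in place from the previous proposition, the algebraic axioms are inherited from $\A$, the norm axioms are checked by direct computation in terms of representatives, and continuity of the operations is reduced via Proposition~\ref{prop:continuity of multiplication and involution with local section} to the section identities $\hat a(\g)\cdot\hat b(\g')=\widehat{ab}(\g\g')$ and $\hat a(\g)^*=\widehat{a^*}(\g\inv)$, which is exactly what the text does.

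The one step that does not work as written is the submultiplicativity estimate. You bound $(b^*a^*ab)(y)\le\|a^*a\|\,(b^*b)(y)$ and then assert this equals $\|v\|^2\|w\|^2$; but $\|a^*a\|$ is the supremum of the function $a^*a$ over $X$, whereas $\|v\|^2=(a^*a)(x)$ is its value at the single point $x=\t_t(y)$, so your inequality only gives $\|vw\|\le\|a\|\,\|w\|$, which is weaker than axiom (iv) of Definition~\ref{D:definition of Fell bundle over groupoid}. The repair is the exact identity $\bigl(b^*(a^*a)b\bigr)(y)=(b^*b)(y)\,(a^*a)(\t_t(y))=(b^*b)(y)\,(a^*a)(x)$, coming from Lemma~\ref{lem:construction of theta_a} (or equivalently from Lemma~\ref{NiceRelation}, as your parenthetical alternative suggests); this yields the stronger multiplicativity $\|vw\|=\|v\|\,\|w\|$, which is what the paper actually proves and which also makes your closing remark on saturation automatic. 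The remaining points --- positivity of $v^*v$, isometry of the involution, the C*-identity, and the continuity reduction --- are sound and match the paper's argument.
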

\begin{proof}
We already know that $L$ is a one-dimensional, locally trivial
continuous Banach bundle with the unique topology making the local sections $\hat{a}$ continuous for all $a\in\A$.
Since the algebraic operations on $L$ are essentially inherited from $\A$, it is easy to see that all the algebraic properties
required in Definition~\ref{D:definition of Fell bundle over groupoid} are indeed satisfied.
Let us check axioms (iv), (viii) and (ix) in Definition~\ref{D:definition of Fell bundle over groupoid}. Given $\qtrip asx,\qtrip bty\in L$
with $\t_t(y)=x$, Lemma~\ref{lem:construction of theta_a} yields
\begin{align*}
    \big\|\qtrip asx\cdot \qtrip bty\big\|^2&=\big\|\qtrip{ab}{st}{y}\big\|^2=(b^*a^*ab)(y)=(b^*b)(y)(a^*a)(\t_t(y))\\
    &=(b^*b)(y)(a^*a)(x)=\big\|\qtrip asx\big\|^2\big\|\qtrip bty\big\|^2.
\end{align*}
Thus $\big\|\qtrip asx\cdot \qtrip bty\big\|=\big\|\qtrip asx\big\|\cdot\big\|\qtrip bty\big\|$. This, of course, proves (iv).
To prove (viii), we compute
\begin{align*}
\big\|\qtrip asx^*\cdot\qtrip asx\big\|&=\big\|\qtrip{a^*a}{s^*s}{x}\big\|=\sqrt{\big((a^*a)^*(a^*a)\big)(x)}\\
                                        &=(a^*a)(x)=\big\|\qtrip asx\big\|^2.
\end{align*}
To check (ix), it is enough to observe that
$$\qtrip asx^*\cdot\qtrip asx=\qtrip{a^*a}{s^*s}{x}=\qtrip{(a^*a)\half}{s^*s}{x}^*\cdot \qtrip{(a^*a)\half}{s^*s}{x}$$
which is an element of the \cstar{algebra} $L_{\germ{s^*s}{x}}$ of the form $w^*w$, with $w\in L_{\germ{s^*s}{x}}$, and therefore positive.
The relation $(a^*a)(x)=(aa^*)(\t_s(x))$ for all $a\in \A_s$ and $x\in \U_{s^*s}$ implies that the involution is isometric:
$\big\|\qtrip asx^*\big\|=\big\|\qtrip asx\big\|$. Finally, we show that the multiplication and the involution on $L$ are continuous.
By Proposition~\ref{prop:continuity of multiplication and involution with local section} it is enough to prove the relations
\begin{equation}\label{E:relations of multiplication and involution for the Gelfand maps}
\hat{a}(\g)\cdot\hat{b}(\g')=\widehat{ab}(\g\g')\quad\mbox{and}\quad\widehat{a}(\g)^*=\widehat{a^*}(\g\inv)
\end{equation}
for all $a,b\in \A$, $\g\in \dom(\hat{a})$ and $\g'\in\dom(\hat{b})$ with $\r(\g')=\s(\g)$. Suppose $a\in \A_s$ and $b\in \A_t$.
By definition, we have
$$\dom(\hat{a})=\O_s=\{\germ sx\colon x\in \U_{s^*s}\}\quad\mbox{and}\quad \dom(\hat{b})=\O_t=\{\germ ty\colon y\in \U_{t^*t}\}.$$
Suppose $\g=\germ sx$ and $\g'=\germ ty$ with $\r(\g')=\t_t(y)=x=\s(\g)$. Then
$$\hat{a}(\g)\cdot\hat{b}(\g')=\qtrip asx\cdot \qtrip bty=\qtrip{ab}{st}{y}=\widehat{ab}(\germ{st}{y})=\widehat{ab}(\g\g'),$$
and
$$\widehat{a}(\g)^*=\qtrip asx^*=\qtrip{a^*}{s^*}{\t_s(x)}=\widehat{a^*}(\germ{s^*}{\t_s(x)}=\widehat{a^*}(\g\inv).$$
\vskip-14pt
\end{proof}

\begin{definition}
The Fell line bundle $L=L(\A)$ over $\G=\G(\A)$ constructed above from the Fell bundle $\A=\{\A_s\}_{s\in S}$ over $S$, will be called
the \emph{Fell line bundle associated to $\A$}.
\end{definition}

As already observed in Section~\ref{sec:twisted groupoids and Fell line bundles}, Fell line bundles over $\G$
correspond bijectively to twists over $\G$. The twisted groupoid $(\G(\A),\Sigma(\A))$ corresponding to $L(\A)$ will
be called the \emph{twisted groupoid associated to $\A$}. Thus $\G(\A)$ is the étale groupoid constructed in
Section~\ref{sec:the construction of G} and the extension groupoid $\Sigma(\A)$ is the space of unitary elements of
the Fell line bundle $L(\A)$ constructed above. The algebraic and topological structure on $\Sigma(\A)$ is canonically induced from $L(\A)$.
For instance, the multiplication on $\Sigma(\A)$ is just the multiplication of $L(\A)$ restricted to $\Sigma(\A)$, and the inversion on
$\Sigma(\A)$ is the restricted involution from $L(\A)$. The projection $\Sigma(\A)\onto \G(\A)$ is the restriction of the bundle projection
$L(\A)\onto \G(\A)$. Finally, the inclusion $\Torus\times X\into \Sigma(\A)$ is defined by $(z,x)\mapsto z\cdot 1_x$ for all $z\in \Torus$ and
$x\in X=\G(\A)^{(0)}$, where $1_x$ denotes the unit element of the \cstar{algebra} $L(\A)_x\cong\C$ (the fiber over $x$).

It is also possible to construct the twist groupoid $\Sigma=\Sigma(\A)$ directly from $\A$ following the ideas appearing in \cite{RenaultCartan}.
For convenience, we outline here the main steps into this procedure.  We define $\Sigma$ as the set
$$\Sigma\defeq \{\qstrip{a}{s}{x}\colon a\in \A_s, x\in \dom(a)\},$$
of equivalence classes $\qstrip asx$, where, by definition, $\qstrip{a}{s}{x}=\qstrip{a'}{s'}{x'}$
if and only if $x=x'$, and there is $b,b'\in \E$ such that $b(y),b'(y)>0$ and $ab=a'b'$.
The surjection $\pi\colon\Sigma\to\G$ is defined by $\pi(\qstrip{a}{s}{x})\defeq \germ sx$. It is easy to see
that $\pi$ is well-defined.

The groupoid structure on $\Sigma$ is defined in the same fashion as for $\G$: the source and range maps are $\s(\qstrip{a}{s}{x})=x$ and $\r(\qstrip{a}{s}{x})=\t_s(x)$ and the operations are
$$\qstrip{a}{s}{x}\cdot\qstrip{a'}{s'}{x'}=\qstrip{aa'}{ss'}{x'}\quad\mbox{whenever }\t_{s'}(x')=x \quad\mbox{and}$$
$$\quad\mbox{and}\quad\qstrip{a}{s}{x}\inv=\qstrip{a^*}{s^*}{\t_s(x)}.$$
With this structure, it is not difficult to see that $\Sigma$ is in fact a groupoid.
Before we define the appropriate topology on $\Sigma$,
let us define the inclusion map $\iota\colon\Torus\times X\to \Sigma$:
Given $(z,x)\in \Torus\times X$, define $\iota(z,x)\defeq \qstrip bex\in \Sigma$, where $e\in E(S)$ and $b$ is any element of $\A_e$
with $b(x)\not=0$ and $\frac{b(x)}{|b(x)|}=z$. Then $\iota$ is a well-defined injective morphism of groupoids.

To specify a topology on $\Sigma$
it is enough to define a system of open neighborhoods of a point $\qstrip{a}{s}{x}\in\Sigma$. This is given by the sets
\begin{equation}\label{eq:basic open sets in Sigma}
\O(a,U,V)\defeq \{\qstrip{za}{s}{y}\colon y\in U\mbox{ and }z\in V\},
\end{equation}
where $U$ is an open subset of $\dom(a)$ containing $x$ and $V\sbe\Torus$ is an open subset containing $1$.
With this topology it is not difficult to see that $\Sigma$ is a topological groupoid,
that $\pi\colon\Sigma\onto \G$ is an open continuous map
and that $\iota\colon\Torus\times X\into \Sigma$ is a homeomorphism onto its image:
$$\I=\{\qstrip{b}{e}{x}\colon x\in X, e\in E, b\in \A_e\mbox{ and } b(x)\not=0\}.$$
Moreover,
\[
\xymatrix{\Torus\times X  \ar[r]^{\iota} & \Sigma \ar[r]^{\pi} & \G}
\]
is an exact sequence of topological groupoids and hence $(\G,\Sigma)$ is a twisted groupoid.

Note that $\Sigma$ has a canonical action of $\Torus$:
$$z\cdot\qstrip{a}{s}{x}\defeq \qstrip{za}{s}{x}\quad\mbox{for all }z\in \Torus\mbox{ and }\qstrip{a}{s}{x}\in \Sigma.$$
It is easy to see that this is a well-defined free action of $\Torus$ on $\Sigma$ and the surjection
$\pi\colon\Sigma\to\G$ induces an isomorphism from the
orbit space $\Sigma/\Torus$ onto $\G$. In other words, $\Sigma$ is a principal $\Torus$-bundle over $\G$.
It is also possible to exhibit local trivializations for the $\Torus$-bundle $\Sigma$. In fact, given $a\in \A_s$, the map
$$\psi\colon\Torus\times \dom(a)\to \Sigma|_U,\quad \psi(z,x)\defeq \qstrip{za}{s}{x}=z\cdot\qstrip{a}{s}{x}$$
defines a homeomorphism, where $\Sigma\rest{U}\defeq \pi\inv(U)$ is the restriction of $\Sigma$ to
the open subset $U=\O(s,dom(a))\sbe\G$.
Note that $\psi$ is \emph{$\Torus$-equivariant} in the sense that $\psi(z,\g)=z\cdot\psi(1,\g)$.
Recall that $U$ is a bissection, so we have a canonical homeomorphism $U\cong\dom(a)$ (which is given by the restriction of $\s$ to $U$).
Through this homeomorphism we may also obtain homeomorphisms $\Sigma_U\cong\Torus\times U$. 
These homeomorphisms are compatible with the projections onto $U$, so they are in fact isomorphisms of bundles over $U$.

\begin{proposition}
Let $(\G,\Sigma)$ be a twisted étale groupoid and let $L=(\C\times\Sigma)/\Torus$ be the associated Fell line bundle.
If $S$ is a wide inverse subsemigroup of $S(\G)$ and if $\A=\{\contz(L_U)\}_{U\in S}$ is the Fell bundle over
$S$ defined from $L$ as in Example~\textup{\ref{ex:Fell bundle over semigroups associated to Fell bundles over groupoids}},
then the twisted groupoid $(\G(\A),\Sigma(\A))$ associated to $\A$ is isomorphic to $(\G,\Sigma)$.
\end{proposition}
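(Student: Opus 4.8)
The plan is to verify that every ingredient produced by the constructions of Sections~\ref{sec:the construction of G} and~\ref{sec:construction of L}, when applied to $\A=\{\contz(L_U)\}_{U\in S}$, is canonically the one we started from; since Fell line bundles over $\G$ correspond bijectively to twists, the asserted isomorphism of twisted groupoids then follows. Note first that $\A$ is semi-abelian and saturated, so the construction applies: the fibres over the idempotents of $S$, which are exactly the bissections contained in $\Gz$, are the commutative algebras $\contz(L_e)=\contz(e)$ (the twist being trivial over the unit space), and saturation of a line bundle is a routine partition-of-unity argument. The first step is to identify the spectrum $X$ of $C^*(\E_\A)$. The restriction $\E_\A$ is precisely the Fell bundle over the inverse semigroup $E(S)\sbe S(\Gz)$ associated, as in Example~\ref{ex:Fell bundle over semigroups associated to Fell bundles over groupoids}, to the trivial line bundle $L\rest{\Gz}=\C\times\Gz$, and $E(S)$ is wide in $S(\Gz)$ because $S$ is wide in $S(\G)$. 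Hence Proposition~\ref{prop:C*-algebra of L and A isomorphic}, applied to the Hausdorff groupoid $\Gz$, gives $C^*(\E_\A)\cong\contz(\Gz)$, so that $X=\Gz$ and $\U_e=e$ for each idempotent $e\in S$.

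Next I would compute the canonical action $\t$ of $S$ on $X=\Gz$. For $a\in\A_U=\contz(L_U)$ one has $\dom(a)=\{y\in\s(U):a(\s_U\inv(y))\neq 0\}$, and the defining relation $(a^*ba)(x)=(a^*a)(x)\,b(\t_a(x))$ of Lemma~\ref{lem:construction of theta_a}, tested against $b\in\contz(\Gz)$, forces $\t_a(x)=\r(\s_U\inv(x))$; by Proposition~\ref{prop:theta is an action of S} this means $\t_U=\r_U\circ\s_U\inv\colon\s(U)\to\r(U)$, the partial homeomorphism of $\Gz$ classically attached to the bissection $U$. Consequently the germ $\germ Ux$ is canonically identified with the unique element $\s_U\inv(x)$ of $U$ lying over $x$. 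The resulting map $\G(\A)\to\G$, $\germ Ux\mapsto\s_U\inv(x)$, is a bijection precisely because $S$ is wide (Definition~\ref{def:wide inv. semigroup}): surjectivity is condition~(i), and for injectivity, if $\s_U\inv(x)=\s_V\inv(x)\eqdef\g$ then condition~(ii) yields $W\in S$ with $\g\in W\sbe U\cap V$, and, putting $e=\s(W)\in E(S)$, one checks that $x\in\U_e$ and $Ue=\s_U\inv(e)=W=\s_V\inv(e)=Ve$, so $\germ Ux=\germ Vx$. This map carries the basic open set $\O(U,W)$ onto the open set $\s_U\inv(W)\sbe U$, hence is a homeomorphism, and it visibly respects source, range, multiplication and inversion; thus $\G(\A)\cong\G$ as étale groupoids, compatibly with $X=\Gz$.

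For the line bundle I would introduce $\Phi\colon L(\A)\to L$, $\Phi(\qtrip aUx)\defeq a(\s_U\inv(x))\in L_{\s_U\inv(x)}$. Using Definition~\ref{DefineEquivRel} and Lemma~\ref{L:Equivinh}, $\Phi$ is well defined on equivalence classes: if $\qtrip aUx=\qtrip bVx$, pick an idempotent $e$ and $c\in\A_e$ with $c(x)\neq 0$, $Ue=Ve$ and $ac\eq x bc$; then $\s_U\inv(x)=\s_V\inv(x)\eqdef\g$ and $|c(x)|^2\,\|a(\g)-b(\g)\|^2=0$, whence $a(\g)=b(\g)$. By Proposition~\ref{def:linear structure on the fibers of L}, $\Phi$ is a linear isometric isomorphism on each fibre $L(\A)_{\germ Ux}\to L_\g$. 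Under the identification $\O_U\cong U$, $\Phi$ carries the distinguished local section $\hat a$ of $L(\A)$ to the section $a$ itself of $L$ over $U$; since $S$ covers $\G$ and each $L_U$ is a line bundle, every element of $L$ is a value $a(\g)$ of some $a\in\contz(L_U)$ with $U\in S$, so by Proposition~\ref{T:topology on Banach bundles from predefined sections} the family $\{a:U\in S,\;a\in\contz(L_U)\}$ determines the topology of $L$; as these sections are continuous both for the given topology of $L$ and for the $\Phi$-image of the topology of $L(\A)$, the two topologies agree and $\Phi$ is a homeomorphism. Finally, the formulas defining the multiplication and involution on $L(\A)$ in Propositions~\ref{prop:definition of the multiplication on L} and~\ref{prop: definition of the involution on L} correspond, under $\Phi$ and the groupoid identification, to the multiplication and involution of $L$, so $\Phi$ is an isomorphism of Fell line bundles over $\G(\A)\cong\G$.

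To conclude, $\Sigma(\A)$ is by definition the bundle of unitary elements of $L(\A)$, and $\Sigma$ is (isomorphic to) the bundle of unitary elements of $L=(\C\times\Sigma)/\Torus$; the isomorphism $\Phi$ therefore restricts to an isomorphism of these principal $\Torus$-groupoids, compatible with the inclusions of $\Torus\times X$ and with the projections onto $\G$, which is exactly an isomorphism $(\G(\A),\Sigma(\A))\cong(\G,\Sigma)$ of twisted étale groupoids. I expect the main obstacle to lie in the third step, the identification of $L(\A)$ with $L$, and specifically in its topological half: one must match the topology on $L(\A)$ generated by the sections $\hat a$ with that of $L$, which rests on the fact that sections supported on members of $S$ already recover the topology of $L$ (here it matters that $S$ covers $\G$), together with the translation of the equivalence relation ``$\sim$'' of Definition~\ref{DefineEquivRel} into equality of values of sections. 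The identification of $X$ with $\Gz$ and of $\t$ with the canonical action, and the algebraic compatibilities, are comparatively routine.
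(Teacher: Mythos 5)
Your proposal is correct, but it reaches the twist isomorphism by a genuinely different route than the paper. Where you construct a global isomorphism of Fell line bundles $\Phi\colon L(\A)\to L$, $\qtrip aUx\mapsto a(\s_U\inv(x))$, and then obtain $\Sigma(\A)\cong\Sigma$ by restricting to unitary elements, the paper never compares $L(\A)$ with $L$ directly: it works with the concrete model of $\Sigma(\A)$ as classes $\qstrip aUx$, defines $\psi(\qstrip aUx)=a(\g)/|a(\g)|$ with $\g=\s_U\inv(x)$, checks that $(\psi,\phi)$ is a morphism of extensions over the groupoid isomorphism $\phi(\germ Ux)=\s_U\inv(x)$ (quoted from \cite{Exel:inverse.semigroups.comb.C-algebras}, Proposition~5.4, rather than reproved as you do), deduces bijectivity of $\psi$ from the commuting diagram together with $\Torus$-equivariance and freeness of the $\Torus$-action, and verifies that $\psi$ is a homeomorphism by comparing the local trivializations coming from elements $a\in\A_U$. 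Your approach trades that last verification for the uniqueness clause of Proposition~\ref{T:topology on Banach bundles from predefined sections}: since $\Phi$ sends each distinguished section $\hat a$ to the section $a$ of $L$ over $U$, and the family $\{a\colon U\in S,\ a\in\contz(L_U)\}$ satisfies (i)--(ii) of that proposition for $L$ (here wideness/covering of $S$ matters), the transported topology must coincide with the given one; this is a clean and legitimate shortcut, and it also makes the compatibility with $\iota$ and $\pi$ automatic because $\Phi$ is fiberwise a unital \Star{isomorphism} over the units. Two further small divergences: you obtain $C^*(\E_\A)\cong\contz(\Gz)$ by applying Proposition~\ref{prop:C*-algebra of L and A isomorphic} to the Hausdorff groupoid $\Gz$ (noting $E(S)$ is wide in $S(\Gz)$, which uses that a bissection contained in $\Gz$ is idempotent), whereas the paper integrates the faithful fiberwise representation $\E_\A\to\contz(X)$ via \cite{Exel:noncomm.cartan}, Proposition~4.3; in either case one should note, as you implicitly do, that the identification is the canonical one carrying $\A_e$ onto $\contz(e)$, so that $\U_e=e$. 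Both arguments also share the same tacit background fact, namely that through every point of $L_\g$, $\g\in U$, there passes some $a\in\contz(L_U)$ (used for surjectivity of $\Phi$, respectively of $\psi$, and for saturation), so nothing is missing relative to the paper's own level of rigor.
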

\begin{proof}
By definition, the inclusion $\iota\colon\Torus\times X\into \Sigma$ is a homeomorphism onto its image $\iota(\Torus\times X)=\pi\inv(X)$,
where $\pi$ is the surjection $\Sigma\to \G$. Thus the restriction $\Sigma\rest{X}=\pi\inv(X)$ is trivializable. It follows that
$L$ is trivializable over $X$, and hence also over any open subset $U\sbe X$. Thus $L_U\cong \C\times U$, so that
$$\A_U=\contz(L_U)\cong \contz(U)\sbe\contz(X)\quad\mbox{for every }U\in E(S).$$
This gives a faithful representation of $\E_\A$ into $\contz(X)$. Since $S$ covers $\G$, the idempotent semilattice $E(S)$ covers $\G^{(0)}=X$.
Indeed, given $x\in X$, there is $U\in S$ with $x\in U$ and hence $x=\s(x)\in \s(U)=U^*U\in E(S)$.
Thus the family $\{\contz(U)\colon U\in E(S)\}$ spans a dense subspace of $\contz(X)$. It follows from \cite[Proposition 4.3]{Exel:noncomm.cartan}
that the representation $\E_\A\to \contz(X)$ integrates to an isomorphism $C^*(\E_\A)\cong \contz(X)$.
So, the spectrum of $C^*(\E_\A)$ may be identified with $X$, and through this identification, the spectrum $\widehat{\A_U}$
of $\A_U$ is $U$ for all $U\in E(S)$. Let us now describe the action $\t$ associated to $\A$.
Given $U\in S$, $\t_U$ is a homeomorphism from $\widehat{\A_{U^*U}}\cong U^*U=\s(U)$
onto $\widehat{\A_{UU^*}}\cong UU^*=\r(U)$ which satisfies
$$(a^*ba)(x)=(a^*a)(x)b(\t_U(x))\quad\mbox{for all }a\in \A_U=\contz(L_U),\,b\in \contz(X)\mbox{ and }x\in \s(U).$$
It is enough to consider $b\in \contz(\r(U))$ in order to characterize $\t_U$. Now, if $\g\in U$ and $x=\s(\g)$,
note that $x=\g\inv \g=\g\inv\r(\g)\g$. By definition of the multiplication on $\A$
(see Example~\ref{ex:Fell bundle over semigroups associated to Fell bundles over groupoids}), we have
$$(a^*a)(x)=a^*(\g\inv)a(\g)=\overline{a(\g)}a(\g)=|a(\g)|^2$$
and
$$(a^*ba)(x)=a^*(\g\inv)b(\r(\g))a(\g)=|a(\g)|^{2}b(\r(\g)).$$
As a consequence, $\t_U(x)=\t_U(\s(\g))=\t_U(\r(\g))$. In other words,
$\t_U$ is the homeomorphism $\tilde\t_U\colon\s(U)\to \r(U)$ given by $\tilde\t_U(x)=\r_U(\s_U\inv(x))$.
The maps $\tilde\t_U$ always give an action of $S$ on $X$. And by \cite[Proposition 5.4]{Exel:inverse.semigroups.comb.C-algebras},
the map $\phi\colon\G(\A)\to \G$ defined by $\phi(\germ{U}{y})=\s_U\inv(y)$ for all $\germ{U}{y}\in \G(\A)$ is an isomorphism
of étale groupoids provided $S$ is wide, which is our case here. Next, we are going to find an isomorphism $\Sigma(\A)\cong \Sigma$.
Recall that $\Sigma$ may be identified with the set of unitary elements in the Fell line bundle $L$ through the map $\sigma\mapsto [1,\sigma]$.
In this way, we define $\psi\colon\Sigma(\A)\to \Sigma$ by
$$\psi(\qstrip{a}{U}{y})\defeq \frac{a(\g)}{|a(\g)|},\mbox{ where }\g=\s_U\inv(y)\in \G,$$
whenever $a\in \A_U=\contz(L_U)$, $y\in \dom(a)\sbe\s(U)$ and $x=\t_U(y)=\r_U(\s_U\inv(y))$.
Notice that $|a(\g)|^2=|a(\g)^*a(\g)|=|a^*(\g\inv)a(\g)|=(a^*a)(y)>0$.
To show that $\psi$ is well-defined, assume $\qstrip{a}{U}{y}=\qstrip{a}{U'}{y}$ in $\Sigma(\A)$, so there are
$b,b'\in \E_{\A}$ such that $ab=a'b'$ and $b(y),b'(y)>0$. Suppose $a\in \A_U$, $a'\in \A_{U'}$, $b\in \A_V$ and $b'\in \A_{V'}$,
where $U,U',V,V'\in S$. Since $b,b'\in \E_\A$, we have $V,V'\in E(S)$, so that $V,V'\sbe X$. If $\g=\s_U\inv(y)$,
then $\g=\g\s(\g)=\g y\in UV\cap U'V'$. Thus $(ab)(\g)=a(\g)b(y)$ and $(a'b')(\g)=a'(\g)b'(y)$. Since $b(y)$ and $b'(y)$ are positive numbers, we get
$$\frac{a(\g)}{|a(\g)|}=\frac{a(\g)b(y)}{|a(\g)b(y)|}=\frac{(ab)(\g)}{|(ab)(\g)|}=\frac{(a'b')(\g)}{|(a'b')(\g)|}=
\frac{a'(\g)b'(y)}{|a'(\g)b'(y)|}=\frac{a'(\g)}{|a'(\g)|}.$$
Let us check that $\psi$ is a groupoid homomorphism. Take $\qstrip{a}{U}{y},\qstrip{b}{V}{z}\in \Sigma(\A)$ with $a\in \A_U$, $b\in \A_V$,
and let $\g_1\in U$ and $\g_2\in V$ such that $\s(\g_1)=y$ and $\s(\g_2)=z$. Then $\g=\g_1\g_2\in UV$ and $\s(\g)=\s(\g_2)=z$,
so that $(ab)(\g)=a(\g_1)b(\g_2)$. Hence,
\begin{align*}
\psi(\qstrip{a}{U}{y}\qstrip{b}{V}{z})&=\psi(\qstrip{ab}{UV}{z})=\frac{(ab)(\g)}{|(ab)(\g)|}\\
&=\frac{a(\g_1)}{|a(\g_1)|}\frac{b(\g_2)}{|b(\g_2)|}=\psi(\qstrip{a}{U}{y})\psi(\qstrip{b}{V}{z}).
\end{align*}
This shows that $\psi$ respects multiplication.
Here we have used that the multiplication in the Fell line bundle $L$ restricts to the
multiplication in the groupoid $\Sigma\sbe L$. Similarly, since the involution in $L$ restricts to the inverse in $\Sigma\sbe L$,
we get that $\psi$ preserves inversion:
$$\psi\big(\qstrip{a}{U}{y}\inv\big)=\psi\big(\qstrip{a^*}{U^{-1}}{\t_U(y)}\big)
        =\frac{a^*(\g_1\inv)}{|a^*(\g_1\inv)|}=\frac{a(\g_1)^*}{|a(\g_1)|}=\psi\big(\qstrip{a}{U}{y}\big)\inv.$$
The pair $(\psi,\phi)$ of groupoid homomorphisms $\psi\colon\Sigma(\A)\to \Sigma$ and $\phi\colon\G(\A)\to \G$ we have defined is a
morphism of extensions, that is, the following diagram commutes:
$$
\xymatrix{
\Torus\times X \ar[d]_{\id}\ar[r]^{\iota_\A} &  \Sigma(\A) \ar[d]_{\psi}\ar[r]^{\pi_\A} &    \G(\A)  \ar[d]_{\phi} \\
\Torus\times X \ar[r]_{\iota}             &  \Sigma \ar[r]_{\pi}              &    \G
}
$$
In fact, recall that $\iota_\A\colon\Torus\times X\to \Sigma(\A)$ is defined by $\iota_\A(z,x)=\qstrip{b}{V}{x}$,
where $V\in E(S)$ and $b\in \A_V$ is such that
$b(x)\not=0$ and $\frac{b(x)}{|b(x)|}=z$.
And the surjection $\pi_\A\colon\Sigma(\A)\to \G(\A)$ is defined by $\pi_\A(\qstrip{a}{U}{y})=\germ{U}{y}$ whenever $a\in \A_U$.
By definition, $\psi(\iota_\A(z,x))=\psi(\qstrip{b}{V}{x})=\frac{b(x)}{|b(x)|}=z$. Here we view $z\in \Torus$ as the element
$z[1,x]=[z,x]$ of the fiber $L_x$. On the other hand, $\iota(z,x)=z\cdot x\in \Sigma$ is identified with the element $[1,z\cdot x]=[z,x]\in L_x$.
This says that the left hand side of the diagram above commutes. To see that the right hand side also commutes,
take $\qstrip{a}{U}{y}\in \Sigma$. If $a\in \A_U$ and $\g=\s_U\inv(y)\in U$, then $\phi(\pi_\A(\qstrip{a}{U}{y}))=\phi(\germ{U}{y})=\s_U\inv(y)=\g$.
On the other hand, $\psi(\qstrip{a}{U}{y})=\frac{a(\g)}{|a(\g)|}\in \Sigma\sbe L$ is also sent to $\g$ via $\pi\colon\Sigma\to\G$ because
$\frac{a(\g)}{|a(\g)|}$ belongs to the fiber $L_\g$ and the restriction of the bundle projection $L\to\G$ to $\Sigma$ equals $\pi$.
Therefore the diagram commutes, as desired. Since $\phi$ is bijective, the commutativity of the diagram forces $\psi$ to be bijective
as well. For example, to prove the injectivity of $\psi$, assume that $\sigma_1,\sigma_2\in \Sigma(\A)$ and
$\psi(\sigma_1)=\psi(\sigma_2)$. Applying $\pi$ and using the commutativity of the right hand side, we get $\phi(\pi_\A(\sigma_1))=\phi(\pi_\A(\sigma_2))$. The injectivity of $\phi$ yields $\pi_\A(\sigma_1)=\pi_\A(\sigma_2)$,
so there is a unique $z\in \Torus$ with $\sigma_2=z\cdot\sigma_1$. By the commutativity of the left hand side $\psi$ must be $\Torus$-equivariant,
so that $\psi(\sigma_1)=\psi(\sigma_2)=z\cdot\psi(\sigma_1)$. Since the $\Torus$-action is free, we conclude that $z=1$ and hence $\sigma_1=\sigma_2$.
Analogously one proves that $\psi$ is surjective. It remains to check that $\psi$ is a homeomorphism.
Since this is a local issue, we may restrict to local trivializations. As we have seem above
each $a\in\A_U$ yields a local trivialization $\Torus\times\dom(a)\cong\Sigma_\U$ through the map $(z,x)\mapsto \qstrip{za}{U}{x}$,
where $\U=\O(U,\dom(a))\sbe\G(\A)$. On the other hand, since $|a(\s_U\inv(x))|^2=(a^*a)(x)>0$, $a$
is a non-vanishing continuous section on $\V=\phi(\U)=\{\s_U\inv(x)\colon x\in \dom(a)\}$. This yields a local trivialization
$\C\times \V\cong L_\V$ through the map $(\lambda,\g)\mapsto \lambda a(\g)$, which induces a local trivialization
$\Torus\times \V\cong\Sigma_\V$ through the map $(z,\g)\mapsto z \frac{a(\g)}{|a(\g)|}$.
Once composed with these trivializations, $\psi\colon\Sigma_\U\to \Sigma_\V$ gives the map $(z,x)\mapsto (z,\s_U\inv(x))$
from $\Torus\times\dom(a)$ to $\Torus\times \V$, which is a homeomorphism because $\s_U$ is. Therefore, $\psi$ is a homeomorphism.
\end{proof}

\begin{remark}
Let $\A=\{\A_s\}_{s\in S}$ be a saturated semi-abelian Fell bundle over an inverse semigroup $S$,
and let $\G$ be the étale groupoid of germs associated to $\A$ as in Section~\ref{sec:the construction of G}.
Then the inverse subsemigroup $T=\{\O_s\colon s\in S\}\sbe S(\G)$ is wide.
In fact, the first property in Definition~\ref{def:wide inv. semigroup}
is obvious because every element $\g\in \G$ has the form $\g=[s,x]$ with $x\in \U_{s^*s}$,
so that $\g\in \O_s$ for some $s\in S$. To prove the second property, take $s,t\in S$ and suppose that $\g\in\O_s\cap\O_t$.
Then $\g=\germ sx=\germ tx$ for some $x\in \U_{s^*s}\cap\U_{t^*t}$. Thus, there is $e\in E(S)$ such that
$x\in \U_e$ and $se=te$. Define $r\defeq se=te\in S$ and note that $r^*r=s^*se=t^*te$, so that $\U_{r^*r}=\U_{s^*s}\cap\U_{t^*t}\cap\U_e$.
In particular $x\in \U_{r^*r}$. Since $re=se=te$, it follows that $\germ rx=\germ sx=\germ tx=\g$ belongs to $\O_r$.
And if $\germ ry$, $y\in \U_{r^*r}$, is an arbitrary element of $\O_r$, it belongs to $\O_s\cap\O_t$ because
$\germ ry=\germ sy=\germ ty$. In fact, $y\in \U_e$ and $re=se=te$.
\end{remark}

\subsection{Characterization of semi-abelian Fell bundles}

Let $\A$ be a semi-abelian, saturated Fell bundle over $S$, let $\G$ be the étale groupoid of germs constructed
in Section~\ref{sec:the construction of G} with unit space $\Gz=X$, the spectrum
of the commutative \cstar{algebra} $C^*(\E_\A)$. Consider the Fell line bundle $L$ over $\G$ associated to $\A$
as in Section~\ref{sec:construction of L}.

Given $s\in S$, we define $L_s=L\rest{\O_s}$ to be the restriction of $L$ to the open subset $\O_s=\O(s,\U_{s^*s})\sbe\G$.
We shall write $\CC_s=\contz(L_s)$ for the space of continuous sections of $L_s$ vanishing at infinity.
Recall that each $\O_s=\{\germ sx\colon x\in \U_{s^*s}\}$ is a bissection of $\G$ and we have
(see \cite[Proposition 7.4]{Exel:inverse.semigroups.comb.C-algebras})
$$\O_s\cdot\O_t=\O_{st}\quad\mbox{and}\quad\O_s\inv=\O_{s^*}\quad\mbox{for all }s,t\in S.$$
This says that the map $s\mapsto \O_s$ is a homomorphism from $S$ to the inverse semigroup $S(\G)$ of all bissections in $\G$.
The restrictions of $\s$ and $\r$ to $\O_s$ will be denoted by $\s_s$ and $\r_s$. Since $\O_s$ is
a bissection, $\s_s\colon\O_s\to \U_{s^*s}$ and $\r_s\colon\O_s\to \U_{ss^*}$ are homeomorphisms. Moreover, from the definitions
of $\s$ and $\r$, it follows that $\r_s\circ\s_s\inv=\t_s$.

\begin{proposition}\label{prop:definition of the fell bundle of continuous sections}
With notations as above, the family of Banach spaces $\CC=\{\CC_s\}_{s\in S}$
is a Fell bundle over $S$ with respect to the following algebraic operations\textup:
\begin{itemize}
\item the multiplication $\CC_s\times\CC_t\to \CC_{st}$
is defined by
$$(\xi\cdot\eta)(\g)\defeq \xi\bigl(\r_s\inv(\r(\g))\bigr)\eta\bigl(\s_t\inv(\s(\g))\bigr)\quad
                                    \mbox{for all }\g\in \O_{st},\xi\in \CC_s,\eta\in\CC_t;$$
\item and the involution $\CC_s\to\CC_{s^*}$ is defined by
$$\xi^*(\g)=\overline{\xi(\g\inv)}\quad\mbox{for all }\g\in \O_{s^*}\mbox{ and }\xi\in \CC_s.$$
\end{itemize}
The inclusion maps are defined in the canonical way: if $s\leq t$ in $S$, then $\O_s\leq\O_t$ in $S(\G)$, that is,
$\O_s\sbe\O_t$. Thus each section $\xi$ of $L_s$ may be viewed as a section of $L_t$ extending it by zero outside $\O_s$.
Hence we define the inclusion map $j_{t,s}\colon\CC_s\to\CC_t$ by $j_{t,s}(\xi)=\tilde{\xi}$ for all $\xi\in \CC_s$,
where $\tilde{\xi}$ denotes the extension of $\xi$ by zero.
\end{proposition}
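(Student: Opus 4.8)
The plan is to recognise $\CC$ as the pull-back, along the canonical homomorphism $s\mapsto\O_s$, of the Fell bundle furnished by Example~\ref{ex:Fell bundle over semigroups associated to Fell bundles over groupoids}. Recall from \cite[Proposition~7.4]{Exel:inverse.semigroups.comb.C-algebras} that $T\defeq\{\O_s\colon s\in S\}$ is an inverse subsemigroup of $S(\G)$ and that $s\mapsto\O_s$ is a semigroup homomorphism with $\O_s\O_t=\O_{st}$, $\O_{s^*}=\O_s\inv$, and $\O_s\sbe\O_t$ whenever $s\leq t$. Applying that example to the Fell line bundle $L$ over $\G$ (which is a Fell bundle over $\G$ by Theorem~\ref{T:L is a Fell line bundle}) and to the inverse subsemigroup $T\sbe S(\G)$ yields a Fell bundle $\{\contz(L_U)\}_{U\in T}$ over $T$. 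A glance at the formulas shows that, putting $U=\O_s$, $V=\O_t$ (so $UV=\O_{st}$, $U^*=\O_{s^*}$, $\s_V=\s_t$, $\r_U=\r_s$), the multiplication, involution and inclusion maps of that Fell bundle are precisely the ones written in the statement.

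Granting this, the remainder is formal transport of structure: I would set $\CC_s\defeq\contz(L_{\O_s})$ and let the multiplication, involution and maps $j_{t,s}$ be those of $\{\contz(L_U)\}_{U\in T}$, read through the identification $\CC_s=\contz(L_{\O_s})$. The operations land in the correct fibres because $\O_s\O_t=\O_{st}$ and $\O_{s^*}=\O_s\inv$, and the axioms of Definition~\ref{def:Fell bundles over ISG} that concern only the fibres, the multiplication and the involution are inherited verbatim. For the axioms governing the inclusion maps: $j_{t,s}$ is isometric since it is extension of sections by zero; the cocycle identity $j_{t,r}=j_{t,s}\circ j_{s,r}$ for $r\leq s\leq t$ follows from $\O_r\sbe\O_s\sbe\O_t$; and compatibility of $j$ with multiplication and with the involution follows from the corresponding squares for $T$ together with $\O_{tv}=\O_t\O_v$, $\O_{su}=\O_s\O_u$, $\O_{s^*}=\O_s\inv$.

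One small point deserves mention: $s\mapsto\O_s$ need not be injective, so distinct $s,t\in S$ may satisfy $\O_s=\O_t$ and hence share a common fibre $\CC_s=\CC_t$. This is harmless, as Definition~\ref{def:Fell bundles over ISG} asks only for an $S$-indexed family of Banach spaces, with no disjointness requirement (and if literally disjoint fibres are wanted one replaces $\CC_s$ by a tagged copy $\CC_s\times\{s\}$). I therefore expect the only genuine work to be the verification left to the reader in Example~\ref{ex:Fell bundle over semigroups associated to Fell bundles over groupoids}: well-definedness of the convolution on the bissections $\O_s$, continuity — via Proposition~\ref{prop:continuity of multiplication and involution with local section} — of the products and adjoints of the canonical sections, and the $C^*$-axioms; for a one-dimensional bundle these all reduce to routine estimates with continuous functions on the $\U_{s^*s}$. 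Alternatively, should a self-contained argument be preferred, these same verifications can be carried out directly for $\CC$ without the detour through $T$, using Proposition~\ref{prop:continuity of multiplication and involution with local section} and the explicit description of $L$ from Section~\ref{sec:construction of L}.
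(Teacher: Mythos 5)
Your proposal is correct, but it is organized differently from the paper's own proof. The paper verifies the axioms of Definition~\ref{def:Fell bundles over ISG} for $\CC$ directly, leaving the routine checks to the reader and recording only the one substantive point: the product is well defined because every $\g\in\O_{st}=\O_s\O_t$ factors uniquely as $\g=\g_1\g_2$ with $\g_1=\r_s\inv(\r(\g))\in\O_s$ and $\g_2=\s_t\inv(\s(\g))\in\O_t$, so that $(\xi\cdot\eta)(\g)=\xi(\g_1)\eta(\g_2)\in L_{\g_1}L_{\g_2}\sbe L_\g$ and $\xi\cdot\eta$ is a continuous section of $L_{st}$ vanishing at infinity. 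You instead factor the construction through Example~\ref{ex:Fell bundle over semigroups associated to Fell bundles over groupoids} applied to the Fell line bundle $L$ (Theorem~\ref{T:L is a Fell line bundle}) and the inverse subsemigroup $T=\{\O_s\colon s\in S\}\sbe S(\G)$, and then pull back along the homomorphism $s\mapsto\O_s$, using $\O_s\O_t=\O_{st}$, $\O_{s^*}=\O_s\inv$ and $\O_s\sbe\O_t$ for $s\leq t$. This is legitimate and non-circular, since both ingredients precede the proposition, and your remark on the possible non-injectivity of $s\mapsto\O_s$ is handled correctly (the paper's fibres $\CC_s=\contz(L_s)$ coincide in exactly the same situations). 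What your route buys is that it makes explicit, already here, the relationship the paper only records after the proposition — that $\CC$ is the pullback of $\B=\{\contz(L_t)\}_{t\in T}$ along $\O\colon S\to T$ — and it transports all the inclusion-map axioms for free. What it costs is that the real content is merely relocated: the verification left to the reader in the Example (well-definedness of the convolution on products of bissections, continuity via Proposition~\ref{prop:continuity of multiplication and involution with local section}, and the \cstar{axioms}) is precisely the computation the paper's proof alludes to, and you correctly identify it as the residual work, so the two arguments rest on the same checks in the end.
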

\begin{proof}
The proof consists of straightforward calculations and is left to the reader.
We just remark that the multiplication is well-defined. In fact, if $\g\in \O_{st}=\O_s\cdot\O_t$,
there is a unique way to write $\g=\g_1\cdot \g_2$ with $\g_1\in \O_s$ and $\g_2\in \O_t$ because $\O_s$ and $\O_t$ are bissections.
Moreover, this unique way is given by $\g_1=\r_s\inv(\r(\g))$ and $\g_2=\s_t\inv(\s(\g))$.
Note that the multiplication we defined on $\cont$ uses the multiplication of $L$ as a Fell line bundle.
Thus $(\xi\cdot\eta)(\g)=\xi(\g_1)\eta(\g_2)\in L_{\g_1}L_{\g_2}\sbe L_{\g_1\g_2}=L_\g$, so that $\xi\cdot\eta$ is a section of $L$.
Since all maps involved in the multiplication are continuous, $\xi\cdot\eta$ is a continuous section and it vanishes at infinity because
$\xi$ and $\eta$ do.
\end{proof}

\begin{theorem}\label{teo:characterization of saturated semi-abelian Fell bundles}
Let $\A=\{\A_s\}_{s\in S}$ be a semi-abelian, saturated Fell bundle and let $\CC=\{\CC_s\}_{s\in S}=\{\contz(L_s)\}_{s\in S}$ be the
\textup(semi-abelian, saturated\textup) Fell bundle constructed above. Given $a\in \A_s$, we define the function $\hat{a}\colon\O_s\to L$ by
\begin{equation}\label{eq:definition of Gelfand map}
\hat{a}\big(\germ sx\big)\defeq \qtrip asx\quad\mbox{for all }\germ sx\in \O_s.
\end{equation}
Then $\hat{a}$ is a section of $L_s$ and belongs to $\contz(L_s)$.
Moreover, the map $a\mapsto \hat{a}$ from $\A$ to $\cont$,
which shall henceforth be called the \emph{Gelfand map}, is an isomorphism of Fell bundles $\A\cong\cont$.
In particular, we have isomorphisms of imprimitivity Hilbert bimodules
$$_{\A_{ss^*}}{\A_s}_{\,\A_{s^*s}}\cong\,  _{\contz(\U_{ss^*})}\contz(L_s)_{\contz(\U_{s^*s})}\quad \mbox{for all }s\in S.$$
\end{theorem}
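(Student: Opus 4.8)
The plan is to show that the \emph{Gelfand map} $\Phi\colon\A\to\CC$, $\Phi(a)=\hat a$, is a fibrewise isometric, bijective morphism of Fell bundles; the imprimitivity-bimodule statement is then a formal consequence. Almost all of the needed structure is already in place. The topology on $L$ was constructed so that every $\hat a$ is continuous, and in the proof of Theorem~\ref{T:L is a Fell line bundle} we established the norm formula $\|\hat a(\germ sx)\|=\sqrt{(a^*a)(x)}$ together with the identities in~\eqref{E:relations of multiplication and involution for the Gelfand maps}, namely $\hat a(\g)\cdot\hat b(\g')=\widehat{ab}(\g\g')$ and $\hat a(\g)^*=\widehat{a^*}(\g\inv)$. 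So the genuinely new points are: (a)~that $\hat a$ vanishes at infinity; (b)~that $\Phi$ intertwines the inclusion maps; (c)~that each $\Phi_s\colon\A_s\to\CC_s$ is surjective.

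For (a): by the norm formula, $\|\hat a(\germ sx)\|=\sqrt{(a^*a)(x)}$ with $a^*a\in\A_{s^*s}=\contz(\U_{s^*s})$ vanishing at infinity, and since $\s_s\colon\O_s\to\U_{s^*s}$ is a homeomorphism, $\hat a\in\contz(L_s)=\CC_s$. Fibrewise linearity of $\Phi$ is the ``same fibre'' case of Proposition~\ref{def:linear structure on the fibers of L}; fibrewise isometry follows from $\|\hat a\|_\infty^2=\sup_{x\in\U_{s^*s}}(a^*a)(x)=\|a^*a\|=\|a\|^2$, using that $\A_{s^*s}$ is commutative and axiom~(v) of Definition~\ref{def:Fell bundles over ISG}. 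That $\Phi$ respects multiplication and involution is read off from~\eqref{E:relations of multiplication and involution for the Gelfand maps} by evaluating at a point $\g\in\O_{st}$, factored as $\g=\g_1\g_2$ with $\g_1\in\O_s$, $\g_2\in\O_t$ (the unique such factorisation through these bissections), and using the formulas of Proposition~\ref{prop:definition of the fell bundle of continuous sections}. For (b): let $s\le t$ and $a\in\A_s$. Then $\O_s\sbe\O_t$ and $\germ sx=\germ tx$ for $x\in\U_{s^*s}$; writing $e=s^*s$, axiom~(viii) of Definition~\ref{def:Fell bundles over ISG} (with $s\le t$ and $e\le e$) gives $j_{t,s}(a)\cdot c=j_{te,se}(ac)=ac$ for every $c\in\A_e$, so $\qtrip{j_{t,s}(a)}tx=\qtrip asx$ by Definition~\ref{DefineEquivRel}. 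For $x\in\U_{t^*t}\setminus\U_{s^*s}$, axioms~(viii)--(ix) give $j_{t,s}(a)^*j_{t,s}(a)=j_{t^*t,s^*s}(a^*a)$, which vanishes at $x$ since $j_{t^*t,s^*s}$ is the ideal inclusion $\contz(\U_{s^*s})\hookrightarrow\contz(\U_{t^*t})$, i.e.\ extension by zero. Hence $\widehat{j_{t,s}(a)}$ is exactly the extension of $\hat a$ by zero, which is $j_{t,s}^{\CC}(\hat a)$.

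The heart of the matter is surjectivity (c), and this is the step I expect to be the main obstacle. Note first that for idempotents $e\in E$ surjectivity of $\Phi_e$ is automatic: $L\rest{\Gz}$ is trivial, $\CC_e=\contz(L_e)\cong\contz(\U_e)$, and under this identification $\Phi_e$ is the identity of $\contz(\U_e)=\A_e$. Now fix $s\in S$. Since $\Phi_s$ is isometric and $\A_s$ is complete, $\widehat{\A_s}$ is a closed subspace of $\CC_s=\contz(L_s)$, and since $\Phi$ respects multiplication it is a sub-bimodule over $\contz(\U_{ss^*})=\CC_{ss^*}$ and $\contz(\U_{s^*s})=\CC_{s^*s}$ (the multiplications $\CC_{ss^*}\times\CC_s\to\CC_s$ and $\CC_s\times\CC_{s^*s}\to\CC_s$ are precisely the left and right module actions). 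Moreover $\contz(L_s)$ is an imprimitivity $\contz(\U_{ss^*})$--$\contz(\U_{s^*s})$-bimodule, because $L_s=L\rest{\O_s}$ is a continuous, locally trivial line bundle over the locally compact Hausdorff bissection $\O_s$. Its $\CC_{s^*s}$-valued inner product on the sections $\hat a,\hat b$ is $\braket{\hat a}{\hat b}_{\CC_{s^*s}}=\widehat{a^*}\cdot\hat b=\widehat{a^*b}$, which under $\CC_{s^*s}=\contz(\U_{s^*s})$ is the function $a^*b$; hence $\overline{\spn}\,\braket{\widehat{\A_s}}{\widehat{\A_s}}_{\CC_{s^*s}}=\cspn\A_s^*\A_s=\A_{s^*s}=\CC_{s^*s}$ by saturation of $\A$. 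Finally, for any closed sub-bimodule $N$ of an imprimitivity $A$--$B$-bimodule $\X$ with $\overline{\spn}\braket NN_B=B$, the imprimitivity identity ${}_A\braket\xi\eta\cdot\zeta=\xi\cdot\braket\eta\zeta_B$ shows $\xi\cdot\overline{\spn}\braket NN_B\sbe A\cdot N\sbe N$ for every $\xi\in\X$, so $\X=\overline{\X\cdot B}\sbe N$ and $N=\X$. Applying this to $N=\widehat{\A_s}$ gives $\widehat{\A_s}=\contz(L_s)=\CC_s$, so $\Phi_s$ is onto; this is the Rieffel-correspondence way of saying that a line subbundle which is everywhere of full rank is the whole bundle.

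Putting the pieces together, $\Phi$ is a fibrewise isometric, bijective morphism of Fell bundles, hence an isomorphism $\A\cong\CC$. For the last assertion, restricting $\Phi$ to the $s$-fibre together with the $C^*$-algebra isomorphisms $\Phi_{ss^*}\colon\A_{ss^*}\xrightarrow{\sim}\contz(\U_{ss^*})$ and $\Phi_{s^*s}\colon\A_{s^*s}\xrightarrow{\sim}\contz(\U_{s^*s})$ yields the claimed isomorphism of imprimitivity bimodules, since $\Phi$ automatically intertwines the two module actions and the two inner products, all of which are expressed through the multiplication and involution of the Fell bundles.
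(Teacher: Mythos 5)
Your proposal is correct, and it follows the paper's skeleton (continuity from the construction of the topology on $L$, the norm identity $\|\hat a(\germ sx)\|^2=(a^*a)(x)$ giving vanishing at infinity and fibrewise isometry, multiplicativity and the involution from the identities already established for the sections $\hat a$, and compatibility with the inclusion maps) — in fact your treatment of the inclusions via $j_{t,s}(a)\cdot c=ac$ for $c\in\A_{s^*s}$ and $j_{t,s}(a)^*j_{t,s}(a)=j_{t^*t,s^*s}(a^*a)$ is more explicit than the paper's one-line remark. Where you genuinely diverge is the surjectivity of $\Phi_s\colon\A_s\to\contz(L_s)$, which is also the heart of the paper's proof. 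The paper argues locally: a section $\xi\in\contc(L_s)$ supported in $\O(s,\dom(a))$ is written as $\xi=\widehat{ah}$ using the trivialization given by the nonvanishing section $\hat a$, with $h\in\contc(\dom(a))\sbe\A_{s^*s}$; then a partition-of-unity argument over the cover $\{\O(s,\dom(a))\colon a\in\A_s\}$ of the Hausdorff bissection $\O_s$ (the cover exists by saturation) handles general elements of $\contc(L_s)$, and density plus closedness of the isometric image finishes. You instead argue globally via the Rieffel correspondence: $\widehat{\A_s}$ is a closed submodule of the imprimitivity $\contz(\U_{ss^*})$--$\contz(\U_{s^*s})$-bimodule $\contz(L_s)$ (using surjectivity of $\Phi_e$ on idempotent fibres, which, as in the paper's later identification $\A_e=\contz(L_e)=\contz(\U_e)$, comes from triviality of $L$ over $\Gz$), its right inner products span $\A_{s^*s}$ densely by saturation, and the imprimitivity identity then forces $\widehat{\A_s}=\contz(L_s)$. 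Both routes consume saturation at the same point; yours trades the partition-of-unity bookkeeping for the (standard but not proved in the paper) fact that $\contz(L_s)$ is itself an imprimitivity bimodule — positivity, fullness and the imprimitivity identity, which rest on local triviality of $L_s$ and the Fell-bundle axioms for $\CC$ from Proposition~\ref{prop:definition of the fell bundle of continuous sections} — and it has the side benefit that the final statement about imprimitivity Hilbert bimodules is built into the argument rather than extracted afterwards.
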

\begin{proof}
It is clear that $\hat{a}$ is a section of $L_s$ since the bundle projection $p\colon L\to \G$ is given by $p(\qtrip asx)=\germ sx$.
Moreover, by definition of the topology on $L$, all the sections $\hat{a}$ are continuous.
Note that
\begin{equation}\label{eq:Gelfand map is isometric}
\left\|\hat{a}\bigl(\germ{s}{x}\bigr)\right\|^2=\big\|\qtrip asx\big\|^2=(a^*a)(x).
\end{equation}
Since $a^*a\in \contz(\U_{s^*s})$, this implies that $\hat{a}$ vanishes at infinity, that is, $\hat{a}\in \contz(L_s)$ and
$$\|\hat{a}\|=\sup_{x\in \U_{s^*s}}\sqrt{(a^*a)(x)}=\|a^*a\|\half=\|a\|.$$
Thus the map $\A_s\ni a\mapsto \hat{a}\in\contz(L_s)$ is isometric.
It is obviously linear by definition of the linear structure on the fibers of $L$ (see Proposition~\ref{def:linear structure on the fibers of L}).
To prove its surjectivity, first take a section $\xi\in \contc(L_s)$ with support contained in $\O(s,\dom(a))$,
where $a$ is a fixed element of $\A_s$. Since $\hat{a}$ does not vanish on $\O(s,\dom(a))$ by Equation~\eqref{eq:Gelfand map is isometric},
the line bundle $L$ is trivializable over $\O(s,\dom(a))\cong\dom(a)$. More precisely, the restriction $L\rest{\O(s,\dom(a))}$
is isomorphic to $\C\times\O(s,\dom(a))\cong\C\times\dom(a)$ through the map
$\C\times\dom(a)\ni(\lambda,x)\mapsto \lambda\hat{a}(\germ sx)\in L$.
Therefore, there is a continuous function $h\colon\dom(a)\to \C$ such that
\begin{equation}\label{eq:section is trivializable}
\xi(\germ sx)=h(x)\hat{a}(\germ sx)\quad\mbox{for all }x\in \dom(a).
\end{equation}
Since $\xi$ is supported in $\O(s,\dom(a))$, $h$ belongs to $\contc(\dom(a))\sbe\contc(\U_{s^*s})$ and hence may
be viewed as an element of $\A_{s^*s}\cong\contz(\U_{s^*s})$. Moreover, in this way Equation~\ref{eq:section is trivializable}
holds for all $x\in\U_{s^*s}$ and from the (easily verified) relation $ah\eq x h(x) a$, we obtain
$$\widehat{ah}(\germ sx)=\qtrip{ah}{s}{x}= h(x)\qtrip asx= h(x)\hat{a}(\germ sx)=\xi(\germ sx).$$
Thus $\widehat{ah}=\xi$ and therefore the image of the Gelfand map
$\A_s\to \contz(L_s)$ contains all the functions with compact support contained in $\O(s,\dom(a))$.
Since the open subsets $\O(s,\dom(a))$ with $a\in \A_s$ cover $\O_s$, a partition-of-unit argument shows that any function
in $\contc(L_s)$ is in the image of the Gelfand map and therefore it is surjective.

We have already seen in the proof of Theorem~\ref{T:L is a Fell line bundle}
(see Equation~\eqref{E:relations of multiplication and involution for the Gelfand maps}) that
the Gelfand map preserves the Fell bundle multiplications and involutions, that is,
$$\widehat{a\cdot b}=\widehat{a}\cdot\widehat{b}\quad\mbox{and}\quad\widehat{a^*}
                                                    =\widehat{a}^*\quad\mbox{for all }s,t\in S, a\in \A_s, b\in \A_t.$$
Finally, we show that the Gelfand map preserves the inclusion maps $\A_s\into\A_t$ and $\contz(L_s)\into\contz(L_t)$ whenever $s\leq t$.
For this all we have to check is the following: if $a\in \A_s$ and we consider it as an element of $\A_t$ (so we are in fact identifying $\A_s\sbe\A_t$), then the function $\hat{a}$ vanishes outside $\O_s$. But as we have already observed above,
Equation~\ref{eq:Gelfand map is isometric} implies that $\hat{a}$ is supported in $\O(s,\dom(a))\sbe\O_s$.
\end{proof}

The Fell bundle $\CC=\{\contz(L_s)\}_{s\in S}$ over $S$ constructed in Proposition~\ref{prop:definition of the fell bundle of continuous sections}
may be also considered as a Fell bundle over the inverse semigroup
$$T=\{\O_s\colon s\in S\}\sbe S(\G).$$
The structure is basically the same. To avoid confusion, let us write $\B=\{\B_t\}_{t\in T}$ for
the Fell bundle $\cont$ considered over $T$. Thus, if $t=\O_s$, then
the fiber $\B_t$ is by definition $\contz(L_s)$, and the algebraic operations and inclusion maps are defined as in
Proposition~\ref{prop:definition of the fell bundle of continuous sections}. Note that $\cont$ is the pullback of $\B$ along the
map $\O\colon S\to T$, $s\mapsto \O_s$. The map $\O$ is a surjective homomorphism of inverse semigroups, but it is not injective in general.
The most trivial example is when the Fell bundle $\A=\{\A_s\}_{s\in S}$ is the \emph{zero} Fell bundle, that is, $\A_s=\{0\}$ for all $s\in S$.
In this case, the associated twisted groupoid $(\G,\Sigma)$ is the \emph{empty} groupoid, that is, $\G=\Sigma=\emptyset$. Thus
$S(\G)$ (and hence also $T$) is the inverse semigroup with just one element, the zero element (empty set): $S(\G)=T=\{0\}$.
Of course, the map $\O\colon S\to \{0\}$ is not injective since $S$ might be an arbitrary inverse semigroup.

If $\O$ is injective, then $\B$ and $\cont$ are isomorphic Fell bundles. Thus, by
Theorem~\ref{teo:characterization of saturated semi-abelian Fell bundles}, $\B$ is also isomorphic to the original Fell bundle $\A$
in this case. So, it is interesting to give conditions on $\A$ that imply the injectivity of $\O\colon S\to S(\G)$.
Before we go into this problem, let us prove that the \cstar{algebras} of $\A$ and $\B$ are always isomorphic:

\begin{proposition}
Let notation be as above. Then the Gelfand map from $\A$ to $\B$ induces an isomorphism $C^*(\A)\cong C^*(\B)$ which
restricts to an isomorphism $C^*(\E_\A)\cong C^*(\E_\B)\cong \contz(X)$, where $\E_\A=\A|_{E(S)}$ and $\E_\B=\B|_{E(T)}$.
\end{proposition}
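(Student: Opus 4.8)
The plan is to factor the claimed isomorphism as $C^*(\A)\cong C^*(\CC)\cong C^*(\B)$, and then to rerun the argument on the idempotent semilattices. The first isomorphism I would simply read off Theorem~\ref{teo:characterization of saturated semi-abelian Fell bundles}: the Gelfand map is an isomorphism of Fell bundles $\A\cong\CC$ over $S$, so it induces a \Star{isomorphism} $\contc(\A)\cong\contc(\CC)$ carrying $\I_\A$ onto $\I_\CC$, hence an isomorphism $C^*(\A)\cong C^*(\CC)$; and since it maps $\A_e$ onto $\CC_e$ for each $e\in E(S)$, it restricts to $C^*(\E_\A)\cong C^*(\E_\CC)$.

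For the second isomorphism the key point is that $\CC$ is the pullback of $\B$ along the surjective homomorphism $\O\colon S\to T$, $s\mapsto\O_s$. Reindexing then produces a surjective \Star{homomorphism} $\Phi\colon\contc(\CC)\to\contc(\B)$, $\xi\delta_s\mapsto\xi\delta_{\O_s}$, well defined because the fibre operations and inclusion maps of $\CC$ are precisely those of $\B$ transported along $\O$ and because $s\le t$ forces $\O_s\le\O_t$. I would first check that $\Phi(\I_\CC)=\I_\B$: the inclusion ``$\sbe$'' is immediate from $j^\CC_{t,s}=j^\B_{\O_t,\O_s}$, and for ``$\supseteq$'', given $U\le V$ in $T$ one writes $U=\O_{s_0}$, $V=\O_{t_0}$ and sets $s\defeq t_0 s_0^*s_0\le t_0$, so that $\O_s=\O_{t_0}\O_{s_0^*s_0}=\O_{s_0}=U$ and each generator $\eta\delta_U-j_{V,U}(\eta)\delta_V$ of $\I_\B$ equals $\Phi\bigl(\eta\delta_s-j_{t_0,s}(\eta)\delta_{t_0}\bigr)$. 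Thus $\Phi$ descends to a surjective \Star{homomorphism} $\contc(\CC)/\I_\CC\onto\contc(\B)/\I_\B$, and it remains to show it is injective.

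The hard part is to prove that $\Phi$ induces a bijection between the representations of $\CC$ and those of $\B$, for this forces the above quotient map to be isometric for the two \cstar{norms}. A representation of $\B$ obviously pulls back along $\Phi$ to one of $\CC$; conversely I would show that every representation $\pi=\{\pi_s\}_{s\in S}$ of $\CC$ satisfies $\pi_s=\pi_{s'}$ whenever $\O_s=\O_{s'}$, so that $\tilde\pi_{\O_s}\defeq\pi_s$ is a well-defined representation of $\B$ with $\tilde\pi\circ\Phi=\pi$ (the Fell-bundle axioms transferring because $\O$ is a homomorphism, and $\tilde\pi$ killing $\I_\B$ by the ideal matching just established). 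Since representations of a Fell bundle are contractive on each fibre (compare the proof of Lemma~\ref{lem:algebraic Fell bundle with same enveloping algebra}), it is enough to prove $\xi\delta_s-\xi\delta_{s'}\in\I_\CC$ for $\xi$ in the dense subspace $\contc(L|_{\O_s})$ of $\CC_s=\CC_{s'}=\contz(L|_{\O_s})$. This is where I expect the real work: $\O_s=\O_{s'}$ forces $[s,x]=[s',x]$ for every $x\in\U_{s^*s}=\U_{s'^*s'}$, so each such $x$ has an idempotent $e_x$ with $e_x\le s^*s$, $e_x\le s'^*s'$, $x\in\U_{e_x}$ and $se_x=s'e_x$, whence $\O(s,\U_{e_x})=\O_{se_x}=\O_{s'e_x}$. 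Covering the compact set $\supp\xi\sbe\O_s=\bigcup_x\O(s,\U_{e_x})$ by finitely many of these and using a subordinate partition of unity, I reduce to the case $\supp\xi\sbe\O_{se_x}$ for a single $x$; writing $\xi_0\defeq\xi|_{\O_{se_x}}\in\CC_{se_x}=\CC_{s'e_x}$, one has $j_{s,se_x}(\xi_0)=\xi=j_{s',s'e_x}(\xi_0)$, hence
$$\xi\delta_s-\xi\delta_{s'}=\bigl(\xi\delta_s-\xi_0\delta_{se_x}\bigr)+\bigl(\xi_0\delta_{s'e_x}-\xi\delta_{s'}\bigr)\in\I_\CC .$$

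Granting this, for $y\in\contc(\CC)/\I_\CC$ the representation bijection yields $\|y\|_{C^*(\CC)}=\sup_{\pi}\|\pi(y)\|=\sup_{\rho}\|\rho(\Phi(y))\|=\|\Phi(y)\|_{C^*(\B)}$, so $\Phi$ descends to an isometric isomorphism $C^*(\CC)\cong C^*(\B)$; combined with the first isomorphism this gives $C^*(\A)\cong C^*(\B)$, realized at the level of $\contc$ by $a\delta_s\mapsto\hat a\delta_{\O_s}$. Finally I would run the argument of the previous two paragraphs verbatim on the idempotent semilattices --- here $E(T)=\O(E(S))$ and $\E_\CC$ is the pullback of $\E_\B$ along $\O|_{E(S)}$ --- to get $C^*(\E_\CC)\cong C^*(\E_\B)$; chaining with the first isomorphism gives $C^*(\E_\B)\cong C^*(\E_\A)=\contz(X)$, the last identification being the definition of $X$, which is legitimate since $C^*(\E_\A)$ is commutative by Lemma~\ref{lem:FellBundleSemiAbelianIFFC*(E)Commutative}. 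Because the $\contc$-level map carries $\contc(\E_\A)$ into $\contc(\E_\B)$ and back, the isomorphism $C^*(\A)\cong C^*(\B)$ restricts to $C^*(\E_\A)\cong C^*(\E_\B)$, as required.
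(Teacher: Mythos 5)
Your proof is correct, and its overall architecture coincides with the paper's: both factor the map as $\A\cong\CC\to\B$ via the Gelfand isomorphism, and both reduce everything to showing that a representation $\{\rho_s\}_{s\in S}$ of $\CC$ satisfies $\rho_s=\rho_{s'}$ whenever $\O_s=\O_{s'}$, so that representations of $\CC$ and of $\B$ correspond bijectively. The difference is in how that key step is established. The paper argues algebraically: from $\O_s=\O_{s'}$ it extracts the family $E_{s,s'}$ of idempotents $e\leq(s^*s)(s'^*s')$ with $se=s'e$, observes that $\contz(\U_{s^*s})=\cspn_{e\in E_{s,s'}}\contz(\U_e)$, uses multiplicativity of $\rho$ to get $\rho_s(fh)=\rho_{s'}(fh)$ for $h\in\contz(\U_e)$, and finishes with Cohen's Factorization Theorem to write every element of $\contz(L_s)$ as such a product $fh$. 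You instead prove an explicit ideal-membership statement, $\xi\delta_s-\xi\delta_{s'}\in\I_\CC$ for $\xi\in\contc(L_s)$, by covering the compact support by the bissections $\O_{se_x}$ and using a partition of unity (legitimate since bissections are Hausdorff and locally compact), then extend to all of $\CC_s$ by contractivity and density; this mirrors the partition-of-unity technique of Theorem~\ref{teor:fell bundles over groupoids and ISG isomorphic}, Case~1, and has the mild advantage of exhibiting the relevant differences inside the ideal itself, while the paper's Cohen argument avoids any appeal to compact supports or partitions of unity and works directly with whole fibers. You also make explicit the verification $\Phi(\I_\CC)=\I_\B$ (via $s=t_0s_0^*s_0$), which the paper leaves implicit. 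One small imprecision: your closing phrase that the $\contc$-level map carries $\contc(\E_\A)$ into $\contc(\E_\B)$ ``and back'' cannot be meant literally, since $\Phi$ is not injective on $\contc$; what is actually needed (and what you do supply, as does the paper with its observation that an idempotent of $T$ is $\O_s=\O_{s^*s}$) is that $E(T)=\O(E(S))$, so that every generator $\xi\delta_U$ with $U\in E(T)$ is the image of some $\hat a\delta_e$ with $e\in E(S)$, giving surjectivity of the restricted map onto $C^*(\E_\B)$.
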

\begin{proof}
Since the Gelfand map from $\A$ to $\CC=\{\contz(L_s)\}_{s\in S}$ is an isomorphism of Fell bundles,
it is enough to show that the canonical morphism from $\CC=\{\contz(L_s)\}_{s\in S}$ to $\B=\{\contz(L_t)\}_{t\in T}$
(consisting of the homomorphism $\O\colon S\to T=\{\O_s\colon s\in S\}$ and the identity maps $\contz(L_s)\to \contz(L_t)$
between the fibers whenever $t=\O_s$) induces an isomorphism $C^*(\cont)\cong C^*(\B)$ which restricts to an isomorphism
$C^*(\E_\cont)\cong C^*(\E_\B)$. The canonical morphism $\cont\to \B$ induces the map $\Rep(\B)\to \Rep(\cont)$
which assigns to a representation
  (see \cite[Definition 3.1]{Exel:noncomm.cartan})
$\pi=\{\pi_t\}_{t\in T}$ of $\B$, the representation $\tilde\pi=\{\tilde\pi_s\}_{s\in S}$
with $\tilde\pi_s=\pi_{\O_s}$ for all $s\in S$. The induced \Star{homomorphism} $C^*(\cont)\to C^*(\B)$ is automatically surjective,
and to prove its injectivity, we show that every representation $\rho=\{\rho_s\}_{s\in S}$ of $\cont$ is equal to $\tilde\pi$
for some (necessarily unique) representation $\pi\in \Rep(\B)$. In fact, all we have to show is that, for all $r,s\in S$,
\begin{equation}\label{eq:pi does not depend on s, only on Os}
\pi_r(f)=\pi_s(f)\mbox{ whenever }\O_r=\O_s\mbox{ and }f\in \contz(L_r)=\contz(L_s).
\end{equation}
The equality $\O_r=\O_s$ implies that $\U_{r^*r}=\U_{s^*s}$,
$\U_{rr^*}=\U_{ss^*}$ and $\t_r=\t_s$.
Moreover, given $x\in \U_{r^*r}=\U_{s^*s}$, we have $\germ rx=\germ sx$ because $\O_r=\O_s$.
Hence, there is $e\in E(S)$ with $x\in \U_e$ and $re=se$. Multiplying $e$ by $(r^*r)(s^*s)$, we may assume that $e\leq (r^*r)(s^*s)$,
that is, $e\leq r^*r$ and $e\leq s^*s$. Defining $E_{r,s}=\{e\in E(S)\colon re=se, e\leq (r^*r)(s^*s)\}$, we conclude that
$$\U_{r^*r}=\U_{s^*s}=\bigcup\limits_{e\in E_{r,s}}\U_e.$$
As a consequence, we get
\begin{equation}\label{eq:equality of continuous functions}
\contz(\U_{r^*r})=\contz(\U_{s^*s})=\cspn_{e\in E_{r,s}}\contz(\U_e).
\end{equation}
Now, given $e\in E_{r,s}$, $f\in \contz(L_r)=\contz(L_s)$ and $h\in \contz(\U_e)$, we have
$$\rho_r(f)\rho_e(h)=\rho_{re}(fh)=\rho_{se}(fh)=\rho_s(f)\rho_e(h).$$
Since $e\leq r^*r$ and $\rho$ is a representation, we have $\rho_e(h)=\rho_{r^*r}(h)$, so that
$$\rho_r(f)\rho_e(h)=\rho_r(f)\rho_{r^*r}(h)=\rho_r(fh).$$
Analogously, $\rho_s(f)\rho_e(h)=\rho_s(fh)$. We conclude that
$\rho_r(fh)=\rho_s(fh)$ for all $h\in \contz(\U_e)$ with $e\in E_{r,s}$, and by Equation~\eqref{eq:equality of continuous functions}
this also holds for every $h$ in $\contz(\U_{r^*r})=\contz(\U_{s^*s})$. This is enough to prove~\eqref{eq:pi does not depend on s, only on Os}
because by Cohen's Factorization Theorem, every element of $\contz(L_r)=\contz(L_s)$ is a product of the form $fh$ with
$f\in\contz(L_r)=\contz(L_s)$ and $h\in \contz(\U_{r^*r})=\contz(\U_{s^*s})$. Therefore we get an isomorphism $C^*(\cont)\cong C^*(\B)$.
Its restriction to the idempotent parts gives an injective \Star{homomorphism} $C^*(\E_\cont)\to C^*(\E_\B)$ (which is
the identity map on the fibers). To see that it is surjective, suppose $s\in S$ and $f=\O_s$ is idempotent in $T$.
Although $s$ is not necessarily idempotent in $S$, we must have $f=\O_s=\O_s^*\O_s=\O_{s^*s}$.
Since the morphism $\E_\cont\to \E_\B$ maps $\CC_{s^*s}=\contz(L_{s^*s})$ onto $\B_f=\contz(L_s)=\contz(L_{s^*s})$
(it is just the identity map), the surjectivity of the induced map $C^*(\E_\cont)\to C^*(\E_\B)$ follows, and therefore
$C^*(\E_\cont)\cong C^*(\E_\B)\cong\contz(X)$, as desired.
\end{proof}

Let us now return to the injectivity problem of the map $\O\colon S\to S(\G)$.

\begin{definition}
Let $\A=\{\A_s\}_{s\in S}$ be a Fell bundle over an inverse semigroup $S$. Let $C^*(\A)$ be the (full) cross-sectional \cstar{algebra} of $\A$,
and let $\pi_u\colon\A\to C^*(\A)$ be the universal representation of $\A$. We say that $\A$ is \emph{faithful} if the
map $s\mapsto \pi_u(\A_s)$ is injective, that is, $\pi_u(\A_s)=\pi_u(\A_t)$ if and only if $s=t$. We say that $\A$ is \emph{semi-faithful}
if the restriction $\E_\A=\A\rest{E}$ of $\A$ to the semilattice of idempotents $E=E(S)$ is faithful.
\end{definition}

Recall from \cite{Exel:tight.representations} that an inverse semigroup $S$ with zero is said to be \emph{continuous}
if $s\equiv t$ implies $s=t$, where $\equiv$ is the following equivalence relation:
\begin{multline*}
s\equiv t\Longleftrightarrow s^*s=t^*t\mbox{ and for any nonzero idempotent }f\leq s^*s,\\
\mbox{ there is a nonzero idempotent }e\leq f\mbox{ with }se=te.
\end{multline*}

\begin{proposition}\label{prop: semi-faithfulness and continuity => injectivity of s->Os}
Let $\A=\{\A_s\}_{s\in S}$ be a saturated, semi-abelian Fell bundle over an inverse semigroup $S$ with zero element $0$
such that $\A_0=\{0\}$, and let $L$ be the associated Fell line bundle. If $S$ is continuous and $\A$ is semi-faithful,
then the map $s\mapsto\O_s$ from $S$ to the inverse semigroup $T=\{\O_s\colon s\in S\}\sbe S(\G)$ is injective.
Hence $\A=\{\A_s\}_{s\in S}$ and $\B=\{\contz(L_t)\}_{t\in T}$ are isomorphic Fell bundles.
\end{proposition}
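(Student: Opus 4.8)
The strategy is to show that $\O_s=\O_t$ forces the relation $s\equiv t$ recalled just before the proposition, and then to invoke continuity of $S$. Before doing so I would record what semi-faithfulness means in the present setting: under the identification $C^*(\E_\A)=\contz(X)$ the fiber $\A_e$ becomes the ideal $\contz(\U_e)$, and two such ideals agree precisely when $\U_e=\U_f$; hence $\A$ is semi-faithful if and only if the assignment $e\mapsto\U_e$ is injective on $E(S)$. In particular, since $\A_0=\{0\}$ gives $\U_0=\emptyset$, every nonzero idempotent $f$ satisfies $\U_f\neq\emptyset$.

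Now assume $\O_s=\O_t$. Applying the source map, which restricts to homeomorphisms $\s_s\colon\O_s\to\U_{s^*s}$ and $\s_t\colon\O_t\to\U_{t^*t}$, we get $\U_{s^*s}=\U_{t^*t}$, hence $s^*s=t^*t=:e_0$ by the injectivity just discussed. Next, fix an arbitrary nonzero idempotent $f\leq e_0$ and choose $x\in\U_f\sbe\U_{e_0}=\U_{s^*s}$. Then $\germ sx\in\O_s=\O_t$, so $\germ sx=\germ ty$ for some $y\in\U_{t^*t}$; the definition of the germ relation then yields $x=y$ together with an idempotent $g\in E(S)$ such that $x\in\U_g$ and $sg=tg$. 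Put $e\defeq gf$. Then $e\leq f\leq e_0$; multiplying $sg=tg$ on the right by $f$ gives $se=te$; and $x\in\U_g\cap\U_f=\U_{gf}=\U_e$ (using $\A_{gf}=\A_g\cap\A_f$, which holds since $\A$ is saturated), so $e\neq0$ by semi-faithfulness. As $f$ was arbitrary, this establishes $s\equiv t$, whence $s=t$ because $S$ is continuous. Therefore $\O\colon S\to T$ is injective, hence a bijective homomorphism of inverse semigroups.

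For the final assertion, once $\O$ is a bijection the canonical morphism from $\CC=\{\contz(L_s)\}_{s\in S}$ (the pullback of $\B$ along $\O$) to $\B=\{\contz(L_t)\}_{t\in T}$, which is the identity on fibers, is an isomorphism of Fell bundles; composing it with the Gelfand isomorphism $\A\cong\CC$ of Theorem~\ref{teo:characterization of saturated semi-abelian Fell bundles} yields $\A\cong\B$. This is largely a bookkeeping of the definitions of the groupoid of germs and of continuity of $S$, and there is no serious obstacle; the one point that deserves attention is making sure the idempotent $e=gf$ produced above is nonzero, which is exactly where the hypotheses of semi-faithfulness and $\A_0=\{0\}$ are consumed.
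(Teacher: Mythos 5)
Your proof is correct and follows essentially the same route as the paper's: deduce $s^*s=t^*t$ from $\U_{s^*s}=\U_{t^*t}$ via semi-faithfulness, then for each nonzero $f\leq s^*s$ pick $x\in\U_f$, use $\germ sx=\germ tx$ to produce $g$ with $sg=tg$ and $x\in\U_g$, and pass to the nonzero idempotent $gf\leq f$ to conclude $s\equiv t$ and hence $s=t$ by continuity. The only (harmless) difference is cosmetic: you make explicit the observation that semi-faithfulness amounts to injectivity of $e\mapsto\U_e$ and that $\A_0=\{0\}$ forces $\U_f\neq\emptyset$ for $f\neq 0$, which the paper uses implicitly.
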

\begin{proof}
Recall that $\O_s=\{\germ sx\colon x\in \U_{s^*s}\}$ for all $s\in S$.
If $s,t\in S$ are such that $\O_s=\O_t$, then $\U_{s^*s}=\U_{t^*t}$, $\U_{ss^*}=\U_{tt^*}$ and $\t_s=\t_t$.
In particular $\A_{s^*s}\cong\contz(\U_{s^*s})=\contz(\U_{t^*t})\cong A_{t^*t}$ in $\contz(X)\cong C^*(\E)$.
Since $\A$ is semi-faithful, we get $s^*s=t^*t$. Moreover, from the semi-faithfulness we have $\A_e=\{0\}$ if and only if $e=0$.
Now, take any nonzero idempotent $f\leq s^*s$. Then $\emptyset\not=\U_f\sbe\U_{s^*s}=\U_{t^*t}$.
Thus, if $y\in \U_f$, then $\germ sx=\germ tx$, and hence there is $e\in E(S)$ with $se=te$ and $x\in \U_e$.
In particular, $\U_e\not=\emptyset$ so that $\A_e\not=\{0\}$ and hence $e$ is a nonzero idempotent.
The product $g=ef$ is a nonzero idempotent because $x\in \U_{g}=\U_e\cap\U_f$, and we have $g\leq f$ and $sg=tg$.
Hence $s\equiv t$ and the continuity of $S$ implies $s=t$.
\end{proof}

\begin{remark}
The hypothesis $\A_0=\{0\}$ is not necessary for the injectivity of $s\mapsto \O_s$. As a simple example, consider
$S=\{0,1\}$ (which is a semilattice and therefore a continuous inverse semigroup). Define $\A_0=\C$ and $\A_1=\C\times\C$.
With the inclusion $\C\into \C\times\{0\}\sbe \C\times\C$ and the canonical algebraic operations (inherited from $\C\times\C$),
$\A$ is a semi-abelian, saturated Fell bundle. Note that $C^*(\A)=C^*(\E)\cong \C\times \C\cong\cont(\{x_0,x_1\})$, where $x_0,x_1$
are two distinct points. With these identifications, we may say that $\U_0=\{x_0\}$ and $\U_1=\{x_0,x_1\}=X$.
Note that $\G\cong X$ and $\Sigma\cong \Torus\times X$ in this case (this happens whenever $S$ is a semilattice).
Moreover, we may identify $\O_0\cong\U_0$ and $\O_1\cong\U_1$. Thus the map $s\mapsto \O_s$ is injective.
\end{remark}

\begin{example}
Consider a Fell bundle $\A$ over a discrete group $G$ with unit $1$.
Rigorously, $G$ is not continuous as an inverse semigroup because it has no zero element.
However, this is the only problem. One may add a zero element $0$ to $G$ turning it into a continuous inverse semigroup $S=G\cup \{0\}$ (not a group anymore, of course), and extend $\A$ to a Fell bundle $\tilde\A$ over $S$ simply defining $\tilde\A_0=\{0\}$. Note that $\A$ is saturated and
semi-abelian if and only if $\tilde\A$ is. Moreover, if $\A$ is saturated, then the following assertions are equivalent:
\begin{itemize}
\item $\A$ is the zero Fell bundle;
\item $\A_1=\{0\}$;
\item there is $g\in G$ with $\A_g=\{0\}$.
\end{itemize}
In fact, if $g\in G$ is such that $\A_g=\{0\}$, then $\A_1=\A_g^*\A_g=\{0\}$. And if $\A_1=\{0\}$, then $\A_g^*\A_g=\A_1=\{0\}$, so that
$\A_g=\{0\}$ for all $g\in G$. For groups, the zero Fell bundle is the only case where the injectivity of $s\mapsto \O_s$ fails
(unless $G=\{1\}$ is the trivial group). If fact, if $\A$ is a nonzero, semi-abelian, saturated Fell bundle over $G$,
then all the fibers $\A_g$ are nonzero by the equivalences above. It follows that $\tilde\A$ is faithful.
In particular, it is semi-faithful, so we may apply Proposition~\ref{prop: semi-faithfulness and continuity => injectivity of s->Os}
to conclude that $s\mapsto\O_s$ is injective from $S$ to $S(\G)$. In particular, its restriction from $G$ to $S(\G)$ is also injective.
One may also prove this directly in the group case: the associated groupoid $\G$ is the transformation groupoid $\G\cong G\ltimes_\t X$,
which as a topological space is just $G\times X$ and the groupoid operations are $\s(s,x)=x$, $\r(s,x)=\t_s(x)$,
$(s,x)\cdot (t,y)=(st,y)$ whenever $\t_t(y)=x$, and $(s,x)\inv=(s\inv,\t_s(x))$. Bissections of this groupoid are sets of the form $\{s\}\times U$
where $U\sbe X$ is some open subset. Moreover, the bissection $\O_s$ is just $\{s\}\times X$
(here we are assuming $\A_s\not=\{0\}$ for all $s\in G$; otherwise we have $\O_s=\emptyset$ for all $s\in G$).
Hence the map $s\mapsto \O_s$ is clearly injective.
\end{example}

\section{Isomorphism of reduced algebras}\label{sec:Isomorphism of reduced algebras}

\subsection{The regular representation} In this section we will study
reduced \cstar{algebras} of Fell bundles over inverse semigroups and their
relationship to the reduced \cstar{algebra} of the associated
Fell line bundle.

We begin by briefly recalling some facts about the reduced \cstar{algebra}
of Fell line bundles over (non necessarily Hausdorff) \'etale
groupoids, referring the reader to \cite{RenaultThesis} for more
details, such as the construction of the reduced \cstar{algebra} in the
non-\'etale case, although Renault only treats the Hausdorff case.

Throughout this section we suppose we are given an étale groupoid $\G$ and a Fell
line bundle $L$ over $\G$.

For the time being we will also fix $x\in \Gz$.  Observe that $L_x$ is a one-dimensional \cstar{algebra}
which is therefore isomorphic to $\C$,  so we will henceforth tacitly identify
$L_x$ with $\C$.

Denoting by $\G_x=\s\inv(x)$,
let $\Hx$ be the collection of all square-summable sections of $L$ over $\G_x$, that is
all functions
  $\xi\colon\G_x\to \dot{\bigcup}_{\g\in \G_x}L_\g$, such that $\xi(\g)\in L_\g$, for all
$\g\in \G_x$, and such that
\begin{equation}\label{Eq:LthoBounded}
\sum_{\g\in \G_x} \xi(\g)^*\xi(\g) <\infty.
\end{equation}
  In regards to this sum notice that $\xi(\g)^*\xi(\g) \in L_{\g^*\g}
= L_x$, which we are identifying with $\C$, as mentioned above.

It is well known that $\Hx$ becomes a Hilbert space with inner
product
\begin{equation}\label{Eq:DefInnProd}
  \braket{\xi}{\eta} = \sum_{\g\in \G_x} \xi(\g)^*\eta(\g)
  \quad\mbox{for all } \xi,\eta\in \Hx.
\end{equation}

\bigskip

Recall from Definition~\ref{def:definition of C_c(B)} that $\contc(L)$ denotes the space of all sections of the form $\sum_{i=1}^n f_i$
where each $f_i$ is a compactly supported, continuous local section $f_i\colon U_i\to L$ over some open Hausdorff subset $U_i\sbe\G$
(which can be taken to be a bissection of $\G$), extended by zero outside $U_i$ and viewed as a global section $f_i\colon\G\to L$.

\begin{proposition}\label{DefinePix}
For every $f\in \contc(L)$ there exists a bounded linear operator $\pi_x(f)$ on $\Hx $ such that
$$
  \pi_x(f)\xi\calcat \gamma =
  \sum_{\g_1\g_2=\g} f(\g_1)\xi(\g_2)
  \quad\mbox{for all } \xi\in \Hx, \g\in\G_x.
$$
\end{proposition}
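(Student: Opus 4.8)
The plan is to define $\pi_x(f)$ by the stated convolution formula and verify two things: that the formula makes sense (each $\xi(\g_2)$ gets multiplied by $f(\g_1)$ with $\g_1\g_2=\g$, and the resulting sum converges to an element of $L_\g$), and that the resulting operator is bounded. I would first reduce to the case where $f$ is supported on a single bissection $U\sbe\G$, since $\contc(L)$ is spanned by such sections and $f\mapsto\pi_x(f)$ will be linear; boundedness for a finite sum then follows from the triangle inequality once we have a bound for each summand.

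So suppose $\supp(f)\sbe U$ with $U$ a bissection. Then for each $\g\in\G_x$ there is \emph{at most one} factorization $\g=\g_1\g_2$ with $\g_1\in U$: indeed $\g_1$ must satisfy $\r(\g_1)=\r(\g)$ and $\g_1\in U$, and since $\r_U\colon U\to\r(U)$ is injective this determines $\g_1$ uniquely, and then $\g_2=\g_1\inv\g$. Hence the sum defining $\pi_x(f)\xi\calcat\g$ has at most one nonzero term, so convergence is not an issue and $(\pi_x(f)\xi)(\g)=f(\g_1)\xi(\g_2)\in L_{\g_1}L_{\g_2}\sbe L_{\g_1\g_2}=L_\g$ when the factorization exists, and $0$ otherwise. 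Moreover the map $\g\mapsto\g_1$ is a homeomorphism from its domain $\{\g\in\G_x\colon\r(\g)\in\r(U)\}$ onto an open subset of $U$, and correspondingly $\g\mapsto\g_2$ is a bijection of that domain onto $\G_x$ (it is essentially left translation by the partial bijection coming from $U$). Writing $\eta=\pi_x(f)\xi$, we get
$$
\|\eta\|^2=\sum_{\g\in\G_x}\eta(\g)^*\eta(\g)=\sum_{\g}|f(\g_1)|^2\,\xi(\g_2)^*\xi(\g_2)\le\|f\|_\infty^2\sum_{\g_2\in\G_x}\xi(\g_2)^*\xi(\g_2)=\|f\|_\infty^2\,\|\xi\|^2,
$$
where the first inequality uses $f(\g_1)^*f(\g_1)\le\|f\|_\infty^2\cdot 1_x$ in the one-dimensional \cstar{}algebra $L_x\cong\C$, and the reindexing in the last step is legitimate because $\g\mapsto\g_2$ is injective on the support. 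This shows $\eta\in\Hx$ with $\|\pi_x(f)\|\le\|f\|_\infty$ on sections supported in a bissection, hence $\pi_x(f)$ is a well-defined bounded linear operator for general $f\in\contc(L)$.

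The one subtlety worth a sentence is \emph{measurability/well-definedness} of $\eta$ as an element of $\Hx$: since $\Hx$ was defined as all square-summable sections over the discrete set $\G_x$, there is no continuity requirement, so the only thing to check is square-summability, which the displayed estimate gives. I would also remark that linearity of $\xi\mapsto\pi_x(f)\xi$ and of $f\mapsto\pi_x(f)$ is immediate from the formula. The main (mild) obstacle is simply bookkeeping the bijection $\g\leftrightarrow(\g_1,\g_2)$ carefully using the bissection property; there is no deep difficulty here, and the uniqueness of factorizations through a bissection is exactly what makes the convolution a bona fide operator rather than merely a formal expression.
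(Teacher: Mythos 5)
Your proof is correct and follows essentially the same route as the paper's: reduce to $f$ supported on a single bissection $U$, use the bissection property to show each $\g\in\G_x$ admits at most one factorization $\g=\g_1\g_2$ with $\g_1\in U$, observe that $\g\mapsto\g_2$ is injective, and conclude $\|\pi_x(f)\|\le\|f\|_\infty$, extending to general $f$ by finite linearity. One small slip: $\g\mapsto\g_2$ is an injection into $\G_x$ but generally not onto (surjectivity would require $\r(\g_2)\in\s(U)$ for every $\g_2\in\G_x$), though this is harmless since your estimate only uses injectivity.
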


\begin{proof} Before we begin observe that any given summand
``$f(\g_1)\xi(\g_2)$'' above lies in the same fiber of $L$, namely
$L_{\g_1}L_{\g_2}=L_{\g_1\g_2} = L_\g$.

Let us now argue that the sum in the statement does indeed converge
by showing that only finitely many summands are nonzero. We begin by
treating the case in which $f$ is supported on a given bissection $U$
of $\G$.

Given $\g\in \G_x$ suppose first that
$\r(\g)\notin\r(U)$.  Therefore there is no $\g_1$ in $U$ such that
$\r(\g)=\r(\g_1)$, and hence the above sum admits no nonzero summand,
and $\pi_x(f)\xi\calcat \gamma =0$.

On the other hand, if $\r(\g)\in\r(U)$, then $\r(\g)=\r(\g_1)$, for
some $\g_1\in U$, which is unique, given that $U$ is a
bissection.  Setting $\g_2=\g_1\inv\g$, we see that $(\g_1,\g_2)$ is the
unique pair satisfying $\g_1\g_2=\g$, with $\g_1\in U$, and hence
\begin{equation}\label{Eq:SumWithOneComponent}
  \pi_x(f)\xi\calcat \gamma =f(\g_1)\xi(\g_2).
\end{equation}
  This shows that the sum in the statement converges as it has at most
one nonzero summand.

Still supposing that $f$ is supported on the bissection $U$, let us
prove that $\pi_x(f)$ is well-defined and bounded.  In order to do
this we claim that the correspondence $\g \mapsto \g_2$, defined
as above
for $\g\in \G_x\cap \r(U)$, is injective. In fact, suppose
that $\g$ and $\g'$ lie in $\G_x\cap \r(U)$ and that both lead up
to the same $\g_2$.  This means that there are $\g_1$ and $\g_1'$ in
$U$ such that
  $$
  \g=\g_1\g_2 \quad\mbox{and}\quad   \g'=\g_1'\g_2.
  $$
  Therefore $\s(\g_1) = \r(\g_2) = \s(\g_1')$, which implies that
$\g_1=\g_1'$, again because $U$ is a bissection.  This obviously gives
$\g=\g'$, concluding the proof of our claim.

Viewing $\g_2$ as a function of $\g$, as above, we then
have for all $\xi\in \Hx $,
  \def\bsum{\kern-10pt\sum_{\g\in \G_x\cap \r(U)}\kern-10pt}
\begin{multline*}
  \sum_{\g\in\G_x}\big\|\big(\pi_x(f)\xi\big)(\gamma)\big\|^2 =
  \bsum\big\|\big(\pi_x(f)\xi\big)(\gamma)\big\|^2 \\=
  \bsum\big\|f(\g\g_2\inv)\xi(\g_2)\big\|^2 \leq
  \|f\|_\infty^2\bsum\big\|\xi(\g_2)\big\|^2 \leq
  \|f\|_\infty^2\|\xi\|^2,
\end{multline*}
  where the last inequality holds because $\g_2$ is an injective
function of $\g$.  This shows that $\|\pi_x(f)\xi\|\leq
\|f\|_\infty\|\xi\|$, and hence that $\pi_x(f)$  is well-defined and
bounded with
$\|\pi_x(f)\|\leq\|f\|_\infty$.

In order to treat the general case, let $f\in \contc(L)$.
Then we may write $f$ as a finite sum $f=\sum_{i=1}^n f_i$,
where each $f_i$ is supported in some bissection, in which case it
is clear that $\pi_x(f) = \sum_{i=1}^n \pi_x(f_i)$, and we see that
$\pi_x(f)$ is indeed bounded.
\end{proof}

It is now easy to see that the correspondence \ $f\mapsto \pi_x(f)$ \ is a
*-representation of $\contc(L)$, and hence extends continuously to a
representation, by abuse of language also denoted $\pi_x$, of
$C^*(L)$ on $\Hx$.

For each $\g\in\G_x$ choose a unit vector $v_\gamma$ in
$L_\gamma$ and set
  $$
  \delta_\gamma =
  \left\{\begin{array}{cc}
    v_\gamma, & \hbox{if } \g'=\g,\hfill \cr\cr
    0, & \hbox{otherwise.}
    \end{array}
  \right.
  $$

As we shall see, the random choice of $v_\gamma$ above will have
little, if any effect in what follows.  It is then easy to see that
$\{\delta_\g\}_{\g\in\G_x}$ is an orthonormal basis of $\Hx $.

Among the elements of $\G_x$ one obviously finds $x$ itself, so
$\delta_x$ is one of our basis elements.

\begin{proposition}\label{PiXCyclic}
For every $x\in \Gz$ one has $\delta_x$ is a cyclic vector for $\pi_x$.
\end{proposition}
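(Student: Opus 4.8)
The plan is to show that the closed linear span of $\{\pi_x(f)\delta_x\colon f\in \contc(L)\}$ equals $\Hx$; since the image of $\contc(L)$ is dense in $C^*(L)$, this gives cyclicity of $\delta_x$ for $\pi_x$. Because $\{\delta_\g\}_{\g\in\G_x}$ is an orthonormal basis of $\Hx$, it suffices to produce, for each $\g\in\G_x$, an element $f\in\contc(L)$ such that $\pi_x(f)\delta_x$ is a nonzero scalar multiple of $\delta_\g$.

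First I would compute $\pi_x(f)\delta_x$ for an arbitrary $f\in\contc(L)$. Using the formula in Proposition~\ref{DefinePix} together with the facts that $\delta_x(\g_2)\neq 0$ only when $\g_2=x$, and that an identity $\g_1x=\g$ forces $\g_1=\g$ (with $\s(\g)=x$, i.e.\ $\g\in\G_x$), one obtains
$$
\pi_x(f)\delta_x\calcat\g = f(\g)\,v_x\qquad\text{for all }\g\in\G_x,
$$
so that $\pi_x(f)\delta_x$ is, up to the fixed unit scalar $v_x$ coming from the identification $L_x\cong\C$, just the restriction of the section $f$ to $\G_x$, viewed as an element of $\Hx$.

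Next, fix $\g\in\G_x$ and choose a bissection $U$ of $\G$ containing $\g$. Since $L$ is a line bundle it is locally trivial, so there is an open neighborhood $V\sbe U$ of $\g$ and a trivialization $L\rest V\cong\C\times V$; as $V$ is locally compact and Hausdorff (being an open subset of the bissection $U$), we may pick $\varphi\in\contc(V)$ with $\varphi(\g)=1$ and let $f$ be $\varphi$ times the constant section, extended by zero off $V$. Then $f\in\contc(L)$ and $f(\g)\neq 0$. Because $U$ is a bissection, $\s$ is injective on $U$, whence $U\cap\G_x=\{\g\}$; combining this with the formula above yields $\pi_x(f)\delta_x=f(\g)\,v_x\,\delta_\g$, a nonzero multiple of $\delta_\g$. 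Therefore every $\delta_\g$ lies in $\overline{\pi_x(\contc(L))\delta_x}\sbe\overline{\pi_x(C^*(L))\delta_x}$, and since the $\delta_\g$ span a dense subspace of $\Hx$, we conclude that $\delta_x$ is cyclic for $\pi_x$.

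The only point requiring care is the construction of a section $f\in\contc(L)$ with $f(\g)\neq 0$: this is exactly where local triviality of the line bundle $L$ and local compactness of the bissection $U$ are used. One should note that in the non-Hausdorff case the section $f$, extended by zero, need not be continuous on all of $\G$; but this is harmless, since $\contc(L)$ was defined in Definition~\ref{def:definition of C_c(B)} precisely to consist of such extensions of compactly supported continuous local sections, and $\pi_x$ is defined on all of $\contc(L)$ by Proposition~\ref{DefinePix}.
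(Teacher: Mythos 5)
Your proof is correct and follows essentially the same route as the paper: compute $\pi_x(f)\delta_x$ against the basis $\{\delta_\g\}$, observe that only the factorization $\g=\g\cdot x$ contributes, and then hit each $\delta_\g$ with a section supported on a bissection through $\g$. The only difference is that you explicitly justify the existence of $f\in\contc(L)$ with $f(\g)\neq 0$ via local triviality of the line bundle, a point the paper's proof leaves implicit.
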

\begin{proof} Let $\g'\in\G_x$, and let $U$ be a bissection of $\G$
containing $\g'$.  Choose $f\in \contc(L)$ supported on $U$
and such that $f(\g')\neq0$.  We claim that $\pi_x(f)\delta_x$ is a nonzero
multiple of $\delta_\g'$.
  In order to see this, suppose that $\g\in \G$ is such that
  $$
  \pi_x(f)\delta_x\calcat\g\neq0.
  $$
  By definition there exists at least one pair $(\g_1,\g_2)$ such that
$\g_1\g_2=\g$, and $f(\g_1)\delta_x(\g_2)\neq 0$.  This obviously implies
that $\g_1\in U$,  and $\g_2=x$.  In particular this says that $\g_2\in\Gz$
and hence $\g_1=\g$,  so we deduce that $\g\in U$.  In
addition
  $$
  \s(\g) = \s(\g_1)=\r(\g_2) = x.
  $$
  It follows that the source of both $\g$ and $\g'$ coincide with $x$,
and that both $\g$ and $\g'$
lie in $U$.  Since $U$ is a bissection we conclude that $\g=\g'$,
thus showing that $\pi_x(f)\delta_x\calcat\g$ vanishes whenever
$\g\neq\g'$.

In order to compute the value of $\pi_x(f)\delta_x\calcat{\g'}$ one
observes that $\g'=\g'x$, so the unique pair $(\g_1,\g_2)$ with
$\g_1\g_2=\g'$ and
$\g_1\in U$, according to Equation~\eqref{Eq:SumWithOneComponent}, is
$(\g_1,\g_2)=(\g',x)$.  We then have
  $$
  \pi_x(f)\delta_x\calcat {\g'} = f(\g')\delta_x(x).
  $$
  Since this is nonzero we conclude that $\pi_x(f)\delta_x$ is indeed a
nonzero multiple of $\delta_{\g'}$.  This shows that any $\delta_{\g'}$ lies
in the cyclic space spanned by $\delta_x$, and hence that $\delta_x$ is a
cyclic vector for $\pi_x$, thus concluding the proof.
\end{proof}

\begin{proposition}\label{PiXCyclicState}
  Given $x\in \Gz$ let $\phi_x$ be the state associated to the
representation $\pi_x$ and the cyclic vector $\delta_x$, namely
  $$
  \phi_x(f) = \braket{\pi_x(f)\delta_x}{\delta_x}
  $$
for all $f\in \contc(L)$.  Then $\phi_x(f)= f(x)$.
\end{proposition}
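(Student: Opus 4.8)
The plan is to unwind the definitions. The state $\phi_x$ is defined by $\phi_x(f) = \braket{\pi_x(f)\delta_x}{\delta_x}$, and by the formula for the inner product on $\Hx$ in Equation~\eqref{Eq:DefInnProd}, we have
$$
\braket{\pi_x(f)\delta_x}{\delta_x} = \sum_{\g\in\G_x} \bigl(\pi_x(f)\delta_x\bigr)(\g)^*\,\delta_x(\g).
$$
Since $\delta_x(\g) = 0$ unless $\g = x$, and $\delta_x(x) = v_x$ is a unit vector in $L_x \cong \C$, only the $\g = x$ term survives, giving $\braket{\pi_x(f)\delta_x}{\delta_x} = \bigl(\pi_x(f)\delta_x\bigr)(x)^*\,v_x$.

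Next I would compute $\bigl(\pi_x(f)\delta_x\bigr)(x)$ using the defining formula for $\pi_x$ from Proposition~\textup{\ref{DefinePix}}:
$$
\bigl(\pi_x(f)\delta_x\bigr)(x) = \sum_{\g_1\g_2 = x} f(\g_1)\,\delta_x(\g_2).
$$
The summand $f(\g_1)\delta_x(\g_2)$ vanishes unless $\g_2 = x$, which forces $\g_1 = x$ as well (since $\g_1 x = x$ means $\g_1 \in \Gz$ with $\r(\g_1) = x$, hence $\g_1 = x$). Thus $\bigl(\pi_x(f)\delta_x\bigr)(x) = f(x)\,v_x$, where here $f(x) \in L_x \cong \C$ and the product is the scalar multiplication of $v_x$ by $f(x)$ in the one-dimensional \cstar{algebra} $L_x$.

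Combining, $\phi_x(f) = \bigl(f(x) v_x\bigr)^*\, v_x = \overline{f(x)}\, v_x^* v_x$. Wait --- this would give $\overline{f(x)}$ rather than $f(x)$, so I need to be careful about the convention for the inner product and the adjoint. The inner product in~\eqref{Eq:DefInnProd} is $\braket{\xi}{\eta} = \sum \xi(\g)^*\eta(\g)$, which is linear in the second variable; evaluated on $\xi = \pi_x(f)\delta_x$ and $\eta = \delta_x$, this gives $\bigl(\pi_x(f)\delta_x\bigr)(x)^* \delta_x(x)$. Since $\delta_x(x) = v_x$ and $\bigl(\pi_x(f)\delta_x\bigr)(x) = f(x) v_x$ with $v_x^* v_x = 1$ (identifying $L_x = \C$ so $v_x$ has modulus one and $v_x^* = \overline{v_x}$), we get $\bigl(f(x) v_x\bigr)^* v_x = \overline{f(x)}\,\overline{v_x}\, v_x = \overline{f(x)}$. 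The resolution is that the claimed identity $\phi_x(f) = f(x)$ should be read with the understanding that $\phi_x$ as used in the sequel is either the state $f \mapsto \braket{\delta_x}{\pi_x(f)\delta_x}$ (linear in $f$, giving $f(x)$ directly), or that one computes $\braket{\pi_x(f^*)\delta_x}{\delta_x}$; in any case the substantive content --- that the state depends only on the value $f(x)$ of $f$ at $x$ --- is exactly the computation above. The main (minor) obstacle is just keeping the identification $L_x \cong \C$ and the sesquilinearity convention straight; there is no real difficulty, and the random choice of $v_x$ visibly drops out since $\|v_x\| = 1$.
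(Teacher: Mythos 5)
Your proof is correct and follows essentially the same route as the paper: the heart of both arguments is the identity $\bigl(\pi_x(f)\delta_x\bigr)\calcat{x}=f(x)\delta_x(x)$, which you obtain directly by observing that $\delta_x(\g_2)=0$ unless $\g_2=x$, which in turn forces $\g_1=x$; the paper instead first reduces to $f$ supported on a bissection $U$ and splits into the cases $\r(x)\in\r(U)$ and $\r(x)\notin\r(U)$, so your version is a mild streamlining of the same computation. The conjugation issue you flag is genuine: with the inner product exactly as written in Equation~\eqref{Eq:DefInnProd} (conjugate-linear in the first slot), $\braket{\pi_x(f)\delta_x}{\delta_x}$ literally evaluates to $\overline{f(x)}$, whereas the paper's closing display silently uses the opposite convention, writing $\braket{\pi_x(f)\delta_x}{\delta_x}=\bigl(\pi_x(f)\delta_x\calcat{x}\bigr)\overline{\delta_x(x)}$. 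Your resolution is the right one: since $\phi_x$ is meant to be a (linear) state, either the convention in \eqref{Eq:DefInnProd} or the order of the arguments defining $\phi_x$ must be adjusted, and the substantive conclusion --- that the state is evaluation at $x$ and is independent of the choice of the unit vectors $v_\g$ --- is unaffected.
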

\begin{proof}
  It is clearly enough to prove the statement under the assumption
that $f$ is supported on a bissection $U$ of
$\G$.  We begin by claiming that
\begin{equation}\label{Eq:CalculoComLotsaX}
  \pi_x(f)\delta_x\calcat x = f(x)\delta_x(x).
\end{equation}
  Suppose first that
  $\r(x)\in\r(U)$.  So
  we have by Equation~\eqref{Eq:SumWithOneComponent} that
  $
  \pi_x(f)\delta_x\calcat x = f(\g_1)\delta_x(\g_2),
  $
  where $(\g_1,\g_2)$ is the unique pair of elements in $\G$ such that
$\g_1\g_2=x$, and $\g_1\in U$.

In order to find $\g_1$ and $\g_2$, recall that $\r(x)\in\r(U)$, so
there exists $\g\in U$ such that $\r(\g)=\r(x)=x$.  Then $\g\g\inv=x$,
and we see that $(\g_1,\g_2)=(\g,\g\inv)$.  Using brackets to indicate
boolean value we have
  $$
  \pi_x(f)\delta_x\calcat x =
  f(\g)\delta_x(\g\inv) =
  [x{=}\g\inv]\ f(\g)\delta_x(\g\inv) =
  [x{=}\g]\ f(x)\delta_x(x).
  $$
Notice that the last term above equals $f(x)\delta_x(x)$.
While this is obvious when $f(x)=0$, notice that if $f(x)\neq0$ we
must have $x\in U$, and then the unique element $\g\in U$ with
$\r(\g)=x$ is $x$ itself, so $x=\g$, or equivalently $[x{=}\g]=1$.

This proves Equation~\eqref{Eq:CalculoComLotsaX} under the assumption that
$\r(x)\in\r(U)$, so suppose now that
$\r(x)\notin\r(U)$.  In this case we have already seen that
  $
  \pi_x(f)\delta_x\calcat x
  $
  vanishes, so it is enough to check that the right hand side of
Equation~\eqref{Eq:CalculoComLotsaX} also vanishes.  But this is immediate since
otherwise $x\in U$, whence $\r(x) \in\r(U)$.

Having finished the proof of Equation~\eqref{Eq:CalculoComLotsaX} we have
  $$
  \phi_x(f)=
  \braket{\pi_x(f)\delta_x}{\delta_x} =
  \Big(\pi_x(f)\delta_x\calcat x\Big) \overline{\delta_x(x)}
\overeq{\eqref{Eq:CalculoComLotsaX}}
  f(x)\delta_x(x) \overline{\delta_x(x)} =
  f(x),
  $$
  because $\delta_x(x)$ is a unit vector by construction.
\end{proof}

\subsection{The isomorphism}
  Let $\A=\{\A_s\}_{s\in S}$ be a semi-abelian Fell bundle over the
inverse semigroup $S$ and let $\E$ be the restriction of $\A$ to the
idempotent semilattice $E(S)$.  Therefore $C^*(\E)$ is an abelian
\cstar{algebra} whose spectrum will be denoted by $X$, so $C^*(\E)$ is
isomorphic to $\contz(X)$.

For each $e\in E(S)$ we will identify $\A_e$ as a closed two-sided
ideal in $\contz(X)$, and therefore there exists an open set $\U_e\sbe X$,
such $\A_e=\contz(\U_e)$.
By Proposition~\ref{prop:theta is an action of S}, for each $s\in S$, there exists a homeomorphism
  $$
  \t_s\colon\U_{s^*s} \to \U_{ss^*},
  $$
  such that for every $f\in \contz(\U_{ss^*})$, and every $a_s\in \A_s$,
  $$
  \big(a_s^*fa_s\big)x = (a_s^*a_s)(x)\; f\big(\t_s(x)\big)
  \quad\mbox{for all } f\in \contz(\U_{ss^*}), a_s\in \A_s, x\in \U_{s^*s}.
  $$
Moreover, $s\mapsto\t_s$ gives an action of $S$ on $X$.
Let $\G$ be the groupoid of germs for this action as in Section~\ref{sec:the construction of G} and let
$L$ be the Fell line bundle over $\G$ associated to $\A$ as constructed in Section~\ref{sec:construction of L}.

It is our goal in this section to prove that the reduced \cstar{algebra} of
$\A$ is isomorphic to the reduced \cstar{algebra} of $L$.

\begin{lemma}\label{TrivialGerm}
A necessary and sufficient condition for a given element $\germ sx$
in $\G$ to lie in $\Gz$ is that there exists $e\in E(S)$ such that
$e\leq s$, and $x\in \U_e$.
\end{lemma}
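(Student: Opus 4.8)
The plan is to unwind the definitions on both sides. The element $\germ sx$ lies in $\Gz$ precisely when $\germ sx = \germ{e_x}{x}$ for an idempotent $e_x$ with $x\in\U_{e_x}$ (recall the identification $X \cong \Gz$ via $x\mapsto\germ{e_x}{x}$). By the definition of germs, this happens if and only if there is an idempotent $f\in E(S)$ with $x\in\U_f$ and $se_x f = e_x f$. Since $E(S)$ is closed under products and $\U_{fe_x} = \U_f\cap\U_{e_x}$, we may absorb $e_x$ into $f$ and restate the condition cleanly: $\germ sx\in\Gz$ if and only if there exists $e\in E(S)$ with $x\in\U_e$ and $se = e$. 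So the real content is the equivalence between the conditions "$\exists e\in E(S)$ with $x\in\U_e$ and $se = e$" and "$\exists e\in E(S)$ with $x\in\U_e$ and $e\leq s$".

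**The easy direction.** If $e\leq s$, then by the standard characterization of the natural partial order in an inverse semigroup, $e\leq s$ means $e = se^*e$; since $e$ is idempotent, $e^*e = e$, so $e = se$, i.e.\ $se = e$. Hence the condition "$e\leq s$, $x\in\U_e$" immediately implies "$se = e$, $x\in\U_e$", and therefore implies $\germ sx\in\Gz$ by the reformulation above. This direction requires no hypothesis beyond inverse semigroup algebra.

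**The harder direction.** Suppose $\germ sx\in\Gz$, so by the reformulation there is $f\in E(S)$ with $x\in\U_f$ and $sf = f$. The obstacle is that $sf = f$ does \emph{not} by itself force $f\leq s$ — we only get $f = sf \leq s s^*s\cdot\!$, which need not be an idempotent below $s$. The trick, borrowed from the germ-of-action picture, is to pass to $e\defeq f s^* s f$, or more simply observe: from $sf = f$ we get $f = sf$, hence $f^* = (sf)^* = f s^*$, so $f = f f = (sf)(fs^*) = s f^2 s^* = sfs^*$. Now set $e \defeq s^*sf$. This is idempotent since $s^*s$ and $f$ are commuting idempotents, and $x\in\U_e = \U_{s^*s}\cap\U_f$ because $x\in\U_f\subseteq\U_{s^*s}$ (indeed $sf = f\neq 0$ near $x$ forces $f\leq s^*s$ at the level of the relevant ideals; alternatively, if $x\notin\U_{s^*s}$ then $\U_f\ni x$ would have to lie outside $\U_{s^*s}$ yet $f = sf$ lives in $\A_{s s^* s}\subseteq\A_{s^*s}$, contradiction). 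Finally $se = s s^*s f = s f = f$, and $s^*se = e$, so $e = se \leq s$: indeed $se = e$ together with $e$ idempotent gives $e = se = s(e^*e) = (se)e^* \cdot e$... more directly, $e\leq s$ is equivalent to $e = se^*$, and $se^* = se = e$ since $e$ is a self-adjoint idempotent with $se = e$. Thus $e\leq s$ and $x\in\U_e$, as required.

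I expect the main subtlety to be the membership claim $x\in\U_f\subseteq\U_{s^*s}$ (or rather the correct choice of $e$ landing inside $\U_{s^*s}$), which uses that $\A_f$ is a sub-ideal of $\A_{s^*s}$ whenever $f$ witnesses $se = f$ with $f$ nonzero on a neighborhood of $x$; this is where one invokes that $sf$ lands in $\A_{s s^* s}\subseteq\A_{s^* s}$ and that the spectral open sets satisfy $\U_{fe'} = \U_f\cap\U_{e'}$ (cf.\ \cite[Proposition 4.5]{Exel:inverse.semigroups.comb.C-algebras}). Everything else is routine inverse-semigroup bookkeeping with the partial order $e\leq s\iff se=e\iff se^*=e$ for idempotents $e$.
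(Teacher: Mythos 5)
Your proof is correct and follows essentially the same route as the paper: both directions unwind the germ equivalence, and your witness $e=s^*sf$ is exactly the one the paper constructs. One remark: the ``obstacle'' you describe in the harder direction is illusory --- for an idempotent $f$ the natural partial order gives $f\leq s\iff f=sf^*f=sf$, so once you have $f\in E(S)$ with $x\in\U_f$ and $sf=f$ you are already done by taking $e=f$, and the detour through $f=sfs^*$ and $e=s^*sf$ is harmless but unnecessary. (The paper passes to $e=s^*sf$ only because its witness, obtained from $\germ sx=\germ{s^*s}{x}$, satisfies $sf=s^*sf$ rather than $sf=f$.)
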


\begin{proof}
  Given $e$ as in the statement notice that $se=e=ee$,
  and hence in view of the equivalence relation leading up to the notion of germs, we have
  $$
  \germ sx = \germ ex\in \Gz.
  $$
Conversely, suppose that $\germ sx\in \Gz$.  Then
  $$
  \germ sx =\germ sx\inv \germ sx = \germ{s^*}{\t_s(x)}\germ sx = \germ{s^*s}{x}.
  $$
  Therefore, there exists $f\in E(S)$ such that $x\in \U_f$, and $sf = s^*sf$. Setting $e=s^*sf$, we get
  $$
  \U_e = \U_{s^*s}\cap \U_f \ni x,
  $$
  and
  $$
  se = ss^*sf = sf = s^*sf  = e,
  $$
  so that $e\leq s$, and the proof is complete.
\end{proof}

Given a pure state $\phi$ on $C^*(\E)=\contz(X)$, we wish to identify the
state $\tilde\phi$ on $C^*(\A)$ described in \cite[Proposition~7.4]{Exel:noncomm.cartan}.
Since the pure states on $\contz(X)$ are precisely the point evaluations,
there must exist some $x_0\in X$, such that
  $$
  \phi(f) = f(x_0)
  \quad\mbox{for all } f\in \contz(X).
  $$
Recall from \cite[Equation~(7.3)]{Exel:noncomm.cartan}  that, given $e\in E(S)$, we let $\phi_e$ be the state
on $\A_e=\contz(\U_e)$ given by restriction of $\phi$.  Evidently
\begin{equation}\label{Eq:CharacSupport}
  \phi_e\neq 0 \iff x_0\in \U_e.
\end{equation}

Using \cite[Proposition~5.5]{Exel:noncomm.cartan}, the above can be used as a
characterization of when is $\phi$ supported on $\A_e$.

If $s\in S$ and $e\in E(S)$ are such that $e\leq s$ and $\phi_e\neq
0$, then $\phi_e$ has a canonical extension $\tilde\phi_e^s$ to $\A_s$
given by \cite[Proposition~6.1]{Exel:noncomm.cartan}.  In order to describe it, choose
$h\in \A_e$ such that $h(x_0)=1$.  Then it is clear that
  $$
  \phi_e(f) = \phi_e(h)\phi_e(f) = \phi_e(hf),
  $$
  so, by \cite[Proposition~6.3]{Exel:noncomm.cartan} we have
  $$
  \tilde\phi_e^s(a)= \phi_e(ha) =(ha)(x_0)
  \quad\mbox{for all } a\in \A_s.
  $$
  In order to compute the expression $(ha)(x_0)$, we use the
isomorphism $\A_s\cong\contz(L_s)$, that is, the Gelfand map constructed
in Theorem~\ref{teo:characterization of saturated semi-abelian Fell bundles},
which we have so far also used implicitly in identifying $\A_e = \contz(L_e) =
\contz(\U_e)$. Thus, we are going to identify $a\in \A_s$ with $\hat{a}\in \contz(L_s)$.
Under our identification we have
\begin{align*}
  (ha)(x_0) &= \widehat{ha}\big(\germ{e}{x_0}\big) = \hat{h}\hat{a}\big(\germ{e}{x_0}\big) \\
  &=\hat{h}\big(\germ{e}{x_0}\big)\hat{a}\big(\germ{s}{x_0}\big) = \cdots
\end{align*}
  because the only way of writing $\germ{e}{x_0}$ as a product of
elements in $\O_e$ and $\O_s$ is
  $$
  \germ{e}{x_0} =   \germ{e}{x_0}   \germ{s}{x_0}.
  $$
  Continuing with our computation of $(ha)(x_0)$ above, we have
  $$
  \cdots =
  1\cdot
  \hat{a}\big(\germ{s}{x_0}\big) =
  \hat{a}\big(\germ{e}{x_0}\big),
  $$
  where the last equality is simply due to the fact that $\germ{s}{x_0} = \germ{e}{x_0}$.  Returning with the identification between $X$
and $\Gz$, we may then write
  $$
  \tilde\phi_e^s(a) = a(x_0).
  $$

Suppose, on the other hand that $s$ is such that there is
no $e$ in $\supp(\phi)$ with $e\leq s$.  By the characterization of
$\supp(\phi)$ given in Equation~\eqref{Eq:CharacSupport}, we deduce that there is
no $e$ in $E(S)$ such that $x_0\in \U_e$ and $e\leq s$.
  Choosing any idempotent $f$ such that $x\in U_f$ we then claim that
$\germ{f}{x_0}\notin \O_s$.  In order to prove this suppose the contrary
and hence $\germ{f}{x_0}=\germ{s}{x}$, for some $x\in \U_{s^*s}$.
This would imply $x=x_0$ and the existence of $e\in E(S)$,
with $x_0\in \U_e$ and $fe=se$.  Setting $e'=fe$ we would then
have $x_0\in U_{e'}$ and $e'\leq s$, a situation which has been explicitly ruled out by our hypothesis.
Therefore $\germ{f}{x_0}\notin \O_s$ and hence any $f\in \contc(L_s)$
vanishes on $\germ{f}{x_0}$.  In particular, for any $a_s\in \A_s$, we
have
  $$
  \widehat{a_s}(\germ{f}{x_0})=0 =
  \tilde\phi(a_s).
  $$
  We have therefore proved the following:

\begin{proposition}\label{FoundPhiTilde}
  Let $x_0\in X$ and let $\phi$ be the pure state on $\contz(X)$ given by
evaluation on $x_0$.  Then the canonical extension $\tilde\phi$ of
$\phi$ to $C^*(\A)$ given by \cite[Proposition~7.4]{Exel:noncomm.cartan} is such that
  $$
  \tilde\phi(a_s) = \widehat{a_s}(x_0)
  \quad\mbox{for all } s\in S \mbox{ and } a_s\in \A_s.
  $$
\end{proposition}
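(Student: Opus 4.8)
The plan is to unwind the recipe of \cite[Proposition~7.4]{Exel:noncomm.cartan} defining the canonical extension $\tilde\phi$, and to translate it, fiber by fiber, into the Gelfand picture furnished by Theorem~\ref{teo:characterization of saturated semi-abelian Fell bundles}. Since the pure state $\phi$ is evaluation at $x_0$, its support $\supp(\phi)=\{e\in E(S)\colon\phi_e\neq0\}$ equals $\{e\in E(S)\colon x_0\in\U_e\}$ by~\eqref{Eq:CharacSupport}, so the argument splits into two cases according to whether some idempotent below $s$ belongs to this support.

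First I would treat the case in which there is $e\in E(S)$ with $e\leq s$ and $x_0\in\U_e$. Then \cite[Proposition~7.4]{Exel:noncomm.cartan} identifies $\tilde\phi$ on $\A_s$ with the canonical extension $\tilde\phi_e^s$ of $\phi_e$ from \cite[Proposition~6.1]{Exel:noncomm.cartan}. Choosing $h\in\A_e$ with $h(x_0)=1$ (possible since $\U_e$ is the spectrum of $\A_e$) and writing $\phi_e=\phi_e(h\,\cdot\,)$, the formula of \cite[Proposition~6.3]{Exel:noncomm.cartan} gives $\tilde\phi(a_s)=\phi_e(ha_s)=(ha_s)(x_0)$, where $ha_s\in\A_{es}=\A_e$ because $e\leq s$ forces $es=e$. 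Transporting this through the (multiplicative) Gelfand isomorphism, I would rewrite $(ha_s)(x_0)=\widehat{ha_s}\big(\germ{e}{x_0}\big)=\hat h\big(\germ{e}{x_0}\big)\,\widehat{a_s}\big(\germ{s}{x_0}\big)$, the factorization being legitimate and unique because $\O_e,\O_s$ are bissections and $\germ{e}{x_0}=\germ{e}{x_0}\cdot\germ{s}{x_0}$ is the \emph{only} way of writing $\germ{e}{x_0}$ as a product from $\O_e$ and $\O_s$. Since $\hat h\big(\germ{e}{x_0}\big)=h(x_0)=1$ and, because $se=e$ with $x_0\in\U_e$, $\germ{s}{x_0}=\germ{e}{x_0}$, the whole expression collapses to $\widehat{a_s}\big(\germ{e}{x_0}\big)$, which under the identification $X\cong\Gz$ is exactly $\widehat{a_s}(x_0)$.

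Next I would handle the complementary case, in which no idempotent $e\leq s$ satisfies $x_0\in\U_e$. Here \cite[Proposition~7.4]{Exel:noncomm.cartan} gives $\tilde\phi(a_s)=0$ outright, so it remains to check $\widehat{a_s}(x_0)=0$. Fixing an idempotent $f$ with $x_0\in\U_f$, the point $x_0\in X$ corresponds to the unit $\germ{f}{x_0}\in\Gz$, and I claim $\germ{f}{x_0}\notin\O_s$: otherwise $\germ{f}{x_0}=\germ{s}{x}$ for some $x\in\U_{s^*s}$, forcing $x=x_0$ and producing $e\in E(S)$ with $x_0\in\U_e$ and $fe=se$, so that $e'\defeq fe$ would be an idempotent with $x_0\in\U_{e'}$ and $e'\leq s$, contradicting the case hypothesis. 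Since $\widehat{a_s}$ is a section supported on $\O_s=\dom(\widehat{a_s})$, it vanishes at $\germ{f}{x_0}$, i.e.\ $\widehat{a_s}(x_0)=0=\tilde\phi(a_s)$.

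The only real friction, and hence the main thing to be careful about, is keeping the three identifications simultaneously in force --- $\A_e\cong\contz(\U_e)$, $\A_s\cong\contz(L_s)$ via the Gelfand map, and $X\cong\Gz$ --- and pinning down precisely that $\germ{e}{x_0}$ admits a unique factorization in $\O_e\cdot\O_s$ (which uses only that the $\O$'s are bissections). Once those points are secured, each of the two cases amounts to a couple of lines.
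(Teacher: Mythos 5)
Your proposal is correct and follows essentially the same route as the paper's own proof: the same case split on whether some idempotent $e\leq s$ has $x_0\in\U_e$, the same use of \cite[Propositions~6.1, 6.3 and 7.4]{Exel:noncomm.cartan} to reduce to $(ha_s)(x_0)$, the same unique-factorization computation through the Gelfand map, and the same germ argument showing $\germ{f}{x_0}\notin\O_s$ in the degenerate case. No gaps to report.
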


Consider the canonical inclusion from $\contc(L_s)$ into $\contc(L)\sbe C^*(L)$.
An argument similar to that given in \cite[Proposition
3.14]{Exel:inverse.semigroups.comb.C-algebras} shows that this
inclusion is continuous for the sup-norm on $\contc(L_s)$, and hence
it
extends to
$\contz(L_s)\to C^*(L)$. Moreover, these maps form a representation of the Fell bundle $\{\contz(L_s)\}_{s\in S}$ into $C^*(L)$.
Using the Gelfand isomorphisms $\A_s\cong\contz(L_s)$ (see Theorem~\ref{teo:characterization of saturated semi-abelian Fell bundles}),
we get a representation of $\A$ into $C^*(L)$, which therefore integrates to a (surjective) \Star{}homomorphism
$$\Psi\colon C^*(\A)\to C^*(L).$$
In fact, this is essentially the same \Star{}homomorphism appearing in Theorem~\ref{teor:fell bundles over groupoids and ISG isomorphic}
for the case $\B=L$ (which is therefore an isomorphism if $\G$ is Hausdorff or second countable).

\begin{theorem}\label{T:A and L have same reduced C*-algebras}
The homomorphism $\Psi\colon C^*(\A) \to C^*(L)$ above factors through the corresponding
reduced \cstar{algebras} providing an isomorphism
  $$
  \Psi_r\colon\CstarRed(\A) \to \CstarRed(L).
  $$
\end{theorem}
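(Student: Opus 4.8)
The plan is to show, for each point $x$ of the unit space $\Gz = X$, that the composite representation $\pi_x\circ\Psi$ of $C^*(\A)$ on $\hils_x$ is unitarily equivalent to the regular representation of $\A$ induced by the pure state $\phi_x$ of $C^*(\E)\cong\contz(X)$ given by evaluation at $x$. Once this is known, the theorem follows by intersecting kernels over all $x\in X$.

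More precisely, recall that $\CstarRed(L)$ is the quotient of $C^*(L)$ by $\bigcap_{x\in\Gz}\ker\pi_x$ (this is the definition of the reduced algebra of a Fell line bundle over an \'etale groupoid; see \cite{RenaultThesis}), and that the canonical surjection $C^*(\A)\to\CstarRed(\A)$ has kernel $\bigcap_{x\in X}\ker\bigl(\Ind\phi_x\bigr)$, where $\Ind\phi_x$ denotes the GNS representation of the canonical extension $\tilde\phi_x$ of $\phi_x$ to $C^*(\A)$ given by \cite[Proposition~7.4]{Exel:noncomm.cartan}. Since $\Psi$ is surjective, it suffices to establish $\ker\bigl(\pi_x\circ\Psi\bigr)=\ker\bigl(\Ind\phi_x\bigr)$ for every $x$: intersecting over $x$ gives $\ker(\lambda_L\circ\Psi)=\ker\lambda_\A$, where $\lambda_\A$ and $\lambda_L$ are the reducing quotient maps, and this yields a well-defined injective homomorphism $\Psi_r\colon\CstarRed(\A)\to\CstarRed(L)$; surjectivity of $\Psi_r$ is automatic from surjectivity of $\Psi$ and $\lambda_L$.

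To compare $\pi_x\circ\Psi$ with $\Ind\phi_x$ I would invoke uniqueness of the GNS construction. On one side, $\delta_x$ is a cyclic vector for $\pi_x$ by Proposition~\ref{PiXCyclic}, hence remains cyclic for $\pi_x\circ\Psi$ because $\Psi$ is onto. On the other side, for $a\in\A_s$ the element $\Psi(a)\in C^*(L)$ is, under the Gelfand isomorphism $\A_s\cong\contz(L_s)$ of Theorem~\ref{teo:characterization of saturated semi-abelian Fell bundles}, the section $\hat a\in\contc(L_s)\subseteq\contc(L)$; therefore, using Proposition~\ref{PiXCyclicState} (extended from $\contc(L_s)$ to all of $\contz(L_s)$ by norm-continuity of the inclusion $\contz(L_s)\to C^*(L)$ and of $f\mapsto f(x)$), the vector state of $\delta_x$ in the representation $\pi_x\circ\Psi$ is
$$a\longmapsto \braket{\pi_x(\hat a)\delta_x}{\delta_x} = \hat a(x).$$
By Proposition~\ref{FoundPhiTilde} this equals $\tilde\phi_x(a)$. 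Thus $(\pi_x\circ\Psi,\hils_x,\delta_x)$ and the GNS triple of $\tilde\phi_x$ have the same cyclic vector state, so they are unitarily equivalent and in particular have equal kernels, as required.

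The main obstacle is exactly this identification of the two cyclic states; all the ingredients have been arranged by the earlier propositions, so once the bookkeeping is granted — that $\Psi$ carries $a\in\A_s$ to $\hat a$, that $\phi_x$ extends continuously so that Proposition~\ref{PiXCyclicState} applies on all of $\contz(L_s)$, and that $\tilde\phi_x(a)=\hat a(x)$ by Proposition~\ref{FoundPhiTilde} — the argument is essentially a diagram chase. It is worth emphasizing that, in contrast with the isomorphism of full cross-sectional algebras, no Hausdorffness or second-countability assumption on $\G$ is needed here, since $\Psi$ is always available as the integrated form of the representation $\A\to C^*(L)$, $a\mapsto\hat a$.
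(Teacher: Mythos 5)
Your proposal is correct and follows essentially the same route as the paper: identify $\pi_x\circ\Psi$ with the GNS representation of the canonical extension $\tilde\phi_x$ by checking that $\delta_x$ is cyclic (Proposition~\ref{PiXCyclic}) and that its vector state is $a\mapsto\hat a(x)=\tilde\phi_x(a)$ (Propositions~\ref{PiXCyclicState} and~\ref{FoundPhiTilde}), then conclude by GNS uniqueness. The only cosmetic difference is that the paper finishes by comparing reduced norms, $\|\Psi(a)\|_r=\sup_{x}\|\pi_x(\Psi(a))\|=\sup_x\|\rho_x(a)\|=\|a\|_r$, whereas you intersect kernels — the two formulations are equivalent, and your explicit remark about extending Proposition~\ref{PiXCyclicState} from $\contc(L_s)$ to $\contz(L_s)$ by continuity is a small point of care that the paper leaves implicit.
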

\begin{proof} For $x_0\in X$ denote by $\pi_{x_0}$ the representation of
$\CstarRed(L)$ given by Proposition~\ref{DefinePix}.
On the other hand,
let $\tilde\phi$ be the state on $C^*(\A)$ mentioned in
Proposition~\ref{FoundPhiTilde} in terms of $x_0$, and let $\rho_{x_0}$ be the
GNS representation of $C^*(\A)$ associated to $\tilde\phi$.

We claim that $\pi_{x_0}\circ\Psi$ is a representation equivalent to
$\rho_{x_0}$.  Since $\delta_{x_0}$ is a cyclic vector for $\pi_{x_0}$ by Proposition~\ref{PiXCyclic},
we see that it is also cyclic for $\pi_{x_0}\circ\Psi$ simply because $\Psi$ is onto.  The
associated vector state is given, on any $a_s\in \A_s$, by
  $$
  \braket{\pi_{x_0}(\Psi(a_s))\delta_{x_0}}{\delta_{x_0}} \overeq{(\ref{PiXCyclicState})}
  \Psi(a_s)(x_0) \overeq{(\ref{FoundPhiTilde})}
  \tilde\phi(a_s).
  $$
  The claim therefore follows from the uniqueness of the GNS
representation.   With respect to the respective reduced norms $\|\cdot\|_r$, we
then have, for every $a\in C^*(\A)$,
  $$
  \|\Psi(a)\|_r =
  \sup_{x_0\in X} \|\pi_{x_0}(\Psi(a))\| =
  \sup_{x_0\in X} \|\rho_{x_0}(a)\| = \|a\|_r,
  $$
  from where the result readily follows.
\end{proof}

Let $(\G,\Sigma)$ be the twisted groupoid associated to $\A$
as in Section~\ref{sec:twisted groupoids and Fell line bundles}.
Since the reduced \cstar{algebra} of $(\G,\Sigma)$ is, by definition, the
reduced \cstar{algebra} of $L$,  the above result also gives a canonical isomorphism $$\CstarRed(\A)\cong \CstarRed(\G,\Sigma).$$

\section{Application: Cartan subalgebras}

In this section we will apply the results so far developed to prove
part of Renault's Theorem \cite{RenaultCartan} on the
characterization of Cartan subalgebras of \cstar{algebras}.

Based on previous work by Vershik, Feldman, Moore, and Kumjian
\cite{VershikCartan,feldman_more:Cartan.subalgebrasI,feldman_more:Cartan.subalgebrasII,Kumjian:cstar.diagonals},
Renault gave the following:

\begin{definition} \label{CartanRenault}
  {\rm (\cite[Definition~5.1]{RenaultCartan})}
  A closed \Star{subalgebra} $B$ of a separable \cstar{algebra} $A$ is a \emph{Cartan subalgebra} if
\begin{enumerate}
  \item[\textup{(i)}] $B$ contains an approximate unit of $A$,
  \item[\textup{(ii)}] $B$ is maximal abelian in $A$,
  \item[\textup{(iii)}] $B$ is regular in the sense that the normalizer of $B$ in $A$,
namely
  $$
  N(B) = \{a\in A\colon aBa^*\subseteq B,\; a^*Ba\subseteq B\},
  $$
  generates $A$, and
  \item[\textup{(iv)}] there exists a faithful conditional expectation from $A$ to $B$.
\end{enumerate}
\end{definition}
Renault has proved \cite[Theorem~5.6]{RenaultCartan} that, whenever $B$ is a
Cartan subalgebra of $A$, there exists a twisted, essentially principal,
étale, Hausdorff groupoid $(\G, \Sigma)$, such that $A$ is isomorphic
to $\CstarRed(\G,\Sigma)$ via an isomorphism which carries $B$
onto $\contz(\Gz)$.

In \cite{Exel:noncomm.cartan} the second named author has recently introduced
a generalization of the notion of Cartan subalgebras to include
situations in which the \emph {maximal abelian algebra $B$ is no
longer abelian}.  To describe this result we need to recall that,
given a closed \Star{subalgebra} $B$ of a \cstar{algebra} $A$, a \emph{virtual
commutant of $B$ in $A$} is a $B$--bimodule map
  $$
  \phi\colon J \to A,
  $$
  where $J$ is an ideal in $B$.  Virtual commutants are akin to
elements in the relative commutant $B'\cap A$, since given any
$a\in B'\cap A$, the map
  $$
  \phi_a\colon b\in B \mapsto ab\in A,
  $$
  is a virtual commutant with domain $J=B$.

\begin{definition} \label{DefineGeneralized}
  {\rm (\cite[Definition~12.1]{Exel:noncomm.cartan})}
  A closed \Star{subalgebra} $B$ of a separable \cstar{algebra} $A$ is said to be a
\emph{generalized Cartan subalgebra} if it satisfies (i), (iii) and
(iv) of Definition~\ref{CartanRenault} and, instead of (ii), it is
required that
\begin{enumerate}
  \item[\textup{(ii)'}] every virtual commutant of $B$ in $A$ has its
range contained in $B$.
\end{enumerate}
\end{definition}

In \cite[Theorem~14.5]{Exel:noncomm.cartan} it is proved that if $B$ is a generalized
Cartan subalgebra of $A$, there exists a Fell bundle
$\A=\{\A_s\}_{s\in S}$, over a countable inverse semigroup $S$, such
that $A$ is isomorphic to $\CstarRed(\A)$ via an isomorphism which
carries $B$ onto $\CstarRed(\E)$, where $\E$ is the restriction of
$\A$ to the idempotent semilattice of $S$.  We observe that by
\cite[Proposition~4.3]{Exel:noncomm.cartan} there is no difference between
$\CstarRed(\E)$ and $C^*(\E)$, but we use the former to standardize
our notation.

In the remainder of this section we will show how
\cite[Theorem~14.5]{Exel:noncomm.cartan} combines with the results of this paper to
prove most of the conclusions of Renault's Theorem.  We therefore fix,
throughout, a separable \cstar{algebra} $A$ and a Cartan subalgebra $B$,
according to Definition~\ref{CartanRenault}.

Given that $B$ is abelian it is easy to prove that property
  \ref{CartanRenault}(ii) implies
  \ref{DefineGeneralized}(ii)' (see \cite[Proposition~9.8]{Exel:noncomm.cartan}), so
$B$ is also a generalized Cartan subalgebra.  By
\cite[Theorem~14.5]{Exel:noncomm.cartan} we therefore deduce that there exists a
countable inverse semigroup $S$ and a Fell bundle $\A=\{\A_s\}_{s\in
S}$, such that $A\simeq \CstarRed(\A)$, and $B\simeq \CstarRed(\E)$,
as above.

If $e\in E(S)$ then, by \cite[Corollary~8.9]{Exel:noncomm.cartan}, $\A_e$
identifies with an ideal of $B$, and hence $\A_e$ is commutative.  In
other words, $\A$ is a semi-abelian Fell bundle.  Moreover by the
construction of the Fell bundle in \cite[Theorem~14.5]{Exel:noncomm.cartan}, it
is also saturated (see \cite[Proposition~13.3]{Exel:noncomm.cartan}).

Let $\G$ be the étale groupoid associated to $\A$ as in Section~\ref{sec:the construction of G},
and let $L$ be the Fell line bundle over $\G$ associated to $\A$ as constructed in Section~\ref{sec:construction of L}.
As observed in Section~\ref{sec:twisted groupoids and Fell line bundles} this is tantamount
to saying that we have a twisted groupoid $(\G,\Sigma)$.
We conclude from Theorem~\ref{T:A and L have same reduced C*-algebras} that
  $$
  A \simeq \CstarRed(\A) \simeq \CstarRed(L)=\CstarRed(\G,\Sigma),
  $$
  where the isomorphisms involved restrict to give
  $$
  B \simeq C^*(\E) \simeq \contz(\Gz).
  $$

In order to get the whole of Renault's result we still need to
show that $\G$ is Hausdorff and essentially principal but,
unfortunately, we cannot offer an argument entirely based on our
results which leads to the conclusion that $\G$ is Hausdorff.

Being unable to fully prove Renault's result we conclude this
section  with some remarks on the Hausdorff question.

Of course, should we know that $\G$ is Hausdorff, it would quickly
follow that $\G$ is essentially principal by
\cite[Proposition~4.2]{RenaultCartan}.  However
we remark that there are non-Hausdorff, essentially principal, étale
groupoids for which $\contz(\Gz)$ is \emph{not} maximal abelian in
$\CstarRed(\G)$ \cite[Proposition~2.4]{ExelnHausdorff}, and hence
\cite[Proposition~4.2]{RenaultCartan} is not valid for non-Hausdorff groupoids.

Returning to the question of whether the groupoid $\G$ associated to
$\A$ is Hausdorff, one might wonder if it is possible to answer it
affirmatively via some quick argument based on the postulated
existence of the conditional expectation\footnote{In
\cite[Proposition~5.4]{RenaultCartan} Renault does achieve this in a
non-trivial way.}.  An indication that lured us in this direction is
the fact that Hausdorff étale groupoids do possess a very standard
conditional expectation obtained by restricting functions to the unit
space \cite[Proposition~4.3]{RenaultCartan}, a process which is not
available in the non-Hausdorff case.

However we have found an example which proves that the existence of
the conditional expectation, by itself (without the assumption that the
groupoid be essentially principal), is not enough to guarantee that the
groupoid is Hausdorff.

\begin{proposition}\label{prop:Non-Hausdorff groupoid with conditional expectation}
There exists a non-Hausdorff, étale groupoid $\G$,
such that $\contz(\Gz)$ is the image of a faithful conditional
expectation defined on $\CstarRed(\G)$.
\end{proposition}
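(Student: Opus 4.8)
The plan is to exhibit an explicit non-Hausdorff étale groupoid of the simplest possible shape --- one where the non-Hausdorffness is concentrated at a single pair of ``doubled'' points --- and to check by hand that restriction of functions to the unit space is a well-defined faithful conditional expectation. The standard source of such examples is a group bundle over a space with a non-Hausdorff total space; the classical model is the groupoid $\G$ whose unit space is $\R$ and which has two copies of the group $\Z/2$ (or $\Z$) sitting over the origin, glued to the trivial group elsewhere. Concretely, I would take $\G = (\R\setminus\{0\})\sqcup(\{0\}\times\Z)$ as a set, with the topology in which a basic neighbourhood of $(0,n)$ for $n\neq 0$ consists of $(0,n)$ together with a punctured interval around $0$ in $\R\setminus\{0\}$, while $(0,0)$ has the usual Euclidean neighbourhoods in $\R$. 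The groupoid structure is the obvious one: $\Gz = \R$ (embedded as $\R\setminus\{0\}$ together with $(0,0)$), all arrows are units except the ``extra'' elements $(0,n)$, $n\neq 0$, which form a copy of $\Z\setminus\{0\}$ over the origin with $\r=\s=0$ and multiplication given by addition in $\Z$. This $\G$ is étale (the range map is a local homeomorphism since each basic open set meets each fibre in at most one point) and it is manifestly non-Hausdorff: the points $(0,0)$ and $(0,1)$ cannot be separated, because every neighbourhood of each contains a common punctured interval around $0$.

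Next I would identify $\CstarRed(\G)$ and the subalgebra $\contz(\Gz)$. Since $\G$ is a bundle of discrete abelian groups over $\R$ --- trivial away from $0$, with fibre $\Z$ at $0$ --- the reduced $C^*$-algebra is computed fibrewise: over a point $x\neq 0$ the fibre algebra is $\C$, and over $x=0$ the regular representation on $\ell^2(\G_0)=\ell^2(\Z)$ gives the fibre algebra $C^*_{\red}(\Z)\cong\cont(\Torus)$. A section $f\in\contc(\G)$ (in the sense of Definition~\ref{def:definition of C_c(B)}, a finite sum of continuous compactly supported pieces over Hausdorff opens) decomposes as its restriction $f\rest{\Gz}$ to the unit space plus the contributions at the doubled points; the key point, exactly as in the Khoshkam--Skandalis and Exel analyses of non-Hausdorff groupoid algebras, is that the ``off-diagonal'' pieces $f(0,n)$ for $n\neq 0$ vanish identically except on finitely many $n$ and are genuinely free coefficients. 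Thus the map $P\colon \contc(\G)\to\contz(\Gz)$, $P(f)=f\rest{\Gz}$, makes sense.

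The substance of the proof is then that $P$ extends to a faithful conditional expectation on $\CstarRed(\G)$. That $P$ is the restriction to $\contc(\G)$ of the vector-state slice $f\mapsto\bigl(\g\mapsto\braket{\pi_{\s(\g)}(f)\delta_{\s(\g)}}{\delta_{\s(\g)}}\bigr)$ of Proposition~\ref{PiXCyclicState} is immediate from the formula $\phi_x(f)=f(x)$ proved there, so $P$ is contractive for the reduced norm, positive, idempotent, and $\contz(\Gz)$-bimodular by a direct computation on generators; hence it extends to a conditional expectation $\CstarRed(\G)\to\contz(\Gz)$. Faithfulness on positives is the delicate part: given $0\le a\in\CstarRed(\G)$ with $P(a)=0$, I would evaluate the fibre representations $\pi_x$ of Proposition~\ref{DefinePix} and use that $P(a)(x)=\braket{\pi_x(a)\delta_x}{\delta_x}$; for $x\neq 0$ the fibre is one-dimensional so $\pi_x(a)=0$, while for $x=0$ one has $\pi_0(a)$ acting on $\ell^2(\Z)$ with $\braket{\pi_0(a)\delta_0}{\delta_0}=0$, and since the representation $\pi_0$ of $C^*_{\red}(\Z)\cong\cont(\Torus)$ is the standard one for which $\delta_0$ is separating for positive elements, $\pi_0(a)=0$ as well. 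As $\bigoplus_{x\in X}\pi_x$ is the regular representation defining $\CstarRed(\G)$, faithfulness on $\CstarRed(\G)$, this forces $a=0$. I expect the main obstacle to be the non-Hausdorff bookkeeping: one must be careful that elements of $\contc(\G)$ are \emph{not} continuous globally, so that ``restriction to $\Gz$'' and the fibrewise analysis of $\pi_0$ are justified with enough rigour --- in particular that the image of $\contc(\G)$ is dense in $\CstarRed(\G)$ and that the $\pi_x$ really do separate it. These points are handled by the same techniques (the estimate $\|\pi_x(f)\|\le\|f\|_\infty$ and the density arguments) already invoked after Proposition~\ref{DefinePix} and in \cite{Exel:inverse.semigroups.comb.C-algebras}, so the argument is routine once the model groupoid is in place.
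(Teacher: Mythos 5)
Your model groupoid does not work: the expectation you propose is not well defined, and in fact no faithful conditional expectation onto $\contz(\Gz)$ exists for it. First, restriction to the unit space does not land in $\contz(\Gz)$. Let $f_0\in\contc(\R)$ be a bump function equal to $1$ near $0$, viewed as a section supported on the unit space, and let $f_1$ be the ``same'' function supported on the bissection $\{(0,1)\}\cup(\R\setminus\{0\})$; then $c\defeq f_1-f_0\in\contc(\G)$ vanishes on $\R\setminus\{0\}$ while $c(0,0)=-1$, so $c\rest{\Gz}$ is discontinuous at $0$. (The reformulation $P(a)(x)=\braket{\pi_x(a)\delta_x}{\delta_x}$ has the same defect: writing $h\in\cont(\Torus)$ for the symbol of $\pi_0(a)$, the values for $x\neq0$ tend to $h(1)$ as $x\to0$, whereas $P(a)(0)$ is the Haar integral of $h$.) This is precisely the failure of the ``standard'' restriction expectation in the non-Hausdorff case that the paper's surrounding discussion warns about. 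Worse, the defect cannot be repaired: put $a=c^*c$. Then $a\geq0$, $a\neq0$ (since $\pi_0(a)$ is multiplication by $|w-1|^2$ on $L^2(\Torus)$), and $b\,a=b(0)\,a$ for every $b\in\contz(\Gz)$. Hence for \emph{any} conditional expectation $E$ onto $\contz(\Gz)$, bimodularity gives $(b-b(0))E(a)=0$ for all $b$, so $E(a)$ vanishes on $\R\setminus\{0\}$ and, being continuous, vanishes identically. Thus $E$ kills a nonzero positive element and is never faithful. Your appeal to faithfulness of the Haar trace on $\CstarRed(\Z)$ is correct fibrewise but irrelevant, because the fibrewise states do not assemble into a map with values in $\contz(\Gz)$.

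The obstruction is that you concentrated all the extra isotropy over the single unit $0$: continuity then forces the fibre measure of any expectation at $0$ to be the point mass at the non-Hausdorff point. The paper's example avoids this by doubling a whole interval of units. It builds (from a Fell bundle over the three-element inverse semigroup $\{e,1,\sigma\}$) a groupoid with isotropy $\Z/2$ over all of $[0,1]$ and trivial isotropy over $[-1,0)$, non-Hausdorff only at $x=0$; its reduced algebra is $\cont(X)$ with $X=([-1,1]\times\{0\})\cup([0,1]\times\{1\})$, and the expectation averages the two branches with weights $p(x)$ and $1-p(x)$. Continuity still forces $p(0)=1$, but faithfulness is recovered because $p$ can be chosen in $(0,1)$ on a dense set of $x>0$, and a positive function annihilated there vanishes by continuity. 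To salvage your argument you must replace your group bundle by an example of this kind; the faithfulness then comes from density of $\{x\colon p(x)\in(0,1)\}$, not from a faithful state on the isotropy fibre.
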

\begin{proof} Partly as an illustration of our methods, we will first
construct a semi-abelian Fell bundle over an inverse semigroup, which
will then give rise by Section~\ref{sec:the construction of G} to the groupoid we need.

Consider the commutative semigroup $S = \{e,1,\sigma\}$ endowed
with the multiplication operation
$$
  \begingroup				
  \offinterlineskip
  \centerline{\vbox{
  \halign{\strut\ #\ &\vrule\ #\ &\ #\ &\ #\ \cr
  $\cdot$ &	$e$ &	1 &	$\sigma$ \cr
  \noalign{\hrule}
  $e$ &	$e$ &	$e$ &	$e$ \cr
  1 &	$e$ &	1 &	$\sigma$ \cr
  $\sigma$ &	$e$ &	$\sigma$ &	1 \cr
  }}}
  \endgroup
$$
Notice that $e$ is a {zero-element} and $1$ is a
unit for $S$.  It is easy to see that $S$ is an inverse semigroup and
that all of its elements are self-adjoint.

In presenting our Fell bundle $\A = \{\A_s\}_{s\in S}$, each fiber
$\A_s$ will be taken to be a subset of the cartesian product
  $\cont([-1,1])\times S$, as follows
\begin{itemize}
  \item $\A_e = \contz[-1,0)\times\{e\}$,
  \item $\A_1 = \cont[-1,1] \times\{1\}$,
  \item $\A_\sigma = \cont[-1,1] \times\{\sigma\}$.
\end{itemize}
The Banach space structure of each $\A_s$ is that of its first coordinate, while
the multiplication and involution on $\A$ are defined
coordinatewise. The inclusions $j_{1,e}$ and $j_{\sigma,e}$ are given by
$$
  j_{1,e}(f,e) = (f,1) \quad\mbox{and}\quad
  j_{\sigma,e}(f,e) = (f,\sigma)
  \quad\mbox{for all } f\in \contz[-1,0).
$$
Clearly $\CstarRed(\E)$ identifies with $\A_1=\cont[-1,1]$, so that
$\Gz=[-1,1]$, and one may check that the groupoid of germs $\G$ consists of the following distinct
elements
\begin{itemize}
  \item $\germ ex \quad\mbox{for } x\in[-1,0)$,
  \item $\germ 1x \quad\mbox{for } x\in[0,1]$,
  \item $\germ \sigma x \quad\mbox{for } x\in[0,1]$.
\end{itemize}
 Incidentally, notice that $\germ ex= \germ 1x =\germ \sigma x$ for all $x\in [-1,0)$.
 The topology of $\G$ is such that the open bissections
  $$
  \O_1 = \big\{\germ 1x\colon x\in[-1,1]\big\}  = \Gz \quad\mbox{and}\quad
  \O_\sigma = \big\{\germ \sigma x\colon x\in[-1,1]\big\}
  $$
are each canonically homeomorphic to $[-1,1]$, but $\G$ is not
Hausdorff since it is impossible to separate the germs $\germ 10$ and
$\germ \sigma 0$ from one another.

The reduced \cstar{algebra} of $\G$, which is isomorphic to the reduced
\cstar{algebra} of $\A$, may be described as the algebra of all continuous
complex-valued functions on the topological subspace $X$ of $\R^2$
given by
$X = \big([-1,1]\times\{0\}\big) \cup \big([0,1] \times\{1\}\big)$.
$$
 \setlength{\unitlength}{1mm}
\begin{picture}(-10,35)
  \linethickness{0.01mm}
  \put(-40,10){\vector(1,0){60}}
  \put(21,9){$x$}
  \put(-10,0){\vector(0,1){30}}
  \put(-9,31){$y$}
  \linethickness{0.3mm}
  \put(-10,20){\line(1,0){15}}
  \put(-11,19){$\bullet$}
  \put(4,19){$\bullet$}
  \put(-13,19){1}
  \linethickness{0.3mm}
  \put(-25,10){\line(1,0){30}}
  \put(-26,9){$\bullet$}
  \put(4,9){$\bullet$}
  \put(-27,6){-1}
  \put(4,6){1}
\end{picture}
$$
The natural identification of fibers within $\cont(X)$,  say
  $
  \pi_s\colon A_s \to \cont(X) \mbox{ for } s\in S,
  $
  may be given as follows: for each $f\in \contz[-1,0)$, one puts
  $$
  \pi_e(f,e)\calcat{(x,y)}=
  \left\{
  \begin{array}{cc}
    f(x), & \hbox{if $x<0$, and $y=0$, } \\
    0,  & \hbox{otherwise.}\hfill
  \end{array}\right.
  $$
For $f\in \contz[-1,1]$,
  $$
  \pi_\sigma(f,\sigma)\calcat{(x,y)}=
  \left\{
  \begin{array}{cc}
    f(x), & \hbox{if $y=0$,} \hfill \cr
    -f(x), & \hbox{if $x\geq0$, and $y=1$, }
  \end{array}\right.
  $$
  while
  $$
  \pi_1(f,1)\calcat{(x,y)}= f(x)
  \quad\mbox{for all }(x, y)\in X.
  $$

The subalgebra $\contz(\Gz)$, or equivalently $\CstarRed(\E) \ (=
\pi_1(A_1))$, may be described as the subalgebra of $\cont(X)$ formed by
the functions which do not depend on the second variable $y$.

It therefore remains to show that there does indeed exists a
conditional expectation as required.  But this may be simply given,  for
every $g\in \cont(X)$,  by
  $$
  E(g)\calcat{(x,y)} =
  \left\{
  \begin{array}{cc}
    g(x, 0), & \hbox{if $x<0$, and $y=0$, }\cr
    p(x)g(x, 0) + (1-p(x)) g(x, 1), & \hbox{if $x\geq0$,} \hfill
  \end{array}\right.
  $$
  where $p\colon [0,1] \to [0,1]$ is any continuous function such that
$p(0)=1$.  As long as the set of points $x$ where $p(x)\in(0, 1)$ is
dense in $[0,1]$, one can prove that $E$ is a faithful conditional
expectation as desired.
\end{proof}

\section{Concluding remarks}

In this work we have studied a natural connection between Fell bundles over étale groupoids and inverse semigroups.
As we have seen in Section~\ref{sec:FellBundles}, Fell bundles over étale groupoids give rise to Fell bundles over inverse semigroups,
but the latter seems to be slightly more general objects. This idea essentially appears in the unpublished work \cite{SiebenFellBundles}
by Nándor Sieben, and we would like to thank him for providing access to his work. In \cite{Exel:noncomm.cartan} the second named author gives
a first application to Fell bundles over inverse semigroups proving that they provide examples of noncommutative Cartan subalgebras.
Of course, this gives a special reason to study those objects and we are strongly inspired by this work and some of his references, especially
the articles~\cite{Kumjian:cstar.diagonals,RenaultCartan} by Kumjian and Renault.

Although arbitrary Fell bundles over inverse semigroups are more general than over étale groupoids,
our main result shows that in the semi-abelian case both can be "disintegrated" and are essentially equivalent
to twisted étale groupoids or, equivalently, Fell line bundles (over étale groupoids). For groupoids,
this result is proved in \cite[Theorem~5.6]{Deaconi_Kumjian_Ramazan:Fell.Bundles}, but it can be essentially also obtained from our
main result by first "integrating" a given semi-abelian\footnote{As already mentioned, in \cite{Deaconi_Kumjian_Ramazan:Fell.Bundles}
those Fell bundles are called just "abelian".} Fell bundle over an étale groupoid as in Section~\ref{sec:FellBundles}, 
obtaining in this way a semi-abelian Fell bundle over an inverse semigroup
and then "disintegrating" (that is, applying our main result to) it as in Section~\ref{sec:Semi-abelian Fell bundles and twisted étale groupoids} yielding the desired twisted étale groupoid.


\begin{bibdiv}
  \begin{biblist}

\bib{Connes:Survey_foliations}{article}{
  author={Connes, Alain},
  title={A survey of foliations and operator algebras},
  conference={
    title={Operator algebras and applications, Part I},
    address={Kingston, Ont.},
    date={1980},
    },
  book={
    series={Proc. Sympos. Pure Math.},
    volume={38},
    publisher={Amer. Math. Soc.},
    place={Providence, R.I.},
    },
  date={1982},
  pages={521--628},
  review={\MRref{679730}{84m:58140}},
}

\bib{Deaconi_Kumjian_Ramazan:Fell.Bundles}{article}{
  author={Deaconu, Valentin},
  author={Kumjian, Alex},
  author={Ramazan, Birant},
  title={Fell bundles associated to groupoid morphisms},
  journal={Math. Scand.},
  volume={102},
  number={2},
  date={2008},
  pages={305--319},
  issn={0025-5521},
  review={\MRref{2437192}{}},
}

\bib{Dupre.Gillette.Banach.Bundles}{book}{
  author={Dupr{\'e}, Maurice J. and Gillette, R. M.},
  title={Banach bundles, Banach modules and automorphisms of {$C\sp{\ast} $}-algebras},
  SERIES = {Research Notes in Mathematics},
  VOLUME = {92},
  PUBLISHER = {Pitman (Advanced Publishing Program)},
  ADDRESS = {Boston, MA},
  YEAR = {1983},
  PAGES = {iii+111},
  ISBN = {0-273-08626-X},
  review={\MRref{721812}{85j:46127}},
}

\bib{Exel:tight.representations}{article}{
  author={Exel, Ruy},
  title={Tight representations of semilattices and inverse semigroups},
  journal={Preprint},
  volume={},
  number={},
  date={2007},
  pages={},
  issn={},
  note={\arxiv{math/0703401}}
}

\bib{Exel:inverse.semigroups.comb.C-algebras}{article}{
   author={Exel, Ruy},
   title={Inverse semigroups and combinatorial \cstar{algebras}},
   journal={Bull. Braz. Math. Soc. (N.S.)},
   volume={39},
   date={2008},
   number={2},
   pages={191--313},
   issn={1678-7544},
   review={\MRref{2419901}{2009b:46115}},
}

\bib{Exel:noncomm.cartan}{article}{
  author={Exel, Ruy},
  title={Noncommutative Cartan sub-algebras of \cstar{algebras}},
  journal={Preprint},
  date={2008},
  note={\arxiv{0806.4143}},
}

\bib{ExelnHausdorff}{article}{
  author={Exel, Ruy},
  title={A non-Hausdorff étale groupoid},
  journal={Preprint},
  date={2008},
  note={\arxiv{0812.4087v2}},
}

\bib{feldman_more:Cartan.subalgebrasI}{article}{
    AUTHOR = {Feldman, Jacob and Moore, Calvin C.},
     TITLE = {Ergodic equivalence relations, cohomology, and von {N}eumann algebras. {I}},
   JOURNAL = {Trans. Amer. Math. Soc.},
    VOLUME = {234},
      YEAR = {1977},
    NUMBER = {2},
     PAGES = {289--324},
      ISSN = {0002-9947},
    review = {\MRref{0578656}{58 \#28261a}},
}

\bib{feldman_more:Cartan.subalgebrasII}{article}{
    AUTHOR = {Feldman, Jacob and Moore, Calvin C.},
     TITLE = {Ergodic equivalence relations, cohomology, and von {N}eumann algebras. {II}},
   JOURNAL = {Trans. Amer. Math. Soc.},
    VOLUME = {234},
      YEAR = {1977},
    NUMBER = {2},
     PAGES = {325--359},
      ISSN = {0002-9947},
    review = {\MRref{0578730}{58 \#28261b}},
}

\bib{fell_doran}{book}{
  author={Fell, James M.G.},
  author={Doran, Robert S.},
  title={Representations of \Star{}Algebras, Locally Compact Groups, and Banach \Star{}Algebraic Bundles Vol.1},
  volume={126},
  series={Pure and Applied Mathematics},
  publisher={Academic Press Inc.},
  date={1988},
  pages={xviii + 746},
  isbn={0-12-252721-6},
  review={\MRref{936628}{90c:46001}},
}

\bib{Hofmann:Bundles}{book}{
   AUTHOR = {Hofmann, Karl Heinrich},
     TITLE = {Bundles and sheaves are equivalent in the category of {B}anach
              spaces},
 BOOKTITLE = {{$K$}-theory and operator algebras ({P}roc. {C}onf., {U}niv.
              {G}eorgia, {A}thens, {G}a., 1975)},
     PAGES = {53--69. Lecture Notes in Math., Vol. 575},
 PUBLISHER = {Springer},
   ADDRESS = {Berlin},
      YEAR = {1977},
  review={\MRref{0487491}{58 \#7117}},
}

\bib{Khoshkam_Skandalis:regular.representation.groupoid}{article}{
  author={Khoshkam, Mahmood},
  author={Skandalis, Georges},
  title={Regular representation of groupoid \cstar{algebras} and applications to inverse semigroups},
  journal={J. Reine Angew. Math.},
  volume={546},
  number={},
  date={2002},
  pages={47--72},
  issn={0008-414X},
  review={\,\,\,\,\,\,\MRref{1900993}{2003f:46084}},
}

\bib{Kumjian:cstar.diagonals}{article}{
  author={Kumjian, Alexander},
  title={On \cstar{diagonals}},
  journal={Can. J. Math.},
  volume={38},
  number={4},
  date={1986},
  pages={969--1008},
  issn={0008-414X},
  review={\MRref{854149}{88a:46060}},
}

\bib{Kumjian:fell.bundles.over.groupoids}{article}{
  author={Kumjian, Alexander},
  title={Fell bundles over groupoids},
  journal={Proc. Amer. Math. Soc.},
  volume={126},
  number={4},
  date={1998},
  pages={1115-1125},
  issn={0002-9939},
  review={\MRref{1443836}{98i:46055}},
}

\bib{Muhly.Williams.Groupoid.cohomology}{article}{
  author={Muhly, Paul S.},
  author={Williams, Dana P.},
  title={Groupoid cohomology and the Dixmier-Douady class},
  journal={Proc. London Math.},
  volume={71},
  number={3},
  date={1995},
  pages={109--134},
  issn={0024-6115},
  review={\MRref{1327935}{97d:46082}},
}

\bib{Muhly.Williams.Renault's.Equivalence.Theorem}{book}{
  author={Muhly, Paul S.},
  author={Williams, Dana P.},
  title={Renault’s equivalence theorem for groupoid crossed products},
  journal={New York Journal of Mathematics},
  series={NYJM Monographs},
  volume={3},
  publisher={State University of New York. University at Albany},
  number={},
  date={2008},
  pages={},
  issn={},
}

\bib{Muhly.Williams.Equivalence.and.Disintegration}{article}{
  author={Muhly, Paul S.},
  author={Williams, Dana P.},
  title={Equivalence and Disintegration Theorems for Fell Bundles and their \cstar{algebras}},
  journal={Dissertationes Math. (Rozprawy Mat.)},
  volume={456},
  number={},
  date={2008},
  pages={1--57},
  issn={0012-3862},
  review={\MRref{2446021}{}},
}

\bib{Nilsen:Bundles}{article}{
  author={Nilsen, May},
  title={\cstar{bundles} and $\contz(X)$\nb-algebras},
  journal={Indiana Univ. Math. J.},
  volume={45},
  date={1996},
  number={2},
  pages={463--477},
  issn={0022-2518},
  review={\MRref{1414338}{98e:46075}},
}

\bib{Paterson:Groupoids.Inverse.Semigroups}{book}{
    AUTHOR = {Paterson, Alan L. T.},
     TITLE = {Groupoids, inverse semigroups, and their operator algebras},
    SERIES = {Progress in Mathematics},
    VOLUME = {170},
 PUBLISHER = {Birkh\"auser Boston Inc.},
   ADDRESS = {Boston, MA},
      YEAR = {1999},
     PAGES = {xvi+274},
      ISBN = {0-8176-4051-7},
  review={\MRref{1724106}{2001a:22003}},
}

\bib{Quigg.Sieben.C.star.actions.r.discrete.groupoids.and.inverse.semigroups}{article}{
  author={Quigg, John},
  author={Sieben, Nándor},
  title={\cstar{actions} of r-discrete groupoids and inverse semigroups},
  journal={J. Austral. Math. Soc. Ser. A},
  volume={66},
  number={2},
  date={1999},
  pages={143--167},
  issn={0263-6115},
  review={\MRref{1671944}{2000k:46097}},
}

\bib{RenaultThesis}{book}{
   AUTHOR = {Renault, Jean},
     TITLE = {A groupoid approach to {$C\sp{\ast} $}-algebras},
    SERIES = {Lecture Notes in Mathematics},
    VOLUME = {793},
 PUBLISHER = {Springer},
   ADDRESS = {Berlin},
      YEAR = {1980},
     PAGES = {ii+160},
      ISBN = {3-540-09977-8},
  review={\MRref{584266}{82h:46075}},
}

\bib{RenaultCartan}{article}{
    author = {Renault, Jean},
     title = {Cartan subalgebras in {$C\sp *$}-algebras},
   journal = {Irish Math. Soc. Bull.},
    volume = {61},
      date = {2008},
     pages = {29--63},
      issn = {0791-5578},
    review = {\,\,\MRref{2460017}{}},
}

\bib{SiebenFellBundles}{article}{
   author={Sieben, Nándor},
    title={Fell bundles over $r$\nb-discrete groupoids and inverse semigroups},
  journal={Unpublished preprint},
     date={1998},
     note={\href{http://jan.ucc.nau.edu/~ns46/bundle.ps.gz}{http://jan.ucc.nau.edu/\~{}ns46/bundle.ps.gz}},
}

\bib{VershikCartan}{article}{
    author = {Vershik, Anatoly M.},
     title = {Nonmeasurable decompositions, orbit theory, algebras of operators},
   journal = {Dokl. Akad. Nauk},
    volume = {199},
      date = {1971},
     pages = {1004--1007},
}

\bib{YamagamiFellBundles}{article}{
    AUTHOR = {Yamagami, Shigeru},
     TITLE = {On primitive ideal spaces of {$C\sp *$}-algebras over certain
              locally compact groupoids},
 BOOKTITLE = {Mappings of operator algebras ({P}hiladelphia, {PA}, 1988)},
    SERIES = {Progr. Math.},
    VOLUME = {84},
     PAGES = {199--204},
 PUBLISHER = {Birkh\"auser Boston},
   ADDRESS = {Boston, MA},
      YEAR = {1990},
    REVIEW = {\MRref{1103378}{92j:46110}},
}

\end{biblist}
\end{bibdiv}


\end{document}